%!TEX program = lualatex
%!TEX spellcheck = en_GB
\documentclass[a4paper,reqno]{amsart}
\usepackage[british]{babel}
\usepackage[T1]{fontenc}
\usepackage{amssymb,amsthm,bm}
\usepackage{mathtools}
\usepackage{nicefrac}
\mathtoolsset{mathic}
\allowdisplaybreaks
\usepackage{courier,mathptmx,stmaryrd}
\usepackage{longtable}
\usepackage[numbers,sort&compress]{natbib}
\usepackage{hyperref,url}
\hypersetup{  % bookmarks
bookmarksnumbered
}
\hypersetup{  % links
breaklinks=false,
pdfborderstyle={/S/U/W 0},  % zet op .5 ofzo voor onderlijnen
citebordercolor=.235 .702 .443,
urlbordercolor=.255 .412 .882,
linkbordercolor=.804 .149 .19,
}
\hypersetup{ % metadata
pdfauthor={Gert de Cooman and Jasper De Bock},
pdftitle={Randomness is inherently imprecise},
pdfkeywords={}
}
\usepackage[all]{hypcap} % hyperlinks naar figures, niet naar captions
\usepackage{xcolor}
\usepackage{tikz}
\usetikzlibrary{calc,matrix,plotmarks,intersections,arrows,positioning}
\tikzstyle{root}=[rectangle,draw=blue!90]
\tikzstyle{nonterminal}=[rectangle,rounded corners,fill=blue!15,draw=blue!15]
\tikzstyle{terminal}=[rectangle]
\tikzstyle{cut}=[thick,dotted,draw=green!50!black]
\tikzstyle{local}=[color=green!50!black,text=green!25!black]
\usepackage{enumitem}

% delimiters
\DeclarePairedDelimiter{\group}{(}{)}
\DeclarePairedDelimiter{\sqgroup}{[}{]}
\DeclarePairedDelimiter{\set}{\{}{\}}
\DeclarePairedDelimiter{\avg}{\llbracket}{\rrbracket}

\DeclarePairedDelimiter{\norm}{\Vert}{\Vert}
\DeclarePairedDelimiter{\abs}{\vert}{\vert}
\DeclarePairedDelimiter{\dist}{\vert}{\vert}
\DeclarePairedDelimiter{\floor}{\lfloor}{\rfloor}
\DeclarePairedDelimiter{\ceil}{\lceil}{\rceil}

% special sets and maps
\newcommand{\availables}{\mathcal{A}}
\newcommand{\sells}{\mathcal{S}}

\newcommand{\frcsts}{\sqgroup{0,1}}

\newcommand{\imprecisefrcsts}{\mathcal{I}}

\newcommand{\outcomes}{\set{0,1}}
\newcommand{\pths}{\Omega}
\newcommand{\sits}{\mathbb{S}}

\newcommand{\exact}[1]{\Gamma(#1)}

\newcommand{\constantfrcstsystem}[1][I]{\gamma_{\,#1}}
\newcommand{\lconstantfrcstsystem}[1][I]{\smash{\underline{\gamma}_{\,#1}}}
\newcommand{\uconstantfrcstsystem}[1][I]{\smash{\overline{\gamma}_{\,#1}}}
\newcommand{\average}[2][S]{\avg{#2}_{#1}}

\newcommand{\frcstsystem}{\varphi}
\newcommand{\lfrcstsystem}{\smash{\underline{\frcstsystem}}}
\newcommand{\ufrcstsystem}{\smash{\overline{\frcstsystem}}}
\newcommand{\vacfrcstsystem}{\frcstsystem_{\mathrm{v}}}
\newcommand{\frcstsystems}{\Phi}
\newcommand{\constantfrcstsystems}{\imprecisefrcsts}
\newcommand{\process}{F}
\newcommand{\processtoo}{G}

\newcommand{\selection}{S}
\newcommand{\wangselection}{\hat{S}}
\newcommand{\selectionfunction}{\sigma}
\newcommand{\selectionsum}{\zeta}
\newcommand{\wangselectionsum}{\hat{\zeta}}
\newcommand{\multprocess}{D}
\newcommand{\mint}[1][\multprocess]{#1^{\scriptscriptstyle\circledcirc}}
\newcommand{\supermartin}{M}
\newcommand{\submartin}{M}
\newcommand{\martin}{M}
\newcommand{\wangmartin}{\hat{\martin}}
\newcommand{\submartins}[1][\frcstsystem]{\smash{\underline{\mathbb{M}}^{#1}}}
\newcommand{\supermartins}[1][\frcstsystem]{\smash{\overline{\mathbb{M}}^{#1}}}
\newcommand{\martins}[1][\frcstsystem]{{\mathbb{M}}^{#1}}

\newcommand{\testsupermartins}[1][\frcstsystem]{\smash{\overline{\mathbb{T}}^{#1}}}

\newcommand{\test}{T}
\newcommand{\tests}{{\overline{\mathbb{T}}}}

\newcommand{\allowables}{\mathbb{A}}
\newcommand{\mlallowables}{\smash{\allowables_{\mathrm{ML}}}}
\newcommand{\callowables}{\smash{\allowables_{\mathrm{C}}}}
\newcommand{\poscallowables}{\smash{\allowables^{\scriptscriptstyle+}_{\mathrm{C}}}}
\newcommand{\multmlallowables}{\smash{\allowables^{\scriptscriptstyle\circledcirc}_{\mathrm{ML}}}}
\newcommand{\allowabletests}[1][\frcstsystem]{\smash{\tests^{#1}_{\allowables}}}
\newcommand{\allowableteststoo}[1][\frcstsystem]{\smash{\tests^{#1}_{\allowables'}}}
\newcommand{\mlallowabletests}[1][\frcstsystem]{\smash{\tests^{#1}_{\mathrm{ML}}}}
\newcommand{\multmlallowabletests}[1][\frcstsystem]{\smash{\tests^{{#1},{\scriptscriptstyle\circledcirc}}_{\mathrm{ML}}}}
\newcommand{\callowabletests}[1][\frcstsystem]{\smash{\tests^{#1}_{\mathrm{C}}}}
\newcommand{\poscallowabletests}[1][\frcstsystem]{\smash{\tests^{#1,{\scriptscriptstyle+}}_{\mathrm{C}}}}

\newcommand{\testallowables}[1][]{\allowables\cap\testsupermartins[#1]}
\newcommand{\random}[2][\pth]{\frcstsystems_{#2}(#1)}
\newcommand{\crandom}[1][\pth]{\frcstsystems_{\mathrm{C}}(#1)}
\newcommand{\poscrandom}[1][\pth]{\frcstsystems^{\scriptscriptstyle+}_{\mathrm{C}}(#1)}
\newcommand{\multmlrandom}[1][\pth]{\frcstsystems^{\scriptscriptstyle\circledcirc}_{\mathrm{ML}}(#1)}
\newcommand{\mlrandom}[1][\pth]{\frcstsystems_{\mathrm{ML}}(#1)}
\newcommand{\schnorrrandom}[1][\pth]{\frcstsystems_{\mathrm{S}}(#1)}
\newcommand{\constantrandom}[2][\pth]{\constantfrcstsystems_{#2}(#1)}
\newcommand{\constantcrandom}[1][\pth]{\constantfrcstsystems_{\mathrm{C}}(#1)}
\newcommand{\constantposcrandom}[1][\pth]{\constantfrcstsystems^{\scriptscriptstyle+}_{\mathrm{C}}(#1)}
\newcommand{\constantmultmlrandom}[1][\pth]{\constantfrcstsystems^{\scriptscriptstyle\circledcirc}_{\mathrm{ML}}(#1)}
\newcommand{\constantmlrandom}[1][\pth]{\constantfrcstsystems_{\mathrm{ML}}(#1)}
\newcommand{\constantschnorrrandom}[1][\pth]{\constantfrcstsystems_{\mathrm{S}}(#1)}
\newcommand{\lowerconstantrandom}[2][\pth]{L_{#2}(#1)}
\newcommand{\upperconstantrandom}[2][\pth]{U_{#2}(#1)}
\newcommand{\lowerconstantschnorrrandom}[1][\pth]{L_{\mathrm{S}}(#1)}
\newcommand{\upperconstantschnorrrandom}[1][\pth]{U_{\mathrm{S}}(#1)}

\newcommand{\lowersmallestrandom}[2][\pth]{\lp[#2](#1)}
\newcommand{\uppersmallestrandom}[2][\pth]{\up[#2](#1)}

\newcommand{\lowersmallestschnorrrandom}[1][\pth]{\lp[\mathrm{S}](#1)}
\newcommand{\uppersmallestschnorrrandom}[1][\pth]{\up[\mathrm{S}](#1)}
\newcommand{\varnorm}[1]{\norm{#1}_\mathrm{v}}
\newcommand{\ordering}{\rho}
\newcommand{\realordering}{\tau}

% sets of numbers
\newcommand{\naturals}{\mathbb{N}}
\newcommand{\naturalswithzero}{\mathbb{N}_0}
\newcommand{\integers}{\mathbb{Z}}

\newcommand{\reals}{\mathbb{R}}

\newcommand{\nonnegreals}{\mathbb{R}_{\geq0}}

\newcommand{\compfrcstsystems}[1][I]{\smash{\mathbb{N}_{0,#1}}}

% prevision symbols
\newcommand{\ex}{E}
\newcommand{\lex}{\smash{\underline{\ex}}}
\newcommand{\uex}{\smash{\overline{\ex}}}

\newcommand{\pr}{P}
\newcommand{\lpr}{\smash{\underline{\pr}}}
\newcommand{\upr}{\smash{\overline{\pr}}}

\newcommand{\lglobal}[1][\frcstsystem]{\lex^{#1}}
\newcommand{\uglobal}[1][\frcstsystem]{\uex^{#1}}

\newcommand{\lglobalprob}[1][\frcstsystem]{\lpr^{#1}}
\newcommand{\uglobalprob}[1][\frcstsystem]{\upr^{#1}}

\newcommand{\lp}[1][]{\smash{\underline{p}}_{#1}}
\newcommand{\up}[1][]{\smash{\overline{p}}_{#1}}

\newcommand{\pinterval}[1][]{\sqgroup{\lp[#1],\up[#1]}}

\newcommand{\lptoo}[1][]{\smash{\underline{q}}_{#1}}
\newcommand{\uptoo}[1][]{\smash{\overline{q}}_{#1}}

\newcommand{\qinterval}[1][]{\sqgroup{\lptoo[#1],\uptoo[#1]}}

\newcommand{\lowr}[1][]{\smash{\underline{r}}_{#1}}
\newcommand{\uppr}[1][]{\smash{\overline{r}}_{#1}}
\newcommand{\rprime}[1][]{\smash{r'_{#1}}}

% tree stuff
\newcommand{\init}{\square}
\newcommand{\pth}{\omega}
\newcommand{\pthat}[1]{\pth_{#1}}
\newcommand{\pthto}[1]{\pth_{1:#1}}
\newcommand{\altpth}{\varpi}

\newcommand{\altpthto}[1]{\altpth_{1:#1}}
\newcommand{\wangpth}{\hat{\omega}}
\newcommand{\wangpthat}[1]{\wangpth_{#1}}
\newcommand{\wangpthto}[1]{\wangpth_{1:#1}}

\newcommand{\sit}{s}
\newcommand{\sitto}[1]{\sit_{1:#1}}
\newcommand{\sitat}[1]{\sit_{#1}}
\newcommand{\altsit}{t}

\newcommand{\altsitat}[1]{\altsit_{#1}}

\newcommand{\ualtsit}{\overline{\altsit}}
\newcommand{\ualtsitto}[1]{\ualtsit_{1:#1}}
\newcommand{\ualtsitat}[1]{\ualtsit_{#1}}
\newcommand{\precedes}{\sqsubseteq}
\newcommand{\sprecedes}{\sqsubset}

\newcommand{\sfollows}{\sqsupset}

\newcommand{\xval}[1][]{x_{#1}}
\newcommand{\xvaltolong}[1][n]{\xval[1],\dots,\xval[#1]}
\newcommand{\xvalto}[1][n]{\xval[1:#1]}
\newcommand{\zval}[1][]{z_{#1}}
\newcommand{\zvaltolong}[1][n]{\zval[1],\dots,\zval[#1]}

\newcommand{\randomoutcome}[1][]{X_{#1}}
\newcommand{\randomoutcometolong}[1][n]{\randomoutcome[1],\dots,\randomoutcome[#1]}
\newcommand{\randomoutcometo}[1][n]{\randomoutcome[1:#1]}

% game stuff

% abbreviations
\newcommand{\comp}{computable}
\newcommand{\Comp}{Computable}
\newcommand{\scomp}{semicomputable}
\newcommand{\lscomp}{lower semicomputable}
\newcommand{\uscomp}{upper semicomputable}
\newcommand{\compy}{computability}
\newcommand{\Compy}{Computability}
\newcommand{\scompy}{semicomputability}
\newcommand{\lscompy}{lower semicomputability}

\newcommand{\ML}{Martin-L\"of}

% divergences

% various symbols
\newcommand{\cset}[3][]{\set[#1]{#2\colon#3}}
\newcommand{\ind}[1]{\mathbb{I}_{#1}}
\newcommand{\indsing}[1]{\ind{\set{#1}}}
\newcommand{\then}{\Rightarrow}

\newcommand{\ifandonlyif}{\Leftrightarrow}

\newcommand{\adddelta}{\Delta}

\newcommand{\parw}[1]{{\scriptscriptstyle(}#1{\scriptscriptstyle)}}

% environments

\newtheorem{theorem}{Theorem}
\newtheorem{proposition}[theorem]{Proposition}
\newtheorem{lemma}[theorem]{Lemma}
\newtheorem{corollary}[theorem]{Corollary}
\theoremstyle{definition}
\newtheorem{definition}{Definition}
\newtheorem*{game}{Game}
\theoremstyle{remark}

% editing macros

\begin{document}
\title{Randomness is inherently imprecise}
\author{Gert de Cooman}
\address{Ghent University, Foundations Lab for imprecise probabilities, Technologiepark--Zwijnaarde 125, 9052 Zwijnaarde, Belgium}
\email{gert.decooman@ugent.be}
\author{Jasper De Bock}
\address{Ghent University, Foundations Lab for imprecise probabilities, Technologiepark--Zwijnaarde 125, 9052 Zwijnaarde, Belgium}
\email{jasper.debock@ugent.be}

\begin{abstract}
We use the martingale-theoretic approach of game-theoretic probability to incorporate imprecision into the study of randomness.
In particular, we define several notions of randomness associated with interval, rather than precise, forecasting systems, and study their properties.
The richer mathematical structure that thus arises lets us, amongst other things, better understand and place existing results for the precise limit.
When we focus on constant interval forecasts, we find that every sequence of binary outcomes has an associated filter of intervals it is random for.
It may happen that none of these intervals is precise---a single real number---which justifies the title of this paper.
We illustrate this by showing that randomness associated with non-stationary precise forecasting systems can be captured by a constant interval forecast, which must then be less precise: a gain in model simplicity is thus paid for by a loss in precision.
But imprecise randomness can't always be explained away as a result of oversimplification: we show that there are sequences that are random for a constant interval forecast, but never random for any {\comp} (more) precise forecasting system.
We also show that the set of sequences that are random for a non-vacuous interval forecasting system is meagre, as it is for precise forecasting systems.
\end{abstract}

\keywords{{\ML} randomness; {\comp} randomness; Schnorr randomness; {\comp} stochasticity; imprecise probabilities; game-theoretic probability; interval forecast; supermartingale; {\compy}; meagre set.}

\maketitle

\section{Introduction}\label{sec:introduction}
This paper documents the first steps in our attempt to incorporate imprecision into the study of algorithmic randomness.
What this means is that we want to allow for, give a precise mathematical meaning to, and study the mathematical consequences of, associating randomness with interval rather than precise probabilities and expectations.
We will see that this is a non-trivial problem, argue that it leads to surprising conclusions about the nature of randomness, and discover that it opens up interesting and hitherto uncharted territory for mathematical and even philosophical investigation.
We believe that our work provides (the beginnings of) a satisfactory answer to questions raised by a number of researchers \cite{walley1982a,fierens2009:frequentist,fierens2009:chaotic,gorban2016:statisticalstability} about frequentist and `objective' aspects of interval, or imprecise, probabilities.

To explain what it is we're after, consider an infinite sequence~\(\pth=(\zvaltolong[n],\dots)\), whose components \(\zval[k]\) are either zero or one, and are typically considered as successive \emph{outcomes} of some experiment.
When do we call such a sequence \emph{random}?
There are many notions of randomness, and many of them have a number of equivalent definitions \cite{ambosspies2000,bienvenu2009:randomness}.
We will focus here essentially on {\ML} randomness, {\comp} randomness, and Schnorr randomness.

The randomness of a sequence~\(\pth\) is typically associated with a probability measure on the sample space of all such infinite sequences, or---which is essentially equivalent due to Ionescu Tulcea's extension theorem \cite[Theorem~II.9.2]{billingsley1995}---with a so-called \emph{forecasting system}~\(\frcstsystem\) that associates with each finite sequence of outcomes~\((\xvaltolong[n])\) the (conditional) expectation~\(\frcstsystem(\xvaltolong[n])=\ex(\randomoutcome[n+1]\vert\xvaltolong[n])\) for the next, as yet unknown, outcome~\(\randomoutcome[n+1]\).\footnote{We will follow the convention of denoting (as yet) unknown things---\emph{variables}---with a capital letter.}\textsuperscript{,}\footnote{The expectation~\(\ex(X)\) of a variable~\(X\) that may only assume the values~\(0\) and \(1\) is actually the probability \(\pr(X=1)\) that it assumes the value~\(1\): \(\ex(X)=0\cdot\pr(X=0)+1\cdot\pr(X=1)\). This observation already explains why, further on, we will assume that this expectation~\(\ex(X)\) lies in the unit interval~\(\frcsts\). For reasons that will become clear later, we prefer to use the language of expectations in this paper.}
This \(\frcstsystem(\xvaltolong[n])\) is the (precise) \emph{forecast} for the value of~\(\randomoutcome[n+1]\) after observing the values~\(\xvaltolong[n]\) of the respective variables~\(\randomoutcometolong[n]\).
The sequence~\(\pth\) is then typically called `random' when it passes some countable number of randomness tests, where the collection of such randomness tests depends of the forecasting system~\(\frcstsystem\).

An alternative and essentially equivalent approach to defining randomness, going back to Ville \cite{ville1939}, sees each forecast~\(\frcstsystem(\xvaltolong[n])\) as a fair price for---and therefore a commitment to bet on---the as yet unknown next outcome~\(\randomoutcome[n+1]\) after observing the first \(n\) outcomes~\(\xvaltolong[n]\).
The sequence~\(\pth\) is then `random' when there is no `allowable' strategy for getting infinitely rich by exploiting the bets made available by the forecasting system~\(\frcstsystem\) along the sequence, without borrowing.
Betting strategies that are made available by the forecasting system~\(\frcstsystem\) are called supermartingales.
Which supermartingales are considered `allowable' differs in various approaches \cite{schnorr1971,downey2010,levin1973:random:sequence,bienvenu2009:randomness,ambosspies2000}, but typically involves some (semi){\compy} requirement---we discuss relevant aspects of {\compy} in Section~\ref{sec:computability}.
Technically speaking, randomness then requires that all allowable non-negative supermartingales (that start with unit value) should remain bounded on~\(\pth\).

It is this last, martingale-theoretic, approach that seems to lend itself most easily to allowing for interval rather than precise forecasts, and therefore to allowing for `imprecision' in the definition of randomness.
As we explain in Sections~\ref{sec:single:forecast} and~\ref{sec:forecasting:systems}, an interval, or `imprecise', forecasting system~\(\frcstsystem\) associates with each finite sequence of outcomes~\((\xvaltolong[n])\) a (conditional) expectation \emph{interval} \(\frcstsystem(\xvaltolong[n])\) for the next, as yet unknown, outcome~\(\randomoutcome[n+1]\).
The lower bound of this \emph{interval forecast} represents a supremum acceptable buying price, and its upper bound an infimum acceptable selling price, for the next outcome~\(\randomoutcome[n+1]\).
This idea rests firmly on the common ground between Walley's \cite{walley1991} theory of coherent lower previsions and Shafer and Vovk's \cite{shafer2001,shafer2019:book} game-theoretic approach to probability that we have helped establishing in recent years, through our research on imprecise stochastic processes \cite{cooman2007d,cooman2015:markovergodic}; see also Refs.~\cite{troffaes2013:lp,augustin2013:itip} for more details on so-called `imprecise probabilities'.
These theoretical developments allow us here to associate supermartingales with an interval forecasting system, and therefore in Section~\ref{sec:randomness} to extend a number of existing notions of randomness to allow for interval, rather than precise, forecasts: we include in particular {\ML} randomness and {\comp} randomness \cite{schnorr1971,downey2010,ambosspies2000,bienvenu2009:randomness}.
In Section~\ref{sec:schnorr:randomness}, we also extend Schnorr randomness \cite{schnorr1971,downey2010,ambosspies2000,bienvenu2009:randomness} to allow for interval forecasts.
We then show in Section~\ref{sec:consistency} that our approach allows us to extend to interval forecasting some of Dawid's \cite{dawid1982:well:calibrated:bayesian} well-known work on calibration, and to establish a number of interesting `limiting frequencies' or {\comp} stochasticity results.

We believe the discussion becomes especially interesting in Section~\ref{sec:constantintervalforecasts}, where we start restricting our attention to \emph{constant}, or \emph{stationary}, interval forecasts.
We see this as an extension of the more classical accounts of randomness, which typically consider a forecasting system with constant forecast~\(\nicefrac{1}{2}\)---corresponding to flipping a fair coin.
As we have by now come to expect from our experience with so-called imprecise probability models, when we allow for interval forecasts, a mathematical structure appears that is much more interesting than the rather simpler case of precise forecasts would lead us to suspect.
In the precise case, a given sequence may not be random for any stationary forecast, but as we will see, in the case of interval forecasting there typically is a filter of intervals that a given sequence is random for.
Furthermore, as we show in Section~\ref{sec:non-stationarity} by means of explicit examples, this filter may not have a smallest element, and even when it does, this smallest element may be a non-vanishing interval: this is the first cornerstone for our argument that \emph{randomness is inherently imprecise}.

The examples in Section~\ref{sec:non-stationarity} all involve sequences that are random for some {\comp} non-stationary precise forecast, but can't be random for a stationary forecast unless it becomes interval-valued, or imprecise.
This might lead to the suspicion that this imprecision is perhaps only an artefact, which results from looking at non-stationary phenomena through an imperfect stationary lens.
We show in Section~\ref{sec:inherently} that this suspicion is unfounded: there are sequences that are random for a stationary interval forecast, but that aren't random for \emph{any {\comp} (more) precise} forecast, be it stationary or not.
This further corroborates our claim that \emph{randomness is, indeed, inherently imprecise}.

Finally, in Section~\ref{sec:meagreness}, we argue that `imprecise' randomness is an interesting extension of the existing notions of `precise' randomness, because it is equally rare: just as for precise stationary forecasts, the set of all sequences that are random for a \emph{non-vacuous} stationary interval forecast is \emph{meagre}.
This, we will argue, indicates that the essential distinction lies not between precise and imprecise forecasts (or randomness), but between non-vacuous and vacuous ones, and provides further evidence for the essentially `imprecise' nature of the randomness notion.

We conclude with a short discussion of the significance of our findings, and of possible avenues for further research.
In order to maintain focus, we have decided to move all technical proofs of auxiliary results about computability and growth functions to an appendix.
We have also, as much as possible, tried to make sure that our more complicated and technical proofs in the main text are preceded by informal arguments, in order to help the reader build some intuition about why and how they work.

\section{A single interval forecast}\label{sec:single:forecast}
The dynamics of making a single forecast can be made very clear, after the fashion first introduced by Shafer and Vovk \cite{shafer2001,shafer2019:book}, by considering a simple game, with three players, namely Forecaster, Sceptic and Reality.
The game involves an initially unknown outcome in the set~\(\outcomes\), which we will denote by~\(\randomoutcome\).
To stress that it is unknown, we call it a \emph{variable}, and use upper-case notation.

\begin{game}[Single forecast of an outcome~\(\randomoutcome\)]
In a first step, the first player, \emph{Forecaster}, specifies an interval bound~\(I=\pinterval\subseteq\frcsts\) for the expectation of an as yet unknown outcome~\(\randomoutcome\) in~\(\outcomes\)---or equivalently, for the probability that \(\randomoutcome=1\).
We interpret this so-called \emph{interval forecast}~\(I\) as a commitment, on the part of Forecaster, to adopt \(\lp\) as his \emph{supremum acceptable buying price} and \(\up\) as his \emph{infimum acceptable selling price} for the gamble (with reward function) \(\randomoutcome\).
This is taken to mean that the second player, \emph{Sceptic}, can now in a second step take Forecaster up on any (combination) of the following commitments, whose uncertain pay-offs are expressed in units of a linear utility:
\begin{enumerate}[label=\upshape(\roman*),leftmargin=*,noitemsep,topsep=0pt]
\item for all real~\(q\leq\lp\) and all real~\(\alpha\geq0\), Forecaster is committed to accepting the gamble~\(\alpha[\randomoutcome-q]\), leading to a (possibly negative) uncertain reward~\(-\alpha[\randomoutcome-q]\) for Sceptic;\footnote{Because we allow \(q\leq\lp\) rather than \(q<\lp\), we actually see \(\lp\) as a \emph{maximum} acceptable buying price, rather than a supremum one. We do this because it doesn't affect the conclusions, as we show in the Appendix, but does simplify the mathematics and the discussion somewhat. Similarly for~\(r\geq\up\).}
\item for all real~\(r\geq\up\) and all real~\(\beta\geq0\), Forecaster is committed to accepting the gamble~\(\beta[r-\randomoutcome]\), leading to a (possibly negative) uncertain reward~\(-\beta[r-\randomoutcome]\) for Sceptic.
\end{enumerate}
Finally, in a third step, the third player, \emph{Reality}, determines the value~\(x\) of~\(\randomoutcome\) in~\(\outcomes\), and the corresponding rewards~\(-\alpha[x-q]\) or~\(-\beta[r-x]\) are paid by Forecaster to Sceptic.\qed
\end{game}

Elements~\(x\) of~\(\outcomes\) are called \emph{outcomes}, and elements \(p\) of the real unit interval \(\frcsts\) will serve as (precise) \emph{forecasts}.
We denote by~\(\imprecisefrcsts\) the set of non-empty closed subintervals of the real unit interval \(\frcsts\).
Any element~\(I\) of~\(\imprecisefrcsts\) will serve as an \emph{interval forecast}.
It has a smallest element~\(\min I\) and a greatest element~\(\max I\), so \(I=\sqgroup{\min I,\max I}\).
We will use the generic notation~\(I\) for such an interval forecast, and \(\lp\coloneqq\min I\) and \(\up\coloneqq\max I\) for its lower and upper bounds, respectively.
An interval forecast~\(I=\sqgroup{\lp,\up}\) is of course \emph{precise} when \(\lp=\up\eqqcolon p\), and we will then make no distinction between the singleton interval forecast~\(I=\set{p}\in\imprecisefrcsts\) and the corresponding precise forecast~\(p\in\frcsts\).

After Forecaster announces an interval forecast~\(I\), what Sceptic can do is essentially to try and increase her capital by taking a gamble on the unknown outcome~\(\randomoutcome\).
Any such \emph{gamble} can be considered as a map \(f\colon\outcomes\to\reals\), and can therefore be represented as a point or vector \((f(1),f(0))\) in the two-dimensional vector space \(\reals^2\); see also Figure~\ref{fig:capitalincrease} below.
\(f(\randomoutcome)\) is then the (possibly negative) increase in Sceptic's capital after the game has been played, as a function of the outcome variable \(\randomoutcome\).
Of course, not every gamble~\(f(\randomoutcome)\) on the unknown outcome~\(\randomoutcome\) will be available to Sceptic: which gambles she can take is determined by Forecaster's interval forecast~\(I\).
As we indicated above, in their most general form, they're given by~\(f(\randomoutcome)=-\alpha[\randomoutcome-q]-\beta[r-\randomoutcome]\), where \(\alpha\) and \(\beta\) are non-negative real numbers, \(q\leq\lp\) and \(r\geq\up\).
We see that the gambles that are available to Sceptic constitute a closed convex cone~\(\availables_I\) in~\(\reals^2\), see also Figure~\ref{fig:capitalincrease}:
\begin{equation*}
\availables_I\coloneqq\cset[\big]{-\alpha[\randomoutcome-q]-\beta[r-\randomoutcome]}
{\text{\(q\leq\lp\), \(\up\leq r\) and \(\alpha,\beta\in\nonnegreals\)}},
\end{equation*}
where we use \(\nonnegreals\) to denote the set of non-negative real numbers.

Let us associate with any precise forecast \(p\in\frcsts\) the \emph{expectation} (functional)~\(\ex_p\), defined by
\begin{equation}\label{eq:local:linear}
\ex_p(f)
\coloneqq pf(1)+(1-p)f(0)
\text{ for any gamble~\(f\colon\outcomes\to\reals\).}
\end{equation}
If we also consider the so-called \emph{lower expectation} (functional)~\(\lex_I\) associated with an interval forecast~\(I\in\imprecisefrcsts\), defined by
\begin{multline}\label{eq:local:lower}
\lex_I(f)
\coloneqq\min_{p\in I}\ex_p(f)
=\min_{p\in I}\sqgroup[\big]{pf(1)+(1-p)f(0)}
=
\begin{cases}
\ex_{\lp}(f)&\text{if \(f(1)\geq f(0)\)}\\
\ex_{\up}(f)&\text{if \(f(1)\leq f(0)\)}
\end{cases}\\
\text{ for any gamble~\(f\colon\outcomes\to\reals\)},
\end{multline}
and similarly, the \emph{upper expectation} (functional)~\(\uex_I\), defined by
\begin{multline}\label{eq:local:upper}
\uex_I(f)
\coloneqq\max_{p\in I}\ex_p(f)
=\begin{cases}
\ex_{\up}(f)&\text{if \(f(1)\geq f(0)\)}\\
\ex_{\lp}(f)&\text{if \(f(1)\leq f(0)\)}
\end{cases}
=-\lex_I(-f)\\
\text{ for any gamble~\(f\colon\outcomes\to\reals\)},
\end{multline}
then it is not difficult to see\footnote{Use the characterisation~\(f(\randomoutcome)=-\alpha[\randomoutcome-p]-\beta[q-\randomoutcome]\) of the available gambles derived above, and the properties of the upper expectation~\(\uex_I\) listed in Proposition~\ref{prop:properties:of:expectations}.} that \emph{the closed convex cone~\(\availables_I\) of all gambles~\(f(\randomoutcome)\) that are available to Sceptic after Forecaster announces his interval forecast~\(I\) is completely determined by the condition~\(\uex_I(f)\leq0\)}, as depicted by the blue regions in Figure~\ref{fig:capitalincrease}.
In fact, the condition ~\(\uex_I(f)\leq0\) is equivalent to~\((\forall p\in I)\ex_p(f)\leq0\), so the available gambles belong to the intersection of all half-planes determined by~\(\ex_p(f)\leq0\) for all~\(p\in I\).

The functionals \(\lex_I\) and \(\uex_I\) are easily shown to have the following so-called \emph{coherence properties}, typical for the more general lower and upper expectation operators defined on arbitrary gamble spaces \cite{walley1991,troffaes2013:lp}:

\begin{proposition}\label{prop:properties:of:expectations}
Consider any forecast interval \(I\in\imprecisefrcsts\).
Then for all gambles \(f,g\) on~\(\outcomes\), all~\(\mu\in\reals\) and all non-negative \(\lambda\in\reals\):
\begin{enumerate}[label=\upshape C{\arabic*}.,ref=\upshape C{\arabic*},leftmargin=*,noitemsep,topsep=0pt]
\item\label{axiom:coherence:bounds} \(\min f\leq\lex_I(f)\leq\uex_I(f)\leq\max f\);\upshape\hfill[bounds]
\item\label{axiom:coherence:homogeneity} \(\lex_I(\lambda f)=\lambda\lex_I(f)\) and \(\uex_I(\lambda f)=\lambda\uex_I(f)\);\upshape\hfill[non-negative homogeneity]
\item\label{axiom:coherence:subadditivity} \(\lex_I(f+g)\geq\lex_I(f)+\lex_I(g)\) and \(\uex_I(f+g)\leq\uex_I(f)+\uex_I(g)\);\upshape\hfill[super/subadditivity]
\item\label{axiom:coherence:constantadditivity} \(\lex_I(f+\mu)=\lex_I(f)+\mu\) and \(\uex_I(f+\mu)=\uex_I(f)+\mu\);\upshape\hfill[constant additivity]
\item\label{axiom:coherence:monotonicity} if \(f\leq g\) then \(\lex_I(f)\leq\lex_I(g)\) and \(\uex_I(f)\leq\uex_I(g)\).\upshape\hfill[monotonicity]
\end{enumerate}
\end{proposition}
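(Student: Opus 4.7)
The plan is to reduce every property to the elementary behaviour of the precise expectation functionals $\ex_p$ and then transfer it to the envelopes $\lex_I$ and $\uex_I$ by the simple min/max arithmetic built into \eqref{eq:local:lower} and \eqref{eq:local:upper}. First, from the defining convex combination $\ex_p(f) = p f(1) + (1-p) f(0)$ in \eqref{eq:local:linear}, I would observe that for each fixed $p \in \frcsts$ the functional $\ex_p$ is linear, monotone, constant-preserving (so $\ex_p(\mu) = \mu$), and bounded between $\min f$ and $\max f$. These are one-line verifications, since the weights $p$ and $1-p$ are non-negative and sum to one.

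Next, since \eqref{eq:local:upper} already records the conjugacy relation $\uex_I(f) = -\lex_I(-f)$, it suffices to prove each of \ref{axiom:coherence:bounds}--\ref{axiom:coherence:monotonicity} for $\lex_I$; the corresponding claim for $\uex_I$ then follows automatically. For \ref{axiom:coherence:bounds}, every $\ex_p(f)$ lies in $[\min f, \max f]$, so the minimum $\lex_I(f) = \min_{p \in I} \ex_p(f)$ lies there as well; picking any $p \in I$ also gives $\lex_I(f) \leq \ex_p(f) \leq \uex_I(f)$. For \ref{axiom:coherence:homogeneity}, non-negative homogeneity of each $\ex_p$ lets the factor $\lambda \geq 0$ be pulled out of the minimum. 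For \ref{axiom:coherence:constantadditivity}, constant additivity of each $\ex_p$ allows the constant $\mu$ to be pulled out of the minimum. For \ref{axiom:coherence:monotonicity}, $f \leq g$ gives $\ex_p(f) \leq \ex_p(g)$ pointwise in $p$, whence $\lex_I(f) \leq \lex_I(g)$ by taking the infimum on each side.

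The only step with any genuine content is the superadditivity in \ref{axiom:coherence:subadditivity}. Here the linearity of each $\ex_p$ yields $\ex_p(f+g) = \ex_p(f) + \ex_p(g)$, and I would then invoke the standard fact that the minimum of a sum of two functions of $p$ is at least the sum of their separate minima, since the $p$ that minimises $\ex_p(f+g)$ need not simultaneously minimise $\ex_p(f)$ and $\ex_p(g)$. This gives $\lex_I(f+g) \geq \lex_I(f) + \lex_I(g)$, and the subadditivity of $\uex_I$ then follows by conjugacy. This is the only property in which an equality at the level of $\ex_p$ degrades to an inequality after passing to the envelope, and is the step I would flag as the main, albeit very mild, obstacle.
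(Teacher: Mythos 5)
Your proof is correct and is the standard argument that the paper leaves to the reader (it states only that these properties are "easily shown"). The reduction to elementary properties of the precise functionals $\ex_p$, the use of conjugacy $\uex_I(f)=-\lex_I(-f)$ to halve the work, and the observation that only superadditivity genuinely loses information when passing to the lower envelope are all exactly right.
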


\begin{figure}[ht]
\centering
\begin{tikzpicture}[scale=1.25]\footnotesize
% various coordinates
\coordinate (xaxis) at (1.5,0);
\coordinate (yaxis) at (0,1.5);
\coordinate (origin) at (0,0);
\coordinate (bottomleft) at (-1.5,-1.5);
\coordinate (aboveleft) at (-1.5,0.5);
\coordinate (belowright) at (0.5,-1.5);
\coordinate (gamble) at (-1,-0.5);
\coordinate (label) at (0,-1.7);
% the good region
\draw[blue,thick,name path=marginal line] (aboveleft) -- node[midway,above,rotate=-18.43] {\footnotesize \(\ex_{\lp}(f)=0\)} (origin);
\draw[help lines,dashed] (origin) -- (1.5,-0.5);
\draw[help lines,dashed] (-0.5,1.5) -- (origin);
\draw[blue,thick] (origin) -- node[midway,above,rotate=-71.57] {\footnotesize \(\ex_{\up}(f)=0\)} (belowright);
\fill[nearly transparent,blue] (origin) -- (belowright) -- (bottomleft) -- (aboveleft) -- cycle;
% axes and grid
\draw[step=.5cm,gray,very thin] (-1.4,-1.4) grid (1.4,1.4);
\draw[->] (-1.5,0) -- (xaxis) node[below] {\footnotesize\(f(1)\)};
\draw[->] (0,-1.5) -- (yaxis) node[above] {\footnotesize\(f(0)\)};
% first diagonal and associated regions
\draw[red,very thin] (bottomleft) -- (origin) -- coordinate[midway] (diagonal) (1.5,1.5) ;
\node[red,rotate=45,above=5pt] at (diagonal) {\footnotesize \(f(1)\leq f(0)\)};
\node[red,rotate=45,below=5pt] at (diagonal) {\footnotesize \(f(1)\geq f(0)\)};
% gamble to marginal
\node[below] at (label) {(a)};
\end{tikzpicture}
\qquad\qquad
\begin{tikzpicture}[scale=1.25]\footnotesize
% various coordinates
\coordinate (xaxis) at (1.5,0);
\coordinate (yaxis) at (0,1.5);
\coordinate (origin) at (0,0);
\coordinate (bottomleft) at (-1.5,-1.5);
\coordinate (bottomright) at (1.5,-1.5);
\coordinate (aboveleft) at (-1.5,0.5);
\coordinate (belowright) at (1.5,-0.5);
\coordinate (gamble) at (-1,-0.5);
\coordinate (label) at (0,-1.7);
% axes and grid
\draw[step=.5cm,gray,very thin] (-1.4,-1.4) grid (1.4,1.4);
\draw[->] (-1.5,0) -- (xaxis) node[below] {\footnotesize\(f(1)\)};
\draw[->] (0,-1.5) -- (yaxis) node[above] {\footnotesize\(f(0)\)};
% first diagonal and associated regions
\draw[red,very thin] (bottomleft) -- (origin) -- coordinate[midway] (diagonal) (1.5,1.5) ;
\node[red,rotate=45,above=5pt] at (diagonal) {\footnotesize \(f(1)\leq f(0)\)};
\node[red,rotate=45,below=5pt] at (diagonal) {\footnotesize \(f(1)\geq f(0)\)};
% the good region
\fill[nearly transparent,blue] (aboveleft) -- (belowright) -- (bottomright) -- (bottomleft) -- cycle;
\draw[fill=white,blue,thick,name path=marginal line] (aboveleft) -- node[midway,above,rotate=-18.43] {\footnotesize \(\ex_{p}(f)=0\)} (origin) -- (belowright);
% gamble to marginal
\node[below] at (label) {(b)};
\end{tikzpicture}
\caption{Gambles~\(f\) available to Sceptic when (a) Forecaster announces \(I\in\imprecisefrcsts\) with \(\lp<\up\); and when (b) Forecaster announces \(I\in\imprecisefrcsts\) with \(\lp=\up\eqqcolon p\).}
\label{fig:capitalincrease}
\end{figure}

\section{Interval forecasting systems and imprecise probability trees}\label{sec:forecasting:systems}
We now consider a sequence of repeated versions of the forecasting game in the previous section.
At each successive stage~\(k\in\naturals\), Forecaster presents an interval forecast~\(I_k=\pinterval[k]\) for the unknown outcome variable \(\randomoutcome[k]\).
This effectively allows Sceptic to choose any gamble~\(f_k(\randomoutcome[k])\) such that \(\uex_{I_k}(f_k)\leq0\).
Finally, Reality then chooses a value \(\xval[k]\) for~\(\randomoutcome[k]\), resulting in a gain in capital~\(f_k(\xval[k])\) for Sceptic.
This gain~\(f_k(\xval[k])\) can, of course, be negative, resulting in an actual decrease in Sceptic's capital.

Here and in what follows, \(\naturals\) is the set of all natural numbers, without zero.
We will also use the notation \(\naturalswithzero\coloneqq\naturals\cup\set{0}\) and the notation~\(\integers\) for the set of all integer numbers.

\subsection{The event tree and its forecasting systems}
We call~\((\xval[1],\xval[2],\dots,\xval[n],\dots)\) an outcome sequence, and we collect all possible outcome sequences in the set~\(\pths\coloneqq\outcomes^\naturals\).
We collect the finite outcome sequences~\(\xvalto[n]\coloneqq(\xvaltolong[n])\) in the set~\(\sits\coloneqq\outcomes^*=\bigcup_{n\in\naturalswithzero}\outcomes^n\).
The finite outcome sequences~\(\sit\) in~\(\sits\) and infinite outcome sequences~\(\pth\) in~\(\pths\) constitute the nodes---also called \emph{situations}---and \emph{paths} in an event tree with unbounded horizon, part of which is depicted below.
The empty sequence~\(\xvalto[0]\eqqcolon\init\) is also called the \emph{initial} situation.
From now on, we will systematically use the `situations' and `paths' terminology.
Keep in mind that any path~\(\pth\in\pths\) is an infinite outcome sequence, and can therefore also be identified with---the binary expansion of---a real number in the unit interval~\(\frcsts\).
\begin{center}
\begin{tikzpicture}
\tikzstyle{level 1}=[sibling distance=18em]
\tikzstyle{level 2}=[sibling distance=9em]
\tikzstyle{level 3}=[sibling distance=4.5em]
\tikzstyle{level 4}=[level distance=2em]
\node[root] (root) {} [grow=down,level distance=5ex]
child {node[nonterminal] (a) {\(0\)}
child {node[nonterminal] (aa) {\(00\)}
child {node[nonterminal] (aaa) {\(000\)}
child[black,dotted]}
child {node[nonterminal] (aab) {\(001\)}
child[black,dotted]}
}
child {node[nonterminal] (ab) {\(01\)}
child {node[nonterminal] (aba) {\(010\)}
child[black,dotted]}
child {node[nonterminal] (abb) {\(011\)}
child[black,dotted]}
}
}
child {node[nonterminal] (b) {\(1\)}
child {node[nonterminal] (ba) {\(10\)}
child {node[nonterminal] (baa) {\(100\)}
child[black,dotted]}
child {node[nonterminal] (bab) {\(101\)}
child[black,dotted]}
}
child {node[nonterminal] (bb) {\(11\)}
child {node[nonterminal] (bba) {\(110\)}
child[black,dotted]}
child {node[nonterminal] (bbb) {\(111\)}
child[black,dotted]}
}
};
\end{tikzpicture}
\end{center}

In the repeated game described above, Forecaster will only provide interval forecasts~\(I_k\) after observing the actual sequence~\((\xvaltolong[k-1])\) that Reality has chosen, and the corresponding sequence of gambles~\((f_1,\dots,f_{k-1})\) that Sceptic has chosen.
This is the essence of so-called prequential forecasting \cite{dawid1982:well:calibrated:bayesian,dawid1984,dawid1999}.
But for the purposes of the present discussion, it will be advantageous to consider an alternative, and in some aspects more involved, setting where a forecast~\(I_\sit\) is specified in each of the possible situations~\(\sit\) in the event tree~\(\sits\); see the figure below:
\begin{center}
% input{imprecisewithinsets.tikz}
\begin{tikzpicture}[scale=.7]
\tikzstyle{level 1}=[sibling distance=18em]
\tikzstyle{level 2}=[sibling distance=9em]
\tikzstyle{level 3}=[sibling distance=4.5em]
\tikzstyle{level 4}=[level distance=2em]
\node[root] (root) {} [grow=down,level distance=10ex]
child {node[nonterminal] (a) {\(0\)}
child {node[nonterminal] (aa) {\(00\)}
child {node[nonterminal] (aaa) {\(000\)}
child[thick,black,dotted]}
child {node[nonterminal] (aab) {\(001\)}
child[thick,black,dotted]}
}
child {node[nonterminal] (ab) {\(01\)}
child {node[nonterminal] (aba) {\(010\)}
child[thick,black,dotted]}
child {node[nonterminal] (abb) {\(011\)}
child[thick,black,dotted]}
}
}
child {node[nonterminal] (b) {\(1\)}
child {node[nonterminal] (ba) {\(10\)}
child {node[nonterminal] (baa) {\(100\)}
child[thick,black,dotted]}
child {node[nonterminal] (bab) {\(101\)}
child[thick,black,dotted]}
}
child {node[nonterminal] (bb) {\(11\)}
child {node[nonterminal] (bba) {\(110\)}
child[thick,black,dotted]}
child {node[nonterminal] (bbb) {\(111\)}
child[thick,black,dotted]}
}
};
\draw[local,thick] (root) +(180:1em) arc (180:360:1em);
\draw[local,thick] (b) +(190:1.25em) arc (190:350:1.25em);
\draw[local,thick] (a) +(190:1.25em) arc (190:350:1.25em);
\draw[local,thick] (bb) +(210:1.35em) arc (210:330:1.35em);
\draw[local,thick] (ba) +(210:1.35em) arc (210:330:1.35em);
\draw[local,thick] (ab) +(210:1.35em) arc (210:330:1.35em);
\draw[local,thick] (aa) +(210:1.35em) arc (210:330:1.35em);
\path (root) +(275:1.7em) node[local] {\footnotesize\(I_\init\)};
\path (a) +(270:2em) node[local] {\footnotesize\(I_{0}\)};
\path (b) +(270:2em) node[local] {\footnotesize\(I_{1}\)};
\path (aa) +(270:2.1em) node[local] {\footnotesize\(I_{00}\)};
\path (bb) +(270:2.1em) node[local] {\footnotesize\(I_{11}\)};
\path (ba) +(270:2.1em) node[local] {\footnotesize\(I_{10}\)};
\path (ab) +(270:2.1em) node[local] {\footnotesize\(I_{01}\)};
\node[right = 1mm of root,anchor=south west] {%
\begin{tikzpicture}[scale=.5,anchor=south west]\tiny
% various coordinates
\coordinate (xaxis) at (1.5,0);
\coordinate (yaxis) at (0,1.5);
\coordinate (origin) at (0,0);
\coordinate (bottomleft) at (-1.5,-1.5);
\coordinate (bottomright) at (1.5,-1.5);
\coordinate (topright) at (1.5,1.5);
\coordinate (topleft) at (-1.5,1.5);
\coordinate (aboveleft) at (-1.5,0);
\coordinate (belowright) at (0,-1.5);
\coordinate (focus) at (-4,-2);
% axes and grid
\draw[->] (-1.5,0) -- (xaxis) node[below] {\(f(1)\)};
\draw[->] (0,-1.5) -- (yaxis) node[left] {\(f(0)\)};
% the good region
\fill[nearly transparent,blue] (aboveleft) -- (bottomleft) -- (belowright) -- (origin) -- cycle;
\draw[blue,thick,name path=marginal line] (aboveleft) -- (origin) -- (belowright);
% perspective
\draw[gray,thin] (focus) -- (topleft);
\draw[gray,thin] (focus) -- (bottomright);
\end{tikzpicture}%
};
\node[right = 1mm of b] {%
\begin{tikzpicture}[scale=.5]\tiny
% various coordinates
\coordinate (xaxis) at (1.5,0);
\coordinate (yaxis) at (0,1.5);
\coordinate (origin) at (0,0);
\coordinate (bottomleft) at (-1.5,-1.5);
\coordinate (bottomright) at (1.5,-1.5);
\coordinate (topright) at (1.5,1.5);
\coordinate (topleft) at (-1.5,1.5);
\coordinate (aboveleft) at (-1.5,0);
\coordinate (belowright) at (1,-1.5);
\coordinate (focus) at (-4,0);
% axes and grid
\draw[->] (-1.5,0) -- (xaxis) node[below] {\(f(1)\)};
\draw[->] (0,-1.5) -- (yaxis) node[left] {\(f(0)\)};
% the good region
\fill[nearly transparent,blue] (aboveleft) -- (origin) -- (belowright) -- (bottomleft) -- cycle;
\draw[blue,thick,name path=marginal line] (aboveleft) -- (origin) -- (belowright);
% perspective
\draw[gray,thin] (focus) -- (topleft);
\draw[gray,thin] (focus) -- (bottomleft);
\end{tikzpicture}%
};
\node[left = -1mm of aa] {%
\begin{tikzpicture}[scale=.5]\tiny
% various coordinates
\coordinate (xaxis) at (1.5,0);
\coordinate (yaxis) at (0,1.5);
\coordinate (origin) at (0,0);
\coordinate (bottomleft) at (-1.5,-1.5);
\coordinate (bottomright) at (1.5,-1.5);
\coordinate (topright) at (1.5,1.5);
\coordinate (aboveleft) at (-1.5,0.5);
\coordinate (belowright) at (0,-1.5);
\coordinate (focus) at (4,0);
% axes and grid
\draw[->] (-1.5,0) -- (xaxis) node[below] {\(f(1)\)};
\draw[->] (0,-1.5) -- (yaxis) node[left] {\(f(0)\)};
% the good region
\fill[nearly transparent,blue] (aboveleft) -- (origin) -- (belowright) -- (bottomleft) -- cycle;
\draw[blue,thick,name path=marginal line] (aboveleft) -- (origin) -- (belowright);
% perspective
\draw[gray,thin] (focus) -- (topright);
\draw[gray,thin] (focus) -- (bottomright);
\end{tikzpicture}%
};
\end{tikzpicture}
\end{center}
We can use this idea to extend the notion of a forecasting system in Refs.~\cite{dawid1985:cbep,vovk2010:randomness} from precise to interval forecasts.

\begin{definition}[Forecasting system]
A \emph{forecasting system} is a map \(\frcstsystem\colon\sits\to\imprecisefrcsts\), that associates an interval forecast~\(\frcstsystem(\sit)\in\imprecisefrcsts\) with every situation~\(\sit\) in the event tree~\(\sits\).
With any forecasting system~\(\frcstsystem\) we associate two real processes~\(\lfrcstsystem\) and \(\ufrcstsystem\),\footnote{For a more concrete definition of a `process', we refer to the discussion in Section~\ref{sec:trees:and:processes}.} defined by~\(\lfrcstsystem(\sit)\coloneqq\min\frcstsystem(\sit)\) and \(\ufrcstsystem(\sit)\coloneqq\max\frcstsystem(\sit)\) for all~\(\sit\in\sits\).
A forecasting system~\(\frcstsystem\) is called \emph{precise} if \(\lfrcstsystem=\ufrcstsystem\).
\end{definition}
\noindent
Specifying a forecasting system~\(\frcstsystem\) requires that Forecaster should imagine in advance all the moves that Reality (and Sceptic) could make, and that he should devise in advance what forecast~\(\frcstsystem(\sit)\) to give in each imaginable situation~\(\sit\in\sits\).

We will use the notation \(\frcstsystem\subseteq\frcstsystem^*\) to mean that the forecasting system~\(\frcstsystem^*\) is \emph{at least as conservative} as \(\frcstsystem\), meaning that \(\frcstsystem(\sit)\subseteq\frcstsystem^*(\sit)\) for all~\(\sit\in\sits\).

\subsection{Imprecise probability trees and supermartingales}\label{sec:trees:and:processes}
Since in each situation~\(\sit\) the interval forecast~\(I_\sit=\frcstsystem(\sit)\) corresponds to a so-called \emph{local} upper expectation~\(\uex_{I_\sit}\), we can use the argumentation in our earlier papers \cite{cooman2007d,cooman2015:ergodic,cooman2015:markovergodic} on imprecise stochastic processes to help \(\frcstsystem\) turn the event tree into an \emph{imprecise probability tree}, with an associated \emph{global} upper expectation on paths, and a corresponding notion of `almost surely'.

In what follows, we recall in some detail how to do this.
However, we will limit ourselves to discussing only those aspects that are essential for a proper understanding of our treatment of randomness further on; for a much more extensive discussion, we refer to our earlier papers \cite{cooman2007d,cooman2015:ergodic,cooman2015:markovergodic}, based on the seminal work by Shafer and Vovk \cite{shafer2001,shafer2019:book,shafer2012:zero-one,vovk2014:itip}.

We will denote by~\(\frcstsystems\) the set~\(\imprecisefrcsts^{\sits}\) of all forecasting systems, or equivalently, all imprecise probability trees.

For any path~\(\pth\in\pths\), the initial sequence that consists of its first \(n\) elements is a situation in~\(\outcomes^n\) that is denoted by~\(\pthto{n}\).
Its \(n\)-th element belongs to~\(\outcomes\) and is denoted by~\(\pth_n\).
As a convention, we let its \(0\)-th element be the \emph{initial} situation~\(\pthto{0}=\pth_0=\init\).

For any situation~\(\sit\in\sits\) and any path~\(\pth\in\pths\), we say that \(\pth\) \emph{goes through}~\(\sit\) if there is some~\(n\in\naturalswithzero\) such that \(\pthto{n}=\sit\).
We denote by~\(\exact{\sit}\) the so-called \emph{cylinder set} of all paths~\(\pth\in\pths\) that go through~\(\sit\).

We write that \(\sit\precedes\altsit\), and say that the situation~\(\sit\) \emph{precedes} the situation~\(\altsit\), when every path that goes through~\(\altsit\) also goes through~\(\sit\)---so \(\sit\) is a precursor of~\(\altsit\).
An equivalent condition is of course that \(\exact{\altsit}\subseteq\exact{\sit}\).
We say that the situation~\(\sit\) \emph{strictly precedes} the situation~\(\altsit\), and write \(\sit\sprecedes\altsit\), when \(\sit\precedes\altsit\) and \(\sit\neq\altsit\), or equivalently, when \(\exact{\altsit}\subset\exact{\sit}\).

For any situation~\(\sit=(x_1,\dots,x_n)\in\sits\), we call~\(n=\dist{\sit}\) its depth in the tree.
Of course, \(\dist{\sit}\geq\dist{\init}=0\).
We will use a similar notational convention for situations as for paths: we let \(\sitat{k}\coloneqq\xval[k]\) and \(\sitto{k}\coloneqq(\xvaltolong[k])\) for all~\(k\in\set{1,\dots,n}\), and \(\sitto{0}=\sitat{0}\coloneqq\init\).
Also, for any~\(\xval\in\outcomes\), we denote by~\(\sit x\) the situation~\((\xvaltolong[n],x)\).

A \emph{process}~\(\process\) is a map defined on~\(\sits\).
A \emph{real process} is a real-valued process: it associates a real number~\(\process(\sit)\in\reals\) with every situation~\(\sit\in\sits\).
With any real process~\(\process\), we can always associate a process~\(\adddelta\process\), called the \emph{process difference}.
For every situation~\(\sit\in\sits\), \(\adddelta\process(\sit)\) is the gamble on~\(\outcomes\) defined by
\begin{equation*}
\adddelta\process(\sit)(\xval)
\coloneqq\process(\sit x)
-\process(\sit)
\text{ for all~\(\xval\in\outcomes\)}.
\end{equation*}
The \emph{initial value} of a process~\(\process\) is its value~\(\process(\init)\) in the initial situation~\(\init\).
Any real process is completely determined by its initial value and its process difference, because
\begin{equation*}
\process(\xvaltolong[n])=\process(\init)+\sum_{k=0}^{n-1}\adddelta\process(\xvaltolong[k])(\xval[k+1])
\text{ for all \((\xvaltolong[n])\in\sits\)}.
\end{equation*}
We call a real process \emph{non-negative} if it is non-negative in all situations.
Similarly, a \emph{positive} real process is (strictly) positive in all situations.
We call \emph{test process} any non-negative real process~\(\process\) with unit initial value~\(\process(\init)=1\).

We now look at at number of special real processes.
In the imprecise probability tree associated with a \emph{given} forecasting system~\(\frcstsystem\), a \emph{supermartingale}~\(\supermartin\) for~\(\frcstsystem\) is a real process such that
\begin{equation}\label{eq:supermartingale}
\uex_{\frcstsystem(\sit)}(\adddelta\supermartin(\sit))\leq0,
\text{ or equivalently, }
\uex_{\frcstsystem(\sit)}(\supermartin(\sit\cdot))\leq\supermartin(s),
\text{ for all~\(\sit\in\sits\)}.
\end{equation}
In other words, all supermartingale differences have non-positive upper expectation: supermartingales are real processes that Forecaster expects to decrease.
A real process~\(\submartin\) is a \emph{submartingale} for~\(\frcstsystem\) if \(-\submartin\) is a supermartingale, which means that \(\lex_{\frcstsystem(\sit)}(\adddelta\submartin(\sit))\geq0\) for all~\(\sit\in\sits\): all submartingale differences have non-negative lower expectation, so submartingales are real processes that Forecaster expects to increase.
We denote the set of all supermartingales for a given forecasting system~\(\frcstsystem\) by~\(\supermartins[\frcstsystem]\)---whether a real process is a supermartingale depends of course on the forecasts in the situations.
Similarly, the set~\(\submartins[\frcstsystem]\coloneqq-\supermartins[\frcstsystem]\) is the set of all submartingales for~\(\frcstsystem\), and \(\martins\coloneqq\submartins[\frcstsystem]\cap\supermartins[\frcstsystem]\) is the set of all \emph{martingales} for~\(\frcstsystem\)---real processes that are at the same time super- and submartingales, and therefore real processes that Forecaster expects to remain constant.
\par
It ought to be clear from the discussion in Section~\ref{sec:single:forecast} that the supermartingales for~\(\frcstsystem\) are effectively all the possible \emph{capital processes}~\(\supermartin\) for a Sceptic who starts with an initial capital~\(\supermartin(\init)\), and in each possible subsequent situation~\(\sit\) selects a gamble~\(f_\sit=\adddelta\supermartin(\sit)\) that is available there because of Forecaster's specification of the interval forecast~\(I_\sit=\frcstsystem(\sit)\): \(\uex_{I_\sit}(f_\sit)\leq0\).
If Reality chooses the successive outcomes~\(\xvaltolong[n]\), then Sceptic will end up in the corresponding situation~\(\sit=(\xvaltolong[n])\) with a capital
\begin{equation*}
\supermartin(\xvaltolong[n])
=\supermartin(\init)+\sum_{k=0}^{n-1}\adddelta\supermartin(\xvaltolong[k])(\xval[k+1])
=\supermartin(\init)+\sum_{k=0}^{n-1}f_{(\xvaltolong[k])}(\xval[k+1]).
\end{equation*}
\par
We call \emph{test supermartingale} for~\(\frcstsystem\) any test process that is also a supermartingale for~\(\frcstsystem\), or in other words, any non-negative supermartingale~\(\supermartin\) for~\(\frcstsystem\) with initial value~\(\supermartin(\init)=1\).
It corresponds to Sceptic starting with unit capital and never borrowing.
We collect all test supermartingales for~\(\frcstsystem\) in the set~\(\testsupermartins[\frcstsystem]\).
\par
We will also need to pay attention to a particular way of constructing test supermartingales.
We define a \emph{gamble process} as a map~\(\multprocess\) from~\(\sits\) to gambles on~\(\outcomes\).
If these gambles~\(\multprocess(\sit)\) are all \emph{non-negative}, then we call this~\(\multprocess\) a \emph{multiplier process}.
Given such a multiplier process~\(\multprocess\), we can construct the test process~\(\mint\) by the recursion equation
\begin{equation*}
\mint(\init)\coloneqq1
\text{ and }
\mint(\sit x)\coloneqq\mint(\sit)\multprocess(\sit)(x)
\text{ for all~\(\sit\in\sits\) and \(x\in\outcomes\)},
\end{equation*}
or equivalently by letting \(\mint(\xvaltolong[n])\coloneqq\prod_{k=0}^{n-1}\multprocess(\xvaltolong[k])(x_{k+1})\) for all~\(n\in\naturalswithzero\) and \((\xvaltolong[n])\in\sits\).
We call~\(\mint\) the test process \emph{generated by} the multiplier process~\(\multprocess\).

Any multiplier process~\(\multprocess\) that satisfies the additional condition that \(\uex_{\frcstsystem(\sit)}(\multprocess(\sit))\leq1\) for all~\(\sit\in\sits\), is called a \emph{supermartingale multiplier} for the forecasting system~\(\frcstsystem\).
It is easy to see that the test process~\(\mint\) generated by~\(\multprocess\) is then a test supermartingale for~\(\frcstsystem\): it suffices to check that
\begin{multline}\label{eq:supermartingale:multiplier:differences}
\adddelta\mint(\sit)
=\mint(\sit)[\multprocess(\sit)-1]
\text{ and therefore }
\uex_{\frcstsystem(\sit)}(\adddelta\mint(\sit))
=\mint(\sit)\sqgroup[\big]{\uex_{\frcstsystem(\sit)}(\multprocess(\sit))-1}\\
\text{ for all~\(\sit\in\sits\)},
\end{multline}
due to the coherence properties~\ref{axiom:coherence:homogeneity} and~\ref{axiom:coherence:constantadditivity} of upper expectation operators.

\subsection{Upper expectations and null events}
In the context of (imprecise) probability trees, we call \emph{variable} any map defined on the so-called \emph{sample space}---the set~\(\pths\) of all paths.
When this variable is real-valued and bounded, we call it a \emph{gamble} on~\(\pths\), or also a \emph{global gamble}.
An \emph{event}~\(A\) in this context is a subset of~\(\pths\), and its \emph{indicator}~\(\ind{A}\) is the gamble on~\(\pths\) that assumes the value~\(1\) on~\(A\) and \(0\) elsewhere.

The sub- and supermartingales for a forecasting system~\(\frcstsystem\) can be used to associate so-called \emph{global} lower and upper expectation operators---defined on global gambles---with the forecasting system~\(\frcstsystem\):
\begin{align}
\lglobal(g)
\coloneqq&\sup\cset[\big]{\submartin(\init)}
{\submartin\in\submartins[\frcstsystem]
\text{ and }
\limsup\submartin(\pth)\leq g(\pth)
\text{ for all~\(\pth\in\pths\)}}
\label{eq:tree:lower:expectation}\\
\uglobal(g)
\coloneqq&\inf\cset[\big]{\supermartin(\init)}
{\supermartin\in\supermartins[\frcstsystem]
\text{ and }
\liminf\supermartin(\pth)\geq g(\pth)
\text{ for all~\(\pth\in\pths\)}}
\label{eq:tree:upper:expectation}
\end{align}
for all gambles~\(g\) on~\(\pths\).
In these expressions, we have used the notations
\begin{equation*}
\liminf\supermartin(\pth)\coloneqq\liminf_{n\to\infty}\supermartin(\pthto{n})
\text{ and }
\limsup\supermartin(\pth)\coloneqq\limsup_{n\to\infty}\supermartin(\pthto{n})
\text{ for all~\(\pth\in\pths\)}.
\end{equation*}
It is clear that lower and upper expectations are related to each other through the following \emph{conjugacy relationship}:
\begin{equation}\label{eq:conjugacy}
\lglobal(g)=-\uglobal(-g)
\text{ for all gambles~\(g\) on~\(\pths\)}.
\end{equation}
These lower and upper expectations satisfy coherence properties that are completely similar to---direct counterparts of---those in Proposition~\ref{prop:properties:of:expectations}, and we list these properties again below.
Their proofs are by now fairly well-known \cite{shafer2001,shafer2019:book,tjoens2019:global}, but for the sake of completeness, we repeat them in the Appendix.

\begin{proposition}\label{prop:properties:of:global:expectations}
Consider any forecasting system \(\frcstsystem\in\frcstsystems\).
Then for all gambles \(f,g\) on~\(\pths\), all~\(\mu\in\reals\) and all non-negative \(\lambda\in\reals\):
\begin{enumerate}[label=\upshape E{\arabic*}.,ref=\upshape E{\arabic*},leftmargin=*,noitemsep,topsep=0pt]
\item\label{axiom:lower:upper:bounds} \(\inf f\leq\lglobal(f)\leq\uglobal(f)\leq\sup f\);
\item\label{axiom:lower:upper:homogeneity} \(\lglobal(\lambda f)=\lambda\lglobal(f)\) and \(\uglobal(\lambda f)=\lambda\uglobal(f)\);
\item\label{axiom:lower:upper:subadditivity} \(\lglobal(f)+\lglobal(g)\leq\lglobal(f+g)\leq\lglobal(f)+\uglobal(g)\leq\uglobal(f+g)\leq\uglobal(f)+\uglobal(g)\);
\item\label{axiom:lower:upper:constantadditivity} \(\lglobal(f+\mu)=\lglobal(f)+\mu\) and \(\uglobal(f+\mu)=\uglobal(f)+\mu\);
\item\label{axiom:lower:upper:monotonicity} if \(f\leq g\) then \(\lglobal(f)\leq\lglobal(g)\) and \(\uglobal(f)\leq\uglobal(g)\).
\end{enumerate}
\end{proposition}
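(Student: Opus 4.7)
The strategy is to mirror the local coherence Proposition~\ref{prop:properties:of:expectations} at the global level, exploiting two simplifications: first, the conjugacy relation~\eqref{eq:conjugacy} lets us derive each \(\lglobal\)-statement from its \(\uglobal\)-counterpart by substituting \(-f\) for \(f\), so (except for E1 and E3, which mix both operators) it suffices to work with upper expectations; second, applied to process differences situation-by-situation, the local axioms \ref{axiom:coherence:homogeneity}--\ref{axiom:coherence:monotonicity} immediately imply that \(\supermartins\) is a convex cone, closed under addition of constants, and that \(\submartins=-\supermartins\) behaves dually. These closure properties are what permit the required manipulations of the optimising processes in \eqref{eq:tree:lower:expectation}--\eqref{eq:tree:upper:expectation}.

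The easy properties E2, E4, E5 follow directly. For E2, if \(\supermartin\in\supermartins\) satisfies \(\liminf\supermartin\geq f\) and \(\lambda\geq 0\), then \(\lambda\supermartin\in\supermartins\) (by local~\ref{axiom:coherence:homogeneity}) with \(\liminf(\lambda\supermartin)\geq\lambda f\), so \(\uglobal(\lambda f)\leq\lambda\uglobal(f)\); the reverse inequality comes from the same argument after rescaling (trivial when \(\lambda=0\)). For E4, adding a constant to \(\supermartin\) leaves the process differences---and hence the supermartingale property---unchanged while shifting both \(\supermartin(\init)\) and \(\liminf\supermartin\) by \(\mu\). For E5, if \(f\leq g\) then the set of supermartingales admissible for \(\uglobal(g)\) is contained in that for \(\uglobal(f)\).

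For E3 we combine processes. The outer bound \(\uglobal(f+g)\leq\uglobal(f)+\uglobal(g)\) uses supermartingales \(\supermartin_f,\supermartin_g\) nearly witnessing \(\uglobal(f),\uglobal(g)\): by local~\ref{axiom:coherence:subadditivity}, \(\supermartin_f+\supermartin_g\in\supermartins\), and superadditivity of \(\liminf\) yields \(\liminf(\supermartin_f+\supermartin_g)\geq f+g\). The middle inequality \(\lglobal(f+g)\leq\lglobal(f)+\uglobal(g)\) is obtained by the same trick, now mixing a submartingale \(\submartin_{f+g}\) with \(\limsup\submartin_{f+g}\leq f+g\) and a supermartingale \(\supermartin_g\) with \(\liminf\supermartin_g\geq g\): since the sum of a submartingale and a supermartingale is a submartingale (again by~\ref{axiom:coherence:subadditivity}), the process \(\submartin_{f+g}-\supermartin_g\) is a submartingale with \(\limsup\leq f\), so \(\lglobal(f)\geq\submartin_{f+g}(\init)-\supermartin_g(\init)\); supremising in \(\submartin_{f+g}\) and infimising in \(\supermartin_g\) gives the bound. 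The remaining two links in the chain follow by conjugacy from what has already been shown.

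The genuine obstacle is E1, specifically the middle inequality \(\lglobal(f)\leq\uglobal(f)\). The outer bounds are immediate from the constant processes \(\submartin\equiv\inf f\) and \(\supermartin\equiv\sup f\), which by \ref{axiom:coherence:bounds} are respectively a submartingale and a supermartingale. For the middle inequality, given any \(\submartin\) and \(\supermartin\) admissible for \(\lglobal(f)\) and \(\uglobal(f)\) respectively, the process \(\supermartin-\submartin\) lies in \(\supermartins\) (since \(-\submartin\in\supermartins\) and \(\supermartins\) is closed under sums) and satisfies \(\liminf(\supermartin-\submartin)\geq\liminf\supermartin-\limsup\submartin\geq f-f=0\). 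The proof thus reduces to the following non-trivial fact: \emph{every supermartingale \(\processtoo\in\supermartins\) with \(\liminf\processtoo(\pth)\geq 0\) for all \(\pth\in\pths\) satisfies \(\processtoo(\init)\geq 0\).} This is the game-theoretic analogue of Ville's inequality in the imprecise setting, and it is the technical core of the proposition; it is by now a standard result~\cite{shafer2001,shafer2019:book,tjoens2019:global}, whose proof we defer to the Appendix as announced.
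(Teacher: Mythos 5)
Your reduction is sound and you correctly identify the technical heart of the proposition, but you stop precisely where the paper's proof actually does the work. The fact you defer --- that any supermartingale $\processtoo$ for $\frcstsystem$ with $\liminf\processtoo(\pth)\geq 0$ for all $\pth$ has $\processtoo(\init)\geq 0$ --- is not an external result the paper ``announces'' and puts elsewhere; it is established inside the Appendix proof of this very Proposition~\ref{prop:properties:of:global:expectations}, by a short and fully concrete argument that you should supply. Namely, the supermartingale inequality~\eqref{eq:supermartingale} together with the local bound~\ref{axiom:coherence:bounds} gives $\supermartin(\sit)\geq\uex_{\frcstsystem(\sit)}(\supermartin(\sit\,\cdot))\geq\min\{\supermartin(\sit 0),\supermartin(\sit 1)\}$ for every $\sit\in\sits$, so at every node you can step into the child with the smaller value; by dependent choice this produces a path $\altpth$ along which $\supermartin$ never increases, whence $\supermartin(\init)\geq\liminf\supermartin(\altpth)\geq f(\altpth)\geq\inf f$. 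The paper uses this to prove $\inf f\leq\uglobal(f)$ directly (your deferred fact is the case $f=0$), then obtains $\lglobal(f)\leq\uglobal(f)$ as a corollary of~\ref{axiom:lower:upper:subadditivity} applied to $f+(-f)=0$ together with $\uglobal(0)=\lglobal(0)=0$.

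A secondary issue is one of ordering that you glide past. Your chain in~\ref{axiom:lower:upper:subadditivity}, when you ``infimise in $\supermartin_g$'', and your claim that $\lambda=0$ in~\ref{axiom:lower:upper:homogeneity} is ``trivial'', both implicitly require $\uglobal(g)>-\infty$ and $\uglobal(0)\geq 0$ respectively, and neither follows from the constant-process bounds alone; they are exactly what the dependent-choice argument gives you. The paper therefore establishes $\inf f\leq\uglobal(f)\leq\sup f$ (and by conjugacy, real-valuedness of both global functionals) \emph{before} touching~\ref{axiom:lower:upper:homogeneity}--\ref{axiom:lower:upper:monotonicity}, so the later algebraic manipulations are licit. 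Your direct mixing proof of the middle link $\lglobal(f+g)\leq\lglobal(f)+\uglobal(g)$ is a pleasant alternative to the paper's derivation (which rewrites $g=(f+g)-f$ and reuses the outer link), and the rest of your organisation is fine; but placing E1 last, and leaving its core unproved, leaves the earlier steps resting on an unestablished finiteness claim.
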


For extensive discussion about why the expressions~\eqref{eq:tree:lower:expectation} and~\eqref{eq:tree:upper:expectation} are interesting and useful, we refer to Refs.~\cite{cooman2007d,cooman2015:markovergodic,shafer2001,shafer2019:book,tjoens2019:continuity:arxiv,tjoens2019:global,TJOENS202130,TJoens2021Equivalence}.
For our present purposes, it may suffice to mention that for precise forecasts, they lead to models that coincide with the ones found in measure-theoretic probability theory; see Refs.~\cite[Chapter~8]{shafer2001} and~\cite[Chapter~9]{shafer2019:book}, as well as Ref.~\cite{TJOENS202130}.
In particular, when all~\(I_\sit\) equal~\(\set{\nicefrac{1}{2}}\), these models coincide on all measurable global gambles with the usual uniform (Lebesgue) expectations.
More generally, for an imprecise forecast~\(\frcstsystem\in\frcstsystems\), the lower and upper expectation~\(\lglobal\) and~\(\uglobal\) provide tight lower and upper bounds on the measure-theoretic expectation of every precise forecasting system \(\frcstsystem'\) that is compatible with \(\frcstsystem\), in the sense that \(\frcstsystem'\subseteq\frcstsystem\)~\cite{TJoens2021Equivalence}.

For an event~\(A\subseteq\pths\), the corresponding lower and upper probabilities are defined by \(\lglobalprob(A)\coloneqq\lglobal(\ind{A})\) and \(\uglobalprob(A)\coloneqq\uglobal(\ind{A})\).
The following \emph{conjugacy relationship for events} follows at once from the property~\ref{axiom:lower:upper:constantadditivity} for global lower and upper expectations:
\begin{equation*}
\lglobalprob(A)=1-\uglobalprob(A^c)
\text{ for all~\(A\subseteq\pths\)},
\end{equation*}
where \(A^c\coloneqq\pths\setminus A\) is the complement of~\(A\).

We call an event \(A\subseteq\pths\) \emph{null} for a forecasting system~\(\frcstsystem\) if \(\uglobalprob(A)=0\), or equivalently, if \(\lglobalprob(A^c)=1\).
As usual, any property that holds, except perhaps on a null event, is said to hold \emph{almost surely} for the forecasting system~\(\frcstsystem\).
We will then also say that \emph{almost all paths have that property in the imprecise probability tree corresponding to~\(\frcstsystem\)}.

\section{Basic {\compy} results}\label{sec:computability}
We now give a brief survey of a number of basic notions and results from {\compy} theory, and a few derived results, that are relevant to the developments in this paper.
For a much more extensive discussion, we refer, for instance, to Refs.~\cite{pourel1989,li1993}.

\subsection{Basic definitions}
A \emph{recursive} map~\(\psi\colon\naturalswithzero\to\naturalswithzero\) is a map that can be computed by a Turing machine.
By the Church--Turing (hypo)thesis, this is equivalent to the existence of an algorithm that, upon input of a number \(n\in\naturalswithzero\), outputs the number \(\psi(n)\in\naturalswithzero\).
All notions of {\compy} that we will need are based on this notion, and we will use the equivalent condition consistently.
It is clear that in this definition, we can replace any of the \(\naturalswithzero\) with any other countable set that is linked with~\(\naturalswithzero\) through a recursive bijection whose inverse is also recursive.

We start with the definition of a {\comp} real number.
We call a sequence of rational numbers~\(r_n\) \emph{recursive} if there are three recursive maps \(a,b,\varsigma\) from~\(\naturalswithzero\) to~\(\naturalswithzero\) such that
\begin{equation*}
b(n)>0
\text{ and }
r_n=(-1)^{\varsigma(n)}\frac{a(n)}{b(n)}
\text{ for all~\(n\in\naturalswithzero\)},
\end{equation*}
and we say that it \emph{converges effectively} to a real number~\(x\) if there is some recursive map~\(e\colon\naturalswithzero\to\naturalswithzero\) such that
\begin{equation*}
n\geq e(N)\then\abs{r_n-x}\leq2^{-N}
\text{ for all~\(n,N\in\naturalswithzero\)}.
\end{equation*}
A real number is then called \emph{{\comp}} if there is some recursive sequence of rational numbers that converges effectively to it.
Of course, every rational number is a {\comp} real.

We also need a notion of {\comp} real processes, or in other words, {\comp} real-valued maps \(\process\colon\sits\to\reals\) defined on the set~\(\sits\) of all situations.
Because there is an obvious recursive bijection between \(\naturalswithzero\) and \(\sits\), whose inverse is also recursive, we can identify real processes and real sequences, and simply import, {\itshape mutatis mutandis}, the definitions for {\comp} real sequences common in the literature \cite[Chapter~0, Definition~5]{pourel1989}.
We call a net of rational numbers~\(r_{\sit,n}\) \emph{recursive} if there are three recursive maps \(a,b,\varsigma\) from~\(\sits\times\naturalswithzero\) to~\(\naturalswithzero\) such that
\begin{equation*}
b(\sit,n)>0
\text{ and }
r_{\sit,n}=(-1)^{\varsigma(\sit,n)}\frac{a(\sit,n)}{b(\sit,n)}
\text{ for all~\(\sit\in\sits\) and \(n\in\naturalswithzero\)}.
\end{equation*}
We call a real process~\(\process\colon\sits\to\reals\) \emph{{\comp}} if there is a recursive net of rational numbers~\(r_{\sit,n}\) and a recursive map~\(e\colon\sits\times\naturalswithzero\to\naturalswithzero\) such that
\begin{equation*}
n\geq e(\sit,N)\then\abs{r_{\sit,n}-\process(\sit)}\leq2^{-N}
\text{ for all~\(\sit\in\sits\) and \(n,N\in\naturalswithzero\)}.
\end{equation*}
Again, there is no problem with the notions `recursive net of rational numbers' or `recursive map' in this definition, because we can identify~\(\sits\times\naturalswithzero\) with~\(\naturalswithzero\) through a recursive bijection whose inverse is also recursive.

Obviously, it follows from this definition that in particular \(\process(\altsit)\) is a {\comp} real number for any~\(\altsit\in\sits\): fix \(\sit=\altsit\) and consider the sequence~\(r_{\altsit,n}\), which converges effectively to the real number~\(\process(\altsit)\) as \(n\to\infty\).
Also, a constant real process is {\comp} if and only if its constant real value is.

We also need to mention {\scomp} real processes; see for instance \cite{schnorr1971,li1993} for more details.
A real process~\(\process\) is \emph{\lscomp} if it can be approximated from below by a recursive net of rational numbers, meaning that there is some recursive net of rational numbers~\(r_{\sit,n}\) such that
\begin{enumerate}[label=\upshape(\roman*),leftmargin=*,noitemsep,topsep=0pt]
\item \(r_{\sit,n+1}\geq r_{\sit,n}\) for all~\(\sit\in\sits\) and \(n\in\naturalswithzero\);
\item \(\process(\sit)=\lim_{n\to\infty}r_{\sit,n}\) for all~\(\sit\in\sits\).
\end{enumerate}
We say that \(\process\) is \emph{\uscomp} if \(-\process\) is {\lscomp}.
A real number~\(x\) is {\lscomp} if the real process with constant value~\(x\) is, or equivalently, if there is some recursive sequence of rational numbers~\(r_n\) such that \(r_n\nearrow x\).

In the (semi){\compy} definitions above, as well as in the results that follow in this section, we can replace the countable set~\(\sits\) with any countable set that can be identified with~\(\sits\) through a recursive bijection whose inverse is also recursive.
Further on in this paper, we will for instance have occasion to replace \(\sits\) with~\(\naturals\), \(\naturalswithzero\) and \(\sits\times\outcomes\).

\subsection{Basic results from the literature}
We recall the following standard results; see for instance Ref.~\cite[Chapter~0]{pourel1989}.
The following propositions apply {\itshape mutatis mutandis} also to {\comp} real numbers and {\comp} real sequences in lieu of {\comp} real processes.
Even though they're fairly standard, we give their proofs in the Appendix for the sake of completeness, and to give the reader an idea of why they work.

The condition for {\compy} of a real process can be simplified as follows.

\begin{proposition}[{\protect\cite[Chapter~0, Definition~5a]{pourel1989}}]\label{prop:computable:simplified}
A real process~\(\process\) is {\comp} if and only if there is some recursive net of rational numbers~\(r_{\sit,n}\) such that \(\abs{r_{\sit,n}-\process(\sit)}\leq2^{-n}\) for all~\(\sit\in\sits\) and \(n\in\naturalswithzero\).
\end{proposition}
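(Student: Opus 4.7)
The plan is to establish the two directions separately, with the backward implication being essentially immediate and the forward implication reducing to a reindexing argument.

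For the `if' direction, suppose there is a recursive net of rationals~\(r_{\sit,n}\) with \(\abs{r_{\sit,n}-\process(\sit)}\leq2^{-n}\) for all \(\sit\in\sits\) and \(n\in\naturalswithzero\). Then the constant map \(e(\sit,N)\coloneqq N\) is trivially recursive, and for any \(n\geq e(\sit,N)=N\) we have \(\abs{r_{\sit,n}-\process(\sit)}\leq2^{-n}\leq2^{-N}\), so \(\process\) satisfies the original definition of computability.

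For the `only if' direction, assume \(\process\) is {\comp}, witnessed by a recursive net of rationals~\(q_{\sit,n}\) and a recursive modulus \(e\colon\sits\times\naturalswithzero\to\naturalswithzero\) with \(n\geq e(\sit,N)\then\abs{q_{\sit,n}-\process(\sit)}\leq2^{-N}\). Define the reindexed net by \(r_{\sit,n}\coloneqq q_{\sit,e(\sit,n)}\). Because both \(q\) and \(e\) are computed by algorithms on the countable product \(\sits\times\naturalswithzero\) (which we may identify with \(\naturalswithzero\) via a recursive bijection with recursive inverse), their composition is recursive, so \(r_{\sit,n}\) is a recursive net of rationals. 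Applying the original convergence condition with \(N\coloneqq n\) at the index \(e(\sit,n)\geq e(\sit,n)\) yields \(\abs{r_{\sit,n}-\process(\sit)}=\abs{q_{\sit,e(\sit,n)}-\process(\sit)}\leq2^{-n}\), as required.

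The only subtlety is the observation that a composition of recursive maps on \(\sits\times\naturalswithzero\) remains recursive, which is routine given the recursive identification with \(\naturalswithzero\) that the excerpt has already invoked. No further obstacle is expected; the result is fundamentally a bookkeeping lemma that lets one absorb the modulus of convergence into the indexing of the approximating net.
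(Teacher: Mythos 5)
Your proof is correct and follows essentially the same approach as the paper: the `if' direction takes the modulus \(e(\sit,N)\coloneqq N\), and the `only if' direction reindexes the approximating net by composing with the modulus, \(r_{\sit,n}\coloneqq q_{\sit,e(\sit,n)}\). Nothing to add.
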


If \(\process\) and \(\processtoo\) are {\comp} real processes, then so are \(-\process\), \(\process+\processtoo\), \(\process\processtoo\), \(\process/\processtoo\) (provided that \(\processtoo(\sit)\neq0\) for all~\(\sit\in\sits\)), \(\max\set{\process,\processtoo}\), \(\min\set{\process,\processtoo}\), \(\exp(\process)\), \(\ln\process\) (provided that \(\process(\sit)>0\) for all~\(\sit\in\sits\)), and \(F^{\frac{1}{m}}\) for all~\(m\in\naturals\) (provided that \(\process(\sit)\geq0\) for all~\(\sit\in\sits\)); see for instance Ref.~\cite[Chapter~0, Section~2]{pourel1989}.

{\Compy} can be related to lower and upper {\scompy}.

\begin{proposition}\label{prop:computable:upper:lower}
A real process~\(\process\) is {\comp} if and only if it is both lower and upper {\scomp}.
\end{proposition}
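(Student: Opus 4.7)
The plan is to prove the two implications separately. For the forward direction, suppose \(\process\) is {\comp}, so by Proposition~\ref{prop:computable:simplified} there is a recursive net of rationals~\(r_{\sit,n}\) with \(\abs{r_{\sit,n}-\process(\sit)}\leq2^{-n}\) for all~\(\sit\in\sits\) and \(n\in\naturalswithzero\). To obtain a monotone approximation from below, I would let \(\ell_{\sit,n}\coloneqq\max_{k\leq n}\group{r_{\sit,k}-2^{-k}}\). This yields a recursive net of rationals that is non-decreasing in~\(n\), bounded above by \(\process(\sit)\) (since \(r_{\sit,k}-2^{-k}\leq\process(\sit)\) for every~\(k\)), and satisfies \(\ell_{\sit,n}\geq r_{\sit,n}-2^{-n}\geq\process(\sit)-2\cdot2^{-n}\), so it increases to~\(\process(\sit)\). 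Hence \(\process\) is {\lscomp}, and the analogous construction with running minima of \(r_{\sit,k}+2^{-k}\) shows it is {\uscomp}.

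For the converse, assume \(\process\) is both {\lscomp} and {\uscomp}, and let \(\ell_{\sit,n}\nearrow\process(\sit)\) and \(u_{\sit,n}\searrow\process(\sit)\) be the recursive nets witnessing these two properties. Since both sequences converge to the same limit, the non-increasing difference \(u_{\sit,n}-\ell_{\sit,n}\) tends to zero as \(n\to\infty\), for every fixed~\(\sit\). The algorithm I have in mind takes a pair \((\sit,N)\in\sits\times\naturalswithzero\) as input, enumerates \(n=0,1,2,\dots\), computes at each step the rationals \(\ell_{\sit,n}\) and \(u_{\sit,n}\), and halts at the first~\(n\) for which \(u_{\sit,n}-\ell_{\sit,n}\leq2^{-N}\), outputting \(r_{\sit,N}\coloneqq\ell_{\sit,n}\).

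This search always terminates thanks to the observation above; the comparison of rationals is decidable; and the resulting map \((\sit,N)\mapsto r_{\sit,N}\) is therefore a recursive net of rationals. Since by construction \(\ell_{\sit,n}\leq\process(\sit)\leq u_{\sit,n}\), we obtain \(\abs{r_{\sit,N}-\process(\sit)}\leq2^{-N}\), and an appeal to Proposition~\ref{prop:computable:simplified} completes the argument. I do not anticipate a serious obstacle: the only point that requires care is the justification that the halting search above is genuinely recursive as a map from \(\sits\times\naturalswithzero\) to the rationals, which is a routine consequence of the recursiveness of the nets \(\ell_{\sit,n}\) and \(u_{\sit,n}\) together with the decidability of comparisons between rationals.
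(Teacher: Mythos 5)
Your proof is correct and follows essentially the same two-step approach as the paper's: both implications are proved separately, and the converse direction hinges on the same halting search for the first index at which the interval \([\ell_{\sit,n},u_{\sit,n}]\) has width at most \(2^{-N}\), which is exactly how the paper justifies that its modulus function \(e\) is recursive. The only differences are cosmetic: in the computable-to-semicomputable direction you enforce monotonicity via running maxima/minima \(\max_{k\leq n}(r_{\sit,k}-2^{-k})\), whereas the paper re-indexes and subtracts a geometrically decaying constant, \(r'_{\sit,n}\coloneqq r_{\sit,e(\sit,n+2)}-3\cdot2^{-(n+2)}\); and in the converse direction you output the left endpoint of the bracketing interval where the paper outputs the midpoint.
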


The set of all (lower or upper semi){\comp} processes is countable; see for instance Ref.~\cite[Lemma 13]{vovk2010:randomness}.

\subsection{New material for the present context}
We conclude this section with a number of new definitions and results that are specifically tailored to the discussion further on.

The following definitions should be obvious.

A gamble~\(f\) on~\(\outcomes\) is called \emph{{\comp}} if both its values \(f(0)\) and \(f(1)\) are {\comp} real numbers.

An interval forecast~\(I=\pinterval\in\imprecisefrcsts\) is called \emph{{\comp}} if and only if both its lower bound~\(\lp\) and upper bound~\(\up\) are {\comp} real numbers.

A forecasting system~\(\frcstsystem\) is called \emph{{\comp}} if the associated real processes~\(\lfrcstsystem\) and \(\ufrcstsystem\) are {\comp}.

Finally, a process difference \(\adddelta\process\) is called (lower/upper semi)\emph{{\comp}} if the real processes~\(\adddelta\process(\cdot)(0)\) and \(\adddelta\process(\cdot)(1)\) are (lower/upper semi)\emph{{\comp}}; and similarly for a multiplier process~\(\multprocess\).\footnote{These definitions can also be seen as special cases of a more general (lower/upper semi)\emph{{\compy}} condition, where the set~\(\sits\) is replaced by the set~\(\sits\times\outcomes\).}

We also list a number of useful propositions that are less immediate, and perhaps require explicit proofs.
We have gathered these proofs in the Appendix.

\begin{proposition}\label{prop:IcompIffGammaComp}
For any~\(I=\pinterval\in\imprecisefrcsts\), the so-called stationary forecasting system~\(\constantfrcstsystem[I]\), defined by \(\constantfrcstsystem[I](\sit)\coloneqq I\) for all~\(\sit\in\sits\), is {\comp} if and only if the interval \(I\) is {\comp}, and therefore if and only if~\(\lp\) and~\(\up\) are.
\end{proposition}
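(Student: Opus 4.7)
The plan is essentially to unroll definitions, since the proposition is little more than a chain of equivalences already assembled in the excerpt. First I would observe that, by the definition of $\constantfrcstsystem[I]$, the associated real processes $\lconstantfrcstsystem[I]$ and $\uconstantfrcstsystem[I]$ are constant, taking values $\lp$ and $\up$ respectively in every situation $\sit\in\sits$. So the computability of $\constantfrcstsystem[I]$, as a forecasting system, is by definition the conjunction of the computability of the two constant real processes $\lconstantfrcstsystem[I]$ and $\uconstantfrcstsystem[I]$.

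Next I would invoke the remark made just after Proposition~\ref{prop:computable:simplified} (and already noted in the definition of computable real processes): a constant real process is computable if and only if its constant value is a computable real number. Applying this to $\lconstantfrcstsystem[I]\equiv\lp$ and $\uconstantfrcstsystem[I]\equiv\up$ gives that $\constantfrcstsystem[I]$ is computable if and only if both $\lp$ and $\up$ are computable real numbers. By the definition of a computable interval forecast, this last condition is in turn equivalent to the computability of $I=\pinterval$, which closes the chain.

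For completeness, in case the remark about constant processes is felt to require more justification, one direction is immediate: a recursive constant sequence $r_n$ with $\abs{r_n-\lp}\leq 2^{-n}$ gives the recursive net $r_{\sit,n}\coloneqq r_n$ witnessing computability of the constant process. Conversely, a witnessing net $r_{\sit,n}$ for the constant process, restricted to any single situation (say $\sit=\init$), produces a recursive sequence of rationals $r_{\init,n}$ with $\abs{r_{\init,n}-\lp}\leq 2^{-n}$, witnessing computability of $\lp$; and similarly for $\up$. There is no real obstacle here: the only mild care required is to check that the recursiveness conditions on maps $\sits\times\naturalswithzero\to\naturalswithzero$ transfer correctly to recursiveness of maps $\naturalswithzero\to\naturalswithzero$ and back, which follows immediately from the recursive bijection between $\sits$ and $\naturalswithzero$ mentioned in the paragraph on real processes.
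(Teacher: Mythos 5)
Your argument is exactly the one the paper uses: the paper's proof is the one-line observation that the constant processes $\lconstantfrcstsystem[I]\equiv\lp$ and $\uconstantfrcstsystem[I]\equiv\up$ are computable if and only if their constant values are, which together with the definition of computability of a forecasting system gives the result. Your additional paragraph merely fills in the justification for this observation (already stated in the text just below Proposition~\ref{prop:computable:simplified}), and does so correctly.
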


\begin{proposition}\label{prop:computable:from:delta}
Consider any real process~\(\process\), and its process difference \(\adddelta\process\).
Then the following statements hold:
\begin{enumerate}[label=\upshape(\roman*),leftmargin=*,noitemsep,topsep=0pt]
\item if \(\process(\init)\) and \(\adddelta\process\) are {\lscomp} then so is \(\process\);
\item if \(\process(\init)\) and \(\adddelta\process\) are {\uscomp} then so is \(\process\);
\item \(\process\) is {\comp} if and only if \(\process(\init)\) and \(\adddelta\process\) are.
\end{enumerate}
\end{proposition}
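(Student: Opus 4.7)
The plan is to exploit the basic identity
\[
\process(\xvaltolong[n])=\process(\init)+\sum_{k=0}^{n-1}\adddelta\process(\xvaltolong[k])(\xval[k+1])
\]
already noted in Section~\ref{sec:trees:and:processes}, which expresses \(\process(\sit)\) as a finite sum of \(\process(\init)\) and values of \(\adddelta\process\), the number of summands \(\dist{\sit}\) being a recursive function of the situation~\(\sit\).

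For~(i), I would start from recursive witnesses of lower {\scompy}: a recursive sequence \(q^{\init}_m\nearrow\process(\init)\) and, for each \(x\in\outcomes\), a recursive net \(q^{x}_{\sit,m}\nearrow\adddelta\process(\sit)(x)\) (available because \(\adddelta\process(\cdot)(x)\) is lower {\scomp} as a real process on~\(\sits\) for each fixed \(x\in\outcomes\)). Then for every \(\sit=(\xvaltolong[n])\in\sits\) and every \(m\in\naturalswithzero\), I would set
\[
t_{\sit,m}\coloneqq q^{\init}_m+\sum_{k=0}^{n-1}q^{\xval[k+1]}_{(\xvaltolong[k]),m}.
\]
This is manifestly a recursive net of rationals; it is non-decreasing in~\(m\) because it is a finite sum of non-decreasing rational sequences; and by the identity above it converges to \(\process(\sit)\) term-by-term. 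That gives lower {\scompy} of~\(\process\). Part~(ii) then follows by applying (i) to~\(-\process\), whose initial value and process difference are upper {\scomp} exactly when those of~\(\process\) are.

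For~(iii), the forward implication is straightforward: \(\process(\init)\) is {\comp} as a specialisation of the {\comp} real process~\(\process\), and for each fixed \(x\in\outcomes\) the process \(\sit\mapsto\adddelta\process(\sit)(x)=\process(\sit x)-\process(\sit)\) is {\comp} on~\(\sits\) because concatenation with~\(x\) is a recursive map \(\sits\to\sits\) and differences of {\comp} real processes are {\comp}. The converse combines parts~(i) and~(ii) with Proposition~\ref{prop:computable:upper:lower}. The main thing to be careful about in~(i) is the uniformity: the witnessing rationals for \(\adddelta\process\) must be recursive in both~\(\sit\) and the approximation index~\(m\) simultaneously, so that summing \(\dist{\sit}\)-many of them still yields a recursive net---but this is exactly what the definition of lower {\scompy} of a real process indexed by~\(\sit\) provides, together with the existence of a recursive bijection between \(\sits\times\naturalswithzero\) and \(\naturalswithzero\).
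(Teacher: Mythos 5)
Your proof is correct, but it organises the argument differently from the paper, and the difference is worth noting. The paper proves only part~(iii), directly, by invoking Proposition~\ref{prop:computable:simplified} to get rational nets \(r_{\init,n}\) and \(r^{x}_{\sit,n}\) with explicit \(2^{-n}\) error bounds, summing them, and then confronting the fact that the accumulated error is \((\dist{\sit}+1)2^{-n}\) rather than \(2^{-n}\); this forces it to build an explicit recursive modulus \(e(\sit,N)\coloneqq N+\dist{\sit}\) to recover computability. Parts~(i) and~(ii) are then dismissed as ``similar but simpler''. You instead prove~(i) first, and in the semicomputable setting the whole rate-control issue evaporates: a finite sum of non-decreasing rational sequences converging to limits is automatically a non-decreasing rational sequence converging to the sum, with no modulus to construct. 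Part~(ii) follows by negation, and you recover the nontrivial direction of~(iii) by routing through Proposition~\ref{prop:computable:upper:lower} (computable = lower semicomputable and upper semicomputable). Your route is cleaner in that it isolates and eliminates the only real technical point---error accumulation over \(\dist{\sit}\) summands---by working in the monotone framework where it doesn't arise; the cost is that~(iii) is no longer self-contained but depends on Proposition~\ref{prop:computable:upper:lower}. The uniformity concern you flag at the end is exactly the right thing to be careful about, and it is indeed discharged by the paper's definition of (lower semi)computability of a process difference, which asks for nets recursive jointly in the situation and the approximation index.
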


\begin{proposition}\label{prop:computable:from:multiplier}
Consider any multiplier process~\(\multprocess\), then the following implications hold:
\begin{enumerate}[label=\upshape(\roman*),leftmargin=*,noitemsep,topsep=0pt]
\item if \(\multprocess\) is {\lscomp}, then so is \(\mint\);
\item if \(\multprocess\) is {\uscomp}, then so is \(\mint\);
\item if \(\multprocess\) is {\comp}, then so are \(\mint\) and \(\adddelta\mint\).
\end{enumerate}
\end{proposition}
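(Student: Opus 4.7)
The plan is to exploit the explicit product formula \(\mint(x_1,\dots,x_n)=\prod_{k=0}^{n-1}\multprocess(x_{1:k})(x_{k+1})\) already given in Section~\ref{sec:trees:and:processes}, and simply propagate the (semi){\compy} of \(\multprocess\) through finite products of non-negative factors. The heart of the argument is therefore a recursive construction of rational approximations to \(\mint(\sit)\) from rational approximations to \(\multprocess(\cdot)(0)\) and \(\multprocess(\cdot)(1)\); once this is set up, (i) and (ii) fall out by the same scheme and (iii) is a quick combination with Proposition~\ref{prop:computable:upper:lower}.

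For (i), suppose \(\multprocess\) is {\lscomp}. By definition, both real processes \(\multprocess(\cdot)(0)\) and \(\multprocess(\cdot)(1)\) are {\lscomp}, so there are recursive nets of rationals \(r_{\sit,x,n}\) with \(r_{\sit,x,n+1}\geq r_{\sit,x,n}\) and \(r_{\sit,x,n}\nearrow\multprocess(\sit)(x)\) as \(n\to\infty\), for every \(\sit\in\sits\) and \(x\in\outcomes\)---I would stitch the two nets together through a recursive bijection between \(\sits\times\outcomes\times\naturalswithzero\) and~\(\naturalswithzero\). Because the limits \(\multprocess(\sit)(x)\) are non-negative but the approximants need not be, I would first truncate by setting \(\tilde r_{\sit,x,n}\coloneqq\max\set{r_{\sit,x,n},0}\); this keeps the net recursive, non-decreasing in~\(n\) and convergent to \(\multprocess(\sit)(x)\). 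Then define
\[
s_{\sit,n}\coloneqq\prod_{k=0}^{\dist{\sit}-1}\tilde r_{\sit_{1:k},\,\sit_{k+1},\,n}
\text{ for all \(\sit\in\sits\) and \(n\in\naturalswithzero\).}
\]
This is a recursive net of rationals (an algorithm reads off \(\sit\) and multiplies \(\dist{\sit}\) recursively computable rationals), it is non-decreasing in~\(n\) because it is a product of non-negative non-decreasing factors, and it converges to \(\mint(\sit)\) because each of the finitely many factors does. Hence \(\mint\) is {\lscomp}.

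For (ii), the argument is the mirror image, and in fact marginally simpler: with \(r_{\sit,x,n}\) a recursive non-increasing net of rationals converging down to \(\multprocess(\sit)(x)\), each approximant automatically satisfies \(r_{\sit,x,n}\geq\multprocess(\sit)(x)\geq0\), so no truncation is needed. The analogous product net \(s_{\sit,n}\coloneqq\prod_{k=0}^{\dist{\sit}-1} r_{\sit_{1:k},\,\sit_{k+1},\,n}\) is then a recursive non-increasing net of non-negative rationals converging to \(\mint(\sit)\), so \(\mint\) is {\uscomp}. Finally, (iii) is immediate: if \(\multprocess\) is {\comp}, Proposition~\ref{prop:computable:upper:lower} makes it both lower and upper {\scomp}; by (i) and (ii) the same holds for \(\mint\), and a second application of Proposition~\ref{prop:computable:upper:lower} gives that \(\mint\) is {\comp}. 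For the process difference, Equation~\eqref{eq:supermartingale:multiplier:differences} gives \(\adddelta\mint(\sit)(x)=\mint(\sit)\sqgroup[\big]{\multprocess(\sit)(x)-1}\), which is a product and difference of {\comp} real processes, hence {\comp} by the closure properties recalled right after Proposition~\ref{prop:computable:simplified}.

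The only real obstacle, and it is a mild one, is bookkeeping: verifying that \((\sit,n)\mapsto s_{\sit,n}\) is genuinely recursive when \(\sit\) has variable length, and that the truncation step in (i) really preserves recursiveness and monotonicity simultaneously. Both points are routine consequences of the standing identification of \(\sits\times\naturalswithzero\) with \(\naturalswithzero\) via a recursive bijection, together with the fact that \(\max\) of two rationals is computable in the obvious way. No subtler ingredient is needed.
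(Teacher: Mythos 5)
Your proof is correct and follows essentially the same route as the paper: take recursive rational nets converging monotonically to \(\multprocess(\cdot)(x)\) for \(x\in\outcomes\), truncate with \(\max\set{\cdot,0}\) in the lower case, and form the finite product to approximate \(\mint(\sit)\); combine with Proposition~\ref{prop:computable:upper:lower} for (iii). The only cosmetic difference is that for \(\adddelta\mint\) you invoke the explicit identity from Equation~\eqref{eq:supermartingale:multiplier:differences} and closure properties, whereas the paper points to Proposition~\ref{prop:computable:from:delta}; both reduce to the same facts.
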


\begin{proposition}\label{prop:computablemultiplier:from:process}
Consider a multiplier process~\(\multprocess\), and the associated real process~\(\mint\).
If \(\mint\) is positive and {\comp}, then so is \(\multprocess\).
As a consequence, any positive {\comp} real process~\(\process\) has a positive {\comp} multiplier process~\(\multprocess\), such that \(\process=\process(\init)\mint\).
\end{proposition}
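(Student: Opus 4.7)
The plan rests on the observation that whenever $\mint(\sit)>0$, the recursion $\mint(\sit x)=\mint(\sit)\multprocess(\sit)(x)$ can simply be inverted to give
\[
\multprocess(\sit)(x)=\frac{\mint(\sit x)}{\mint(\sit)}
\quad\text{for all }\sit\in\sits,\ x\in\outcomes.
\]
So for the first statement I would argue as follows. Regard $\multprocess$ as a real process on the countable index set $\sits\times\outcomes$, and use the recursive bijection of this set with $\sits$ (or $\naturalswithzero$) noted in Section~\ref{sec:computability}. Both $(\sit,x)\mapsto\mint(\sit x)$ and $(\sit,x)\mapsto\mint(\sit)$ are then \comp{} real processes on $\sits\times\outcomes$, because they are obtained from the \comp{} real process $\mint$ by composition with recursive maps (concatenation and projection). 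The second is positive by assumption, so the standard closure of \compy{} under quotients with a non-vanishing denominator (the closure result cited just before Proposition~\ref{prop:computable:upper:lower}) yields that $\multprocess$ is \comp{}. Positivity of $\multprocess$ follows directly from the positivity of $\mint$ and the same ratio expression, so in particular $\multprocess(\sit)$ is a non-negative gamble on $\outcomes$, confirming that $\multprocess$ is indeed a multiplier process.

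For the consequence, start from any positive \comp{} real process $\process$. Since $\process(\init)$ is then a positive \comp{} real number, the real process $\mint\coloneqq\process/\process(\init)$ is positive and \comp{}, with $\mint(\init)=1$. Define the gamble process $\multprocess$ by $\multprocess(\sit)(x)\coloneqq\process(\sit x)/\process(\sit)$ for all $\sit\in\sits$ and $x\in\outcomes$; by positivity of $\process$ this is a well-defined positive gamble process, hence a multiplier process, and the same argument as above shows it is \comp{}. A direct check gives $\mint(\sit x)=\process(\sit x)/\process(\init)=[\process(\sit)/\process(\init)]\multprocess(\sit)(x)=\mint(\sit)\multprocess(\sit)(x)$, and $\mint(\init)=1$, so by the recursive definition of the test process generated by $\multprocess$, this $\mint$ is exactly $\mint[\multprocess]$. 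Finally, by construction $\process(\init)\mint=\process$, as required.

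The only step that might look like an obstacle is the computability of the ratio of two \comp{} real processes on $\sits\times\outcomes$, but this reduces at once to the well-known closure of \compy{} under division with a non-vanishing denominator, reindexed via the recursive bijection between $\sits\times\outcomes$ and $\naturalswithzero$; everything else is bookkeeping with the recursion that defines $\mint$ from $\multprocess$.
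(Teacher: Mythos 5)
Your proof is correct and takes essentially the same approach as the paper's: invert the recursion to get \(\multprocess(\sit)(x)=\mint(\sit x)/\mint(\sit)\), observe that numerator and denominator are {\comp} and that the denominator is positive, and conclude by closure of {\compy} under division; the second statement then follows by the same ratio construction applied to \(\process\). The only minor difference is that you establish {\compy} of \((\sit,x)\mapsto\mint(\sit x)\) directly by precomposition with the recursive concatenation map on \(\sits\times\outcomes\), whereas the paper instead writes \(\multprocess(\sit)(x)=1+\adddelta\mint(\sit)(x)/\mint(\sit)\) and routes through its already-proved Proposition~\ref{prop:computable:from:delta} for the {\compy} of \(\adddelta\mint\); both are routine and equivalent.
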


\section{Random sequences in an imprecise probability tree}\label{sec:randomness}
With all the scaffolding now in place, we're finally ready to associate various notions of randomness with a forecasting system~\(\frcstsystem\)---or in other words, with an imprecise probability tree.
We want to be able to introduce and study several versions of randomness, each connected with a particular class of test supermartingales---capital processes for Sceptic when she starts with unit capital and never borrows.

\subsection{Allowable test processes and test supermartingales}
In what follows, we will denote by~\(\allowables\) any \emph{countable} set of test processes that includes the countable set of all {\comp} positive test processes, which we denote by~\(\poscallowables\).
Examples of such sets~\(\allowables\) are:
\begin{center}
\begin{tabular}{r|l}
\(\poscallowables\) & all {\comp} positive test processes\\[.5ex]
\(\callowables\) & all {\comp} test processes\\[.5ex]
\(\mlallowables\) & all {\lscomp} test processes\\[.5ex]
\(\multmlallowables\) & all test processes generated by {\lscomp} multiplier processes.\\
\end{tabular}
\end{center}
We will call such test processes in~\(\allowables\) \emph{allowable}.
Observe that,
\begin{equation}\label{eq:allowables:order}
\poscallowables\subseteq\callowables\text{ and }\poscallowables\subseteq\multmlallowables\subseteq\mlallowables,
\end{equation}
where the second chain of inclusions follows from Propositions~\ref{prop:computable:upper:lower}, \ref{prop:computable:from:multiplier} and~\ref{prop:computablemultiplier:from:process}.

The test supermartingales for~\(\frcstsystem\) that belong to this set \(\allowables\) will also be called \emph{allowable test supermartingales}, and collected in the set \(\allowabletests[\frcstsystem]\coloneqq\allowables\cap\testsupermartins[\frcstsystem]\).
In particular,
\begin{center}
\begin{tabular}{r|l}
\(\poscallowabletests[\frcstsystem]\coloneqq\poscallowables\cap\testsupermartins[\frcstsystem]\) & all {\comp} positive test supermartingales for~\(\frcstsystem\)\\[.5ex]
\(\callowabletests[\frcstsystem]\coloneqq\callowables\cap\testsupermartins[\frcstsystem]\) & all {\comp} test supermartingales for~\(\frcstsystem\)\\[.5ex]
\(\mlallowabletests[\frcstsystem]\coloneqq\mlallowables\cap\testsupermartins[\frcstsystem]\) & all {\lscomp} test supermartingales for~\(\frcstsystem\)\\[.5ex]
\(\multmlallowabletests[\frcstsystem]\coloneqq\multmlallowables\cap\testsupermartins[\frcstsystem]\) & all ({\lscomp}) test supermartingales for~\(\frcstsystem\)\\
& generated by {\lscomp} supermartingale multipliers.\\
\end{tabular}
\end{center}

\subsection{Randomness}
In the rest of this section (and paper), and unless explicitly stated to the contrary, \(\allowables\) is an arbitrary but fixed set of allowable test processes.
We remind the reader once again that all such sets~\(\allowables\) and the corresponding sets~\(\allowabletests[\frcstsystem]\) are countable.

\begin{definition}[Randomness]\label{def:randomness}
Consider any forecasting system~\(\frcstsystem\colon\sits\to\imprecisefrcsts\) and path~\(\pth\in\pths\).
We call \(\pth\) \emph{\(\allowables\)-random for~\(\frcstsystem\)} if all (allowable) test supermartingales~\(\test\) in~\(\allowabletests[\frcstsystem]\) remain bounded above on~\(\pth\), meaning that there is some~\(B_\test\in\reals\) such that \(\test(\pthto{n})\leq B_\test\) for all~\(n\in\naturals\), or equivalently, that \(\sup_{n\in\naturals}\test(\pthto{n})<\infty\).
We then also say that the forecasting system~\(\frcstsystem\) \emph{makes \(\pth\) \(\allowables\)-random}.
\end{definition}
\noindent
In other words, \(\allowables\)-randomness of a path means that there is no allowable strategy that starts with unit capital and avoids borrowing, and allows Sceptic to increase her capital without bounds by exploiting the bets on the outcomes along the path that are made available to her by Forecaster's specification of the forecasting system~\(\frcstsystem\).

When the forecasting system~\(\frcstsystem\) is precise and {\comp}, and \(\allowables\) is the set~\(\mlallowables\) of all {\lscomp} test processes, our definition reduces to that of \emph{{\ML} randomness} on the Schnorr--Levin (martingale-theoretic) account \cite{vovk2010:randomness,ambosspies2000,bienvenu2009:randomness,schnorr1971,schnorr1973}, because \(\mlallowabletests[\frcstsystem]\) is the set of all {\lscomp} test supermartingales for~\(\frcstsystem\).
We will therefore continue to call \(\mlallowables\)-randomness \emph{{\ML} randomness}, also when the forecasting system~\(\frcstsystem\) is no longer precise or {\comp}.

Similarly, when the forecasting system~\(\frcstsystem\) is precise and {\comp}, and \(\allowables\) is the set~\(\callowables\) of all {\comp} test processes, our definition reduces to that of \emph{{\comp} randomness} \cite{ambosspies2000,bienvenu2009:randomness}, because \(\callowabletests[\frcstsystem]\) is the set of all {\comp} test supermartingales for~\(\frcstsystem\).
We will therefore continue to call \(\callowables\)-randomness \emph{{\comp} randomness}, also when the forecasting system~\(\frcstsystem\) is no longer precise or {\comp}.

We denote by
\begin{equation*}
\random[\pth]{\allowables}
\coloneqq\cset{\frcstsystem\in\frcstsystems}
{\text{\(\pth\) is \(\allowables\)-random for~\(\frcstsystem\)}}
\end{equation*}
the set of all forecasting systems for which the path~\(\pth\) is \(\allowables\)-random.
We will also use the special notations \(\poscrandom[\pth]\), \(\crandom[\pth]\), \(\multmlrandom[\pth]\) and \(\mlrandom[\pth]\) in the cases that \(\allowables\) is equal to~\(\poscallowables\), \(\callowables\), \(\multmlallowables\) and \(\mlallowables\), respectively.

As a special, not unimportant but fairly trivial case, the ({\comp}) \emph{vacuous} forecasting system~\(\vacfrcstsystem\) assigns the vacuous forecast~\(\vacfrcstsystem(\sit)\coloneqq\frcsts\) to all situations~\(\sit\in\sits\).
Recall that we have introduced the notation~\(\frcstsystem\subseteq\frcstsystem^*\) to mean that \(\frcstsystem^*\) is at least as conservative as~\(\frcstsystem\), so \(\frcstsystem(\sit)\subseteq\frcstsystem^*(\sit)\) for all~\(\sit\in\sits\).
Then clearly \(\frcstsystem\subseteq\vacfrcstsystem\) for all~\(\frcstsystem\in\frcstsystems\), so \(\vacfrcstsystem\) is the most conservative forecasting system, with local models \(\uex_{\vacfrcstsystem(\sit)}=\max\) for all~\(\sit\in\sits\).
It corresponds to Forecaster making no actual commitments, and the closed convex cone~\(\availables_{\frcsts}\) of gambles~\(f\leq0\) that are then available to Sceptic at each successive stage is depicted in Figure~\ref{fig:vacuous:capital:increase}.

\begin{figure}[ht]
\centering
\begin{tikzpicture}[scale=1.25]\footnotesize
% various coordinates
\coordinate (xaxis) at (1.5,0);
\coordinate (yaxis) at (0,1.5);
\coordinate (origin) at (0,0);
\coordinate (bottomleft) at (-1.5,-1.5);
\coordinate (aboveleft) at (-1.5,0.0);
\coordinate (belowright) at (0.0,-1.5);
\coordinate (gamble) at (-1,-0.5);
\coordinate (label) at (0,-1.7);
% axes and grid
\draw[step=.5cm,gray,very thin] (-1.4,-1.4) grid (1.4,1.4);
\draw[->] (-1.5,0) -- (xaxis) node[below] {\footnotesize\(f(1)\)};
\draw[->] (0,-1.5) -- (yaxis) node[above] {\footnotesize\(f(0)\)};
% the good region
\draw[blue,thick,name path=marginal line] (aboveleft) -- node[midway,above,rotate=0] {\footnotesize \(\ex_{0}(f)=0\)} (origin);
\draw[blue,thick] (origin) -- node[midway,above,rotate=-90] {\footnotesize \(\ex_{1}(f)=0\)} (belowright);
\fill[nearly transparent,blue] (origin) -- (belowright) -- (bottomleft) -- (aboveleft) -- cycle;
% first diagonal and associated regions
\draw[red,very thin] (bottomleft) -- (origin) -- coordinate[midway] (diagonal) (1.5,1.5) ;
\node[red,rotate=45,above=5pt] at (diagonal) {\footnotesize \(f(1)\leq f(0)\)};
\node[red,rotate=45,below=5pt] at (diagonal) {\footnotesize \(f(1)\geq f(0)\)};
\end{tikzpicture}
\caption{Gambles~\(f\) available to Sceptic when Forecaster announces the vacuous forecast~\(I=\frcsts\) with~\(\lp=0\) and~\(\up=1\).}
\label{fig:vacuous:capital:increase}
\end{figure}

The following proposition uses this vacuous forecasting system to conclude that no \(\random[\pth]{\allowables}\) is empty.

\begin{proposition}\label{prop:vacuous}
All paths are \(\allowables\)-random for the vacuous forecasting system, so \(\vacfrcstsystem\in\random[\pth]{\allowables}\) for all~\(\pth\in\pths\).
\end{proposition}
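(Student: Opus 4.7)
The plan is to exploit the fact that the vacuous forecast yields the largest possible local upper expectation, which in turn forces every test supermartingale for $\vacfrcstsystem$ to be non-increasing along every branch of the tree; combined with unit initial value this makes every such test supermartingale uniformly bounded by $1$ on all of $\pths$, so there is nothing left to check.

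Concretely, I would first unpack the local model. For any situation $\sit\in\sits$ we have $\vacfrcstsystem(\sit)=\frcsts$, so by Equation~\eqref{eq:local:upper}, $\uex_{\vacfrcstsystem(\sit)}(f)=\max\{f(0),f(1)\}$ for every gamble $f$ on $\outcomes$. Hence, for any test supermartingale $\test\in\allowabletests[\vacfrcstsystem]$, the defining inequality~\eqref{eq:supermartingale} reduces to
\begin{equation*}
\max\bigl\{\adddelta\test(\sit)(0),\,\adddelta\test(\sit)(1)\bigr\}\leq 0,
\end{equation*}
which means $\test(\sit x)\leq\test(\sit)$ for every $\sit\in\sits$ and every $x\in\outcomes$.

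Next I would iterate this along an arbitrary path. Fix $\pth\in\pths$; a trivial induction on $n$ using the inequality just obtained gives $\test(\pthto{n})\leq\test(\pthto{n-1})\leq\dots\leq\test(\init)=1$ for all $n\in\naturalswithzero$, since $\test$ is a test process and so $\test(\init)=1$. Therefore $\sup_{n\in\naturals}\test(\pthto{n})\leq 1<\infty$, which is exactly the boundedness condition of Definition~\ref{def:randomness}. As $\test\in\allowabletests[\vacfrcstsystem]$ was arbitrary, $\pth$ is $\allowables$-random for $\vacfrcstsystem$, and as $\pth\in\pths$ was arbitrary, $\vacfrcstsystem\in\random[\pth]{\allowables}$ for every $\pth\in\pths$.

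There is no real obstacle here: the entire content of the statement is that the most conservative forecast grants Sceptic no favourable bets at all, so her capital can never strictly increase. The only thing to be careful about is to notice that $\uex_{\frcsts}=\max$ is indeed the maximum and not some softer operator, which is immediate from~\eqref{eq:local:upper} upon setting $\lp=0$ and $\up=1$.
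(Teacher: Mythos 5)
Your proof is correct and follows exactly the same route as the paper's: both observe that for the vacuous forecast the local upper expectation is the max operator, so the supermartingale condition forces non-increasingness, and hence every test supermartingale stays below its initial value $1$ on every path. You have merely spelled out the short induction step that the paper leaves implicit.
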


\begin{proof}
In the imprecise probability tree associated with the vacuous forecasting system~\(\vacfrcstsystem\), a real process \(\supermartin\) is a supermartingale if and only if it is non-increasing: \(\adddelta\supermartin\leq0\).
All test supermartingales for~\(\vacfrcstsystem\) are therefore bounded above by~\(1\) on any path~\(\pth\in\pths\).
\end{proof}

The more conservative, or imprecise, the forecasting system, the less stringent is the corresponding randomness notion.

\begin{proposition}\label{prop:nestedfrcstsystems}
Let \(\pth\) be \(\allowables\)-random for a forecasting system~\(\frcstsystem\).
Then \(\pth\) is also \(\allowables\)-random for any forecasting system~\(\frcstsystem^*\) such that \(\frcstsystem\subseteq\frcstsystem^*\).
\end{proposition}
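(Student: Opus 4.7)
The key observation is that making the forecasting system more conservative (wider intervals) can only \emph{shrink} the set of test supermartingales, so fewer capital processes are available to Sceptic; a path that defeats the smaller collection of strategies \emph{a fortiori} defeats any sub-collection of it. So my plan is to prove the inclusion $\allowabletests[\frcstsystem^*]\subseteq\allowabletests[\frcstsystem]$ and then read off the randomness conclusion directly from Definition~\ref{def:randomness}.

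To establish the inclusion, I would first note that since $\frcstsystem(\sit)\subseteq\frcstsystem^*(\sit)$ for every $\sit\in\sits$, the definition of the upper expectation~\eqref{eq:local:upper} as a maximum over the interval gives $\uex_{\frcstsystem(\sit)}(f)\leq\uex_{\frcstsystem^*(\sit)}(f)$ for every gamble $f$ on $\outcomes$ and every situation~$\sit$. Applying this to $f=\adddelta\supermartin(\sit)$ for any supermartingale $\supermartin$ for~$\frcstsystem^*$ yields, by the defining inequality~\eqref{eq:supermartingale},
\begin{equation*}
\uex_{\frcstsystem(\sit)}(\adddelta\supermartin(\sit))\leq\uex_{\frcstsystem^*(\sit)}(\adddelta\supermartin(\sit))\leq0
\text{ for all }\sit\in\sits,
\end{equation*}
so $\supermartin$ is also a supermartingale for~$\frcstsystem$. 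Since the test and allowability conditions (non-negativity, unit initial value, membership in the fixed countable class $\allowables$) do not depend on the forecasting system, this gives $\allowabletests[\frcstsystem^*]\subseteq\allowabletests[\frcstsystem]$.

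To conclude, pick any $\test\in\allowabletests[\frcstsystem^*]$. By the inclusion just proved, $\test\in\allowabletests[\frcstsystem]$, and since $\pth$ is $\allowables$-random for~$\frcstsystem$, we have $\sup_{n\in\naturals}\test(\pthto{n})<\infty$. As $\test$ was arbitrary in $\allowabletests[\frcstsystem^*]$, Definition~\ref{def:randomness} gives that $\pth$ is $\allowables$-random for~$\frcstsystem^*$. There is no real obstacle here; the only thing worth double-checking is the direction of the inequality on upper expectations, which is the crux of the argument.
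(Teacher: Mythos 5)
Your proof is correct and takes the same route as the paper: establish the inclusion $\allowabletests[\frcstsystem^*]\subseteq\allowabletests[\frcstsystem]$ and then invoke Definition~\ref{def:randomness}. The only difference is that the paper states this inclusion without proof (calling it trivial), whereas you spell out the monotonicity of $\uex_I$ in the interval $I$ that underlies it; that is a harmless and helpful elaboration.
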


\begin{proof}
Since \(\frcstsystem\subseteq\frcstsystem^*\) implies that \(\allowabletests[\frcstsystem^*]\subseteq\allowabletests[\frcstsystem]\), this follows trivially from Definition~\ref{def:randomness}.
\end{proof}

The larger the set~\(\allowables\) of allowable test processes, the more stringent is the corresponding randomness notion, and the `fewer' \(\allowables\)-random paths there are.
More precisely:

\begin{proposition}\label{prop:nestedallowables}
Consider two sets~\(\allowables,\allowables'\) of allowable test processes such that \(\smash{\allowables'\subseteq\allowables}\).
If \(\pth\) is \(\allowables\)-random for a forecasting system~\(\frcstsystem\), then \(\pth\) is also \(\allowables'\)-random for~\(\frcstsystem\), and therefore \(\random[\pth]{\allowables}\subseteq\random[\pth]{\allowables'}\).
\end{proposition}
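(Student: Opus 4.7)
The plan is to observe that this proposition is, despite its phrasing, just a monotonicity statement whose proof amounts to intersecting both sides of \(\allowables'\subseteq\allowables\) with the set \(\testsupermartins[\frcstsystem]\) of test supermartingales for \(\frcstsystem\). Concretely, I would first unpack the definitions: the allowable test supermartingales for \(\allowables'\) are \(\allowableteststoo[\frcstsystem]=\allowables'\cap\testsupermartins[\frcstsystem]\), while the allowable test supermartingales for \(\allowables\) are \(\allowabletests[\frcstsystem]=\allowables\cap\testsupermartins[\frcstsystem]\). From \(\allowables'\subseteq\allowables\) one immediately gets \(\allowableteststoo[\frcstsystem]\subseteq\allowabletests[\frcstsystem]\).

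Next I would invoke Definition~\ref{def:randomness}: \(\allowables\)-randomness of \(\pth\) for \(\frcstsystem\) means \(\sup_{n\in\naturals}\test(\pthto{n})<\infty\) for every \(\test\in\allowabletests[\frcstsystem]\). Since every \(\test\in\allowableteststoo[\frcstsystem]\) also lies in \(\allowabletests[\frcstsystem]\), the same boundedness conclusion holds for every \(\test\in\allowableteststoo[\frcstsystem]\), which is exactly \(\allowables'\)-randomness of \(\pth\) for \(\frcstsystem\). Hence \(\frcstsystem\in\random[\pth]{\allowables}\) implies \(\frcstsystem\in\random[\pth]{\allowables'}\), proving \(\random[\pth]{\allowables}\subseteq\random[\pth]{\allowables'}\).

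There is really no obstacle here; the whole statement is built into the definitional set-up. The only thing worth checking is that the assumption \(\poscallowables\subseteq\allowables'\) (implicit in calling \(\allowables'\) a set of allowable test processes) is never needed in the argument, so the proof goes through for any \(\allowables'\subseteq\allowables\), not merely for those containing the computable positive test processes. I would therefore present the proof as a two-line argument immediately after the statement.
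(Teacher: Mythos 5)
Your proof is correct and takes exactly the same route as the paper: intersect $\allowables'\subseteq\allowables$ with $\testsupermartins[\frcstsystem]$ to get $\allowableteststoo[\frcstsystem]\subseteq\allowabletests[\frcstsystem]$, then observe that a universally quantified boundedness condition over the larger set trivially restricts to the smaller one. Your side remark that $\poscallowables\subseteq\allowables'$ is never used is also accurate.
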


\begin{proof}
Since \(\allowables'\subseteq\allowables\) implies that \(\allowableteststoo[\frcstsystem]\subseteq\allowabletests[\frcstsystem]\), this follows trivially from Definition~\ref{def:randomness}.
\end{proof}
\noindent As a fairly direct consequence of Equation~\eqref{eq:allowables:order}, we can now infer from Proposition~\ref{prop:nestedallowables} that
\begin{equation}\label{eq:randomness:order}
\mlrandom[\pth]\subseteq\multmlrandom[\pth]\subseteq\poscrandom[\pth]=\crandom[\pth],
\end{equation}
where only the equality needs more explanation.

\begin{proof}[Proof of the equality in Equation~\eqref{eq:randomness:order}]
Since \(\poscallowables\subseteq\callowables\), Proposition~\ref{prop:nestedallowables} already guarantees that \(\crandom[\pth]\subseteq\poscrandom[\pth]\).
To prove the converse inclusion, assume that the forecasting system~\(\frcstsystem\) makes~\(\pth\) \(\poscallowables\)-random.
Consider any {\comp} test supermartingale~\(\test\) for~\(\frcstsystem\), so \(\test\in\callowabletests\), and assume {\itshape ex absurdo} that \(\test\) is unbounded on~\(\pth\), then so is the {\comp} positive test supermartingale \((1+\test)/2\in\poscallowabletests\), a contradiction.
\end{proof}

Because \(\multmlallowables\)-randomness is weaker than {\ML} randomness, but has a similar flavour, we will also call it \emph{weak {\ML} randomness}.

\subsection{Real-valued versus extended real-valued supermartingales}
Before moving on, we want to comment on a particular aspect of our randomness definition that differs slightly from other approaches, such as for instance described in Refs.~\cite{schnorr1971,vovk2010:randomness}, which allow the test supermartingales in their randomness definition to be \emph{extended} real-valued; we restrict ourselves to real-valued test supermartingales in the present approach.
Let us explain what are the differences between these two approaches, and indicate briefly why we prefer ours.

A process~\(\supermartin\) assuming values in~\(\reals\cup\set{\infty}\) that satisfies the \emph{extended supermartingale inequality}
\begin{equation}\label{eq:extended:supermartingale}
\uex_{\frcstsystem(\sit)}(\supermartin(\sit\cdot))\leq\supermartin(\sit)
\text{ for all~\(\sit\in\sits\)},
\end{equation}
is called an \emph{extended supermartingale} for~\(\frcstsystem\).
The upper expectation \(\uex_{\frcstsystem(\sit)}\) in Equation~\eqref{eq:extended:supermartingale} is defined on maps \(f\colon\outcomes\to\reals\cup\set{\infty}\) by generalising Equation~\eqref{eq:local:upper}: for any~\(I\in\imprecisefrcsts\),
\begin{equation}\label{eq:local:upper:extended}
\uex_{I}(f)
\coloneqq\sup_{p\in I}\ex_p(f)
=\sup_{p\in I}[pf(1)+(1-p)f(0)],
\end{equation}
taking into account the conventions that \(0\cdot\infty=0\), \(x+\infty=\infty\) for all real~\(x\), and \(\infty+\infty=\infty\).
This implies that if \(f(1)=\infty\), then also \(\uex_I(f)=\infty\), unless \(I=\set{0}\), in which case we have that \(\uex_I(f)=f(0)\).
Similarly, if \(f(0)=\infty\), then also \(\uex_I(f)=\infty\), unless \(I=\set{1}\), in which case \(\uex_I(f)=f(1)\).
This argumentation, in combination with~\ref{axiom:coherence:bounds}, tells us that
\begin{equation}\label{eq:local:upper:extended:bounds}
\uex_{I}(f)\geq\min\set{f(0),f(1)}
\text{ for all \(f\colon\outcomes\to\reals\cup\set{\infty}\) and \(I\in\imprecisefrcsts\)}.
\end{equation}
The extended supermartingale inequality~\eqref{eq:extended:supermartingale} imposes no requirements on \(\supermartin(\sit\cdot)\) whenever \(\supermartin(\sit)=\infty\).
At the same time, it tells us that an extended supermartingale with \(\supermartin(\sit)\in\reals\) can't jump to~\(\infty\) in~\(\sit1\) unless \(\frcstsystem(\sit)=0\),\footnote{Recall that we make no distinction between a singleton forecast and its single element.} and similarly, it can't jump to~\(\infty\) in~\(\sit0\) unless \(\frcstsystem(\sit)=1\): infinite jumps upwards in going from one situation to the next are only allowed when the transition between these situations has upper probability zero, and can therefore only occur in situations~\(\sit\) whose forecast~\(\frcstsystem(\sit)\) is \emph{degenerate}, meaning that \(\frcstsystem(\sit)=0\) or \(\frcstsystem(\sit)=1\).

If, contrary to our approach, randomness of a path~\(\pth\) means that all allowable \emph{extended} test supermartingales must remain bounded, this means that, in addition to the requirements on real supermartingales present in our condition, such extended test supermartingales must not be allowed to jump to~\(\infty\) anywhere on the path~\(\pth\).
Now, an extended test supermartingale~\(\test\) starts with the initial value~\(\test(\init)=1\) in~\(\init\), so if it is to assume the value~\(\infty\) somewhere, there must be at least one situation~\(\sit\in\sits\) and outcome~\(x\in\outcomes\) such that \(\test\) makes an infinite jump in going from~\(\test(\sit)\in\reals\) to~\(\test(\sit x)=\infty\).
As explained above, the extended supermartingale condition~\eqref{eq:extended:supermartingale} tells us that this can only happen when
\begin{equation*}
\frcstsystem(\sit)=
\begin{cases}
0&\text{if \(x=1\)}\\
1&\text{if \(x=0\)},
\end{cases}
\end{equation*}
meaning that the (upper) probability of the transition from~\(\sit\) to~\(\sit x\) is zero.
In other words, \emph{there can only be a difference between the two types of randomness definitions when there are degenerate transition probabilities in the (imprecise probability) tree.}\footnote{Of course, this argumentation tells us that both types of randomness definition coincide for the `fair-coin' forecasting system typically considered in the literature.}
And in such cases, when there is for instance a transition in some path~\(\pth\) that has upper probability zero, \(\pth\) can't be random according to the definition with extended test supermartingales.
So, in principle, whether a path is random on the `extended' definition can in that case depend on a single outcome, which is something we find unfortunate.
Whether such a path~\(\pth\) will be random according to our definition, will depend on the forecasting system and the transition behaviour on~\(\pth\).

\section{Schnorr randomness in an imprecise probability tree}\label{sec:schnorr:randomness}
Next, we concentrate on extending the notion of Schnorr randomness to our present context.
We begin with a definition borrowed from Schnorr's seminal work \cite{schnorr1971,schnorr1973}.
\begin{definition}[Growth function]
We call a map \(\ordering\colon\naturalswithzero\to\naturalswithzero\) a \emph{growth function} if
\begin{enumerate}[label=\upshape(\roman*),leftmargin=*,noitemsep,topsep=0pt]
\item it is recursive;
\item it is non-decreasing: \(\group{\forall n_1,n_2\in\naturalswithzero}\group{n_1\leq n_2\then\ordering(n_1)\leq\ordering(n_2)}\);
\item it is unbounded.
\end{enumerate}
We say that a real-valued map~\(\mu\colon\naturalswithzero\to\reals\) is \emph{computably unbounded} if there is some growth function~\(\ordering\) such that \(\limsup_{n\to\infty}\sqgroup{\mu(n)-\ordering(n)}>0\), or equivalently,
\begin{equation}\label{eq:computably:unbounded}
\inf_{m\in\naturalswithzero}\sup_{n\geq m}\sqgroup{\mu(n)-\ordering(n)}>0.
\end{equation}
\end{definition}

In what follows, it will often simplify our proofs to work with a more general notion of growth function.

\begin{definition}[Real growth function]
A map~\(\realordering\colon\naturalswithzero\to\nonnegreals\) is a \emph{real growth function} if
\begin{enumerate}[label=\upshape(\roman*),leftmargin=*,noitemsep,topsep=0pt]
\item it is {\comp};
\item it is non-decreasing: \(\group{\forall n_1,n_2\in\naturalswithzero}\group{n_1\leq n_2\then\realordering(n_1)\leq\realordering(n_2)}\);
\item it is unbounded.
\end{enumerate}
\end{definition}
\noindent It turns out that working with this more general definition does not really change what is important, namely {\comp} unboundedness.
Indeed, we can use it to give a number of equivalent characterisations of this notion that will prove useful further on.
The rather technical proof of these alternative characterisations is deferred to the Appendix.

\begin{proposition}\label{prop:computably:unbounded:various}
Consider a real-valued map~\(\mu\colon\naturalswithzero\to\reals\), then the following statements are equivalent:
\begin{enumerate}[label=\upshape(\roman*),leftmargin=*,noitemsep,topsep=0pt]
\item\label{it:computably:unbounded} there is a growth function~\(\ordering\) such that \(\limsup_{n\to\infty}\sqgroup{\mu(n)-\ordering(n)}>0\);
\item\label{it:computably:unbounded:real} there is a real growth function~\(\realordering\) such that \(\limsup_{n\to\infty}\sqgroup{\mu(n)-\realordering(n)}\geq0\);
\item\label{it:computably:unbounded:fraction} there is a real growth function~\(\realordering\) such that \(\limsup_{n\to\infty}\nicefrac{\mu(n)}{\realordering(n)}>0\).\footnote{This expression makes sense because, for large enough~\(n\), \(\tau(n)>0\).}
\end{enumerate}
All of these statements characterise the {\comp} unboundedness of~\(\mu\).
\end{proposition}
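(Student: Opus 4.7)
The plan is to close the cycle $(i)\then(ii)\then(iii)\then(i)$. The step $(i)\then(ii)$ is immediate: any growth function $\ordering\colon\naturalswithzero\to\naturalswithzero$ is, when viewed as a map into $\nonnegreals$, also a real growth function (integer values are {\comp} reals), and the strict positive $\limsup$ in (i) is stronger than the non-strict one in (ii), so $\realordering\coloneqq\ordering$ works.

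For $(ii)\then(iii)$ I will reuse the same $\realordering$. From $\limsup_{n\to\infty}[\mu(n)-\realordering(n)]\geq0$, the set of $n$ with $\mu(n)>\realordering(n)-1$ is infinite; since $\realordering$ is unbounded and non-decreasing, $\realordering(n)\geq2$ for all sufficiently large~$n$, so along that subsequence $\mu(n)>\realordering(n)/2$, which yields $\limsup_{n\to\infty}\mu(n)/\realordering(n)\geq\nicefrac{1}{2}>0$.

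The substantive step is $(iii)\then(i)$. Given a real growth function $\realordering$ with $c\coloneqq\limsup_{n\to\infty}\mu(n)/\realordering(n)>0$, my strategy is to manufacture---directly from $\realordering$ and without any reference to the unknown $c$---an integer growth function $\ordering$ satisfying $\ordering(n)/\realordering(n)\to0$. Once such an $\ordering$ is in place, infinitely many $n$ satisfy $\mu(n)\geq(c/2)\realordering(n)$, while eventually $\ordering(n)\leq(c/4)\realordering(n)$; combining, $\mu(n)-\ordering(n)\geq(c/4)\realordering(n)\to\infty$ along that subsequence, whence $\limsup_{n\to\infty}[\mu(n)-\ordering(n)]=+\infty>0$, which is the condition in (i).

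To build $\ordering$, I exploit the {\compy} of $\realordering$ to compute rational approximations $\tilde\realordering(n)$ with $\abs{\tilde\realordering(n)-\realordering(n)}\leq\nicefrac{1}{3}$, set
\[
\ordering'(n)\coloneqq\max\set[\big]{k\in\naturalswithzero\colon 2^k\leq 2+\tilde\realordering(n)},
\]
and finally let $\ordering(n)\coloneqq\max_{m\leq n}\ordering'(m)$ to enforce monotonicity. Routine verification shows that $\ordering$ is recursive and non-decreasing, that it is unbounded because $2+\tilde\realordering(n)\to\infty$, and that $\ordering(n)\leq\log_2\group{\realordering(n)+\nicefrac{7}{3}}$, so indeed $\ordering(n)/\realordering(n)\to0$. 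The main obstacle is precisely this construction: since $c$ need not be computable, $\ordering$ must be built in a parameter-free way that is simultaneously unbounded but dominated by every positive multiple of $\realordering$; any sub-linear {\comp} rate of growth, such as the logarithmic one used here, does the job, and all remaining work is bookkeeping around the rational approximation.
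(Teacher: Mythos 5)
Your proof is correct, and the cycle $(i)\then(ii)\then(iii)\then(i)$ matches the paper's overall decomposition. The steps $(i)\then(ii)$ and $(ii)\then(iii)$ are essentially the paper's arguments (the paper uses the threshold $\nicefrac1r$ for arbitrary $r\in\naturals$ where you use the fixed threshold $1$, but both yield $\limsup_{n\to\infty}\mu(n)/\realordering(n)\geq\nicefrac12$). Where you genuinely depart is in the substantive step $(iii)\then(i)$. The paper proceeds in two stages: it first rescales $\realordering$ by the rational factor $\nicefrac1{2r}$ to obtain a real growth function $\realordering'$ for which the difference already has strictly positive $\limsup$, and then carefully constructs an integer growth function $\ordering'(k)=\floor{r_k}$ from a recursive rational sequence $r_k$ with $\realordering'(k)-4\cdot2^{-k}<r_k<\realordering'(k)-2\cdot2^{-k}$, tracking these error bounds to ensure both that $r_k$ is increasing and that $\ordering'$ stays pointwise below $\realordering'$. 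Your approach sidesteps all of this error-tracking by building, directly from $\realordering$, a recursive integer growth function $\ordering$ that is \emph{logarithmically} small relative to $\realordering$, so that $\ordering(n)/\realordering(n)\to0$ and hence $\ordering$ is eventually dominated by every positive multiple of $\realordering$; the desired conclusion then follows immediately from $\limsup_{n\to\infty}\mu(n)/\realordering(n)>0$. This is cleaner in the sense that the construction of $\ordering$ is parameter-free and does not depend on the unknown value of the $\limsup$; what the paper's route buys instead is an integer growth function that approximates (a rescaling of) $\realordering$ up to a vanishing error, which is a slightly finer structural statement though not needed for the proposition. Both are correct and self-contained; the single point worth making explicit in yours is that you treat the cases $\limsup_{n\to\infty}\mu(n)/\realordering(n)<\infty$ and $=\infty$ uniformly by working with a concrete lower bound rather than $c/2$ when $c=\infty$, which your argument silently accommodates.
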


We will also need the following simple results.
Their proofs are fairly obvious, but we have included them in the Appendix for the sake of completeness.

\begin{proposition}\label{prop:computably:unbounded:implies:unbounded}
If a real-valued map~\(\mu\colon\naturalswithzero\to\reals\) is computably unbounded, it is also unbounded above.
\end{proposition}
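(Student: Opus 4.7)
The plan is to unpack the definition of computable unboundedness and immediately read off ordinary unboundedness from the limsup inequality together with the fact that a growth function itself tends to $+\infty$.

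First I would fix a growth function $\ordering$ witnessing the computable unboundedness of $\mu$, so that $\limsup_{n\to\infty}[\mu(n)-\ordering(n)]>0$. Let $\epsilon>0$ denote this limsup. The definition of limsup then guarantees that there are infinitely many $n\in\naturalswithzero$ with $\mu(n)-\ordering(n)>\epsilon/2$, i.e. $\mu(n)>\ordering(n)+\epsilon/2$.

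Next I would use the remaining two defining properties of a growth function: $\ordering$ is non-decreasing and unbounded, hence $\ordering(n)\to\infty$. Given any real $M$, I can therefore pick some threshold $n_0$ with $\ordering(n_0)>M$, and then by monotonicity $\ordering(n)>M$ for all $n\geq n_0$. Combined with the previous paragraph, there are infinitely many (and in particular some) $n\geq n_0$ for which $\mu(n)>\ordering(n)+\epsilon/2>M$, so $\mu$ exceeds $M$. Since $M$ was arbitrary, $\mu$ is unbounded above.

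There is no real obstacle here: the only subtlety is not confusing the computable notion with ordinary unboundedness, and remembering that a growth function as defined in the paper is itself unbounded (rather than merely tending to infinity along some subsequence), which is precisely what makes the reduction straightforward. No appeal to Proposition~\ref{prop:computably:unbounded:various} or to {\compy} of $\ordering$ is needed; recursiveness plays no role in this particular implication.
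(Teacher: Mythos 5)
Your proof is correct and follows essentially the same route as the paper's: combine the infinitely many indices where \(\mu\) exceeds \(\ordering\) (plus a margin) with the fact that \(\ordering\) is non-decreasing and unbounded. The only small refinement in the paper's version is that it never names the limsup as a real number~\(\epsilon\) (so the case \(\limsup_{n\to\infty}[\mu(n)-\ordering(n)]=+\infty\) is handled transparently), using instead that for every \(m\) there is some \(n\geq m\) with \(\mu(n)>\ordering(n)\), which suffices without extracting an \(\epsilon/2\) margin.
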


\begin{proposition}\label{prop:computably:unbounded:product}
If the point-wise product \(\mu_1\mu_2\) of real maps~\(\mu_1\colon\naturalswithzero\to\reals\) and~\(\mu_2\colon\naturalswithzero\to\reals\) is computably unbounded, then at least one of the factors~\(\mu_1\) or~\(\mu_2\) is computably unbounded too.
\end{proposition}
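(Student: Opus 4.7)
The plan is to leverage the equivalent characterisations of computable unboundedness in Proposition~\ref{prop:computably:unbounded:various}, and in particular the multiplicative form~\ref{it:computably:unbounded:fraction}. By hypothesis, there exist a real growth function~$\tau$ and some $\epsilon>0$ such that $\mu_1(n)\mu_2(n)/\tau(n)>\epsilon$ for infinitely many $n\in\naturalswithzero$.

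The key move is to split this denominator evenly. Set $\sigma\coloneqq\sqrt{\tau}$; the closure properties of computability recorded in Section~\ref{sec:computability} (specifically that $F^{1/m}$ is computable whenever $F$ is a non-negative computable real process), together with the preservation of monotonicity and unboundedness under $\sqrt{\cdot}$ on $\nonnegreals$, guarantee that $\sigma$ is again a real growth function. With this choice,
\begin{equation*}
\frac{\mu_1(n)\mu_2(n)}{\tau(n)}=\frac{\mu_1(n)}{\sigma(n)}\cdot\frac{\mu_2(n)}{\sigma(n)},
\end{equation*}
and the elementary bound $\abs{a}\abs{b}>\epsilon\then\max\set{\abs{a},\abs{b}}>\sqrt{\epsilon}$ shows that for each of the infinitely many relevant~$n$, at least one of $\abs{\mu_1(n)/\sigma(n)}$ and $\abs{\mu_2(n)/\sigma(n)}$ exceeds $\sqrt{\epsilon}$. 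A pigeonhole argument then fixes a single index $i\in\set{1,2}$ for which this inequality persists along an infinite subsequence, and another application of characterisation~\ref{it:computably:unbounded:fraction}—this time with the real growth function~$\sigma$—yields that $\mu_i$ is computably unbounded.

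The step I expect to require the most care is the pigeonhole argument and its interplay with signs: positivity of $\mu_1(n)\mu_2(n)$ forces only a \emph{shared} sign of the two factors at each relevant~$n$, while the definition of computable unboundedness detects unboundedness only above, never below. In the intended application—where the $\mu_i$ arise as values along a path of non-negative objects such as test supermartingales—this is harmless, since the absolute values may be dropped and one reads $\limsup_n\mu_i(n)/\sigma(n)\geq\sqrt{\epsilon}>0$ directly. A minor bookkeeping point is to check that $\sigma(n)>0$ for all sufficiently large~$n$, which follows from the monotonicity and unboundedness of~$\tau$ and ensures that the ratios written down are well defined from some index onwards.
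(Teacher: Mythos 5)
Your proof follows essentially the same route as the paper's. The paper also works via characterisation~\ref{it:computably:unbounded:fraction} of Proposition~\ref{prop:computably:unbounded:various} and splits the real growth function~\(\tau\) evenly into \(\tau_1=\tau_2=\sqrt{\tau}\); the only structural difference is that you argue forward, from the computable unboundedness of the product, while the paper assumes \emph{ex absurdo} that neither factor is computably unbounded and derives a contradiction. These are logically equivalent, and the manipulations at the heart of the two arguments are the same, including the observation that \(\sqrt{\tau}\) is again a real growth function.

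What is more interesting is the sign issue you flag, because it is a real one, and the paper's proof passes it over silently. The paper's contradiction step multiplies the two inequalities \(\mu_1(n)/\tau_1(n)\leq 1/r\) and \(\mu_2(n)/\tau_2(n)\leq 1/r\) to conclude \(\mu_1(n)\mu_2(n)/[\tau_1(n)\tau_2(n)]\leq 1/r^2\); that implication fails when both ratios are negative with large magnitude. Indeed, the proposition is false as literally stated: take \(\mu_1(n)\coloneqq\mu_2(n)\coloneqq -n\). Then \(\mu_1\mu_2=n^2\) is computably unbounded, yet each \(\mu_i\) is bounded above and hence, by Proposition~\ref{prop:computably:unbounded:implies:unbounded}, not computably unbounded. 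Your absolute-value variant cannot close the gap either---the pigeonhole only yields \(\abs{\mu_i(n)}/\sigma(n)>\sqrt{\epsilon}\) along a subsequence, which is not the same as \(\mu_i(n)/\sigma(n)>\sqrt{\epsilon}\)---but, unlike the paper, you explicitly acknowledge it and correctly observe that it is harmless where the result is used: in Proposition~\ref{prop:constantcalibrated:intersectioninside:schnorr} and in Section~\ref{sec:example:almost:half}, the \(\mu_i\) are values of non-negative test supermartingales along a path, so the absolute values drop out. The cleanest repair is to add non-negativity of the \(\mu_i\) to the hypothesis of the proposition, after which both your argument and the paper's go through verbatim.
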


We can now extend Schnorr's original definition of randomness \cite{schnorr1971,schnorr1973} in a probability tree with a constant precise forecast~\(I=\set{\nicefrac12}\) to imprecise probability trees associated with an arbitrary---and not necessarily precise nor {\comp}---forecasting system.

\begin{definition}[Schnorr randomness]\label{def:schnorrrandomness}
Consider any forecasting system~\(\frcstsystem\colon\sits\to\imprecisefrcsts\).
We call a path~\(\pth\in\pths\) \emph{Schnorr random for~\(\frcstsystem\)} if no {\comp} test supermartingale~\(\test\in\callowabletests[\frcstsystem]\) for~\(\frcstsystem\) is computably unbounded on~\(\pth\), or in other words, if \(\limsup_{n\to\infty}\sqgroup{\test(\pthto{n})-\ordering(n)}\leq0\) for all {\comp} test supermartingales~\(\test\in\callowabletests[\frcstsystem]\) for~\(\frcstsystem\) and all growth functions~\(\ordering\).
We then also say that the forecasting system~\(\frcstsystem\) \emph{makes \(\pth\) Schnorr random}.
\end{definition}
\noindent In this definition, we can of course also use the alternative characterisations of {\comp} unboundedness listed in Proposition~\ref{prop:computably:unbounded:various}.
Furthermore, without loss of generality, we can focus on {\comp} \emph{positive} test supermartingales.

\begin{proposition}\label{prop:schnorrwithpositiveT}
Consider any forecasting system~\(\frcstsystem\colon\sits\to\imprecisefrcsts\).
Then a path~\(\pth\in\pths\) is Schnorr random for~\(\frcstsystem\) if and only if no {\comp} \emph{positive} test supermartingale~\(\test\in\callowabletests[\frcstsystem]\) for~\(\frcstsystem\) is computably unbounded on~\(\pth\).
\end{proposition}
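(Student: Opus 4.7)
The plan is to prove the two implications separately, with the forward direction being immediate and the backward direction mirroring the $(1+\test)/2$ trick that was already deployed in the proof of the equality in Equation~\eqref{eq:randomness:order}.

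For the forward direction, since every computable positive test supermartingale is in particular a computable test supermartingale, so that $\poscallowabletests[\frcstsystem]\subseteq\callowabletests[\frcstsystem]$, Definition~\ref{def:schnorrrandomness} immediately gives that if $\pth$ is Schnorr random for $\frcstsystem$ then no computable positive test supermartingale for $\frcstsystem$ is computably unbounded on $\pth$.

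For the converse I would suppose, \emph{ex absurdo}, that some $\test\in\callowabletests[\frcstsystem]$ is computably unbounded on $\pth$, and construct a computable positive test supermartingale $\test'$ that is itself computably unbounded on $\pth$. The natural candidate is $\test'\coloneqq(1+\test)/2$. It is positive (bounded below by $1/2$), has initial value $\test'(\init)=1$, and is computable by closure of the computable real processes under addition and scaling by rationals. The supermartingale inequality follows from $\adddelta\test'=(1/2)\adddelta\test$ together with non-negative homogeneity~\ref{axiom:coherence:homogeneity} of upper expectations, since then $\uex_{\frcstsystem(\sit)}(\adddelta\test'(\sit))=(1/2)\uex_{\frcstsystem(\sit)}(\adddelta\test(\sit))\leq 0$ for every $\sit\in\sits$; hence $\test'\in\poscallowabletests[\frcstsystem]$.

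The only non-routine step is transferring computable unboundedness from $\test$ to $\test'$, and this is where I would invoke Proposition~\ref{prop:computably:unbounded:various}. Using characterisation~\ref{it:computably:unbounded:fraction} there is a real growth function $\realordering$ with $\limsup_{n\to\infty}\test(\pthto{n})/\realordering(n)>0$, and then $\test'(\pthto{n})/\realordering(n)\geq\test(\pthto{n})/(2\realordering(n))$ gives $\limsup_{n\to\infty}\test'(\pthto{n})/\realordering(n)>0$, so that Proposition~\ref{prop:computably:unbounded:various} yields again that $\test'$ is computably unbounded on $\pth$, contradicting the hypothesis. The reason to route the argument through the real-growth-function characterisation rather than the integer-valued original is precisely to dodge the minor nuisance that halving an integer-valued growth function need not land in $\naturalswithzero$; this is the only place where the proof might otherwise feel any friction.
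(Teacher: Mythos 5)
Your proof is correct and follows essentially the same route as the paper: both reduce the problem to the `if' direction, use the same construction $\test'\coloneqq(1+\test)/2$, and transfer computable unboundedness to $\test'$ via Proposition~\ref{prop:computably:unbounded:various}. The only cosmetic difference is that you invoke characterisation~\ref{it:computably:unbounded:fraction} (the ratio form) while the paper invokes~\ref{it:computably:unbounded:real} after halving the growth function to $\realordering\coloneqq(1+\ordering)/2$; both choices exist precisely to absorb the factor $\nicefrac12$, and neither changes the substance of the argument.
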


\begin{proof}
It clearly suffices to prove the `if' part.
To this end, we consider any path~\(\pth\in\pths\) that isn't Schnorr random for~\(\frcstsystem\) and prove that there is some {\comp} positive test supermartingale for~\(\frcstsystem\) that is computably unbounded on~\(\pth\).
Since~\(\pth\in\pths\) isn't Schnorr random for~\(\frcstsystem\), there is a {\comp} test supermartingale~\(\test\) for~\(\frcstsystem\) that is computably unbounded on \(\pth\), meaning that there is some growth function \(\ordering\) such that \(\limsup_{n\to\infty}\sqgroup{\test(\pthto{n})-\ordering(n)}>0\).
For the real growth function \(\realordering\coloneqq(1+\ordering)/2\) and the {\comp} positive test supermartingale \(\test'\coloneqq(1+\test)/2\in\callowabletests\), it then follows that \(\limsup_{n\to\infty}\sqgroup{\test'(\pthto{n})-\realordering(n)}>0\), so \(\test'\) is computably unbounded on~\(\pth\) by Proposition~\ref{prop:computably:unbounded:various}\ref{it:computably:unbounded:real}.
\end{proof}

We denote by
\begin{equation*}
\schnorrrandom[\pth]
\coloneqq\cset{\frcstsystem\in\frcstsystems}
{\text{\(\pth\) is Schnorr random for~\(\frcstsystem\)}}
\end{equation*}
the set of all forecasting systems that make the path~\(\pth\) Schnorr random.

The following results are now fairly immediate.
The first one shows that Schnorr randomness is the weakest form of randomness that we're considering here.

\begin{proposition}\label{prop:random:implies:schnorrandom}
Consider any set~\(\allowables\) of allowable test processes, any forecasting system~\(\frcstsystem\colon\sits\to\imprecisefrcsts\) and any path~\(\pth\in\pths\).
Then if \(\pth\) is \(\allowables\)-random for~\(\frcstsystem\), it is also Schnorr random for~\(\frcstsystem\), and therefore \(\random[\pth]{\allowables}\subseteq\schnorrrandom\).
\end{proposition}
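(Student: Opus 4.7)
The plan is to argue by contraposition, showing that if \(\pth\) fails to be Schnorr random for \(\frcstsystem\), then it also fails to be \(\allowables\)-random for \(\frcstsystem\). The crucial structural fact I will exploit is the standing hypothesis on every admissible set \(\allowables\) of allowable test processes, namely that it contains \(\poscallowables\), the (countable) set of all {\comp} positive test processes. Consequently, every {\comp} positive test supermartingale for \(\frcstsystem\) automatically lies in \(\allowabletests[\frcstsystem]\), via the inclusion \(\poscallowabletests[\frcstsystem]\subseteq\allowabletests[\frcstsystem]\).

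The argument is then a short chain. First, assuming \(\pth\) is not Schnorr random for \(\frcstsystem\), I invoke Proposition~\ref{prop:schnorrwithpositiveT} to extract a {\comp} \emph{positive} test supermartingale \(\test\in\callowabletests[\frcstsystem]\) that is computably unbounded on \(\pth\). The restriction to positive witnesses given by this proposition is exactly what makes the argument go through, because the hypothesis on \(\allowables\) guarantees membership only for positive {\comp} test processes, not for arbitrary {\comp} ones.

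Second, Proposition~\ref{prop:computably:unbounded:implies:unbounded} turns the computable unboundedness of the real sequence \(n\mapsto\test(\pthto{n})\) into ordinary unboundedness above, so \(\sup_{n\in\naturals}\test(\pthto{n})=\infty\). Since \(\test\) is a {\comp} positive test supermartingale for \(\frcstsystem\), it belongs to \(\poscallowabletests[\frcstsystem]\subseteq\allowabletests[\frcstsystem]\). Thus \(\test\) is an allowable test supermartingale for \(\frcstsystem\) that is unbounded on \(\pth\), which directly contradicts Definition~\ref{def:randomness}. Hence \(\pth\) is not \(\allowables\)-random for \(\frcstsystem\), as required.

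There is essentially no technical obstacle: the proof is just a matter of splicing together Proposition~\ref{prop:schnorrwithpositiveT} and Proposition~\ref{prop:computably:unbounded:implies:unbounded} and then using the inclusion \(\poscallowables\subseteq\allowables\) built into the definition of an admissible set of allowable test processes. The one point to be careful about is to retain the positive variant of the test supermartingale throughout, since the hypothesis on \(\allowables\) does not in general provide \(\callowables\subseteq\allowables\); without Proposition~\ref{prop:schnorrwithpositiveT} this last passage would not be automatic.
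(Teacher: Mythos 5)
Your proof is correct and follows essentially the same route as the paper's: both reduce to \(\poscallowables\) via the standing inclusion \(\poscallowables\subseteq\allowables\), invoke Proposition~\ref{prop:schnorrwithpositiveT} to obtain a positive {\comp} test supermartingale that is computably unbounded, and then apply Proposition~\ref{prop:computably:unbounded:implies:unbounded}. Your remark about why the positivity supplied by Proposition~\ref{prop:schnorrwithpositiveT} is essential (since \(\callowables\subseteq\allowables\) need not hold) is a nice explicit articulation of a subtlety the paper leaves implicit.
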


\begin{proof}
It follows from Proposition~\ref{prop:nestedallowables} and \(\poscallowables\subseteq\allowables\) that it suffices to give a proof for the case that~\(\allowables=\poscallowables\).
Assume that \(\pth\) isn't Schnorr random for~\(\frcstsystem\).
Then it follows from Proposition~\ref{prop:schnorrwithpositiveT} that there is some positive {\comp} test supermartingale~\(\test\in\callowabletests[\frcstsystem]\) for~\(\frcstsystem\) that is computably unbounded on~\(\pth\).
But then Proposition~\ref{prop:computably:unbounded:implies:unbounded} implies that \(\test\) is also unbounded on~\(\pth\), and therefore \(\pth\) isn't \(\poscallowables\)-random for~\(\frcstsystem\).
\end{proof}
\noindent Together with Equation~\eqref{eq:randomness:order}, this result tells us that
\begin{equation}\label{eq:randomness:order:with:schnorr}
\mlrandom[\pth]\subseteq\multmlrandom[\pth]\subseteq\poscrandom[\pth]=\crandom[\pth]\subseteq\schnorrrandom[\pth].
\end{equation}

\begin{proposition}\label{prop:vacuous:schnorrrandom}
All paths are Schnorr random for the vacuous forecasting system, so \(\vacfrcstsystem\in\schnorrrandom\) for all~\(\pth\in\pths\).
\end{proposition}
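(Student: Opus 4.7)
The plan is to leverage the characterisation of supermartingales for the vacuous forecasting system that was already extracted in the proof of Proposition~\ref{prop:vacuous}. Recall that for $\vacfrcstsystem(\sit)=\frcsts$ we have $\uex_{\vacfrcstsystem(\sit)}(f)=\max\{f(0),f(1)\}$, so the supermartingale inequality~\eqref{eq:supermartingale} forces $\adddelta\supermartin(\sit)\leq 0$ in every situation. Hence every supermartingale for $\vacfrcstsystem$ is non-increasing along every path, and in particular every test supermartingale $\test\in\callowabletests[\vacfrcstsystem]$ (computable or not) satisfies $\test(\pthto{n})\leq\test(\init)=1$ for every $n\in\naturalswithzero$ and every $\pth\in\pths$.

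From there the Schnorr-randomness condition in Definition~\ref{def:schnorrrandomness} essentially reduces to a triviality. Fix any path $\pth\in\pths$, any computable test supermartingale $\test\in\callowabletests[\vacfrcstsystem]$, and any growth function $\ordering$. Since $\test(\pthto{n})\leq 1$ for all $n$ and $\ordering(n)\to\infty$ by unboundedness and monotonicity of $\ordering$, we get
\begin{equation*}
\limsup_{n\to\infty}\sqgroup[\big]{\test(\pthto{n})-\ordering(n)}\leq\limsup_{n\to\infty}\sqgroup[\big]{1-\ordering(n)}=-\infty\leq 0,
\end{equation*}
so $\test$ is not computably unbounded on $\pth$. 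By Definition~\ref{def:schnorrrandomness}, this means that $\pth$ is Schnorr random for $\vacfrcstsystem$, i.e.\ $\vacfrcstsystem\in\schnorrrandom[\pth]$.

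There is no real obstacle here: the statement is a direct corollary of the characterisation of $\vacfrcstsystem$-supermartingales used in Proposition~\ref{prop:vacuous}, together with the unboundedness of any growth function. In fact, the proof works verbatim for \emph{any} notion of randomness considered in the paper (e.g.\ $\allowables$-randomness for arbitrary $\allowables$), since the key ingredient---that every test supermartingale for $\vacfrcstsystem$ is bounded above by $1$ on every path---does not depend on the computability class of $\test$. This provides a Schnorr-randomness counterpart of Proposition~\ref{prop:vacuous}, consistent with the general inclusion $\random[\pth]{\allowables}\subseteq\schnorrrandom[\pth]$ already observed in Proposition~\ref{prop:random:implies:schnorrandom}.
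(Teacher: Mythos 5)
Your argument is correct, but it takes a more self-contained route than the paper's. The paper's own proof is a two-line appeal: it simply combines Proposition~\ref{prop:vacuous} (all paths are $\allowables$-random for $\vacfrcstsystem$) with the general implication in Proposition~\ref{prop:random:implies:schnorrandom} ($\allowables$-randomness implies Schnorr randomness), so the result is immediate. You instead re-derive the key structural fact---that supermartingales for $\vacfrcstsystem$ are exactly the non-increasing real processes, hence every test supermartingale stays $\leq 1$ on every path---and then finish the Schnorr case directly by noting that a growth function is unbounded, so $\limsup_n\sqgroup[\big]{\test(\pthto{n})-\ordering(n)}=-\infty$. Both approaches rest on the same observation about $\uex_{\frcsts}=\max$; the paper factors it through two existing lemmas (shorter, more modular, and automatically inherits Proposition~\ref{prop:schnorrwithpositiveT}'s reduction to positive supermartingales), while your direct version is slightly longer but makes the mechanism transparent and, as you note, is uniform across all the randomness notions in the paper without needing Proposition~\ref{prop:random:implies:schnorrandom}.
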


\begin{proof}
This is an immediate consequence of Propositions~\ref{prop:vacuous} and~\ref{prop:random:implies:schnorrandom}.
\end{proof}

\begin{proposition}\label{prop:nestedfrcstsystems:schnorrrandom}
Let \(\pth\) be Schnorr random for a forecasting system~\(\frcstsystem\).
Then \(\pth\) is also Schnorr random for any forecasting system~\(\frcstsystem^*\) such that \(\frcstsystem\subseteq\frcstsystem^*\).
\end{proposition}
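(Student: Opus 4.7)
The plan is to mirror, almost verbatim, the argument used for Proposition~\ref{prop:nestedfrcstsystems}, since Schnorr randomness is defined via the very same class of test supermartingales (namely \(\callowabletests[\frcstsystem]\)) as computable randomness, only with a stronger notion of ``unbounded'' (computably unbounded rather than merely unbounded). The entire work is already done by the monotonicity of the supermartingale class in the forecasting system.

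First I would observe that \(\frcstsystem\subseteq\frcstsystem^*\) means \(\frcstsystem(\sit)\subseteq\frcstsystem^*(\sit)\) for every \(\sit\in\sits\). Since the upper expectation \(\uex_{I}(f)=\max_{p\in I}\ex_p(f)\) is non-decreasing in the interval~\(I\), we get \(\uex_{\frcstsystem(\sit)}(g)\leq\uex_{\frcstsystem^*(\sit)}(g)\) for every gamble~\(g\) and every~\(\sit\in\sits\). Applying this with \(g=\adddelta\supermartin(\sit)\) shows that any supermartingale for~\(\frcstsystem^*\) (in the sense of Equation~\eqref{eq:supermartingale}) is automatically a supermartingale for~\(\frcstsystem\); hence \(\supermartins[\frcstsystem^*]\subseteq\supermartins[\frcstsystem]\), and intersecting both sides with the countable set \(\callowables\) of \comp{} test processes yields the key containment
\[
\callowabletests[\frcstsystem^*]=\callowables\cap\testsupermartins[\frcstsystem^*]\subseteq\callowables\cap\testsupermartins[\frcstsystem]=\callowabletests[\frcstsystem].
\]

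The proof then closes itself. Assume \(\pth\) is Schnorr random for~\(\frcstsystem\), so that by Definition~\ref{def:schnorrrandomness} no \(\test\in\callowabletests[\frcstsystem]\) is computably unbounded on~\(\pth\). Given any \(\test^*\in\callowabletests[\frcstsystem^*]\), the containment above places \(\test^*\) in \(\callowabletests[\frcstsystem]\), so \(\test^*\) is not computably unbounded on~\(\pth\) either. By Definition~\ref{def:schnorrrandomness}, this is precisely Schnorr randomness of~\(\pth\) for~\(\frcstsystem^*\).

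There is really no obstacle here: unlike later results in the paper, nothing about growth functions, computability of \(\frcstsystem^*\), or subtle approximation arguments enters. The only thing one must be slightly careful about is that the monotonicity of \(\uex_{I}\) in~\(I\) is pointwise and unconditional, so that the inclusion \(\supermartins[\frcstsystem^*]\subseteq\supermartins[\frcstsystem]\) holds verbatim, after which the proof is a one-line invocation of the definition, entirely parallel to the proof of Proposition~\ref{prop:nestedfrcstsystems}.
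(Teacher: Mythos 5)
Your proof is correct and follows essentially the same route as the paper's: both rest on the containment \(\callowabletests[\frcstsystem^*]\subseteq\callowabletests[\frcstsystem]\) implied by \(\frcstsystem\subseteq\frcstsystem^*\), and then invoke Definition~\ref{def:schnorrrandomness}. You simply spell out the monotonicity argument behind the containment in more detail, where the paper states it in one line.
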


\begin{proof}
Since \(\frcstsystem\subseteq\frcstsystem^*\) implies that \(\callowabletests[\frcstsystem^*]\subseteq\callowabletests[\frcstsystem]\), this follows trivially from Definition~\ref{def:schnorrrandomness}.
\end{proof}

\section{Consistency results}\label{sec:consistency}
We now turn to a number of important consistency results for the various randomness notions we have introduced.
In the rest of this section, unless explicitly mentioned to the contrary, \(\allowables\) is an arbitrary but fixed set of allowable test processes.

\subsection{All paths are almost surely random.}
We first show that any Forecaster who specifies a forecasting system is consistent in the sense that he believes himself to be well-calibrated: in the imprecise probability tree generated by his own forecasts, almost all paths will be random, so he is `almost sure' that Sceptic will not be able to become infinitely rich by exploiting his---Forecaster's---forecasts.

\begin{theorem}[The well-calibrated imprecise Bayesian; strong version]\label{thm:consistency}
Consider any forecasting system~\(\frcstsystem\colon\sits\to\imprecisefrcsts\).
Then almost all paths are \(\allowables\)-random for~\(\frcstsystem\) in the imprecise probability tree that corresponds to~\(\frcstsystem\).
As a consequence, almost all paths are Schnorr random for~\(\frcstsystem\) in the imprecise probability tree corresponding to~\(\frcstsystem\).
\end{theorem}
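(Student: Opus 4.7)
Fix a forecasting system \(\frcstsystem\). The set of paths that fail to be \(\allowables\)-random for~\(\frcstsystem\) is
\begin{equation*}
U \coloneqq \bigcup_{\test \in \allowabletests[\frcstsystem]} \set[\big]{\pth \in \pths : \sup_{n \in \naturalswithzero} \test(\pthto{n}) = \infty},
\end{equation*}
which is a countable union because \(\allowabletests[\frcstsystem]\) is countable. It suffices to prove \(\uglobalprob(U) = 0\); the Schnorr consequence then follows at once from Proposition~\ref{prop:random:implies:schnorrandom}, since every path that is not Schnorr random for~\(\frcstsystem\) necessarily lies in~\(U\).

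The strategy combines a Ville-type inequality for a single test supermartingale with a countable summation of witnesses. For the first ingredient, given any test supermartingale~\(\test\) and real~\(c > 0\), I would consider the process~\(\test^{(c)}\) obtained by stopping~\(\test\) upon its first excursion above~\(c\). Because stopping preserves the supermartingale property, \(\test^{(c)}\) is a non-negative supermartingale for~\(\frcstsystem\) with \(\test^{(c)}(\init) = 1\) and \(\liminf_n \test^{(c)}(\pthto{n}) \geq c\) on the event \(\set{\pth : \sup_n \test(\pthto{n}) \geq c}\). Inserting~\(\test^{(c)}/c\) into the defining infimum in~\eqref{eq:tree:upper:expectation} gives \(\uglobalprob\group[\big]{\set{\pth : \sup_n \test(\pthto{n}) \geq c}} \leq \nicefrac{1}{c}\).

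For the summation step, fix \(\epsilon > 0\), enumerate \(\allowabletests[\frcstsystem] = \set{\test_k : k \in \naturals}\), put \(c_k \coloneqq \nicefrac{2^k}{\epsilon}\), and form the candidate
\begin{equation*}
M \coloneqq \sum_{k=1}^{\infty} \frac{\epsilon}{2^k}\,\test_k^{(c_k)}.
\end{equation*}
Provided \(M(\sit)\) is real for every~\(\sit \in \sits\), \(M\) is a non-negative supermartingale for~\(\frcstsystem\) with \(M(\init) = \epsilon\): the supermartingale inequality reduces, because \(\outcomes\) is binary and all summands are non-negative, to the interchange \(\sup_{p\in\frcstsystem(\sit)}\sum_k\ex_p\group{M_k(\sit\cdot)} \leq \sum_k\uex_{\frcstsystem(\sit)}\group{M_k(\sit\cdot)}\), where~\(M_k\) denotes the \(k\)-th summand. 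Moreover, for any~\(\pth \in U\) there is some~\(k\) with \(\sup_n \test_k(\pthto{n}) = \infty\), and then \(\test_k^{(c_k)}(\pthto{n}) \geq c_k\) from some~\(n\) on, so \(\liminf_n M(\pthto{n}) \geq (\epsilon/2^k)\,c_k = 1\). The definition of~\(\uglobalprob\) then yields \(\uglobalprob(U) \leq M(\init) = \epsilon\), and letting \(\epsilon \to 0\) gives \(\uglobalprob(U) = 0\).

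The main obstacle is ensuring that \(M\) is real-valued in every situation. A test supermartingale can jump to an arbitrarily large value at a child~\(\sit x\) of~\(\sit\) whenever the transition to~\(\sit x\) has upper probability zero---that is, whenever \(\ufrcstsystem(\sit) = 0\) and \(x = 1\), or \(\lfrcstsystem(\sit) = 1\) and \(x = 0\)---so the stopped processes~\(\test_k^{(c_k)}\) need not be uniformly bounded in~\(k\) at such situations, and the series defining~\(M\) can diverge. I would handle this by observing that any situation~\(\sit\) with \(M(\sit) = \infty\) is reachable only after traversing at least one such degenerate transition, so the cylinder~\(\exact{\sit}\) has upper probability zero; the countable union of all these cylinders is therefore itself \(\uglobalprob\)-null and can be absorbed into the exceptional set without affecting the conclusion, while on its complement \(M\) is real-valued and the argument above applies unchanged.
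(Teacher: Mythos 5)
Your overall plan is the right one, and coincides with the paper's: aggregate the countably many allowable test supermartingales into a single test supermartingale with small initial value whose \(\liminf\) dominates the indicator of the set of non-random paths, and then read off a bound from the defining infimum in Equation~\eqref{eq:tree:upper:expectation}. But there is a genuine gap precisely at the spot you flag. You stop each \(\test_k\) at its own level \(c_k\) \emph{before} summing; a stopped test supermartingale is not bounded by its stopping level (it can overshoot arbitrarily far across a zero-upper-probability transition), so the series \(M=\sum_k(\epsilon/2^k)\test_k^{(c_k)}\) can indeed equal \(\infty\) at some situations, and then \(M\) is simply not a real process and cannot be inserted into the infimum defining \(\uglobalprob\)---not even ``away from a null set.'' Declaring the offending cylinders null and saying the ``argument applies unchanged on the complement'' does not produce a valid element of \(\supermartins[\frcstsystem]\); you would need to actually modify \(M\), for instance by stopping it at the first situation where it would become infinite and then checking that the supermartingale inequality survives at the zero-upper-probability edge, but you do not carry this out.

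The paper sidesteps the issue entirely by reversing the order of operations: it forms the (possibly extended-real-valued) weighted sum \(\process\coloneqq\sum_k w_k\test_k\) of the \emph{unstopped} processes, and only then stops \(\process\) once at a single level \(\alpha>1\). The stopped process \(\test^{\parw{\alpha}}\) is by construction bounded by \(\alpha\), hence real-valued everywhere; it is checked directly to be a test supermartingale with \(\liminf\test^{\parw{\alpha}}\geq\alpha\ind{A^c}\), whence \(\uglobalprob(A^c)\leq\nicefrac1\alpha\to0\). ``Sum first, stop once'' renders both your Ville-type warm-up lemma and the null-set excision unnecessary, and it is the cleaner route you should take.
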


\begin{proof}
We first prove the result for~\(\allowables\)-randomness.
Consider the event
\begin{equation*}
A\coloneqq\cset{\pth\in\pths}{\pth\text{ is \(\allowables\)-random for~\(\frcstsystem\)}},
\end{equation*}
then we have to prove that \(\lglobalprob(A)=1\), or equivalently, that \(\uglobalprob(A^c)=0\): the non-random paths belong to a null set.
It follows from the assumptions that, for every \(\pth\) in~\(A^c\), there is some allowable test supermartingale~\(\test_\pth\in\allowabletests[\frcstsystem]\) that becomes unbounded on~\(\pth\).
Let \((\test_k)_{k\in\naturals}\) be any enumeration of the countable set \(\allowabletests[\frcstsystem]=\testallowables[\frcstsystem]\) of allowable test supermartingales, and consider any collection of positive real weights \((w_k)_{k\in\naturals}\) such that \(\sum_{k\in\naturals}w_k=1\).
We use these to construct the non-negative extended real process \(\process\coloneqq\sum_{k\in\naturals}w_k\test_k\),  with \(\process(\init)=1\).

We now construct, for any real~\(\alpha>1\), a test supermartingale \(\test^{\parw{\alpha}}\) for~\(\frcstsystem\).
Let
\begin{equation*}
\test^{\parw{\alpha}}(\sit)
\coloneqq
\begin{cases}
\alpha&\text{if \(\process(\altsit)\geq\alpha\) for some precursor~\(\altsit\precedes\sit\) of~\(\sit\)}\\
\process(\sit)&\text{if \(\process(\altsit)<\alpha\) for all precursors~\(\altsit\precedes\sit\) of~\(\sit\)}
\end{cases}
\quad\text{for all~\(\sit\in\sits\)},
\end{equation*}
It is a matter of direct verification to show that \(\test^{\parw{\alpha}}\) is indeed a test supermartingale for~\(\frcstsystem\).
It is clear that, for any~\(\pth\in A^c\), some~\(\test_k\) becomes unbounded on~\(\pth\), and therefore~\(\process\) will eventually exceed~\(\alpha\) on~\(\pth\).
Hence, \(\lim_{n\to\infty}\test^{(\alpha)}(\pthto{n})=\alpha\) for all \(\pth\in A^c\).
This implies that \(\liminf\test^{\parw{\alpha}}(\omega)\geq\alpha\ind{A^c}(\pth)\) for all \(\pth\in\pths\), and therefore
\begin{equation*}
0
\leq\uglobalprob(A^c)
=\uglobal(\ind{A^c})
=\frac1{\alpha}\uglobal(\alpha\ind{A^c})
\leq\frac1{\alpha}\test^{\parw{\alpha}}(\init)
=\frac1{\alpha}.
\end{equation*}
Here, the first inequality follows from the property~\ref{axiom:lower:upper:bounds} of the upper expectation~\(\uglobal\), the second equality from the property~\ref{axiom:lower:upper:homogeneity}, the second inequality from Equation~\eqref{eq:tree:upper:expectation}, and the last equality from the fact that \(\test^{\parw{\alpha}}\) is a test supermartingale.
Since this statement holds for all real~\(\alpha>0\), this implies that, indeed, \(\uglobalprob(A^c)=0\).

To prove the result for Schnorr randomness, it now suffices to recall Proposition~\ref{prop:random:implies:schnorrandom} and the monotonicity of the upper expectation~\(\uglobal\) [property~\ref{axiom:lower:upper:monotonicity}].
\end{proof}
\noindent
This result is quite powerful, and it guarantees in particular that there always are random paths, for any forecasting system.

\begin{corollary}\label{cor:consistency}
For any forecasting system~\(\frcstsystem\), there is at least one path that is \(\allowables\)-random, and therefore also Schnorr random, for~\(\frcstsystem\).
\end{corollary}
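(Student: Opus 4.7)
The plan is to derive this as an immediate consequence of Theorem~\ref{thm:consistency}. That theorem tells us that the event
\[
A\coloneqq\cset{\pth\in\pths}{\pth\text{ is \(\allowables\)-random for }\frcstsystem}
\]
satisfies \(\uglobalprob(A^c)=0\). To conclude that \(A\) itself is non-empty, I would argue by contradiction: if \(A=\varnothing\), then \(A^c=\pths\), and the indicator \(\ind{A^c}\) is the constant gamble \(1\) on~\(\pths\). Applying property~\ref{axiom:lower:upper:bounds} from Proposition~\ref{prop:properties:of:global:expectations} to the constant gamble~\(1\) yields \(\uglobal(1)\geq\inf 1=1\), so \(\uglobalprob(\pths)=\uglobal(\ind{\pths})\geq 1\). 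This contradicts \(\uglobalprob(A^c)=0\), so \(A\) must contain at least one path.

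The Schnorr randomness part is then immediate from Proposition~\ref{prop:random:implies:schnorrandom}, which guarantees \(\random[\pth]{\allowables}\subseteq\schnorrrandom\); hence any path witnessing \(\allowables\)-randomness also witnesses Schnorr randomness. Alternatively, one could invoke the second assertion of Theorem~\ref{thm:consistency} directly, together with the same contradiction argument applied to the set of Schnorr-random paths.

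There is no genuine obstacle here, since the work has already been done in Theorem~\ref{thm:consistency}. The only subtle point is the move from \textquotedblleft almost all\textquotedblright\ to \textquotedblleft at least one\textquotedblright: in the imprecise setting one has to verify that a null event cannot exhaust the whole sample space, which is precisely what the coherence bound \(\uglobal(1)=1\) provides. Everything else is bookkeeping.
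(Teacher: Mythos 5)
Your proposal is correct and follows essentially the same argument as the paper: invoke Theorem~\ref{thm:consistency} to get \(\uglobalprob(A^c)=0\), argue by contradiction that \(A=\varnothing\) would force \(A^c=\pths\) and hence \(\uglobalprob(\pths)=0\), contradict this via the coherence bound~\ref{axiom:lower:upper:bounds}, and then pass to Schnorr randomness through Proposition~\ref{prop:random:implies:schnorrandom}. The only cosmetic difference is that you derive \(\uglobalprob(\pths)\geq 1\) where the paper states \(\uglobalprob(\pths)=1\); both follow from~\ref{axiom:lower:upper:bounds} and yield the same contradiction.
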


\begin{proof}
We give the proof for~\(\allowables\)-randomness; the result for Schnorr randomness will then follow from Proposition~\ref{prop:random:implies:schnorrandom}.
In the proof of Theorem~\ref{thm:consistency}, we considered the set~\(A\) of all paths that are \(\allowables\)-random for~\(\frcstsystem\), and proved that its complement~\(A^c\) is null for~\(\frcstsystem\), or in other words, that \(\uglobalprob(A^c)=0\).
If, {\itshape ex absurdo}, \(A\) were empty, this would imply that~\(A^c=\pths\) and therefore that~\(\uglobalprob(\pths)=0\).
But it follows from property~\ref{axiom:lower:upper:bounds} that, actually, \(\uglobalprob(\pths)=1\), leading to a contradiction.
\end{proof}

In fact, since Theorem~\ref{thm:consistency} tells us that the set of all random paths for a forecasting system has lower probability one, there are many such random paths in a `measure-theoretic' sense.
But we will see in Section~\ref{sec:meagreness} that, in a specific topological sense, random paths are few, as they typically constitute only a meagre set.
This is a known result for precise randomness, that was, as far as we can judge, first formulated as such in the context of a much more encompassing discussion on the nature of randomness by Muchnik, Semenov and Uspensky \cite{muchnik1998:metaphysics:of:randomness}.
It also appeared in a related form in the wake of discussions \cite{schervisch1985:discussion:of:dawid,oakes1985:selfcalibratingpriors,dawid1985:selfcalibratingpriors:comment,schervisch1985:selfcalibratingpriors:comment,belot2013:failure:of:calibration} of Philip Dawid's papers on calibration \cite{dawid1982:well:calibrated:bayesian,dawid1985:cbep}, and was foreshadowed by some of Terrence Fine's results \cite{fine1970}.

\subsection{The well-calibrated imprecise Bayesian}
We now turn to a weaker consistency result that deals with limits (inferior and superior) of relative frequencies.
We will see that it generalises to interval forecasts the arguments and conclusions in an earlier paper on calibration by Philip Dawid \cite{dawid1982:well:calibrated:bayesian}.

We start with any real process~\(\process\colon\sits\to\reals\).
We consider any so-called \emph{selection process}~\(\selection\colon\sits\to\outcomes\), and use it to define the real process~\(\average{\process}\colon\sits\to\reals\) as follows:
\begin{equation*}
\average{\process}(\sit)
\coloneqq
\begin{cases}
0
&\text{if \(\sum_{k=0}^{\dist{\sit}-1}\selection(\sitto{k})=0\)}\\
\dfrac{\sum_{k=0}^{\dist{\sit}-1}\selection(\sitto{k})
\sqgroup{\adddelta\process(\sitto{k})(\sitat{k+1})}}
{\sum_{k=0}^{\dist{\sit}-1}\selection(\sitto{k})}
&\text{if \(\sum_{k=0}^{\dist{\sit}-1}\selection(\sitto{k})>0\)}
\end{cases}
\quad\text{for all~\(\sit\in\sits\).}
\end{equation*}
In words, \(\average{\process}(\sit)\) is the arithmetic average of the process differences~\(\adddelta\process(\sitto{k})\) along the path segment~\(\sit\), where only the actually selected precursor situations~\(\sitto{k}\) with \(\selection(\sitto{k})=1\) are taken into account.

As a particular example that will be useful further on, fix any gamble~\(h\) on~\(\outcomes\), and consider the real process~\(\submartin^\frcstsystem_h\) defined by
\begin{equation*}
\submartin^\frcstsystem_h(\sit)
\coloneqq\smashoperator{\sum_{k=1}^{\dist{\sit}}}\sqgroup[\big]{h(\sitat{k})-\lex_{\frcstsystem(\sitto{k-1})}(h)}
\text{ for all~\(\sit\in\sits\)}.
\end{equation*}
On the one hand, we find for the corresponding process difference~\(\adddelta\submartin^\frcstsystem_h\) that
\begin{equation}\label{eq:lln:local:increments}
\adddelta\submartin^\frcstsystem_h(\sit)(\xval)
=\submartin^\frcstsystem_h(\sit\xval)-\submartin^\frcstsystem_h(\sit)
=h(\xval)-\lex_{\frcstsystem(\sit)}(h)
\text{ for all~\(\xval\in\outcomes\)},
\end{equation}
so \(\adddelta\submartin^\frcstsystem_h(\sit)=h-\lex_{\frcstsystem(\sit)}(h)\), and therefore we find on the other hand for its lower expectations in the imprecise probability tree that
\begin{equation}\label{eq:lln:local:increments:expectation}
\lex_{\frcstsystem(\sit)}(\adddelta\submartin^\frcstsystem_h(\sit))
=\lex_{\frcstsystem(\sit)}(h)-\lex_{\frcstsystem(\sit)}(h)
=0,
\end{equation}
using the coherence property \ref{axiom:coherence:constantadditivity} for the first equality.
We conclude that \(\submartin^\frcstsystem_h\) is a submartingale for~\(\frcstsystem\).
Its process differences~\(\adddelta\submartin^\frcstsystem_h(\sit)\) are furthermore uniformly bounded, for instance by the variation (semi)norm~\(\varnorm{h}\) of~\(h\):
\begin{equation}\label{eq:uniform:bound:on:increments}
\abs{\adddelta\submartin^\frcstsystem_h(\sit)}\leq\max h-\min h\eqqcolon\varnorm{h}
\text{ for all \(\sit\in\sits\)},
\end{equation}
where the inequality follows from Equation~\eqref{eq:lln:local:increments} and the coherence property \ref{axiom:coherence:bounds}.
Observe by the way that in this particular case, for all \(\sit\in\sits\):
\begin{equation}\label{eq:average:over:selected:increments}
\average{\submartin^\frcstsystem_h}(\sit)
=
\begin{cases}
0
&\text{if \(\sum_{k=0}^{\dist{\sit}-1}\selection(\sitto{k})=0\)}\\
\dfrac{\sum_{k=0}^{\dist{\sit}-1}\selection(\sitto{k})
\sqgroup[\big]{h(\sitat{k+1})-\lex_{\frcstsystem(\sitto{k})}(h)}}
{\sum_{k=0}^{\dist{\sit}-1}\selection(\sitto{k})}
&\text{if \(\sum_{k=0}^{\dist{\sit}-1}\selection(\sitto{k})>0\)}.
\end{cases}
\end{equation}

We can now apply our law of large numbers for uniformly bounded submartingale differences \cite[Theorem~7]{cooman2015:markovergodic} to get to the following result, which generalises Philip Dawid's well-known consistency result for Bayesian Forecasters~\cite[General Calibration Theorem]{dawid1982:well:calibrated:bayesian}, to deal with interval forecasts.
In its formulation, we use the following formalised version of notation that we introduced earlier: for any~\(n\in\naturals\), we consider the variables---maps defined on the sample space~\(\pths\)---\(\randomoutcometo[n]\) and \(\randomoutcome[n]\), defined by
\begin{equation*}
\randomoutcometo[n]\colon\pths\to\outcomes^n\colon\pth\mapsto\randomoutcometo[n](\pth)\coloneqq\pthto{n}
\text{ and }
\randomoutcome[n]\colon\pths\to\outcomes\colon\pth\mapsto\randomoutcome[n](\pth)\coloneqq\pthat{n},
\end{equation*}
where we also let, by convention, \(\randomoutcometo[0](\pth)=\randomoutcome[0](\pth)\coloneqq\init\) for all~\(\pth\in\pths\).

\begin{theorem}[The well-calibrated imprecise Bayesian]\label{thm:well:calibrated}
Let \(\frcstsystem\colon\sits\to\imprecisefrcsts\) be any forecasting system, let \(\selection\colon\sits\to\outcomes\) be any selection process, and let \(h\) be any gamble on~\(\outcomes\).
If \(\lim_{n\to\infty}\sum_{k=0}^{n-1}\selection(\randomoutcometo[k])=\infty\) then also \(\liminf_{n\to\infty}\average{\submartin^\frcstsystem_h}(\randomoutcometo[n])\geq0\) almost surely for the forecasting system~\(\frcstsystem\).
\end{theorem}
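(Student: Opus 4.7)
The plan is to reduce the claim to the game-theoretic law of large numbers for submartingales with uniformly bounded differences, cited in the theorem statement as Theorem~7 of~\cite{cooman2015:markovergodic}. The key idea is to work not with \(\submartin^\frcstsystem_h\) itself, but with an auxiliary \emph{selected} submartingale whose differences vanish exactly where \(\selection=0\).

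Concretely, I would first introduce the real process \(\submartin'\) by setting \(\submartin'(\init)\coloneqq0\) and
\[
\adddelta\submartin'(\sit)(x)\coloneqq\selection(\sit)\sqgroup{h(x)-\lex_{\frcstsystem(\sit)}(h)}\text{ for all }\sit\in\sits,\,x\in\outcomes.
\]
Because \(\selection(\sit)\in\set{0,1}\), the argument behind Equations~\eqref{eq:lln:local:increments:expectation} and~\eqref{eq:uniform:bound:on:increments}, together with the coherence properties~\ref{axiom:coherence:homogeneity} and~\ref{axiom:coherence:constantadditivity}, shows that \(\lex_{\frcstsystem(\sit)}(\adddelta\submartin'(\sit))=0\) and \(\abs{\adddelta\submartin'(\sit)}\leq\selection(\sit)\varnorm{h}\leq\varnorm{h}\). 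Hence \(\submartin'\) is a submartingale for~\(\frcstsystem\) with uniformly bounded differences, and a direct telescoping calculation, compared with~\eqref{eq:average:over:selected:increments}, matches it up with the selected average:
\[
\average{\submartin^\frcstsystem_h}(\sit)=\frac{\submartin'(\sit)}{\sum_{k=0}^{\dist{\sit}-1}\selection(\sitto{k})}\text{ whenever the denominator is positive.}
\]
An application of Theorem~7 of~\cite{cooman2015:markovergodic} to~\(\submartin'\) should then yield that, almost surely for~\(\frcstsystem\), on every path \(\pth\) with \(\sum_{k=0}^{n-1}\selection(\pthto{k})\to\infty\) the ratio \(\submartin'(\pthto{n})/\sum_{k=0}^{n-1}\selection(\pthto{k})\) has non-negative lower limit, which by the identity above is exactly the desired conclusion.

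The main obstacle is making sure the cited LLN really does deliver the normalisation by \(\sum_{k=0}^{n-1}\selection(\pthto{k})\), since this sum may grow arbitrarily more slowly than~\(n\); if Theorem~7 of~\cite{cooman2015:markovergodic} only yields \(\liminf\submartin'(\pthto{n})/n\geq0\), that is too weak to conclude. Should that be the case, a Hoeffding-style fallback delivers what is needed directly. For each rational \(\epsilon>0\) and small enough \(\lambda>0\), one checks via Hoeffding's lemma applied to every precise \(p\in\frcstsystem(\sit)\), then combined through the representation~\eqref{eq:local:upper} for~\(\uex_{\frcstsystem(\sit)}\), that
\[
\multprocess(\sit)(x)\coloneqq\exp\set[\big]{-\lambda\selection(\sit)\sqgroup{h(x)-\lex_{\frcstsystem(\sit)}(h)+\epsilon/2}}
\]
is a supermartingale multiplier for~\(\frcstsystem\). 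The associated test supermartingale \(\mint\) satisfies \(\mint(\pthto{n})=\exp\set[\big]{-\lambda\sqgroup{\submartin'(\pthto{n})+(\epsilon/2)\sum_{k=0}^{n-1}\selection(\pthto{k})}}\), so it becomes unbounded on every path where \(\sum_{k=0}^{n-1}\selection(\pthto{k})\to\infty\) and \(\submartin'(\pthto{n})<-\epsilon\sum_{k=0}^{n-1}\selection(\pthto{k})\) infinitely often. A countable convex combination of such \(\mint\) over rational \(\epsilon>0\), in the spirit of the proof of Theorem~\ref{thm:consistency}, then shows that the union of these bad events is null for~\(\frcstsystem\), as required.
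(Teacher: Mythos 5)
Your primary route is essentially the paper's own: establish that the process in question is a submartingale for~\(\frcstsystem\) with uniformly bounded differences and then invoke Theorem~7 of~\cite{cooman2015:markovergodic}, followed by Proposition~4 of the same reference to pass from `strictly almost surely' to `almost surely'. The concern that motivates the rest of your argument is, in fact, unfounded: Theorem~7 of~\cite{cooman2015:markovergodic} is stated directly for a submartingale together with a selection process, and its conclusion is about the selected average~\(\average{\submartin}\)---that is, normalisation is by \(\sum_{k=0}^{n-1}\selection(\randomoutcometo[k])\), not by~\(n\)---so it delivers exactly what is needed when applied to~\(\submartin^{\frcstsystem}_h\) itself. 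This also makes the detour through~\(\submartin'\) unnecessary: since \(\selection^2=\selection\), one has \(\average{\submartin'}=\average{\submartin^{\frcstsystem}_h}\), so applying the LLN to~\(\submartin'\) rather than to~\(\submartin^{\frcstsystem}_h\) gives the same statement with extra work. Your Hoeffding-style fallback is nonetheless a correct and self-contained alternative; it closely parallels the idea behind Lemma~\ref{lem:martinwlln} (which plays this role elsewhere in the paper, using \(\ln(1+x)\geq x-x^2\) instead of Hoeffding's lemma but building the same kind of exponential supermartingale multiplier), and its explicit unbounded test supermartingales reproduce, in effect, the `strictly almost surely' statement that Theorem~7 of~\cite{cooman2015:markovergodic} provides. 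The countable-combination step at the end does require the truncation device used in the proof of Theorem~\ref{thm:consistency} to keep the combined process real-valued, which you correctly gesture at.
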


\begin{proof}
For any submartingale~\(\submartin\) for~\(\frcstsystem\) whose process differences are uniformly bounded, Theorem~7 in Ref.~\cite{cooman2015:markovergodic} states that, strictly almost surely, \(\lim_{n\to\infty}\sum_{k=0}^{n-1}\selection(\randomoutcometo[k])=\infty\) implies that \(\liminf_{n\to\infty}\average{\submartin}(\randomoutcometo[n])\geq0\), where `strictly almost surely' means that there is some test supermartingale that converges to~\(\infty\) on all paths where the statement isn't true.
Furthermore, Proposition~4 in Ref.~\cite{cooman2015:markovergodic} states that any event that holds strictly almost surely, also holds almost surely.
The result therefore follows because, as we have seen in the main text above, \(\submartin^\frcstsystem_h\) is a submartingale for~\(\frcstsystem\) whose process differences are uniformly bounded.
\end{proof}

One important step in the proof of this result---or actually, in the proof of Theorem~7 in Ref.~\cite{cooman2015:markovergodic} on which our proof above relies---is, stripped to its bare essentials, based on a surprisingly elegant and effective idea that goes back to Shafer and Vovk~\cite[Lemma~3.3]{shafer2001}.
We repeat it here, suitably adapted to the present context, in the lemma below, because it will next help us prove a related and equally important result---Theorem~\ref{thm:well:calibrated:general} further on---that will turn out to be crucial for establishing a number of claims in this paper: that randomness is inherently imprecise in Section~\ref{sec:non-stationarity}, and that random paths are few, topologically speaking, in Section~\ref{sec:meagreness}.

\begin{lemma}\label{lem:martinwlln}
Let \(\frcstsystem\colon\sits\to\imprecisefrcsts\) be any forecasting system, and consider any real~\(B>0\) and any~\(0<\xi<\frac{1}{B}\).
Let \(\submartin\) be any submartingale for~\(\frcstsystem\) such that \(\vert\adddelta\submartin\vert\leq B\).
Let \(\selection\colon\sits\to\outcomes\) be any selection process.
Then the real process~\(\process_\submartin\), defined by
\begin{equation}\label{eq:auxiliary:supermartingale}
\process_\submartin(\sit)
\coloneqq\smashoperator{\prod_{k=0}^{\dist{\sit}-1}}
\sqgroup[\big]{1-\xi\selection(\sitto{k})\adddelta\submartin(\sitto{k})(\sitat{k+1})}
\text{ for all~\(\sit\in\sits\)},
\end{equation}
is a positive test supermartingale for~\(\frcstsystem\).
Moreover, if we consider \(0<\epsilon<B\) and \(\xi\coloneqq\frac{\epsilon}{2B^2}\), so \(0<\xi<\frac{1}{2B}\), then
\begin{equation*}
\average{\submartin}(\sit)\leq-\epsilon
\then
\process_\submartin(\sit)
\geq\exp\group[\bigg]{\frac{\epsilon^2}{4B^2}\smashoperator{\sum_{k=0}^{\dist{\sit}-1}}\selection(\sitto{k})},
\text{ for all~\(\sit\in\sits\)}.
\end{equation*}
Finally, if \(\xi\) and \(\adddelta\submartin\) are {\comp} and \(\selection\) is recursive, then \(\process_\submartin\) is {\comp} as well.
\end{lemma}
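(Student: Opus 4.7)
The plan is to verify the three claims in sequence via the multiplier representation. Define the multiplier process $\multprocess$ by $\multprocess(\sit)(x) \coloneqq 1 - \xi\selection(\sit)\adddelta\submartin(\sit)(x)$; the recursion defining a generated test process then gives $\process_\submartin = \mint$. Since $\vert\xi\selection(\sit)\adddelta\submartin(\sit)(x)\vert \leq \xi B < 1$, each factor lies in $(0,2)$, so $\multprocess$ is a bona fide (strictly positive) multiplier process and $\process_\submartin$ is positive with $\process_\submartin(\init) = 1$. For the supermartingale property, identity~\eqref{eq:supermartingale:multiplier:differences} reduces the task to showing $\uex_{\frcstsystem(\sit)}(\multprocess(\sit)) \leq 1$. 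Combining constant additivity~\ref{axiom:coherence:constantadditivity}, non-negative homogeneity~\ref{axiom:coherence:homogeneity} (since $\xi\selection(\sit) \geq 0$) and the local conjugacy in~\eqref{eq:local:upper} yields $\uex_{\frcstsystem(\sit)}(\multprocess(\sit)) = 1 - \xi\selection(\sit)\lex_{\frcstsystem(\sit)}(\adddelta\submartin(\sit)) \leq 1$, where the final inequality uses that $\submartin$ is a submartingale for~$\frcstsystem$.

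For the exponential lower bound, the main analytic input is the elementary inequality $\ln(1-y) \geq -y - y^2$ valid for all $\vert y\vert \leq 1/2$, which is a straightforward Taylor-series comparison. With $y_k \coloneqq \xi\selection(\sitto{k})\adddelta\submartin(\sitto{k})(\sitat{k+1})$, the choice $\xi = \epsilon/(2B^2)$ together with $\epsilon \leq B$ ensures $\vert y_k\vert \leq \xi B \leq 1/2$, so taking logarithms of the product gives $\ln\process_\submartin(\sit) \geq -\sum_k y_k - \sum_k y_k^2$. Using $\selection^2 = \selection$ and $\vert\adddelta\submartin\vert \leq B$, the quadratic term is bounded by $\xi^2 B^2 N(\sit)$, where $N(\sit) \coloneqq \sum_{k=0}^{\dist{\sit}-1}\selection(\sitto{k})$. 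The hypothesis $\average{\submartin}(\sit) \leq -\epsilon$ forces $N(\sit) > 0$ (else $\average{\submartin}(\sit) = 0$), so by the definition of $\average{\submartin}$ the linear term equals $-\xi N(\sit)\average{\submartin}(\sit) \geq \xi\epsilon N(\sit)$. Combining and substituting $\xi = \epsilon/(2B^2)$ collapses $\xi\epsilon - \xi^2 B^2$ to $\epsilon^2/(4B^2)$, so $\ln\process_\submartin(\sit) \geq N(\sit)\epsilon^2/(4B^2)$, and exponentiating gives the claim.

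For computability, the assumed computability of $\xi$ and $\adddelta\submartin$, together with the recursiveness of $\selection$ (viewed as a $\{0,1\}$-valued and therefore computable real process), makes $\multprocess$ a computable multiplier process by the standard arithmetic closure properties recorded in Section~\ref{sec:computability}. Proposition~\ref{prop:computable:from:multiplier}\,(iii) then yields that $\process_\submartin = \mint$ is computable.

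The only substantive step is the quadratic logarithm bound, coupled with the algebraic choice $\xi = \epsilon/(2B^2)$ that optimises $\xi\epsilon - \xi^2 B^2$; everything else is bookkeeping with the coherence axioms and the basic computability-closure results. The construction is recognisable as the game-theoretic analogue of the exponential supermartingale underlying Hoeffding-type concentration inequalities, so I expect the proof to proceed smoothly along these lines.
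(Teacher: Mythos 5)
Your proposal is correct and follows essentially the same route as the paper's proof: define the multiplier process $\multprocess_\submartin = 1 - \xi\selection\adddelta\submartin$, check positivity from $\xi B < 1$, verify the supermartingale-multiplier condition by reducing $\uex_{\frcstsystem(\sit)}(\multprocess_\submartin(\sit))\leq 1$ to the submartingale property of $\submartin$ via coherence and conjugacy, obtain the exponential bound by taking logarithms and applying a quadratic lower bound on $\ln$, and conclude computability from Proposition~\ref{prop:computable:from:multiplier}. The only cosmetic difference is that you phrase the analytic input as $\ln(1-y)\geq -y-y^2$ on $\abs{y}\leq\nicefrac12$ whereas the paper uses the equivalent $\ln(1+x)\geq x-x^2$ on $x>-\nicefrac12$; both suffice because $\abs{\xi\selection\adddelta\submartin}\leq\xi B<\nicefrac12$ under the choice $\xi=\epsilon/(2B^2)$.
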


\begin{proof}
Let
\begin{equation}\label{eq:multprocess:submartingale}
\multprocess_\submartin\coloneqq1-\xi\selection\adddelta\submartin.
\end{equation}
We first show that \(\multprocess_\submartin\) is a positive supermartingale multiplier for~\(\frcstsystem\).
To this end, consider any~\(\sit\in\sits\).
Then on the one hand, it follows from~\(0<\xi B<1\) and \(\vert\adddelta\submartin\vert\leq B\) that \(\multprocess_\submartin(\sit)=1-\xi\selection(\sit)\adddelta\submartin(\sit)\geq 1-\xi B>0\).
On the other hand, since \(\xi>0\) and \(\selection(\sit)\in\outcomes\), we infer from the coherence [\ref{axiom:coherence:constantadditivity} and~\ref{axiom:coherence:homogeneity}] and conjugacy properties of lower and upper expectations that
\begin{align*}
\uex_{\frcstsystem(\sit)}(\multprocess_\submartin(\sit))
&=\uex_{\frcstsystem(\sit)}\group[\big]{1-\xi\selection(\sit)\adddelta\submartin(\sit)}\\
&=1+\uex_{\frcstsystem(\sit)}\group[\big]{-\xi\selection(\sit)\adddelta\submartin(\sit)}
=1-\xi\selection(\sit)\lex_{\frcstsystem(\sit)}\group[\big]{\adddelta\submartin(\sit)}\leq1,
\end{align*}
where the inequality follows from~\(\lex_{\frcstsystem(\sit)}\group{\adddelta\submartin(\sit)}\geq0\), because we assumed that \(\submartin\) is a submartingale for~\(\frcstsystem\).
So, indeed, \(\multprocess_\submartin\) is a positive supermartingale multiplier for~\(\frcstsystem\).

Comparing Equations~\eqref{eq:auxiliary:supermartingale} and~\eqref{eq:multprocess:submartingale}, we see that \(\process_\submartin=\mint[{\multprocess_\submartin}]\), or in other words that \(\process_\submartin\) is generated by the multiplier process~\(\multprocess_\submartin\). 
Hence, \(\process_\submartin(\init)=1\) and, since \(\multprocess_\submartin\) is a positive supermartingale multiplier, \(\process_\submartin=\mint[{\multprocess_\submartin}]\) is a positive supermartingale for~\(\frcstsystem\).
So, indeed, \(\process_\submartin\) is a positive test supermartingale for~\(\frcstsystem\).

For the second statement, consider any~\(0<\epsilon<B\) and let \(\smash{\xi\coloneqq\frac{\epsilon}{2B^2}}\).
This implies that \(0<\xi<\frac{1}{2B}\), so we can already conclude that the first statement of the lemma holds for this particular choice of~\(\xi\).
For all \(\sit\in\sits\) and all real \(K\), since \(\process_\submartin\) and \(1-\xi\selection\adddelta\submartin=\multprocess_\submartin\) are positive, we infer from Equation~\eqref{eq:auxiliary:supermartingale} that
\begin{equation}\label{eq:K}
\process_\submartin(\sit)\geq\exp(K)
\ifandonlyif
\smashoperator{\sum_{k=0}^{\dist{\sit}-1}}\ln\sqgroup[\big]{1-\xi\selection(\sitto{k})\adddelta\submartin(\sitto{k})(\sitat{k+1})}\geq K.
\end{equation}
Since \(\vert\adddelta\submartin\vert\leq B\) and \(0<\epsilon<B\), we find that
\begin{equation}\label{eq:lower:bound:on:the:terms}
-\xi\selection(\sitto{k})\adddelta\submartin(\sitto{k})
\geq-\xi B
=-\frac{\epsilon}{2B}
>-\frac{1}{2}
\text{ for~\(0\leq k\leq\dist{\sit}-1\)}.
\end{equation}
We now restrict our attention to those~\(\sit\in\sits\) for which \(\average{\submartin}(\sit)\leq-\epsilon\).
Since \(\ln(1+x)\ge x-x^2\) for~\(x>-\frac{1}{2}\), we infer from Equation~\eqref{eq:lower:bound:on:the:terms} that
\begin{multline*}
\smashoperator{\sum_{k=0}^{\dist{\sit}-1}}
\ln\sqgroup[\big]{1-\xi\selection(\sitto{k})\adddelta\submartin(\sitto{k})(\sitat{k+1})}\\
\begin{aligned}
&\geq\smashoperator{\sum_{k=0}^{\dist{\sit}-1}}
\sqgroup[\big]{-\xi\selection(\sitto{k})\adddelta\submartin(\sitto{k})(\sitat{k+1})
-\xi^2\selection(\sitto{k})^2(\adddelta\submartin(\sitto{k})(\sitat{k+1}))^2}\\
&=-\xi\average{\submartin}(\sit)\smashoperator{\sum_{k=0}^{\dist{\sit}-1}}\selection(\sitto{k})
-\xi^2\smashoperator{\sum_{k=0}^{\dist{\sit}-1}}
\selection(\sitto{k})(\adddelta\submartin(\sitto{k})(\sitat{k+1}))^2\\
&\geq\xi\epsilon\smashoperator{\sum_{k=0}^{\dist{\sit}-1}}\selection(\sitto{k})
-\xi^2B^2\smashoperator{\sum_{k=0}^{\dist{\sit}-1}}\selection(\sitto{k})\\
&=\xi(\epsilon-\xi B^2)\smashoperator{\sum_{k=0}^{\dist{\sit}-1}}\selection(\sitto{k})
=\frac{\epsilon^2}{4B^2}\smashoperator{\sum_{k=0}^{\dist{\sit}-1}}\selection(\sitto{k}),
\end{aligned}
\end{multline*}
where the first equality holds because \(\selection^2=\selection\).
Choosing \(K\coloneqq\frac{\epsilon^2}{4B^2}\sum_{k=0}^{\dist{\sit}-1}\selection(\sitto{k})\) in Equation~\eqref{eq:K} now completes the proof of the second statement.

We now prove the last statement, dealing with the {\compy} of~\(\process_\submartin\).
Since \(\adddelta\submartin\) and \(\xi\) are assumed to be {\comp} and \(\selection\) is assumed to be recursive, we infer from Equation~\eqref{eq:multprocess:submartingale} that the multiplier process~\(\multprocess_\submartin\) is {\comp} too.
If we now invoke Proposition~\ref{prop:computable:from:multiplier}, we find that \(\process_\submartin=\mint[\multprocess_\submartin]\) is therefore {\comp} as well.
\end{proof}

Theorems~\ref{thm:consistency} and~\ref{thm:well:calibrated} provide statements that hold `almost surely for a forecasting system~\(\frcstsystem\)': any path is \emph{almost surely} random, and the limsup average gain for Sceptic along any path---where the average is taken over any recursive selection of situations---for betting on a fixed gamble with rates provided by Forecaster is \emph{almost surely} non-positive; the corresponding liminf average gain for Forecaster is \emph{almost surely} non-negative.
The following theorem connects these two properties, and at the same time gets rid of their `\emph{almost} sure' flavour: if we concentrate on a \emph{specific} path that is random, then the limsup average gain for Sceptic along \emph{that} path---where the average is again taken over any recursive selection of situations---for betting on a fixed gamble with rates provided by Forecaster is \emph{surely} non-positive.
Interestingly, and in contrast with Theorems~\ref{thm:consistency} and~\ref{thm:well:calibrated}, we need the forecasting system to be {\comp} for our argumentation to work.
We're convinced that this {\compy} requirement can be weakened considerably (but not dropped altogether), but we refrain from going in that direction here, because doing so would come at the cost of an even more abstract formulation, and because the version we state below suffices for our present purposes.

\begin{theorem}[Relative frequencies for selection processes]\label{thm:well:calibrated:general}
Consider a {\comp} forecasting system \(\frcstsystem\colon\sits\to\imprecisefrcsts\) and a path~\(\pth\in\pths\) that is \(\allowables\)-random for~\(\frcstsystem\).
Let \((I_1,\dots,I_n,\dots)\) be the corresponding sequence of interval forecasts \(I_n=\pinterval[n]\coloneqq\frcstsystem(\pthto{n-1})\) for the path~\(\pth\).
If \(\selection\colon\sits\to\outcomes\) is a recursive selection process such that \(\lim_{n\to\infty}\sum_{k=0}^n\selection(\pthto{k})=\infty\), then
\begin{equation*}
\liminf_{n\to\infty}
\dfrac{\sum_{k=0}^{n-1}\selection(\pthto{k})\sqgroup[\big]{h(\pth_{k+1})-\lex_{I_{k+1}}(h)}}
{\sum_{k=0}^{n-1}\selection(\pthto{k})}
\geq0
\text{ for any gamble~\(h\) on~\(\outcomes\)}.
\end{equation*}
\end{theorem}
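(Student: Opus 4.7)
The plan is to prove the contrapositive: assuming the liminf is strictly negative on $\pth$, I will manufacture, via Lemma~\ref{lem:martinwlln}, a \emph{computable} positive test supermartingale that is unbounded on $\pth$, contradicting the $\allowables$-randomness of $\pth$ for $\frcstsystem$ because $\poscallowabletests[\frcstsystem]\subseteq\allowabletests[\frcstsystem]$. As a first step I would reduce to the case that $h$ has rational values. Since the lower expectations~$\lex_I$ are $1$-Lipschitz in their argument (a consequence of the coherence properties in Proposition~\ref{prop:properties:of:expectations}), replacing $h$ by any rational-valued $h'$ with $\norm{h-h'}_\infty\leq\delta$ changes $\adddelta\submartin^\frcstsystem_h$, and hence $\average{\submartin^\frcstsystem_h}(\sit)$, by at most $2\delta$ pointwise; establishing the conclusion for every rational-valued $h'$ and letting $\delta\searrow0$ along the rationals will then yield the conclusion for the original $h$. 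The constant-$h$ case being trivial, I assume further that $h$ is non-constant, so that $B\coloneqq\varnorm{h}>0$ is a positive rational number.

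Now suppose, for a contradiction, that $\liminf_{n\to\infty}\average{\submartin^\frcstsystem_h}(\pthto{n})<0$. Then there is some rational $\epsilon\in(0,B)$ with $\average{\submartin^\frcstsystem_h}(\pthto{n})\leq-\epsilon$ for infinitely many $n\in\naturalswithzero$, where the quantity $\average{\submartin^\frcstsystem_h}(\pthto{n})$ is given explicitly by Equation~\eqref{eq:average:over:selected:increments}. I would apply Lemma~\ref{lem:martinwlln} to the submartingale $\submartin^\frcstsystem_h$, whose process differences are bounded in absolute value by $B$ thanks to Equation~\eqref{eq:uniform:bound:on:increments}, with the rational choice $\xi\coloneqq\epsilon/(2B^2)$. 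Since $\frcstsystem$ is computable and $h$ is rational-valued, the real process $\sit\mapsto\lex_{\frcstsystem(\sit)}(h)$ is computable by the explicit formula~\eqref{eq:local:lower}, and therefore so is $\adddelta\submartin^\frcstsystem_h=h-\lex_{\frcstsystem(\cdot)}(h)$. Combined with the rationality of $\xi$ and the recursiveness of $\selection$, the last clause of the lemma then ensures that the resulting positive test supermartingale $\process_{\submartin^\frcstsystem_h}$ is computable, i.e.\ belongs to $\poscallowabletests[\frcstsystem]\subseteq\allowabletests[\frcstsystem]$. The second clause of the lemma furthermore gives
\begin{equation*}
\process_{\submartin^\frcstsystem_h}(\pthto{n})\geq\exp\bigl(\tfrac{\epsilon^2}{4B^2}\smash{\textstyle\sum_{k=0}^{n-1}}\selection(\pthto{k})\bigr)
\end{equation*}
for each of the infinitely many $n$ selected above. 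Since by hypothesis $\sum_{k=0}^n\selection(\pthto{k})\to\infty$, the right-hand side tends to $\infty$, forcing $\process_{\submartin^\frcstsystem_h}$ to be unbounded on $\pth$ and producing the desired contradiction.

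The routine arithmetic is all packaged in Lemma~\ref{lem:martinwlln}, so the main obstacle I anticipate is the careful bookkeeping of computability: I need to track the dependencies (computability of $\frcstsystem$, rational-valuedness of the approximating $h'$, rationality of $\xi$, recursiveness of $\selection$) through Equations~\eqref{eq:local:lower} and~\eqref{eq:multprocess:submartingale} to guarantee that the test supermartingale produced by the lemma really lands in $\poscallowabletests[\frcstsystem]$. The initial rational-approximation step is what lets us make this computability chain go through without assuming anything about $h$ itself beyond that it is a (bounded) gamble on $\outcomes$.
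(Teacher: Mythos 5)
Your proposal is correct, and it follows essentially the same route as the paper's own proof: reduce to rational-valued gambles, then feed $\submartin^\frcstsystem_{h}$ into Lemma~\ref{lem:martinwlln} with a suitably chosen rational $\xi$, verify that the resulting positive test supermartingale $\process_\submartin$ is computable, and conclude that it is unbounded on $\pth$, contradicting randomness.

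The only genuine (and purely cosmetic) divergence is in how the rational reduction is organised. You do it upfront: you invoke $1$-Lipschitz continuity of $\lex_I$ (a consequence of \ref{axiom:coherence:constantadditivity} and \ref{axiom:coherence:monotonicity}) to bound $\abs{\average{\submartin^\frcstsystem_h}(\sit)-\average{\submartin^\frcstsystem_{h'}}(\sit)}\le 2\delta$ uniformly, establish the inequality for all rational $h'$, and pass to the limit $\delta\searrow0$. The paper instead interleaves the approximation with the contradiction: it assumes the liminf is $<-2\epsilon$ for the original $h$, then replaces $h$ by a rational $h'$ with $h\le h'\le h+\epsilon$, uses only the monotonicity \ref{axiom:coherence:monotonicity} (one-sidedly) to conclude the liminf for $h'$ is $<-\epsilon$, and then works with $h'$. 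Your version is arguably slightly cleaner as a reduction, at the cost of invoking the two-sided Lipschitz bound rather than a single application of monotonicity. A second cosmetic difference: you take $B\coloneqq\varnorm{h}$ directly (after observing that the constant case is trivial so $B>0$), whereas the paper takes $B\coloneqq\max\set{1,\varnorm{h'}}$ so as to keep $\epsilon<1\le B$ automatic; both satisfy the hypothesis $0<\epsilon<B$ of the lemma. Everything else — the computability bookkeeping through Equations~\eqref{eq:local:lower}, \eqref{eq:lln:local:increments}, and \eqref{eq:multprocess:submartingale}, the use of the growth of $\sum_{k=0}^{n-1}\selection(\pthto{k})$ to force unboundedness, and the final appeal to $\poscallowabletests[\frcstsystem]\subseteq\allowabletests[\frcstsystem]$ — matches the paper.
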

\noindent
In short, the proof by contradiction proceeds in two steps.
First, we argue that if the inequality isn't satisfied for some gamble~\(h\), then we can always find another rational-valued gamble~\(h'\) close to it for which the inequality also fails.
In a second step, we show that we can use the gamble~\(h'\) to construct a positive test supermartingale~\(\process_\submartin\) for~\(\frcstsystem\) of the type considered in Lemma~\ref{lem:martinwlln}, and that this~\(\process_\submartin\) is unbounded on~\(\pth\).
Since this~\(\process_\submartin\) is computable because~\(\frcstsystem\) and~\(h'\) are, this contradicts the \(\poscallowables\)-randomness, and therefore also the \(\allowables\)-randomness, of~\(\pth\).  

\begin{proof}[Proof of Theorem~\ref{thm:well:calibrated:general}]
By Proposition~\ref{prop:nestedallowables}, it suffices to prove the result for the special case that \(\allowables\) is the set~\(\poscallowables\) of all {\comp} positive test processes.
Assume {\itshape ex absurdo} that the inequality isn't satisfied for some gamble~\(h\) on~\(\outcomes\).
Then there is some rational~\(0<\epsilon<1\) such that
\begin{equation*}
\liminf_{n\to\infty}
\dfrac{\sum_{k=0}^{n-1}\selection(\pthto{k})\sqgroup[\big]{h(\pth_{k+1})-\lex_{I_{k+1}}(h)}}
{\sum_{k=0}^{n-1}\selection(\pthto{k})}
<-2\epsilon.
\end{equation*}
Let \(h'\) be any rational-valued gamble on~\(\outcomes\) such that \(h\leq h'\leq h+\epsilon\).
Then for all \(k\in\naturalswithzero\), we find that \(h+\epsilon-\lex_{I_{k+1}}(h)\geq h'-\lex_{I_{k+1}}(h)\geq h'-\lex_{I_{k+1}}(h')\), using coherence property~\ref{axiom:coherence:monotonicity} for the last inequality.
It therefore follows that
\begin{align*}
\liminf_{n\to\infty}\average{\submartin^\frcstsystem_{h'}}(\pthto{n})
&=\liminf_{n\to\infty}
\dfrac{\sum_{k=0}^{n-1}\selection(\pthto{k})\sqgroup[\big]{h'(\pth_{k+1})-\lex_{I_{k+1}}(h')}}
{\sum_{k=0}^{n-1}\selection(\pthto{k})}\\
&\leq\liminf_{n\to\infty}
\dfrac{\sum_{k=0}^{n-1}\selection(\pthto{k})\sqgroup[\big]{h(\pth_{k+1})-\lex_{I_{k+1}}(h)}}
{\sum_{k=0}^{n-1}\selection(\pthto{k})}+\epsilon
<-\epsilon,
\end{align*}
where we used Equation~\eqref{eq:average:over:selected:increments} for the equality.

Let \(B\coloneqq\max\set{1,\varnorm{h'}}>0\).
Then on the one hand, we infer from Equation~\eqref{eq:uniform:bound:on:increments} that \(B\) is a uniform real bound on~\(\adddelta\submartin^\frcstsystem_{h'}\), meaning that \(\smash{\abs{\adddelta\submartin^\frcstsystem_{h'}(\sit)}\leq B}\) for all situations~\(\sit\in\sits\).
On the other hand, we also have that \(0<\epsilon<1\leq B\).
Now, consider the positive test supermartingale~\(\process_\submartin\) for~\(\frcstsystem\) introduced in Lemma~\ref{lem:martinwlln}, with in particular our present choice for~\(B\), \(\submartin\coloneqq\submartin^\frcstsystem_{h'}\) and \(\smash{\xi\coloneqq\frac{\epsilon}{2B^2}<\frac{1}{2B}}\).

We start by showing that \(\process_\submartin\) is unbounded on~\(\pth\).
For any~\(m\in\naturalswithzero\), since we know that \(\liminf_{n\to\infty}\average{\submartin^\frcstsystem_{h'}}(\pthto{n})<-\epsilon\), there is some~\(n_m\geq m\) such that \(\average{\submartin^\frcstsystem_{h'}}(\pthto{n_m})<-\epsilon\) and therefore also, because of Lemma~\ref{lem:martinwlln},
\begin{equation}\label{eq:slln:intermediate}
\process_\submartin(\pthto{n_m})
\geq\exp\group[\bigg]{\frac{\epsilon^2}{4B^2}\smashoperator{\sum_{k=0}^{n_m-1}}\selection(\pthto{k})}
\geq\exp\group[\bigg]{\frac{\epsilon^2}{4B^2}\smashoperator{\sum_{k=0}^{m-1}}\selection(\pthto{k})}.
\end{equation}
Now, consider any real~\(R>0\).
Since \(\lim_{n\to\infty}\sum_{k=0}^n\selection(\pthto{k})=\infty\), there is some~\(m_R\in\naturalswithzero\) such that \(\smash{\exp\big(\frac{\epsilon^2}{4B^2}\sum_{k=0}^{m_R-1}\selection(\pthto{k})\big)>R}\).
Due to Equation~\eqref{eq:slln:intermediate}, this implies that \(\smash{\process_\submartin(\pthto{r_R})}>R\), with \(r_R\coloneqq n_{m_R}\).
So, in conclusion, we find that for any~\(R>0\), there is some~\(r_R\in\naturalswithzero\) such that \(\smash{\process_\submartin(\pthto{r_R})}>R\).
This tells us that the positive test supermartingale~\(\process_\submartin\) is indeed unbounded on \(\pth\).

If we can now show that \(\process_\submartin\) is also {\comp}, this will contradict the assumed \(\poscallowables\)-randomness of \(\pth\) for~\(\frcstsystem\).
Recall from the argumentation above that \(\process_\submartin\) is the positive test supermartingale constructed in Lemma~\ref{lem:martinwlln}, for the particular choices \(\submartin=\submartin^\frcstsystem_{h'}\), \(B=\max\set{1,\varnorm{h'}}\) and \(\xi=\frac{\epsilon}{2B^2}\).
Since the gamble~\(h'\) is rational, so is the real number~\(\varnorm{h'}=\abs{h'(1)-h'(0)}\).
Hence, since \(\epsilon\) is rational, the real numbers~\(B\) and~\(\xi\) are rational and therefore definitely {\comp}.
Since \(h'\) is rational, the inequalities \(h'(1)\geq h'(0)\) and \(h'(1)\leq h'(0)\) are decidable.
Therefore, and because the computability of \(\frcstsystem\) means that \(\lfrcstsystem\) and \(\ufrcstsystem\) are {\comp}, Equations~\eqref{eq:local:lower} and~\eqref{eq:local:linear} imply that the real process \(\lex_{\frcstsystem(\sit)}(h')\), \(\sit\in\sits\), is {\comp}.
For any~\(x\in\outcomes\), since we know from Equation~\eqref{eq:lln:local:increments} that \(\adddelta\submartin^\frcstsystem_{h'}(\sit)(\xval)=h'(x)-\lex_{\frcstsystem(\sit)}(h')\), the rationality of \(h'\) therefore implies that \(\adddelta\submartin^\frcstsystem_{h'}(\sit)(\xval)\), \(\sit\in\sits\), is {\comp} as well.
By definition, this means that \(\adddelta\submartin^\frcstsystem_{h'}\) is {\comp}.
So we have found that \(\adddelta\submartin^\frcstsystem_{h'}\) and \(\xi\) are {\comp}.
Since in addition~\(\selection\) is assumed to be recursive, we infer from Lemma~\ref{lem:martinwlln} that the positive test supermartingale~\(\process_\submartin\) is indeed {\comp}.
\end{proof}

If we take a closer look at our argument in this proof, we see that it allows us to derive the desired result for~\(\allowables\)-randomness, but that it does not work for Schnorr random paths~\(\pth\).
Indeed, it follows from the assumptions that the {\comp} test supermartingale \(\process_\supermartin\) from Lemma~\ref{lem:martinwlln} that does the heavy lifting in the proof, is unbounded on~\(\pth\), but not necessarily computably so.
The argument shows that a \emph{sufficient condition} for the {\comp} unboundedness of~\(\process_\supermartin\) on~\(\pth\) is that the map
\begin{equation}\label{eq:number:of:selected:outcomes}
\zeta\colon\naturalswithzero\to\naturalswithzero\colon n\mapsto\zeta(n)\coloneqq\sum_{k=0}^{n-1}\selection(\pthto{k}),
\end{equation}
which gives the number~\(\zeta(n)\) of selected outcomes along the segments \(\pthto{n}\) of the Schnorr random path~\(\pth\), should be recursive.
But, even though the selection process~\(\selection\) is assumed to be recursive, the corresponding~\(\zeta\) in general won't be, simply because the random path~\(\pth\) typically isn't recursive.

This analysis points to a fairly direct way of salvaging the result for Schnorr-random paths as well: when we make sure that the selection process~\(\selection\) depends not on the situations~\(\sit\) themselves, but only on their depth \(\dist{\sit}\) in the event tree, so not on the history of the outcomes but only on the time that has passed, then \(\zeta\) will be recursive as soon as~\(\selection\) is.
This brings us to a new formulation, where we replace the selection process~\(\selection\colon\sits\to\outcomes\) by the simpler notion of a \emph{selection function} \(\selectionfunction\colon\naturals\to\outcomes\).
At any `time point' \(k\in\naturals\), if \(\selectionfunction(k)=1\), then the outcome~\(\pthat{k}\) is selected along the path~\(\pth\), and if \(\selectionfunction(k)=0\), it isn't.

\begin{theorem}[Relative frequencies for selection functions]\label{thm:well:calibrated:general:schnorr}
Consider a {\comp} forecasting system \(\frcstsystem\colon\sits\to\imprecisefrcsts\) and a path~\(\pth\in\pths\) that is \(\allowables\)-random for~\(\frcstsystem\).
Let \((I_1,\dots,I_n,\dots)\) be the corresponding sequence of interval forecasts~\(I_n=\pinterval[n]\coloneqq\frcstsystem(\pthto{n-1})\) for the path~\(\pth\).
If \(\selectionfunction\) is a recursive selection function such that \(\lim_{n\to\infty}\sum_{k=1}^n\selectionfunction(k)=\infty\), then
\begin{equation*}
\liminf_{n\to\infty}
\dfrac{\sum_{k=1}^{n}\selectionfunction(k)\sqgroup[\big]{h(\pth_{k})-\lex_{I_{k}}(h)}}
{\sum_{k=1}^{n}\selectionfunction(k)}
\geq0
\text{ for any gamble~\(h\) on~\(\outcomes\)}.
\end{equation*}
The same conclusion continues to hold when \(\pth\) is Schnorr random for~\(\frcstsystem\).
\end{theorem}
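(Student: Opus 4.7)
The plan is to handle the two claims separately. For the first, where \(\pth\) is \(\allowables\)-random for~\(\frcstsystem\), I would reduce directly to Theorem~\ref{thm:well:calibrated:general}. Concretely, define the selection process \(\selection\colon\sits\to\outcomes\) by \(\selection(\sit)\coloneqq\selectionfunction(\dist{\sit}+1)\). Since \(\selectionfunction\) is recursive and the depth map \(\sit\mapsto\dist{\sit}\) is recursive, \(\selection\) is recursive. Moreover \(\selection(\pthto{k})=\selectionfunction(k+1)\) for every \(k\in\naturalswithzero\), so \(\sum_{k=0}^{n-1}\selection(\pthto{k})=\sum_{j=1}^{n}\selectionfunction(j)\); the hypothesis \(\lim_{n\to\infty}\sum_{k=1}^{n}\selectionfunction(k)=\infty\) therefore matches the corresponding hypothesis of Theorem~\ref{thm:well:calibrated:general}, and, after the reindexing \(j=k+1\), the liminf expression there becomes the one in the present statement.

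For the second claim, where \(\pth\) is only Schnorr random, I would retrace the proof of Theorem~\ref{thm:well:calibrated:general}. Arguing \emph{ex absurdo} exactly as there, we pick a rational \(0<\epsilon<1\) violating the inequality for some gamble~\(h\), a rational gamble~\(h'\) with \(h\leq h'\leq h+\epsilon\), set \(B\coloneqq\max\{1,\varnorm{h'}\}\) and \(\xi\coloneqq\epsilon/(2B^2)\), and build from Lemma~\ref{lem:martinwlln} the {\comp} positive test supermartingale \(\process_\submartin\) for \(\submartin\coloneqq\submartin^\frcstsystem_{h'}\) and the selection process \(\selection\) above. The previous proof already gives a sequence \(n_m\geq m\) with
\[
\process_\submartin(\pthto{n_m})\geq\exp\group[\bigg]{\frac{\epsilon^2}{4B^2}\sum_{k=0}^{n_m-1}\selection(\pthto{k})}=\exp\group[\bigg]{\frac{\epsilon^2}{4B^2}\sum_{j=1}^{n_m}\selectionfunction(j)}.
\]

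The new ingredient, and the one thing the discussion just before the statement flags as obstructing the argument for Schnorr randomness, is to upgrade \emph{unbounded on~\(\pth\)} to \emph{computably unbounded on~\(\pth\)}. This is where the hypothesis that \(\selectionfunction\) depends only on time pays off: the map
\[
\realordering\colon\naturalswithzero\to\nonnegreals\colon n\mapsto\exp\group[\bigg]{\frac{\epsilon^2}{4B^2}\sum_{j=1}^{n}\selectionfunction(j)}
\]
is non-decreasing and, because \(\lim_{n\to\infty}\sum_{j=1}^{n}\selectionfunction(j)=\infty\), unbounded; and because \(\selectionfunction\) is recursive while \(\epsilon^2/(4B^2)\) is rational, \(\realordering\) is a real growth function (using that the exponential of a {\comp} map is {\comp}). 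By construction, \(\process_\submartin(\pthto{n_m})/\realordering(n_m)\geq1\) for every~\(m\), so \(\limsup_{n\to\infty}\process_\submartin(\pthto{n})/\realordering(n)\geq1>0\). By Proposition~\ref{prop:computably:unbounded:various}\ref{it:computably:unbounded:fraction}, \(\process_\submartin\) is therefore computably unbounded on~\(\pth\).

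The {\compy} of \(\process_\submartin\) itself is established exactly as in the proof of Theorem~\ref{thm:well:calibrated:general} (rationality of \(h'\) and \(\epsilon\), computability of \(\frcstsystem\), and recursiveness of \(\selection\) feed through Lemma~\ref{lem:martinwlln}). Hence \(\process_\submartin\in\callowabletests[\frcstsystem]\) is a {\comp} positive test supermartingale that is computably unbounded on~\(\pth\), contradicting Schnorr randomness of~\(\pth\) via Proposition~\ref{prop:schnorrwithpositiveT}. The main obstacle is precisely this upgrade, but it is exactly the obstacle that is removed by replacing \(\selection\) with a time-indexed \(\selectionfunction\), so the proof goes through with essentially no extra work beyond exhibiting the growth function~\(\realordering\).
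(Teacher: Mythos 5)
Your proof is correct and follows essentially the same route as the paper: turn the selection function into a depth-dependent selection process, reuse the supermartingale $\process_\submartin$ and the lower bound from the proof of Theorem~\ref{thm:well:calibrated:general}, and then exhibit a real growth function built from $\sum_{j\le n}\selectionfunction(j)$ to witness computable unboundedness via Proposition~\ref{prop:computably:unbounded:various}. The only cosmetic differences are that you invoke the ratio characterisation~\ref{it:computably:unbounded:fraction} where the paper uses the difference characterisation~\ref{it:computably:unbounded:real}, and you treat the $\allowables$-random case by a direct reduction to Theorem~\ref{thm:well:calibrated:general} rather than noting that it is subsumed by the Schnorr case via Proposition~\ref{prop:random:implies:schnorrandom}.
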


\begin{proof}
By Proposition~\ref{prop:random:implies:schnorrandom}, it clearly suffices to prove the result for Schnorr randomness.
Consider the selection process~\(\selection\colon\sits\to\outcomes\) defined by \(\selection(\sit)\coloneqq\selectionfunction(\abs{\sit}+1)\) for all~\(\sit\in\sits\).
Since \(\selectionfunction\) is recursive and \(\lim_{n\to\infty}\sum_{k=1}^n\selectionfunction(k)=\infty\), it also follows that \(S\) is recursive and that \(\lim_{n\to\infty}\sum_{k=0}^n\selection(\pthto{k})=\infty\).

Assume \emph{ex absurdo} that the inequality isn't satisfied.
This implies that there is some rational \(0<\epsilon<1\) such that
\begin{align*}
-2\epsilon&>\liminf_{n\to\infty}
\dfrac{\sum_{k=1}^{n}\selectionfunction(k)\sqgroup[\big]{h(\pth_{k})-\lex_{I_{k}}(h)}}
{\sum_{k=1}^{n}\selectionfunction(k)}
=\liminf_{n\to\infty}
\dfrac{\sum_{k=0}^{n-1}\selection(\pthto{k})\sqgroup[\big]{h(\pth_{k+1})-\lex_{I_{k+1}}(h)}}
{\sum_{k=0}^{n-1}\selection(\pthto{k})}.
\end{align*}
As we have shown in the proof of Theorem~\ref{thm:well:calibrated:general}, this implies that there is a {\comp} positive test supermartingale~\(\process_\submartin\) for~\(\frcstsystem\) and a {\comp} real number~\(B>0\) such that, for all \(m\in\naturalswithzero\), there is some~\(n_m\geq m\) such that
\begin{equation*}
\process_\submartin(\pthto{n_m})
\geq\exp\group[\bigg]{\frac{\epsilon^2}{4B^2}\smashoperator{\sum_{k=0}^{n_m-1}}\selection(\pthto{k})}
=\exp\group[\bigg]{\frac{\epsilon^2}{4B^2}\smashoperator{\sum_{k=1}^{n_m}}\selectionfunction(k)}
=\tau_\selectionfunction(n_m),
\end{equation*}
where we have defined the map~\(\realordering_\selectionfunction\colon\naturalswithzero\to\nonnegreals\) by
\begin{equation*}
\realordering_\selectionfunction(n)
\coloneqq\exp\group[\bigg]{\frac{\epsilon^2}{4B^2}\smashoperator{\sum_{k=1}^{n}}\selectionfunction(k)}
\text{ for all~\(n\in\naturalswithzero\)}.
\end{equation*}
So for all~\(m\in\naturalswithzero\), there is some~\(n_m\geq m\) such that \(\process_\supermartin(\pthto{n_m})\geq\realordering_\selectionfunction(n_m)\), which implies that \(\sup_{n\geq m}\sqgroup{\process_\supermartin(\pthto{n})-\tau_\selectionfunction(n)}\geq0\). Hence,
\begin{equation*}
\limsup_{n\to\infty}\sqgroup{\process_\supermartin(\pthto{n})-\tau_\selectionfunction(n)}
=\inf_{m\in\naturalswithzero}\sup_{n\geq m}\sqgroup{\process_\supermartin(\pthto{n})-\tau_\selectionfunction(n)}
\geq0.
\end{equation*}
Since \(\epsilon\) is rational, \(B>0\) is {\comp} and \(\selectionfunction\) is recursive, we also know that \(\realordering_\selectionfunction\) is {\comp}.
Furthermore, \(\tau_\selectionfunction\) is non-decreasing because \(\sum_{k=1}^{n}\selectionfunction(k)\) is non-decreasing in~\(n\), and it is unbounded because \(\lim_{n\to\infty}\sum_{k=0}^n\selectionfunction(k)=\infty\) and \(\epsilon>0\).
We conclude that \(\realordering_\selectionfunction\) is a real growth function such that \(\limsup_{n\to\infty}\sqgroup{\process_\supermartin(\pthto{n})-\tau_\selectionfunction(n)}\geq0\).
Proposition~\ref{prop:computably:unbounded:various}\ref{it:computably:unbounded:real} then guarantees that the {\comp} test supermartingale~\(\process_\supermartin\) for~\(\frcstsystem\) is computably unbounded on~\(\pth\), which contradicts the assumed Schnorr randomness of~\(\pth\) for~\(\frcstsystem\).
\end{proof}

\section{Constant interval forecasts}\label{sec:constantintervalforecasts}
From now on, we turn to the special case where the interval forecasts~\(I\in\imprecisefrcsts\) are constant, and don't depend on the already observed outcomes.
This leads to a generalisation of the classical case~\(I=\set{\nicefrac{1}{2}}\) of the randomness associated with a fair coin.

In the rest of this section, unless explicitly stated to the contrary, \(\allowables\) is any arbitrary but fixed set of allowable test processes.
For any interval \(I\in\imprecisefrcsts\), we denote by~\(\constantfrcstsystem[I]\colon\sits\to\imprecisefrcsts\) the corresponding so-called \emph{stationary} forecasting system that assigns the same interval forecast~\(I\) to all situations:
\begin{equation*}
\constantfrcstsystem[I](\sit)
\coloneqq I
\text{ for all~\(\sit\in\sits\).}
\end{equation*}

In order to investigate the mathematical properties of imprecise randomness, it will be helpful to associate, with any path~\(\pth\), the collection of all interval forecasts for which the corresponding stationary forecasting system makes \(\pth\) \(\allowables\)-random:
\begin{equation*}
\constantrandom[\pth]{\allowables}
\coloneqq\cset{I\in\imprecisefrcsts}{\constantfrcstsystem[I]\in\random{\allowables}}
=\cset{I\in\imprecisefrcsts}{\constantfrcstsystem[I]\text{ makes \(\pth\) \(\allowables\)-random}},
\end{equation*}
and we use the special notations \(\constantposcrandom[\pth]\), \(\constantcrandom[\pth]\), \(\constantmultmlrandom[\pth]\) and \(\constantmlrandom[\pth]\) in the cases that \(\allowables\) is equal to~\(\poscallowables\), \(\callowables\), \(\multmlallowables\) and \(\mlallowables\), respectively.
Similarly,
\begin{equation*}
\constantschnorrrandom
\coloneqq\cset{I\in\imprecisefrcsts}{\constantfrcstsystem[I]\in\schnorrrandom}
=\cset{I\in\imprecisefrcsts}{\constantfrcstsystem[I]\text{ makes \(\pth\) Schnorr random}}.
\end{equation*}
Proposition~\ref{prop:random:implies:schnorrandom} and Equation~\eqref{eq:randomness:order:with:schnorr} imply that
\begin{equation}\label{eq:random:implies:schnorrandom}
\constantrandom{\allowables}\subseteq\constantschnorrrandom
\text{ and }
\constantmlrandom[\pth]
\subseteq\constantmultmlrandom[\pth]
\subseteq\constantcrandom[\pth]
=\constantposcrandom[\pth]
\subseteq\constantschnorrrandom[\pth].
\end{equation}
Most of our efforts in this section will be devoted to investigating the mathematical structure of these special sets of interval forecasts.

As immediate consequences of the results proved earlier in Sections~\ref{sec:randomness} and~\ref{sec:schnorr:randomness}, we find that all these sets of interval forecasts associated with a random path are non-empty and increasing.

\begin{proposition}[Non-emptiness]\label{prop:constantcalibrated:top}
For all~\(\pth\in\pths\), \(\frcsts\in\constantrandom{\allowables}\subseteq\constantschnorrrandom\), so any sequence of outcomes~\(\pth\) has at least one stationary forecast that makes it \(\allowables\)-random and therefore also Schnorr random: \(\constantrandom{\allowables}\neq\emptyset\) and \(\constantschnorrrandom\neq\emptyset\).
\end{proposition}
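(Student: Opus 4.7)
The proof plan is essentially a direct reduction to previously established facts, with no real obstacles to speak of. The key observation is that the stationary forecasting system $\constantfrcstsystem[\frcsts]$ whose constant interval forecast is the whole unit interval $\frcsts = [0,1]$ coincides exactly with the vacuous forecasting system $\vacfrcstsystem$ defined in Section~\ref{sec:randomness}, since both assign the forecast $\frcsts$ to every situation~$\sit\in\sits$.

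Given this identification, the first inclusion is immediate from Proposition~\ref{prop:vacuous}: that result tells us that for every path $\pth\in\pths$, we have $\vacfrcstsystem\in\random{\allowables}$, so $\constantfrcstsystem[\frcsts]=\vacfrcstsystem$ makes $\pth$ $\allowables$-random, which by definition of $\constantrandom{\allowables}$ means $\frcsts\in\constantrandom{\allowables}$. In particular $\constantrandom{\allowables}\neq\emptyset$.

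The second inclusion $\constantrandom{\allowables}\subseteq\constantschnorrrandom$ is already recorded as part of Equation~\eqref{eq:random:implies:schnorrandom}, and follows directly from Proposition~\ref{prop:random:implies:schnorrandom} applied to the stationary forecasting system $\constantfrcstsystem[I]$ for each $I\in\constantrandom{\allowables}$: any path that is $\allowables$-random for $\constantfrcstsystem[I]$ is also Schnorr random for it, so $I\in\constantschnorrrandom$. Combining the two inclusions gives $\frcsts\in\constantschnorrrandom$ as well, hence $\constantschnorrrandom\neq\emptyset$. Since everything is a direct appeal to previously established propositions, there is no technical obstacle; the only thing to make explicit in the write-up is the identification of $\constantfrcstsystem[\frcsts]$ with $\vacfrcstsystem$.
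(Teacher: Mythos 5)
Your proof is correct and follows exactly the same route as the paper's own proof: identify $\constantfrcstsystem[\frcsts]$ with the vacuous forecasting system $\vacfrcstsystem$, invoke Proposition~\ref{prop:vacuous} for the first inclusion, and appeal to Equation~\eqref{eq:random:implies:schnorrandom} (equivalently Proposition~\ref{prop:random:implies:schnorrandom}) for the second.
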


\begin{proof}
This is an immediate consequence of Proposition~\ref{prop:vacuous}, with \(\constantfrcstsystem[\frcsts]=\vacfrcstsystem\in\random{\allowables}\), and Equation~\eqref{eq:random:implies:schnorrandom}.
\end{proof}

\begin{proposition}[Increasingness]\label{prop:constantcalibrated:increasing}
For all~\(\pth\in\pths\) and any~\(I,J\in\imprecisefrcsts\):
\begin{enumerate}[label=\upshape(\roman*),leftmargin=*,noitemsep,topsep=0pt]
\item if \(I\in\constantrandom{\allowables}\) and \(I\subseteq J\), then \(J\in\constantrandom{\allowables}\);
\item if \(I\in\constantschnorrrandom\) and \(I\subseteq J\), then \(J\in\constantschnorrrandom\).
\end{enumerate}
\end{proposition}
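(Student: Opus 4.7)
The plan is to reduce both statements to the monotonicity results already established for general (non-stationary) forecasting systems, namely Proposition~\ref{prop:nestedfrcstsystems} for \(\allowables\)-randomness and Proposition~\ref{prop:nestedfrcstsystems:schnorrrandom} for Schnorr randomness. The only thing that really needs to be observed is that the inclusion of interval forecasts \(I\subseteq J\) lifts pointwise to the corresponding stationary forecasting systems.

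First I would unpack the definition of the stationary forecasting system: from \(\constantfrcstsystem[I](\sit)=I\) and \(\constantfrcstsystem[J](\sit)=J\) for every situation \(\sit\in\sits\), the assumption \(I\subseteq J\) immediately gives \(\constantfrcstsystem[I](\sit)\subseteq\constantfrcstsystem[J](\sit)\) for all \(\sit\in\sits\), so \(\constantfrcstsystem[I]\subseteq\constantfrcstsystem[J]\) in the notation introduced after the definition of forecasting systems. Hence \(\constantfrcstsystem[J]\) is at least as conservative as \(\constantfrcstsystem[I]\).

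Next I would invoke the appropriate monotonicity result. For (i), since \(I\in\constantrandom[\pth]{\allowables}\) means by definition that \(\pth\) is \(\allowables\)-random for \(\constantfrcstsystem[I]\), Proposition~\ref{prop:nestedfrcstsystems} applied to \(\frcstsystem=\constantfrcstsystem[I]\) and \(\frcstsystem^*=\constantfrcstsystem[J]\) yields that \(\pth\) is also \(\allowables\)-random for \(\constantfrcstsystem[J]\), which by definition means \(J\in\constantrandom[\pth]{\allowables}\). For (ii), the same argument with Proposition~\ref{prop:nestedfrcstsystems:schnorrrandom} in place of Proposition~\ref{prop:nestedfrcstsystems} gives \(J\in\constantschnorrrandom[\pth]\).

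There is no real obstacle here: the statement is essentially the specialisation of the already-proved monotonicity-in-\(\frcstsystem\) of randomness to the stationary sub-family of forecasting systems, and the proof amounts to two lines. The only care needed is to make the translation between the ordering on intervals and the ordering on forecasting systems explicit, so that the hypotheses of the cited propositions are satisfied.
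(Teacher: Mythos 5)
Your proof is correct and follows exactly the same route as the paper's: observe that \(I\subseteq J\) gives \(\constantfrcstsystem[I]\subseteq\constantfrcstsystem[J]\), then invoke Propositions~\ref{prop:nestedfrcstsystems} and~\ref{prop:nestedfrcstsystems:schnorrrandom}. Nothing further to add.
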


\begin{proof}
This follows from Propositions~\ref{prop:nestedfrcstsystems} and~\ref{prop:nestedfrcstsystems:schnorrrandom}, because \(I\subseteq J\) implies \(\constantfrcstsystem[I]\subseteq\constantfrcstsystem[J]\).
\end{proof}

\begin{proposition}\label{prop:constantcalibrated:nestedallowables}
Consider two sets \(\allowables,\allowables'\) of allowable test processes such that \(\allowables'\subseteq\allowables\).
Then \(\constantrandom{\allowables}\subseteq\constantrandom{\allowables'}\subseteq\constantschnorrrandom\).
\end{proposition}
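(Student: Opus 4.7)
The plan is to verify both inclusions by a direct specialisation of the generic results already established for arbitrary forecasting systems, applied to the stationary forecasting system~\(\constantfrcstsystem[I]\).

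For the first inclusion~\(\constantrandom{\allowables}\subseteq\constantrandom{\allowables'}\), I would pick any \(I\in\constantrandom{\allowables}\), which by definition means that \(\pth\) is \(\allowables\)-random for the stationary forecasting system~\(\constantfrcstsystem[I]\). Since \(\allowables'\subseteq\allowables\) by assumption, I would then invoke Proposition~\ref{prop:nestedallowables} to conclude that \(\pth\) is also \(\allowables'\)-random for~\(\constantfrcstsystem[I]\), i.e.\ that \(I\in\constantrandom{\allowables'}\).

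For the second inclusion~\(\constantrandom{\allowables'}\subseteq\constantschnorrrandom\), I would pick any \(I\in\constantrandom{\allowables'}\), which means that \(\pth\) is \(\allowables'\)-random for~\(\constantfrcstsystem[I]\). Applying Proposition~\ref{prop:random:implies:schnorrandom} to the set~\(\allowables'\) of allowable test processes and to the forecasting system~\(\constantfrcstsystem[I]\), I conclude that \(\pth\) is Schnorr random for~\(\constantfrcstsystem[I]\), i.e.\ that \(I\in\constantschnorrrandom\).

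Both steps are immediate specialisations of already-proved lemmas; there is no genuine obstacle here, and the only thing one needs to remember is that both \(\allowables\) and \(\allowables'\) are, by convention, assumed to contain \(\poscallowables\), which is exactly what allows Proposition~\ref{prop:random:implies:schnorrandom} to be applied to~\(\allowables'\). The entire argument can be written in two short sentences.
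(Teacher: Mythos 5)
Your proof is correct and follows essentially the same route as the paper: the paper's one-line proof cites Proposition~\ref{prop:nestedallowables} and Equation~\eqref{eq:random:implies:schnorrandom}, and the latter is itself derived from Proposition~\ref{prop:random:implies:schnorrandom}, which is the result you invoke directly for the second inclusion. Your remark about the standing convention that every set of allowable test processes contains \(\poscallowables\) is also on point---it is precisely what makes Proposition~\ref{prop:random:implies:schnorrandom} applicable to \(\allowables'\).
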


\begin{proof}
This follows immediately from Proposition~\ref{prop:nestedallowables} and Equation~\eqref{eq:random:implies:schnorrandom}.
\end{proof}

\subsection{{\Comp} stochasticity}\label{sec:computable:stochasticity}
Before we continue our study of the structure of the sets of interval forecasts associated with a given random path, it will be helpful to make a small detour, and to consider the behaviour of relative frequencies along random paths.
Interestingly, Theorem~\ref{thm:well:calibrated:general} implies the consistency property in Corollary~\ref{cor:well:calibrated:constant} below, which is a counterpart in our more general context of the notion of \emph{{\comp} stochasticity} or \emph{Church randomness} in the precise fair-coin case where \(I=\set{\nicefrac{1}{2}}\) \cite{ambosspies2000}.
However, quite remarkably, and seemingly in contrast with Theorem~\ref{thm:well:calibrated:general}, this corollary does not impose any {\compy} requirements on the interval forecast~\(I\).

{\Comp} stochasticity, or Church randomness, is a notion that goes back to Alonzo Church's account of randomness \cite{church1940:random:sequence}.
He required of a random path~\(\pth\) that for any recursive selection process~\(\selection\) such that \(\sum_{k=0}^n\selection(\pthto{k})\to\infty\),
\begin{equation*}
\lim_{n\to\infty}\frac{\sum_{k=0}^{n-1}\selection(\pthto{k})\pth_{k+1}}{\sum_{k=0}^{n-1}\selection(\pthto{k})}=\frac12.
\end{equation*}
In other words, the relative frequencies of the ones---the successes---in the outcomes that \(\selection\) selects along the random path~\(\pth\) should converge to the constant probability \(\nicefrac{1}{2}\) of a success.
It is well-known that all paths that are computably random---and therefore also all {\ML} random paths---for a stationary forecast~\(I=\set{\nicefrac{1}{2}}\) are also computably stochastic, or Church random; see for instance Refs.~\cite{ambosspies2000,wang1996:phdthesis}.

In our generalisation, we will see that our notions of randomness no longer necessarily imply such convergence, but we're still able to conclude that the limits inferior and superior of the relative frequencies of the successes in the selected outcomes of a random path must lie in the forecast interval.

\begin{corollary}[Church randomness]\label{cor:well:calibrated:constant}
Consider any path~\(\pth\in\pths\) and any constant interval forecast~\(I=\pinterval\in\constantrandom{\allowables}\) that makes \(\pth\) \(\allowables\)-random.
Then for any recursive selection process~\(\selection\colon\sits\to\outcomes\) such that \(\sum_{k=0}^n\selection(\pthto{k})\to\infty\):
\begin{equation*}
\lp
\leq\liminf_{n\to\infty}
\frac{\sum_{k=0}^{n-1}\selection(\pthto{k})\pth_{k+1}}
{\sum_{k=0}^{n-1}\selection(\pthto{k})}
\leq\limsup_{n\to\infty}
\frac{\sum_{k=0}^{n-1}\selection(\pthto{k})\pth_{k+1}}
{\sum_{k=0}^{n-1}\selection(\pthto{k})}
\leq\up.
\end{equation*}
\end{corollary}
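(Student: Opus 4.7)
The plan is to reduce the corollary to Theorem~\ref{thm:well:calibrated:general}. That theorem already handles the hard work (constructing an unbounded computable test supermartingale via Lemma~\ref{lem:martinwlln}), so only two observations remain: first, apply the theorem with the gamble $h\colon\outcomes\to\reals$ defined by $h(x)=x$ (for the $\liminf$ bound) and $h(x)=-x$ (for the $\limsup$ bound); second, work around the fact that Theorem~\ref{thm:well:calibrated:general} requires a \emph{computable} forecasting system, whereas here $I=\pinterval$ is allowed to have arbitrary real bounds.

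For the lower bound, my plan is to approximate $I$ from outside by rational, and hence computable, intervals. The case $\lp=0$ is immediate, so assume $\lp>0$. For any rationals $q,r$ with $0\leq q<\lp$ and $\up\leq r\leq 1$, set $J\coloneqq\sqgroup{q,r}\in\imprecisefrcsts$. Then $I\subseteq J$, so $\constantfrcstsystem[I]\subseteq\constantfrcstsystem[J]$, and Proposition~\ref{prop:nestedfrcstsystems} transfers the $\allowables$-randomness of $\pth$ from $\constantfrcstsystem[I]$ to $\constantfrcstsystem[J]$. Since the endpoints of $J$ are rational, Proposition~\ref{prop:IcompIffGammaComp} guarantees that $\constantfrcstsystem[J]$ is computable. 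Applying Theorem~\ref{thm:well:calibrated:general} to $\constantfrcstsystem[J]$, the gamble $h(x)=x$ (for which $h(1)>h(0)$, so Equation~\eqref{eq:local:lower} gives $\lex_J(h)=q$), and the given recursive selection process $\selection$, one obtains
\[
\liminf_{n\to\infty}\frac{\sum_{k=0}^{n-1}\selection(\pthto{k})\sqgroup{\pth_{k+1}-q}}{\sum_{k=0}^{n-1}\selection(\pthto{k})}\geq 0,
\]
which rearranges to $\liminf_{n\to\infty}\frac{\sum_{k=0}^{n-1}\selection(\pthto{k})\pth_{k+1}}{\sum_{k=0}^{n-1}\selection(\pthto{k})}\geq q$. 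Taking the supremum over rationals $q<\lp$ then yields the desired bound $\liminf\geq\lp$.

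The upper bound follows by the symmetric argument: apply Theorem~\ref{thm:well:calibrated:general} to the same $\constantfrcstsystem[J]$ and to the gamble $h(x)=-x$, for which now $h(1)<h(0)$ so that $\lex_J(h)=-r$ by Equation~\eqref{eq:local:lower}. This produces $\limsup\leq r$, and taking the infimum over rationals $r>\up$ gives $\limsup\leq\up$ (with the case $\up=1$ trivial). The middle inequality $\liminf\leq\limsup$ is of course automatic.

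There is no real obstacle here: the only delicate point is the absence of any computability assumption on $I$, and this is neatly bypassed by the monotonicity of randomness under enlargement of the forecast (Proposition~\ref{prop:nestedfrcstsystems}) combined with the fact that rational intervals are automatically computable. This is precisely why, as remarked in the text, Corollary~\ref{cor:well:calibrated:constant} does not inherit the computability hypothesis of Theorem~\ref{thm:well:calibrated:general}.
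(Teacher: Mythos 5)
Your proof is correct and follows essentially the same route as the paper's own proof: reduce to Theorem~\ref{thm:well:calibrated:general} with the gambles $h=\indsing{1}$ and $h=-\indsing{1}$, and sidestep the computability hypothesis by enlarging $I$ to a rational (hence computable) interval $J\supseteq I$, invoking monotonicity of randomness under enlargement, and then passing to a limit. The paper organises this slightly differently — it first states the result for computable $I$ and then extends by an $\epsilon$-approximation argument (using Proposition~\ref{prop:constantcalibrated:increasing}, a direct corollary of Proposition~\ref{prop:nestedfrcstsystems}) — but the substance of the argument is identical.
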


\begin{proof}
First, assume that \(I\) is {\comp}.
It follows from Proposition~\ref{prop:IcompIffGammaComp} that \(\constantfrcstsystem[I]\) is {\comp} as well.
Furthermore, if we let \(\indsing{1}(x)\coloneqq x\) for all~\(x\in\outcomes\), then \(\indsing{1}\) and \(-\indsing{1}\) are clearly gambles on~\(\outcomes\).
The first and last inequality now follow from Theorem~\ref{thm:well:calibrated:general}, by successively choosing \(f\coloneqq\indsing{1}\) and \(f\coloneqq-\indsing{1}\), respectively, since \(\lex_I(\indsing{1})=\lp\) and \(\lex_I(-\indsing{1})=-\up\).
The second inequality is a standard property of limits inferior and superior.

If \(I\) isn't {\comp}, then for any~\(\epsilon>0\), since all rational numbers are {\comp}, there is some {\comp} \(J=\qinterval\in\imprecisefrcsts\) such that \(\lp-\epsilon\leq\lptoo\leq\lp\leq\up\leq\uptoo\leq\up+\epsilon\).
Since \(I\subseteq J\), it follows from Proposition~\ref{prop:constantcalibrated:increasing} that also \(J\in\constantrandom{\allowables}\).
Since, moreover, \(J\) is {\comp}, it follows from the first part of the proof that
\begin{equation*}
\lp-\epsilon\leq\lptoo
\leq\liminf_{n\to\infty}
\frac{\sum_{k=0}^{n-1}S(\pthto{k})\pth_{k+1}}{\sum_{k=0}^{n-1}S(\pthto{k})}
\leq\limsup_{n\to\infty}
\frac{\sum_{k=0}^{n-1}S(\pthto{k})\pth_{k+1}}{\sum_{k=0}^{n-1}S(\pthto{k})}
\leq\uptoo\leq\up+\epsilon.
\end{equation*}
Since \(\epsilon>0\) is arbitrary, this completes the proof.
\end{proof}

For paths that are (only) Schnorr random, we will discuss below that this result needn't hold, but we can prove a weaker result, whose proof is completely similar---and therefore omitted---but now based on Theorem~\ref{thm:well:calibrated:general:schnorr}.

\begin{corollary}[Weak Church randomness]\label{cor:well:calibrated:constant:schnorr}
Consider any path~\(\pth\in\pths\) and any constant interval forecast~\(I=\pinterval\in\constantrandom{\allowables}\) that makes \(\pth\) \(\allowables\)-random.
Then for any recursive selection function \(\selectionfunction\) such that \(\lim_{n\to\infty}\sum_{k=0}^n\selectionfunction(k)=\infty\):
\begin{equation*}
\lp
\leq\liminf_{n\to\infty}
\frac{\sum_{k=1}^{n}\selectionfunction(k)\pth_{k}}
{\sum_{k=1}^{n}\selectionfunction(k)}
\leq\limsup_{n\to\infty}
\frac{\sum_{k=1}^{n}\selectionfunction(k)\pth_{k}}
{\sum_{k=1}^{n}\selectionfunction(k)}
\leq\up.
\end{equation*}
The same conclusion continues to hold when \(I\) makes \(\pth\) Schnorr random.
\end{corollary}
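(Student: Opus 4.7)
The plan is to mirror the proof of Corollary~\ref{cor:well:calibrated:constant}, but invoke Theorem~\ref{thm:well:calibrated:general:schnorr} in place of Theorem~\ref{thm:well:calibrated:general}, so that both the $\allowables$-randomness and the Schnorr randomness cases are handled uniformly by a single appeal.

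First I would dispose of the case where the forecast interval $I=\pinterval$ is {\comp}. By Proposition~\ref{prop:IcompIffGammaComp}, the stationary forecasting system $\constantfrcstsystem[I]$ is then {\comp} as well, so Theorem~\ref{thm:well:calibrated:general:schnorr} applies (under either the $\allowables$-randomness or the Schnorr randomness hypothesis on $\pth$). Applying it to the gamble $h=\indsing{1}$ on $\outcomes$ (so $h(x)=x$), for which $\lex_I(h)=\lp$, yields directly the leftmost inequality, because $\sum_{k=1}^n\selectionfunction(k)h(\pth_k)=\sum_{k=1}^n\selectionfunction(k)\pth_k$ and the conclusion of the theorem says the corresponding liminf of the normalised selected averages is non-negative. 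Applying the theorem a second time to $h=-\indsing{1}$, for which $\lex_I(h)=-\up$, delivers the rightmost inequality after multiplying through by $-1$ and flipping liminf to limsup. The middle inequality $\liminf\leq\limsup$ is of course immediate.

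Next I would remove the {\compy} assumption on $I$ by a sandwiching argument, exactly as in the proof of Corollary~\ref{cor:well:calibrated:constant}. For any rational $\epsilon>0$, pick rationals $\lptoo,\uptoo$ so that $\lp-\epsilon\leq\lptoo\leq\lp\leq\up\leq\uptoo\leq\up+\epsilon$, and set $J\coloneqq\qinterval\in\imprecisefrcsts$. Then $J$ is {\comp} and $I\subseteq J$, so by Proposition~\ref{prop:constantcalibrated:increasing} (together with Proposition~\ref{prop:nestedfrcstsystems:schnorrrandom} in the Schnorr case) the forecast $J$ also makes $\pth$ $\allowables$-random respectively Schnorr random. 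Applying the {\comp} case to $J$ gives
\begin{equation*}
\lp-\epsilon\leq\lptoo
\leq\liminf_{n\to\infty}\frac{\sum_{k=1}^{n}\selectionfunction(k)\pth_{k}}{\sum_{k=1}^{n}\selectionfunction(k)}
\leq\limsup_{n\to\infty}\frac{\sum_{k=1}^{n}\selectionfunction(k)\pth_{k}}{\sum_{k=1}^{n}\selectionfunction(k)}
\leq\uptoo\leq\up+\epsilon,
\end{equation*}
and letting $\epsilon\downarrow0$ closes the argument.

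There is no real obstacle here: the work has essentially been done in Theorem~\ref{thm:well:calibrated:general:schnorr}, which is precisely the Schnorr-randomness-compatible variant of Theorem~\ref{thm:well:calibrated:general} (it was engineered so that the recursive selection function $\selectionfunction$, as opposed to a selection process depending on the path, makes $\selectionsum(n)=\sum_{k=1}^n\selectionfunction(k)$ a recursive map, and hence makes the test supermartingale constructed via Lemma~\ref{lem:martinwlln} \emph{computably} unbounded on a non-conforming path). The only point one must be slightly careful about is that in the Schnorr case the second half of the statement of Theorem~\ref{thm:well:calibrated:general:schnorr} must be invoked, and that Proposition~\ref{prop:nestedfrcstsystems:schnorrrandom} (and not merely Proposition~\ref{prop:nestedfrcstsystems}) must be used to pass from $I$ to the enclosing {\comp} interval $J$ while preserving Schnorr randomness.
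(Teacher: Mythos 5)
Your proof is correct and is precisely the argument the paper has in mind: the paper explicitly omits this proof, saying it is ``completely similar'' to that of Corollary~\ref{cor:well:calibrated:constant} ``but now based on Theorem~\ref{thm:well:calibrated:general:schnorr}'', which is exactly what you carry out (the {\comp} case via $h=\indsing{1}$ and $h=-\indsing{1}$, the general case by sandwiching with a {\comp} rational interval $J\supseteq I$ and invoking Proposition~\ref{prop:constantcalibrated:increasing}). One small redundancy: Proposition~\ref{prop:constantcalibrated:increasing} already covers both the $\allowables$-random and the Schnorr case in its two clauses, so the extra appeal to Proposition~\ref{prop:nestedfrcstsystems:schnorrrandom} is unnecessary, though harmless.
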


That Corollary~\ref{cor:well:calibrated:constant} needn't hold for Schnorr randomness, is in accordance with the fact that, in the particular fair-coin case where \(I=\set{\nicefrac{1}{2}}\), Schnorr randomness is known not to imply {\comp} stochasticity either.
This was in fact shown by Yongge Wang \cite{wang1996:phdthesis}, who proved the existence of a Schnorr random path~\(\wangpth\) and a {\comp} test martingale~\(\wangmartin\) for~\(\constantfrcstsystem[\nicefrac12]\) such that
\begin{enumerate}[label=\upshape(\roman*),leftmargin=*,noitemsep,topsep=0pt]
\item\label{it:wang:unbounded} \(\wangmartin\) is unbounded---but not computably so---on~\(\wangpth\), also implying that \(\wangpth\) isn't computably random;
\item\label{it:wang:martingale} for all~\(\sit\in\sits\), either \(\group{\forall x\in\outcomes}\wangmartin(\sit x)=2x\wangmartin(\sit)\) or \(\group{\forall x\in\outcomes}\wangmartin(\sit x)=\wangmartin(\sit)\);
\end{enumerate}
and as a consequence also
\begin{enumerate}[label=\upshape(\roman*),leftmargin=*,noitemsep,topsep=0pt,resume]
\item\label{it:wang:martingale:on:path} if \(\wangmartin(\wangpthto{n})=2\wangpthat{n}\wangmartin(\wangpthto{n-1})\) then \(\wangpthat{n}=1\), for all~\(n\in\naturals\).
\end{enumerate}
One immediate conclusion we can draw from these conditions, is that \(\wangmartin\) remains positive on~\(\wangpth\), so \(\wangmartin(\wangpthto{n})>0\) for all \(n\in\naturalswithzero\), simply because \ref{it:wang:martingale} implies that if \(\wangmartin\) ever becomes zero, it remains zero, and can therefore then never become unbounded on~\(\pth\), contradicting~\ref{it:wang:unbounded}.
Another conclusion we can draw from~\ref{it:wang:martingale}, is that~\(\wangmartin\) assumes values in the set~\(\set{0}\cup\cset{2^m}{m\in\naturalswithzero}\).
This implies that, in situations~\(\sit\) such that \(\wangmartin(\sit)>0\), it is decidable which of the two multiplication rules applies in~\ref{it:wang:martingale}.
Hence, the selection process~\(\wangselection\), defined by
\begin{equation*}
\wangselection(\sit)
\coloneqq
\begin{cases}
1&\text{if \(\wangmartin(\sit1)=2\wangmartin(\sit)\) and \(\wangmartin(\sit)>0\)}\\
0&\text{if \(\wangmartin(\sit1)=\wangmartin(\sit)\)}
\end{cases}
\quad\text{for all~\(\sit\in\sits\)},
\end{equation*}
is recursive.
In combination with~\ref{it:wang:martingale:on:path}, this implies that
\begin{equation}\label{eq:wang:one}
\wangselection(\wangpthto{n-1})=1\then\wangpthat{n}=1
\text{ for all~\(n\in\naturals\)}.
\end{equation}
Since it follows from~\ref{it:wang:unbounded} that the first multiplication rule in~\ref{it:wang:martingale} must apply an infinite number of times on~\(\wangpth\), we infer from the conclusion~\eqref{eq:wang:one} that~\(\wangselection\) selects a subsequence of ones from~\(\wangpth\), so the corresponding sequence of relative frequencies on this recursively selected subsequence converges to~\(1\), thus violating {\comp} stochasticity: \emph{Schnorr randomness does not imply {\comp} stochasticity}.

It also follows from these considerations that \(\wangmartin\) either doubles or remains constant on~\(\wangpth\), and that it doubles precisely in those situations~\(\wangpthto{n}\) where~\(\wangselection(\wangpthto{n})=1\).
Hence, if we let \(\wangselectionsum(n)\coloneqq\sum_{k=0}^{n-1}\wangselection(\wangpthto{k})\) in accordance with Equation~\eqref{eq:number:of:selected:outcomes}, then
\begin{equation*}
\wangmartin(\wangpthto{n})=2^{\wangselectionsum(n)}
\text{ for all~\(n\in\naturalswithzero\)}.
\end{equation*}
The map~\(\wangselectionsum\) can't be recursive, because if it were, \(\wangmartin\) would be computably unbounded on~\(\wangpth\), contradicting the Schnorr randomness of~\(\wangpth\).
So, we see that the sufficient condition for `convergence' that we mentioned following the proof of Theorem~\ref{thm:well:calibrated:general}, namely the recursive character of~\(\zeta\) in Equation~\eqref{eq:number:of:selected:outcomes}, is perfectly at ease with Wang's example, as it isn't satisfied for this particular case~\(\selectionsum=\wangselectionsum\).

Observe, by the way, that the recursive character of~\(\zeta\) in Equation~\eqref{eq:number:of:selected:outcomes} is equivalent to the recursive character of the behaviour
\begin{equation*}
\selectionfunction_{\selection,\pth}\colon\naturals\to\outcomes\colon n\mapsto\selectionfunction(n)\coloneqq\selection(\pthto{n-1})
\end{equation*}
of the selection process~\(\selection\) on the random path~\(\pth\).
It should therefore not be surprising that recursive selection \emph{functions}, such as this \(\selectionfunction_{\selection,\pth}\), play such an important part in our Theorem~\ref{thm:well:calibrated:general:schnorr} and Corollary~\ref{cor:well:calibrated:constant:schnorr}.
This also means that if we were to strengthen the requirements on the selection processes~\(\selection\) in Theorem~\ref{thm:well:calibrated:general} and Corollary~\ref{cor:well:calibrated:constant} from `being recursive' to `being recursive and displaying recursive behaviour on the path under consideration', then the corresponding (weaker) {\comp} stochasticity result would still hold for all Schnorr random paths.
This is essentially what we do in Theorem~\ref{thm:well:calibrated:general:schnorr} and Corollary~\ref{cor:well:calibrated:constant:schnorr}.
Any criticism of Schnorr randomness along the lines of Wang's argument \cite{wang1996:phdthesis} will therefore have to include an argumentation for why such a strengthening of the requirements on the selection processes is unreasonable or undesirable, or alternatively, why selection processes rather than selection functions appear in the requirements.

\subsection{The structure of the interval forecasts that make a path random}
We return to our study of the mathematical structure behind constant interval forecasts.
Our digression about Church randomness around Corollary~\ref{cor:well:calibrated:constant} now displays its usefulness, because it allows us to prove the following consistency result: any collection of constant interval forecasts that make some path random must have a non-empty intersection.

\begin{proposition}\label{prop:constantcalibrated:nonempty:intersection}
For any~\(\pth\in\pths\), \(\constantrandom{\allowables}\) and \(\constantschnorrrandom\) have the intersection property: for any collection~\(\imprecisefrcsts'\subseteq\imprecisefrcsts\) of interval forecasts:
\begin{enumerate}[label=\upshape(\roman*),leftmargin=*,noitemsep,topsep=0pt]
\item if \(\imprecisefrcsts'\subseteq\constantrandom{\allowables}\), then \(\bigcap\imprecisefrcsts'\neq\emptyset\);
\item if \(\imprecisefrcsts'\subseteq\constantschnorrrandom\), then \(\bigcap\imprecisefrcsts'\neq\emptyset\).
\end{enumerate}
In fact,
\begin{equation}\label{eq:intersection:property}
\sqgroup[\bigg]{\liminf_{n\to\infty}\frac{1}{n}\smashoperator{\sum_{k=1}^n}\pth_k,
\limsup_{n\to\infty}\frac{1}{n}\smashoperator{\sum_{k=1}^n}\pth_k}
\subseteq\bigcap\constantrandom{\allowables}
\subseteq\bigcap\constantschnorrrandom.
\end{equation}
\end{proposition}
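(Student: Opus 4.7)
The plan is to deduce everything from the `in fact' chain in \eqref{eq:intersection:property} via a single appeal to Corollary~\ref{cor:well:calibrated:constant:schnorr}. Parts (i) and (ii) reduce immediately to the chain: for any subfamily $\imprecisefrcsts' \subseteq \constantrandom[\pth]{\allowables}$ (respectively $\imprecisefrcsts' \subseteq \constantschnorrrandom[\pth]$), one has $\bigcap \imprecisefrcsts' \supseteq \bigcap \constantrandom[\pth]{\allowables}$ (respectively $\bigcap \imprecisefrcsts' \supseteq \bigcap \constantschnorrrandom[\pth]$), so it suffices to show that both full intersections contain the closed interval $J(\pth) \coloneqq \sqgroup{\liminf_{n\to\infty}\frac{1}{n}\sum_{k=1}^n\pth_k,\, \limsup_{n\to\infty}\frac{1}{n}\sum_{k=1}^n\pth_k}$, which is itself non-empty (since $\liminf \leq \limsup$ and both lie in $\frcsts$) and hence is an element of $\imprecisefrcsts$.

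To prove the central containment $J(\pth) \subseteq I$ for every $I$ in either $\constantrandom[\pth]{\allowables}$ or $\constantschnorrrandom[\pth]$, I would apply Corollary~\ref{cor:well:calibrated:constant:schnorr} with the trivial recursive selection function $\selectionfunction(n) \coloneqq 1$ for every $n \in \naturals$, which satisfies $\sum_{k=1}^n \selectionfunction(k) = n \to \infty$. Via \eqref{eq:random:implies:schnorrandom}, every $I \in \constantrandom[\pth]{\allowables}$ also makes $\pth$ Schnorr random, so the corollary applies uniformly in both cases. For $I = \pinterval$ it yields
\begin{equation*}
\lp \leq \liminf_{n\to\infty}\frac{1}{n}\sum_{k=1}^n\pth_k \leq \limsup_{n\to\infty}\frac{1}{n}\sum_{k=1}^n\pth_k \leq \up,
\end{equation*}
which says exactly $J(\pth) \subseteq I$. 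Intersecting over all such $I$ gives the two containments $J(\pth) \subseteq \bigcap \constantrandom[\pth]{\allowables}$ and $J(\pth) \subseteq \bigcap \constantschnorrrandom[\pth]$ asserted in \eqref{eq:intersection:property}, and therefore also the non-emptiness claims (i) and (ii).

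I do not foresee any technical obstacle: the heavy lifting is entirely hidden inside Corollary~\ref{cor:well:calibrated:constant:schnorr}. All that remains is the trivial check that the constant-one selection function satisfies its hypotheses, plus the set-theoretic observation that a smaller family of sets has a larger (or equal) intersection.
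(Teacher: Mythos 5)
Your proof is correct and essentially identical to the paper's: both hinge on a single application of Corollary~\ref{cor:well:calibrated:constant:schnorr} with the constant selection function \(\selectionfunction\equiv 1\), which shows that the relative-frequency accumulation interval \(J(\pth)\) is contained in every \(I\) that makes \(\pth\) random, and \(J(\pth)\) is visibly non-empty. One caveat worth flagging: you read \eqref{eq:intersection:property} as the pair of containments \(J(\pth)\subseteq\bigcap\constantrandom{\allowables}\) and \(J(\pth)\subseteq\bigcap\constantschnorrrandom\), which is exactly what your argument (and the paper's) proves and is all that (i) and (ii) need; but as literally written, \eqref{eq:intersection:property} also asserts the middle inclusion \(\bigcap\constantrandom{\allowables}\subseteq\bigcap\constantschnorrrandom\), which neither proof addresses and which in fact runs the wrong way, since \(\constantrandom{\allowables}\subseteq\constantschnorrrandom\) by \eqref{eq:random:implies:schnorrandom} forces \(\bigcap\constantschnorrrandom\subseteq\bigcap\constantrandom{\allowables}\). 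So your silent restatement of \eqref{eq:intersection:property} is the mathematically correct reading, and the chain in the paper appears to have its last inclusion reversed.
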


\begin{proof}
It clearly suffices to prove the inclusions in Equation~\eqref{eq:intersection:property}, and Proposition~\ref{eq:random:implies:schnorrandom} allows us to concentrate on the first inclusion.
So, for any~\(I=\pinterval\in\constantrandom{\allowables}\), it follows from Corollary~\ref{cor:well:calibrated:constant:schnorr}, with \(\selectionfunction(n)\coloneqq1\) for all~\(n\in\naturals\), that
\begin{equation*}
\lp
\leq\liminf_{n\to\infty}\frac{1}{n}\smashoperator{\sum_{k=1}^n}\pth_k
\leq\limsup_{n\to\infty}\frac{1}{n}\smashoperator{\sum_{k=1}^n}\pth_k
\leq\up.
\end{equation*}
Hence, indeed,
\begin{equation*}
\emptyset
\neq\sqgroup[\bigg]{\liminf_{n\to\infty}\frac{1}{n}\smashoperator{\sum_{k=1}^n}\pth_k,
\limsup_{n\to\infty}\frac{1}{n}\smashoperator{\sum_{k=1}^n}\pth_k}
\subseteq\bigcap_{I\in\constantrandom{\allowables}}I
=\bigcap\constantrandom{\allowables}.
\qedhere
\end{equation*}
\end{proof}

Whether the non-empty closed intervals~\(\bigcap\constantrandom{\allowables}\) and~\(\bigcap\constantschnorrrandom\) themselves also make the path~\(\pth\) \(\allowables\)-random, respectively Schnorr random, depends on the case at hand: we will come across an example in Section~\ref{sec:non-stationarity} where they do (Section~\ref{sec:example:p:q}), and another example where they don't (Section~\ref{sec:example:almost:half}).

We continue our discussion by introducing the following subsets of~\(\frcsts\), which respectively collect the left and right boundaries of the interval forecasts that make a given path~\(\pth\in\pths\) random:
\begin{align*}
\lowerconstantrandom{\allowables}
\coloneqq\cset{\min I}{I\in\constantrandom{\allowables}}
&\text{ and }
\upperconstantrandom{\allowables}
\coloneqq\cset{\max I}{I\in\constantrandom{\allowables}}\\
\lowerconstantschnorrrandom
\coloneqq\cset{\min I}{I\in\constantschnorrrandom}
&\text{ and }
\upperconstantschnorrrandom
\coloneqq\cset{\max I}{I\in\constantschnorrrandom}.
\end{align*}
Proposition~\ref{prop:constantcalibrated:increasing} guarantees that \(\lowerconstantrandom{\allowables}\) and \(\lowerconstantschnorrrandom\) are decreasing sets (down-sets), and that \(\upperconstantrandom{\allowables}\) and \(\upperconstantschnorrrandom\) are increasing (up-sets).
They are therefore all of them subintervals of~\(\frcsts\).
If we also let
\begin{align*}
\lowersmallestrandom{\allowables}
\coloneqq\sup\lowerconstantrandom{\allowables}
=\min\bigcap\constantrandom{\allowables}
&\text{ and }
\uppersmallestrandom{\allowables}
\coloneqq\inf\upperconstantrandom{\allowables}
=\max\bigcap\constantrandom{\allowables}\\
\lowersmallestschnorrrandom
\coloneqq\sup\lowerconstantschnorrrandom
=\min\bigcap\constantschnorrrandom
&\text{ and }
\uppersmallestschnorrrandom
\coloneqq\inf\upperconstantschnorrrandom
=\max\bigcap\constantschnorrrandom,
\end{align*}
then clearly
\begin{align*}
&\lowerconstantrandom{\allowables}=[0,\lowersmallestrandom{\allowables})
\text{ or }
\lowerconstantrandom{\allowables}=[0,\lowersmallestrandom{\allowables}]\\
&\upperconstantrandom{\allowables}=(\uppersmallestrandom{\allowables},1]
\text{ or }
\upperconstantrandom{\allowables}=[\uppersmallestrandom{\allowables},1],
\end{align*}
and similarly for the Schnorr variants.
Proposition~\ref{prop:constantcalibrated:nonempty:intersection} also implies the following consistency property:
\begin{equation*}
\lowersmallestrandom{\allowables}
\leq\uppersmallestrandom{\allowables}
\text{ and }
\lowersmallestschnorrrandom
\leq\uppersmallestschnorrrandom.
\end{equation*}
All of this is illustrated in Figure~\ref{fig:set:filter} for the special, but in no way atypical, case that \(\allowables=\callowables\).

\begin{figure}[ht]
\centering
\begin{tikzpicture}[xscale=5,yscale=2.5]\small
\useasboundingbox (-0.2,-0.4) rectangle (1.2,1.1);
% border lines for the `smallest' forecast interval
\fill [lightgray] (0.4,0) rectangle (0.65,1);
\draw[densely dotted,semithick] (0,1) -- (0,0) (1,1) -- (1,0) ;
\draw[densely dotted,semithick] (0.4,1) -- (0.4,0) (.65,1) -- (.65,0);
% structure
\draw[semithick,->] (-0.1,0) -- (1.1,0) ;
\draw (0,.01) -- (0,-.01) node[below] {\(0\)} ;
\draw (1,.01) -- (1,-.01) node[below] {\(1\)} ;
\draw (.4,.01) -- (.4,-.01) node[below] {\(\lowersmallestrandom[\pth]{\mathrm{C}}\)} ;
\draw (.65,.01) -- (.65,-.01) node[below] {\(\uppersmallestrandom[\pth]{\mathrm{C}}\)} ;
% a few forecast intervals
\draw[thick,blue] (0,1) node[circle,inner sep=1pt,fill=green!50!black] {} -- (1,1) node[circle,inner sep=1pt,fill=red] {};
\draw[thick,blue] (0.39,0.9) node[circle,inner sep=1pt,fill=green!50!black] {} -- (1,0.9) node[circle,inner sep=1pt,fill=red] {};
\draw[thick,blue] (0.1,0.8) node[circle,inner sep=1pt,fill=green!50!black] {} -- (0.9,0.8) node[circle,inner sep=1pt,fill=red] {};
\draw[thick,blue] (0,0.7) node[circle,inner sep=1pt,fill=green!50!black] {} -- (0.66,0.7) node[circle,inner sep=1pt,fill=red] {};
\draw[thick,blue] (0.35,0.6) node[circle,inner sep=1pt,fill=green!50!black] {} -- (0.8,0.6) node[circle,inner sep=1pt,fill=red] {};
\draw[thick,blue] (0.39,0.5) node[circle,inner sep=1pt,fill=green!50!black] {} -- (0.66,0.5) node[circle,inner sep=1pt,fill=red] {};
\draw[thick,blue] (0.2,0.4) node[circle,inner sep=1pt,fill=green!50!black] {} -- (0.7,0.4) node[circle,inner sep=1pt,fill=red] {};
\draw[thick,blue] (0.39,0.3) node[circle,inner sep=1pt,fill=green!50!black] {} -- (0.7,0.3) node[circle,inner sep=1pt,fill=red] {};
\draw[thick,blue] (0.35,0.2) node[circle,inner sep=1pt,fill=green!50!black] {} -- (0.7,0.2) node[circle,inner sep=1pt,fill=red] {};
\draw[thick,blue] (0.35,0.1) node[circle,inner sep=1pt,fill=green!50!black] {} -- (0.66,0.1) node[circle,inner sep=1pt,fill=red] {};
%  the left and the right boundary sets
\draw[thick,green!50!black] (0,-0.3) node[circle,inner sep=1.5pt,fill] {} -- node[midway,below] {\(\lowerconstantrandom[\pth]{\mathrm{C}}\)} (0.4,-0.3) node[circle,inner sep=1.5pt,fill] {} node[circle,inner sep=1pt,fill=white] {} node[circle,inner sep=.5pt,fill] {};
\draw[thick,red] (0.65,-0.3) node[circle,inner sep=1.5pt,fill] {} node[circle,inner sep=1pt,fill=white] {} node[circle,inner sep=.5pt,fill] {} -- node[midway,below] {\(\upperconstantrandom[\pth]{\mathrm{C}}\)} (1,-0.3) node[circle,inner sep=1.5pt,fill] {};
\end{tikzpicture}
\caption{Some interval forecasts in the set~\(\constantcrandom\) (in blue), and corresponding~\(\lowersmallestrandom[\pth]{\mathrm{C}}\) and~\(\uppersmallestrandom[\pth]{\mathrm{C}}\)}
\label{fig:set:filter}
\end{figure}

It is obvious that, for any~\(I\in\constantrandom{\allowables}\), we have that \(\min I\in\lowerconstantrandom[\pth]{\allowables}\) and \(\max I\in\upperconstantrandom[\pth]{\allowables}\), and similarly for the Schnorr randomness variants.
We're about to prove, as a result of Propositions~\ref{prop:constantcalibrated:intersectioninside:ML}--\ref{prop:constantcalibrated:intersectioninside:schnorr} below, that for weak {\ML} randomness, {\comp} randomness and Schnorr randomness, the converse is also true.
Therefore, in those cases, where \(\allowables\) is equal to \(\callowables\) or \(\multmlallowables\),
\begin{equation}\label{eq:intervals:both:sides}
\left\{
\begin{aligned}
I\in\constantrandom{\allowables}
&\ifandonlyif
\group[\big]{%
\min I\in\lowerconstantrandom{\allowables}
\text{ and }
\max I\in\upperconstantrandom{\allowables}
}\\
I\in\constantschnorrrandom
&\ifandonlyif
\group[\big]{%
\min I\in\lowerconstantschnorrrandom
\text{ and }
\max I\in\upperconstantschnorrrandom
}.
\end{aligned}
\right.
\end{equation}

\begin{proof}[Proof of Equation~\eqref{eq:intervals:both:sides}]
We first give a proof of the converse implication for~\(\allowables\)-random\-ness.
Consider any~\(I=\pinterval\in\imprecisefrcsts\) for which \(\lp\in\lowerconstantrandom{\allowables}\) and \(\up\in\upperconstantrandom{\allowables}\).
That \(\lp\in\lowerconstantrandom{\allowables}\) implies by Proposition~\ref{prop:constantcalibrated:increasing} that also \([\lp,1]\in\constantrandom{\allowables}\).
Similarly, \(\up\in\upperconstantrandom{\allowables}\) implies by Proposition~\ref{prop:constantcalibrated:increasing} that also \([0,\up]\in\constantrandom{\allowables}\).
Propositions~\ref{prop:constantcalibrated:intersectioninside:ML} and~\ref{prop:constantcalibrated:intersectioninside:C} then guarantee that, indeed, \(I=\pinterval=[\lp,1]\cap[0,\up]\in\constantrandom{\allowables}\).

The proof for Schnorr randomness is completely similar, but uses Proposition~\ref{prop:constantcalibrated:intersectioninside:schnorr} rather than Propositions~\ref{prop:constantcalibrated:intersectioninside:ML} and~\ref{prop:constantcalibrated:intersectioninside:C}.
\end{proof}

Propositions~\ref{prop:constantcalibrated:intersectioninside:ML}--\ref{prop:constantcalibrated:intersectioninside:schnorr} below can of course be extended straightforwardly to any finite number of interval forecasts, and they guarantee, together with Proposition~\ref{prop:constantcalibrated:increasing}, that \(\constantcrandom\), \(\constantmultmlrandom\) and \(\constantschnorrrandom\) are \emph{set filters}: increasing sets that are closed under finite intersections.

We have no proof for a corresponding result for {\ML} randomness: it is an open problem whether the set of constant interval forecasts~\(\constantmlrandom\) that make a path~\(\pth\) {\ML} random is closed under finite intersections, and therefore a set filter.

\begin{proposition}\label{prop:constantcalibrated:intersectioninside:ML}
For any~\(\pth\in\pths\), \(\constantmultmlrandom\) is closed under (finite) intersections: for any two interval forecasts~\(I\) and \(J\) in~\(\constantmultmlrandom\), we have that \(I\cap J\in\constantmultmlrandom\).
\end{proposition}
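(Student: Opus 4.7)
We argue by contradiction: assume that $I, J \in \constantmultmlrandom$ and that $K \coloneqq I \cap J \notin \constantmultmlrandom$. Unfolding the definitions of $\multmlallowables$-randomness and $\testsupermartins[\constantfrcstsystem[K]]$, the failure of randomness for $\constantfrcstsystem[K]$ yields a lower semicomputable multiplier process $\multprocess$ with $\uex_K(\multprocess(\sit)) \leq 1$ for every $\sit \in \sits$, whose generated test process $\mint$ is unbounded on $\pth$. The target is to effectively split $\multprocess$ into lower semicomputable multiplier processes $\multprocess_I$ and $\multprocess_J$ such that $\uex_I(\multprocess_I(\sit)) \leq 1$, $\uex_J(\multprocess_J(\sit)) \leq 1$, and $\multprocess(\sit)(x) \leq \multprocess_I(\sit)(x) \multprocess_J(\sit)(x)$ pointwise. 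Proposition~\ref{prop:computable:from:multiplier} and the resulting inequality $\mint(\sit) \leq \mint[\multprocess_I](\sit) \mint[\multprocess_J](\sit)$ then force at least one of the test supermartingales $\mint[\multprocess_I], \mint[\multprocess_J]$---valid witnesses for the $\multmlallowables$-randomness tests under $\constantfrcstsystem[I]$, respectively $\constantfrcstsystem[J]$---to be unbounded on $\pth$, contradicting the assumption that $\pth$ is $\multmlallowables$-random for both.

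The construction is guided by the identities $\lp[K] = \max(\lp[I], \lp[J])$ and $\up[K] = \min(\up[I], \up[J])$. Excluding the trivial cases $I \subseteq J$ and $J \subseteq I$ (in which $K$ equals $I$ or $J$), we may assume WLOG (after relabelling $I \leftrightarrow J$) that $\lp[I] \leq \lp[J]$ and $\up[I] \leq \up[J]$, so $\lp[K] = \lp[J]$ and $\up[K] = \up[I]$. A short computation using~\eqref{eq:local:upper} then shows that, in every situation $\sit$, $\uex_K(\multprocess(\sit)) \leq 1$ implies $\uex_I(\multprocess(\sit)) \leq 1$ whenever $\multprocess(\sit)(1) \geq \multprocess(\sit)(0)$ (the upper endpoint of $I$ coincides with that of $K$), and $\uex_J(\multprocess(\sit)) \leq 1$ whenever $\multprocess(\sit)(1) \leq \multprocess(\sit)(0)$ (the lower endpoint of $J$ coincides with that of $K$). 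The naive pointwise factorisation therefore sets $\multprocess_I(\sit) \coloneqq \multprocess(\sit)$ and $\multprocess_J(\sit) \coloneqq \mathbf{1}$ in the first regime, and swaps these roles in the second, producing a product exactly equal to $\multprocess(\sit)$ and both multiplier inequalities holding.

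The main obstacle is effectivity: when $\multprocess$ is only lower semicomputable, the comparison $\multprocess(\sit)(1) \geq \multprocess(\sit)(0)$ is not semi-decidable, so the sharp case split above need not produce lower semicomputable $\multprocess_I, \multprocess_J$. The plan to overcome this is to regularise through rational approximation: write $\multprocess = \sup_n r_n$ with $r_n$ a recursive non-decreasing net of rational gambles. Since $r_n \leq \multprocess$, each $r_n$ automatically satisfies $\uex_K(r_n(\sit)) \leq 1$, and on the rational $r_n$ the sign of $r_n(\sit)(1) - r_n(\sit)(0)$ is decidable, so the stage-wise case-split factorisation $r_n = r_n^I \cdot r_n^J$ can be performed recursively. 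Defining $\multprocess_I$ and $\multprocess_J$ as appropriate lower semicomputable upper envelopes of the $r_n^I$ and $r_n^J$---chosen so that the possible ``flips'' of the case-split direction as $n$ increases are absorbed into a monotone structure---should yield the required multipliers. The delicate point I expect to be the chief technical obstacle is ensuring that this limiting construction is simultaneously lower semicomputable, satisfies $\uex_I(\multprocess_I(\sit)) \leq 1$ and $\uex_J(\multprocess_J(\sit)) \leq 1$ (using continuity of $\uex_I$ and $\uex_J$ in their gamble argument together with the monotonicity of the $r_n$), and still achieves the pointwise domination $\multprocess_I \multprocess_J \geq \multprocess$; the construction must be set up so that temporary misassignments at low approximation levels do not break the product-domination property in the limit.
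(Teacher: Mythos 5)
Your high-level strategy---write the offending multiplier $\multprocess$ for $\constantfrcstsystem[K]$ as a pointwise product of supermartingale multipliers for $\constantfrcstsystem[I]$ and $\constantfrcstsystem[J]$---is exactly the paper's, and your sign-based case split does work on any gamble where the comparison can be decided. But the remedy you propose for the effectivity obstacle, namely taking the lower-semicomputable upper envelopes of the stage-wise factors $r_n^I$ and $r_n^J$, does not go through, and the problem isn't the product-domination property you flag: it's the supermartingale-multiplier inequality itself. Because the case split can flip a \emph{finite} number of times even when the true sign is determinate, the envelope $\multprocess_I(\sit) = \sup_n r_n^I(\sit)$ can end up equal to the componentwise maximum $\max\set{\multprocess(\sit),\mathbf{1}}$, and $\uex_I$ of that can exceed $1$ by monotonicity. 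For a concrete counterexample, take $K=\set{0.3}$, $I=[0.1,0.3]$, $J=[0.3,0.7]$ and $\multprocess(\sit)=(2,\,0.4)$: then $\uex_K(\multprocess(\sit))=0.88\leq1$ and the true split is $\multprocess_I=\multprocess$, $\multprocess_J=\mathbf{1}$, but if even a single low-level approximant assigns $r_1^I(\sit)=\mathbf{1}$, the envelope gives $\multprocess_I(\sit)=(2,1)$ with $\uex_I((2,1))=\ex_{0.3}((2,1))=1.3>1$, so $\multprocess_I$ fails to be a supermartingale multiplier for $\constantfrcstsystem[I]$.

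The paper avoids the case split entirely by writing $\multprocess_I(\sit)(1)\coloneqq\min\set{\multprocess(\sit)(1),1}$, $\multprocess_I(\sit)(0)\coloneqq\max\set{\multprocess(\sit)(0),1}$ (and the mirror image for $\multprocess_J$). This is a single uniform formula, so lower semicomputability is preserved for free because $\min\set{\cdot,1}$ and $\max\set{\cdot,1}$ are continuous and non-decreasing; the product $\multprocess_I\multprocess_J=\multprocess$ holds \emph{exactly} componentwise (since $\min\set{a,1}\max\set{a,1}=a$); and the multiplier inequality for $I$ is verified by a short two-case argument on whether $\multprocess(\sit)(0)\leq1$. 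That formula is precisely the ``monotone structure'' you were reaching for: rather than trying to absorb flips through an envelope operation after the fact, you clamp each coordinate at $1$ in a fixed direction from the outset, which pins $\multprocess_I(\sit)(1)\leq1\leq\multprocess_I(\sit)(0)$ and thereby forces $\uex_I$ to evaluate at the lower endpoint $\lp=\lp[K]$, where the needed bound is inherited from $\uex_K(\multprocess(\sit))\leq1$.
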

\noindent The idea behind the proof is that we show how to write any {\lscomp} supermartingale multiplier for~\(\constantfrcstsystem[I\cap J]\) as a product of two {\lscomp} supermartingale multipliers, one for~\(\constantfrcstsystem[I]\) and one for~\(\constantfrcstsystem[J]\).

\begin{proof}[Proof of Proposition~\ref{prop:constantcalibrated:intersectioninside:ML}]
Let \(K\coloneqq I\cap J\).
We will prove that \(K\in\constantmultmlrandom\).

Let \(I=\pinterval\) and \(J=\qinterval\).
Because of symmetry, we may assume without loss of generality that \(\lptoo\leq\lp\). 
Furthermore, due to Proposition~\ref{prop:constantcalibrated:nonempty:intersection}, we know that then \(\lp\leq\uptoo\).
If we have that \(I\subseteq J\), then \(I=I\cap J\) and therefore, since \(I\in\constantmultmlrandom\), the result holds trivially. Hence, we may assume without loss of generality that \(\lptoo\leq\lp\leq\uptoo<\up\), which implies that \(K=I\cap J=[\lp,\uptoo]\).

\begin{center}
\begin{tikzpicture}[xscale=5,yscale=2]\footnotesize
\useasboundingbox (0,0) rectangle (1,1);
% the forecast interval I
\draw[thick] (0.4,0.8) node[circle,inner sep=1pt,fill] {} node [above=3pt] {\(\lp\)} -- node[midway,above] {\(I\)} (0.8,0.8) node[circle,inner sep=1pt,fill] {} node[above=2pt] {\(\up\)};
% the forecast interval J
\draw[thick] (0.2,0.5) node[circle,inner sep=1pt,fill] {} node [above=3pt] {\(\lptoo\)} -- node[midway,above] {\(J\)} (0.7,0.5) node[circle,inner sep=1pt,fill] {} node[above=2pt] {\(\uptoo\)};
% the forecast interval K
\draw[thick] (0.4,0.2) node[circle,inner sep=1pt,fill] {} node [below=4pt] {\(\lp\)} -- node[midway,above] {\(K\)} (0.7,0.2) node[circle,inner sep=1pt,fill] {} node[below=5pt] {\(\uptoo\)};
% help lines
\draw[densely dotted] (0.4,0.2) -- (0.4,0.8) (0.7,0.2) -- (0.7,0.5);
\end{tikzpicture}
\end{center}

Consider any test supermartingale~\(\test\) in~\(\multmlallowabletests[{\constantfrcstsystem[K]}]\), then we must show that \(\test\) remains bounded on~\(\pth\).
We know that there is some {\lscomp} supermartingale multiplier \(\multprocess\) for~\(\constantfrcstsystem[K]\) such that \(\test=\mint\).

Now let \(\multprocess_I\) be the map from situations to gambles on~\(\outcomes\), defined by
\begin{equation*}
\multprocess_I(\sit)(z)
\coloneqq
\begin{cases}
\min\set{\multprocess(\sit)(1),1}
&\text{if \(z=1\)}\\
\max\set{\multprocess(\sit)(0),1}
&\text{if \(z=0\)}
\end{cases}
\quad\text{for all~\(\sit\in\sits\) and \(z\in\outcomes\).}
\end{equation*}
We now show that \(\multprocess_I\) is a supermartingale multiplier for~\(\constantfrcstsystem[I]\).
That it is  non-negative follows from the non-negativity of~\(\multprocess\).
It therefore remains to show that \(\uex_I(\multprocess_I(\sit))\leq1\) for all \(\sit\in\sits\).
To this end, we consider two cases: \(\multprocess(\sit)(0)\leq1\) and \(\multprocess(\sit)(0)>1\).
If \(\multprocess(\sit)(0)\leq1\), then \(\multprocess_I(\sit)\leq1\) and therefore also \(\uex_{I}(\multprocess_I(\sit))\leq1\), by~\ref{axiom:coherence:bounds}.
The case that \(\multprocess(\sit)(0)>1\) is a bit more involved.
For a start, since \(\multprocess(\sit)(0)>1\) implies that \(\multprocess_I(\sit)(1)<\multprocess_I(\sit)(0)\) [because then \(\multprocess_I(\sit)(0)=\multprocess(\sit)(0)>1\), and at the same time always \(\multprocess_I(\sit)(1)\leq1\)], we find that
\begin{equation*}
\uex_{I}(\multprocess_I(\sit))
=\ex_{\lp}(\multprocess_I(\sit))
=\uex_{K}(\multprocess_I(\sit)).
\end{equation*}
Furthermore, since we know that \(\multprocess\) is a supermartingale multiplier for~\(\constantfrcstsystem[K]\) and therefore~\(\uex_K(\multprocess(\sit))\leq1\), \(\multprocess(\sit)(0)>1\) implies that \(\multprocess(\sit)(1)\leq1\), again by~\ref{axiom:coherence:bounds}.
We therefore find that \(\multprocess(\sit)=\multprocess_I(\sit)\).
By combining these two findings, it follows that indeed here also
\begin{equation*}
\uex_{I}(\multprocess_I(\sit))
=\uex_{K}(\multprocess_I(\sit))
=\uex_{K}(\multprocess(\sit))
\leq1.
\end{equation*}

Since we now know that \(\multprocess_I\) is a supermartingale multiplier for~\(\constantfrcstsystem[I]\), we may conclude that \(\test_I\coloneqq\mint[\multprocess_I]\) is a test supermartingale for~\(\constantfrcstsystem[I]\).
Furthermore, since \(\multprocess\) is {\lscomp}, so is \(\multprocess_I\), because taking minima and maxima are continuous and monotone (non-decreasing) operations.
Hence, \(\test_I\) belongs to~\(\multmlallowabletests[{\constantfrcstsystem[I]}]\).
Therefore, and because \(I\in\constantmultmlrandom\), we can conclude that \(\test_I\) remains bounded on~\(\pth\).

Also, if we let \(\multprocess_J\) be a map from situations to gambles on~\(\outcomes\), defined by
\begin{equation*}
\multprocess_J(\sit)(z)
\coloneqq
\begin{cases}
\max\set{\multprocess(\sit)(1),1}
&\text{if \(z=1\)}\\
\min\set{\multprocess(\sit)(0),1}
&\text{if \(z=0\)}
\end{cases}
\quad\text{for all~\(\sit\in\sits\) and \(z\in\outcomes\)},
\end{equation*}
and consider \(\test_J\coloneqq\mint[\multprocess_J]\), a similar course of reasoning leads us to conclude that \(\test_J\in\multmlallowabletests[{\constantfrcstsystem[J]}]\).
Therefore, and because \(J\in\constantmultmlrandom\), can also conclude that \(\test_J\) remains bounded on~\(\pth\).

Next, we observe that \(\multprocess=\multprocess_I\multprocess_J\), and therefore also \(\test=\mint=\mint[{\multprocess_I}]\mint[{\multprocess_J}]=\test_I\test_J\).
And since both \(\test_I\) and \(\test_J\) remain bounded on~\(\pth\), so, therefore, does \(\test\).
\end{proof}

\begin{proposition}\label{prop:constantcalibrated:intersectioninside:C}
For any~\(\pth\in\pths\), \(\constantcrandom\) is closed under (finite) intersections: for any two interval forecasts~\(I\) and \(J\) in~\(\constantcrandom\), we have that \(I\cap J\in\constantcrandom\).
\end{proposition}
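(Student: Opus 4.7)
The plan is to follow the blueprint of the proof of Proposition~\ref{prop:constantcalibrated:intersectioninside:ML} closely, substituting a computable multiplier for a lower semicomputable one. The enabling observation is that, by Equation~\eqref{eq:randomness:order}, $\constantcrandom = \constantposcrandom$, so it suffices to show that every \emph{positive} computable test supermartingale $\test$ for $\constantfrcstsystem[K]$, with $K \coloneqq I \cap J$, remains bounded on $\pth$.

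First I would reduce, exactly as in the previous proof, to the non-trivial case where $I = \pinterval$ and $J = \qinterval$ satisfy $\lptoo \leq \lp \leq \uptoo < \up$, so that $K = [\lp, \uptoo]$. Next, since $\test$ is positive and computable, Proposition~\ref{prop:computablemultiplier:from:process} provides a positive computable multiplier process $\multprocess$ with $\test = \mint$, and $\multprocess$ is automatically a supermartingale multiplier for $\constantfrcstsystem[K]$ because $\test$ is. I would then reuse the decomposition from Proposition~\ref{prop:constantcalibrated:intersectioninside:ML}, namely
\begin{align*}
\multprocess_I(\sit)(z) &\coloneqq
\begin{cases} \min\set{\multprocess(\sit)(1),1} & \text{if } z=1 \\ \max\set{\multprocess(\sit)(0),1} & \text{if } z=0, \end{cases} \\
\multprocess_J(\sit)(z) &\coloneqq
\begin{cases} \max\set{\multprocess(\sit)(1),1} & \text{if } z=1 \\ \min\set{\multprocess(\sit)(0),1} & \text{if } z=0, \end{cases}
\end{align*}
and import the verification given there to conclude that $\multprocess_I$ and $\multprocess_J$ are positive supermartingale multipliers for $\constantfrcstsystem[I]$ and $\constantfrcstsystem[J]$ respectively, with $\multprocess = \multprocess_I \multprocess_J$.

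The only step that genuinely differs is the computability claim for the two factors. Since $\multprocess$ is computable and taking pointwise $\min$ or $\max$ with the constant $1$ preserves computability (as recalled in Section~\ref{sec:computability}), both $\multprocess_I$ and $\multprocess_J$ are computable; Proposition~\ref{prop:computable:from:multiplier}(iii) then yields that $\test_I \coloneqq \mint[\multprocess_I] \in \callowabletests[{\constantfrcstsystem[I]}]$ and $\test_J \coloneqq \mint[\multprocess_J] \in \callowabletests[{\constantfrcstsystem[J]}]$. Since $I, J \in \constantcrandom$, both $\test_I$ and $\test_J$ remain bounded on $\pth$, and hence so does their product $\test = \test_I \test_J$. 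I foresee no real obstacle; the only delicate point is that Proposition~\ref{prop:computablemultiplier:from:process} requires $\test$ to be positive, which is precisely what the equivalence $\constantcrandom = \constantposcrandom$ buys us.
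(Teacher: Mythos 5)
Your proof is correct and follows essentially the same route as the paper's: reduce to positive computable test supermartingales via the equality \(\constantcrandom=\constantposcrandom\), extract a positive computable supermartingale multiplier via Proposition~\ref{prop:computablemultiplier:from:process}, reuse the factorisation \(\multprocess=\multprocess_I\multprocess_J\) from the weak {\ML} case, and observe that the min/max truncations preserve computability rather than merely lower semicomputability. (The paper cites Equation~\eqref{eq:random:implies:schnorrandom} rather than Equation~\eqref{eq:randomness:order} for the equality \(\constantcrandom=\constantposcrandom\), but the underlying fact is the same.)
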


\begin{proof}
The proof is almost completely analogous to that of Proposition~\ref{prop:constantcalibrated:intersectioninside:ML}.
After replacing~\(\multmlallowables\) and~\(\constantmultmlrandom\) with~\(\poscallowables\) and~\(\constantcrandom=\constantposcrandom\) [the equality follows from Equation~\eqref{eq:random:implies:schnorrandom}], respectively, the only steps that require changes are those that are concerned with {\lscompy}.

First, since \(\test\) is here a test supermartingale for~\(\constantfrcstsystem[I\cap J]\) that belongs to~\(\poscallowables\) and is therefore positive and {\comp}, we infer from Proposition~\ref{prop:computablemultiplier:from:process} that there now is some supermartingale multiplier~\(\multprocess\) that is positive and {\comp}, rather than merely {\lscomp}, such that \(\test=\mint\).
Secondly, we now need to show that the supermartingale multipliers~\(\multprocess_I\) and~\(\multprocess_J\) are positive and {\comp}, rather than merely {\lscomp}.
But this is trivially implied by the positive and {\comp} character of~\(\multprocess\).
\end{proof}

\begin{proposition}\label{prop:constantcalibrated:intersectioninside:schnorr}
For any~\(\pth\in\pths\), \(\constantschnorrrandom\) is closed under (finite) intersections: for any two interval forecasts~\(I\) and \(J\) in~\(\constantschnorrrandom\), we have that \(I\cap J\in\constantschnorrrandom\).
\end{proposition}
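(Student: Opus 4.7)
The plan is to adapt the proof of Proposition~\ref{prop:constantcalibrated:intersectioninside:C} to the Schnorr setting, replacing the final ``both factors remain bounded'' step by an appeal to Proposition~\ref{prop:computably:unbounded:product}. Two general facts set up the adaptation: by Proposition~\ref{prop:schnorrwithpositiveT} it suffices to rule out \emph{{\comp} positive} test supermartingales for \(\constantfrcstsystem[I\cap J]\) that are computably unbounded on~\(\pth\), and by Proposition~\ref{prop:computablemultiplier:from:process} every such test supermartingale can be written as \(\test=\mint\) for some positive {\comp} supermartingale multiplier~\(\multprocess\) (supermartingale because \(\test\) is, via Equation~\eqref{eq:supermartingale:multiplier:differences} and the positivity of~\(\test\)).

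After disposing of the trivial cases \(I\subseteq J\) and \(J\subseteq I\), I would assume without loss of generality that \(\lptoo\leq\lp\leq\uptoo<\up\), so that \(K\coloneqq I\cap J=[\lp,\uptoo]\), exactly as in the proofs of Propositions~\ref{prop:constantcalibrated:intersectioninside:ML} and~\ref{prop:constantcalibrated:intersectioninside:C}. Given a {\comp} positive test supermartingale \(\test\in\callowabletests[{\constantfrcstsystem[K]}]\) and its factorisation \(\test=\mint\), I would then define \(\multprocess_I\) and~\(\multprocess_J\) by the very same min/max-truncation-at-\(1\) formulas used in the proof of Proposition~\ref{prop:constantcalibrated:intersectioninside:ML}. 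The same case analysis as there shows that \(\multprocess_I\) and \(\multprocess_J\) are supermartingale multipliers for~\(\constantfrcstsystem[I]\) and~\(\constantfrcstsystem[J]\) respectively, and that \(\multprocess=\multprocess_I\multprocess_J\). The new ingredient over the {\ML}-random case is that, because \(\multprocess\) is positive and {\comp} and since maxima and minima of {\comp} real processes are {\comp}, the processes \(\multprocess_I\) and~\(\multprocess_J\) inherit both properties. Proposition~\ref{prop:computable:from:multiplier} then yields {\comp} positive test supermartingales \(\test_I\coloneqq\mint[\multprocess_I]\in\callowabletests[{\constantfrcstsystem[I]}]\) and \(\test_J\coloneqq\mint[\multprocess_J]\in\callowabletests[{\constantfrcstsystem[J]}]\), satisfying \(\test=\test_I\test_J\).

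For the conclusion, I would use that \(I,J\in\constantschnorrrandom\) to deduce that neither \(\test_I\) nor~\(\test_J\) is computably unbounded on~\(\pth\). Since \(\test(\pthto{n})=\test_I(\pthto{n})\test_J(\pthto{n})\) for every~\(n\in\naturalswithzero\), the contrapositive of Proposition~\ref{prop:computably:unbounded:product}, applied to the real maps \(n\mapsto\test_I(\pthto{n})\) and \(n\mapsto\test_J(\pthto{n})\), then forces \(\test\) not to be computably unbounded on~\(\pth\) either. Because \(\test\) was arbitrary, Proposition~\ref{prop:schnorrwithpositiveT} gives \(K\in\constantschnorrrandom\).

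The only point I expect to be subtle is the final step: for plain \(\allowables\)-randomness, closure of boundedness under products of positive factors is immediate, whereas for Schnorr randomness we need the corresponding statement for \emph{{\comp} unboundedness}, which is not at all obvious but is precisely the content of Proposition~\ref{prop:computably:unbounded:product}. Once that tool is in hand, no further novelty beyond the multiplier-splitting from the proof of Proposition~\ref{prop:constantcalibrated:intersectioninside:ML} is required.
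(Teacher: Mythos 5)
Your proposal is correct and follows essentially the same route as the paper's proof: restrict to positive {\comp} test supermartingales via Proposition~\ref{prop:schnorrwithpositiveT}, reuse the multiplier-splitting of Propositions~\ref{prop:constantcalibrated:intersectioninside:ML}--\ref{prop:constantcalibrated:intersectioninside:C} to factor \(\test=\test_I\test_J\) with {\comp} positive factors, and close with Proposition~\ref{prop:computably:unbounded:product} applied to \(\mu_1(n)=\test_I(\pthto{n})\) and \(\mu_2(n)=\test_J(\pthto{n})\). You merely spell out in full the steps the paper compresses into a pointer to the earlier proofs.
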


\begin{proof}
The proof starts from the proof of Proposition~\ref{prop:constantcalibrated:intersectioninside:C}.
Taking into account Proposition~\ref{prop:schnorrwithpositiveT}, the only additional complication is that we now also have to prove that if \(\test_I\) and \(\test_J\) are not computably unbounded on~\(\pth\), then neither is \(\test=\test_I\test_J\).
But this is an immediate consequence of Proposition~\ref{prop:computably:unbounded:product}, with \(\mu_1(n)\coloneqq\test_I(\pthto{n})\) and \(\mu_2(n)\coloneqq\test_J(\pthto{n})\) for all \(n\in\naturalswithzero\).
\end{proof}

\subsection{A few examples at the extreme ends}
We finish the discussion in this section by giving a few immediate examples of possible sets of interval forecasts.

On the one hand, for any precise forecast \(p\in\frcsts\), there always are sequences \(\pth\) that are \(\allowables\)-random, and at least as many that are Schnorr random, for the \emph{precise} stationary forecasting system~\(\constantfrcstsystem[p]\); see Corollary~\ref{cor:consistency}.
These types of random sequences have received most attention in the literature, thus far.
For any such sequence, a constant interval forecast~\(I\) will make it random if and only if it contains the precise forecast~\(p\): \(\constantrandom{\allowables}=\cset{I\in\imprecisefrcsts}{p\in I}\).
Hence, \(\lowerconstantrandom{\allowables}=[0,p]\) and \(\upperconstantrandom{\allowables}=[p,0]\), and therefore also
\begin{equation*}
\lowersmallestrandom{\allowables}=\uppersmallestrandom{\allowables}=p;
\end{equation*}
and similarly for Schnorr randomness.

At the other extreme end, any \emph{recursive} path with infinitely many zeroes and ones will only be random for the vacuous interval forecast.

\begin{proposition}\label{prop:constantcalibrated:computable:sequence}
If a path~\(\pth\in\pths\) is recursive and has infinitely many zeroes and infinitely many ones, then \(\constantrandom{\allowables}=\constantschnorrrandom=\set{[0,1]}\), so \(\lowerconstantrandom{\allowables}=\lowerconstantschnorrrandom=\set{0}\), \(\upperconstantrandom{\allowables}=\upperconstantschnorrrandom=\set{1}\), \(\lowersmallestrandom{\allowables}=\lowersmallestschnorrrandom=0\) and \(\uppersmallestrandom{\allowables}=\uppersmallestschnorrrandom=1\).
\end{proposition}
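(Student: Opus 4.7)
The plan is to show that $[0,1]$ is the unique interval forecast making $\pth$ Schnorr random, and then to read off all remaining claims. Since $\constantfrcstsystem[{[0,1]}]$ coincides with the vacuous forecasting system $\vacfrcstsystem$, Propositions~\ref{prop:vacuous} and~\ref{prop:vacuous:schnorrrandom} immediately give $[0,1]\in\constantrandom{\allowables}\cap\constantschnorrrandom$. Combined with the inclusion $\constantrandom{\allowables}\subseteq\constantschnorrrandom$ provided by Proposition~\ref{prop:random:implies:schnorrandom}, this reduces the whole proof to establishing the reverse inclusion $\constantschnorrrandom\subseteq\set{[0,1]}$.

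For that remaining inclusion I would fix any $I=\pinterval\in\constantschnorrrandom$ and apply the `weak Church randomness' result Corollary~\ref{cor:well:calibrated:constant:schnorr} with two carefully chosen recursive selection functions. Because $\pth$ is recursive, the maps $\selectionfunction\colon k\mapsto\pth_k$ and $\selectionfunction'\colon k\mapsto 1-\pth_k$ are both recursive selection functions, and the assumed presence of infinitely many ones (respectively zeroes) along $\pth$ guarantees that $\sum_{k=1}^{n}\selectionfunction(k)\to\infty$ (resp.\ $\sum_{k=1}^{n}\selectionfunction'(k)\to\infty$). Using $\pth_k\in\outcomes$, one has $\pth_k^2=\pth_k$ and $(1-\pth_k)\pth_k=0$, which collapses the ratio in Corollary~\ref{cor:well:calibrated:constant:schnorr} to the constant value~$1$ for $\selectionfunction$ and to the constant value~$0$ for $\selectionfunction'$, for all sufficiently large~$n$. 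The corollary then forces $\up\geq 1$ and $\lp\leq 0$, so $I=[0,1]$, as desired.

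Once $\constantrandom{\allowables}=\constantschnorrrandom=\set{[0,1]}$ has been secured, the remaining identities follow by direct inspection of the definitions: the sole element $[0,1]$ contributes $\min[0,1]=0$ and $\max[0,1]=1$ to the boundary sets, so $\lowerconstantrandom{\allowables}=\lowerconstantschnorrrandom=\set{0}$, $\upperconstantrandom{\allowables}=\upperconstantschnorrrandom=\set{1}$, and consequently $\lowersmallestrandom{\allowables}=\lowersmallestschnorrrandom=0$ and $\uppersmallestrandom{\allowables}=\uppersmallestschnorrrandom=1$. There is no genuine obstacle here: the decisive insight is that recursiveness of $\pth$ allows the sequence to act as its own randomness tester through the selection functions $\selectionfunction$ and $\selectionfunction'$, after which everything reduces to bookkeeping with Corollary~\ref{cor:well:calibrated:constant:schnorr}; the only things worth verifying carefully are that $\selectionfunction$ and $\selectionfunction'$ are recursive (using that $\pth$ itself is, not merely computable as a real), and that the denominators diverge (using both that $\pth$ has infinitely many ones \emph{and} infinitely many zeroes).
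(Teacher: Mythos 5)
Your proof is correct and follows essentially the same route as the paper: it uses Corollary~\ref{cor:well:calibrated:constant:schnorr} with the two recursive selection functions \(k\mapsto\pth_k\) and \(k\mapsto 1-\pth_k\) (which the paper calls \(\selectionfunction_1\) and \(\selectionfunction_0\)), exploits \(\pth_k^2=\pth_k\) and \(\pth_k(1-\pth_k)=0\) to collapse the relative-frequency ratios to \(1\) and \(0\), and concludes \(\lp\leq0\), \(\up\geq1\). The only cosmetic difference is that you pass through the inclusion \(\constantrandom{\allowables}\subseteq\constantschnorrrandom\) (plus non-emptiness via the vacuous forecast) to handle both sets at once, whereas the paper remarks that the same argument applies directly to \(\constantrandom{\allowables}\); this is an equally valid way to finish.
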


\begin{proof}
Since \(\pth\) is recursive, the selection functions~\(\selectionfunction_0\) and~\(\selectionfunction_1\) defined by
\begin{equation*}
\selectionfunction_1(n)\coloneqq\pthat{n}
\text{ and }
\selectionfunction_0(n)\coloneqq1-\pthat{n}
\text{ for all \(n\in\naturals\)},
\end{equation*}
are also recursive.
Moreover, since \(\pth\) has infinitely many zeroes and ones, \(\sum_{k=1}^{n}\selectionfunction_0(k)\to\infty\) and \(\sum_{k=1}^{n}\selectionfunction_1(k)\to\infty\).
For any~\(I\in\constantschnorrrandom\), we then infer from Corollary~\ref{cor:well:calibrated:constant:schnorr} that
\begin{equation*}
\min I
\leq\liminf_{n\to\infty}\frac{\sum_{k=1}^{n}\selectionfunction_0(k)\pthat{k}}
{\sum_{k=1}^{n}\selectionfunction_0(k)}
=\liminf_{n\to\infty}\frac{\sum_{k=1}^{n}(1-\pthat{k})\pthat{k}}
{\sum_{k=1}^{n}\selectionfunction_0(k)}
=0,
\end{equation*}
since all \(\pthat{k}(1-\pthat{k})=0\), and similarly
\begin{equation*}
\max I
\geq\limsup_{n\to\infty}\frac{\sum_{k=1}^{n}\selectionfunction_1(k)\pthat{k}}
{\sum_{k=1}^{n}\selectionfunction_1(k)}
\geq\limsup_{n\to\infty}\frac{\sum_{k=1}^{n}\pthat{k}^2}
{\sum_{k=1}^{n}\pthat{k}}
=1,
\end{equation*}
since all \(\pthat{k}^2=\pthat{k}\).
Hence, \(I=[0,1]\), and therefore \(\constantschnorrrandom=\set{[0,1]}\).
The same argument works for any~\(I\in\constantrandom{\allowables}\), and leads to the conclusion that also \(\constantrandom{\allowables}=\set{[0,1]}\).
\end{proof}

We show by means of a number of concrete examples in the next section that, in between these extremes of total imprecision and maximal precision, there lies a---to the best of our knowledge---previously uncharted realm of sequences, with `similar' unpredictability to the ones traditionally called `random', for which the intervals~\(\lowerconstantrandom{\allowables}\) and \(\upperconstantrandom{\allowables}\) need not always be closed, and more importantly, for which \(0<\lowersmallestrandom{\allowables}<\uppersmallestrandom{\allowables}<1\)---and similarly for Schnorr randomness.
This will provide the first evidence for our claim that `randomness is inherently imprecise'.

\section{Imprecise randomness due to non-stationarity}\label{sec:non-stationarity}
Our work on imprecise Markov chains \cite{cooman2008,cooman2015:markovergodic,krak2017:ICTMC,debock2021:uai,tjoens2019} has taught us that in some cases, we can very efficiently compute tight bounds on expectations in non-stationary precise Markov chains, by replacing them with their stationary imprecise versions.
Similarly, in statistical modelling, when learning from data sampled from a distribution with a varying (non-stationary) parameter, it seems hard to estimate the time sequence of its values, but we may be more successful in learning about its (stationary) interval range.
Similar ideas were also considered earlier by Fierens {\itshape et al.} \cite{fierens2009:frequentist}, when they argued for a frequentist interpretation of imprecise probability models based on non-stationarity.

In this section, we explore this idea in the context of our study of imprecise randomness, and show in a number of interesting examples that randomness associated with non-stationary precise forecasting systems can be captured by a stationary forecasting system, which must then be less precise: we gain simplicity of representation by going from a non-stationary to a stationary one, but we \emph{must} then pay for it by losing precision.

\subsection{A simple example}\label{sec:example:p:q}
Let us begin with a simple example to get some idea of where we want to go to.
In what follows, \(\allowables\) is any set of allowable test processes.
We discuss \(\allowables\)-randomness here, but completely analogous arguments and conclusions are valid for Schnorr randomness.

Consider any~\(p\) and \(q\) in~\(\frcsts\) with \(p<q\), and \emph{any} path~\(\pth\) that is \(\allowables\)-random for the forecasting system~\(\frcstsystem_{p,q}\) that is defined by
\begin{equation*}
\frcstsystem_{p,q}(\sit)
\coloneqq
\begin{cases}
p &\text{if \(\dist{\sit}\) is odd}\\
q &\text{if \(\dist{\sit}\) is even}
\end{cases}
\quad\text{for all~\(\sit\in\sits\).}
\end{equation*}
We know from Corollary~\ref{cor:consistency} that there is at least one such path.

We now look for the \emph{stationary} forecasting systems that make this~\(\pth\) \(\allowables\)-random, and we intend to show that for all~\(I\in\imprecisefrcsts\):
\begin{equation}\label{eq:pq:intervals}
I\in\constantrandom{\allowables}
\ifandonlyif
\sqgroup{p,q}\subseteq I,
\end{equation}
which then also implies that
\begin{equation*}
\lowerconstantrandom[\pth]{\allowables}=\sqgroup{0,p}
\text{, }
\upperconstantrandom[\pth]{\allowables}=\sqgroup{q,1}
\text{, }
\lowersmallestrandom[\pth]{\allowables}=p
\text{ and }
\uppersmallestrandom[\pth]{\allowables}=q.
\end{equation*}

\begin{proof}[Proof of Equation~\eqref{eq:pq:intervals}]
The converse implication follows at once from Proposition~\ref{prop:nestedfrcstsystems} and the fact that for any~\(I\in\imprecisefrcsts\) such that \([p,q]\subseteq I\), the stationary forecasting system~\(\constantfrcstsystem[I]\) is more conservative than \(\frcstsystem_{p,q}\), in the sense that \(\frcstsystem_{p,q}\subseteq\constantfrcstsystem[I]\).

For the direct implication, assume that \(I\in\constantrandom{\allowables}\) and fix any~\(\epsilon>0\).
Since all rational numbers are {\comp}, there are {\comp} intervals \(\pinterval\in\imprecisefrcsts\) and \(\qinterval\in\imprecisefrcsts\) such that
\begin{equation*}
p\in\pinterval\subseteq[p-\epsilon,p+\epsilon]
\text{ and }
q\in\qinterval\subseteq[q-\epsilon,q+\epsilon].
\end{equation*}
Consider now the forecasting system~\(\frcstsystem_\epsilon\), defined by
\begin{equation*}
\frcstsystem_\epsilon(\sit)
\coloneqq
\begin{cases}
\pinterval &\text{if \(\dist{\sit}\) is odd}\\
\qinterval &\text{if \(\dist{\sit}\) is even}
\end{cases}
\quad\text{for all~\(\sit\in\sits\).}
\end{equation*}
Then \(\frcstsystem_\epsilon\) is clearly {\comp} and, since \(\frcstsystem_{p,q}\subseteq\frcstsystem_\epsilon\), we know from Proposition~\ref{prop:nestedfrcstsystems} that \(\pth\) is \(\allowables\)-random for~\(\frcstsystem_\epsilon\).
Therefore, we find that
\begin{equation*}
\min I
\leq\liminf_{n\to\infty}\frac{1}{n}\smashoperator{\sum_{k=1}^n}\pth_{2k}
\leq\limsup_{n\to\infty}\frac{1}{n}\smashoperator{\sum_{k=1}^n}\pth_{2k}
\leq\up
\leq p+\epsilon,
\end{equation*}
where the first and third inequality follow from Corollary~\ref{cor:well:calibrated:constant:schnorr} and Theorem~\ref{thm:well:calibrated:general:schnorr}, respectively, for appropriately chosen recursive selection functions, and for~\(h=\indsing{1}\).
Similarly, but now with \(h=-\indsing{1}\), we also find that
\begin{equation*}
\max I
\geq\limsup_{n\to\infty}\frac{1}{n}\smashoperator{\sum_{k=1}^n}\pth_{2k-1}
\geq\liminf_{n\to\infty}\frac{1}{n}\smashoperator{\sum_{k=1}^n}\pth_{2k-1}
\geq\lptoo
\geq q-\epsilon.
\end{equation*}
Since \(\epsilon>0\) is arbitrary, this allows us to conclude that \(\min I\leq p\) and \(\max I\geq q\), and, therefore, that \([p,q]\subseteq I\).
\end{proof}

\subsection{A more complicated example}\label{sec:example:almost:half}
Next, we turn to a more complicated example, where we look at sequences that are `nearly' random for the constant precise forecast~\(\nicefrac{1}{2}\), but not quite.
We begin by considering the following sequence~\(\set{p_n}_{n\in\naturalswithzero}\) of precise forecasts:
\begin{equation*}
p_{n}\coloneqq\frac{1}{2}+(-1)^{n}\delta_{n}
\text{ with }
\delta_n\coloneqq\sqrt{\frac{8}{n+33}}
\quad\text{for all~\(n\in\naturalswithzero\)}.
\end{equation*}
Since the sequence~\(\{\delta_n\}_{n\in\naturalswithzero}\) decreases towards its limit \(0\) and \(\delta_n\in(0,\nicefrac{1}{2})\) for all~\(n\in\naturalswithzero\), we see that~\(p_n\to\nicefrac{1}{2}\) and that \(p_n\in(0,1)\) for all \(n\in\naturalswithzero\).

In this example, we will focus our attention on an \emph{arbitrary} but fixed path~\(\pth\) that is \(\mlallowables\)-random for the {\comp} precise forecasting system~\(\frcstsystem_{\sim\nicefrac{1}{2}}\) defined by
\begin{equation*}
\frcstsystem_{\sim\nicefrac{1}{2}}(\sit)\coloneqq p_{\dist{\sit}}
\text{ for all~\(\sit\in\sits\).}
\end{equation*}
We know from Corollary~\ref{cor:consistency} that there is at least one such path.
We will show, in a number of successive steps, that for all~\(\allowables\) such that \(\poscallowables\subseteq\allowables\subseteq\mlallowables\):
\begin{equation*}
\constantrandom{\allowables}
=\constantschnorrrandom
=\cset[\Big]{\pinterval\in\imprecisefrcsts}{\lp<\nicefrac{1}{2}<\up},
\end{equation*}
and therefore
\begin{equation*}
\lowerconstantrandom{\allowables}=\lowerconstantschnorrrandom=[0,\nicefrac{1}{2})
\text{ and }
\upperconstantrandom{\allowables}=\upperconstantschnorrrandom=(\nicefrac{1}{2},1]
\end{equation*}
and
\begin{equation*}
\lowersmallestrandom{\allowables}=\lowersmallestschnorrrandom
=\uppersmallestrandom{\allowables}=\uppersmallestschnorrrandom=\nicefrac{1}{2}.
\end{equation*}

We first prove that
\begin{equation*}
\text{\(\pinterval\in\constantmlrandom\) for all \(\pinterval\in\imprecisefrcsts\) such that \(\lp<\nicefrac{1}{2}<\up\)}, 
\end{equation*}
and therefore also \(\pinterval\in\constantrandom{\allowables}\) and \(\pinterval\in\constantschnorrrandom\), by Proposition~\ref{prop:constantcalibrated:nestedallowables}.

\begin{proof}[{\protect Proof that \(\pinterval\in\constantmlrandom\) if \(\pinterval\in\imprecisefrcsts\) and \(\lp<\nicefrac{1}{2}<\up\)}]
We provide a proof by contradiction.
Assume {\itshape ex absurdo} that there is some~\(I\coloneqq\pinterval\in\imprecisefrcsts\) such that \(\lp<\nicefrac{1}{2}<\up\) and \(I\notin\constantmultmlrandom\).
This implies that there is some {\lscomp} test supermartingale~\(\supermartin_I\) for the stationary forecasting system~\(\constantfrcstsystem[I]\) that is unbounded on~\(\pth\).

Consider any~\(m\in\naturalswithzero\) such that \(p_n\in\pinterval=I\) for all \(n\geq m\); this is always possible because \(p_n\) converges to \(\nicefrac{1}{2}\) and \(\lp<\nicefrac{1}{2}<\up\).
Let~\(\alpha>0\) be any rational number such that \(\supermartin_I(\sit)\leq\alpha\) for all \(\sit\in\sits\) with \(\abs{\sit}=m+1\); there always is such an \(\alpha\) because the number of situations of length \(m+1\) is finite.
We now consider a new process~\(\supermartin\), defined by
\begin{equation*}
\supermartin(\sit)
\coloneqq
\begin{cases}
\frac{1}{\alpha}\supermartin_I(\sit)
&\text{if \(\abs{\sit}>m\)}\\
1
&\text{if \(\abs{\sit}\leq m\)}
\end{cases}
\quad\text{for all~\(\sit\in\sits\),}
\end{equation*}
which is {\lscomp} because \(\supermartin_I\) is {\lscomp} and because \(\alpha\) is rational.
This process is furthermore positive because \(\supermartin_I\) and \(\alpha\) are, and it has unit initial value~\(\supermartin(\init)=1\) by definition.
Hence, it is a test process.
To see that it is also a supermartingale for~\(\frcstsystem_{\sim\nicefrac{1}{2}}\), we verify the condition in Equation~\eqref{eq:supermartingale}.
Consider any~\(\sit\in\sits\).
We distinguish three cases: \(\abs{\sit}>m\), \(\abs{\sit}=m\) and \(\abs{\sit}<m\).
If \(\abs{\sit}>m\), then
\begin{equation*}
\ex_{p_{\abs{\sit}}}(\supermartin(\sit\cdot))\leq\uex_I(\supermartin(\sit\cdot))=\uex_I\Big(\frac{1}{\alpha}\supermartin_I(\sit\cdot)\Big)=\frac{1}{\alpha}\uex_I(\supermartin_I(\sit\cdot))\leq\frac{1}{\alpha}\supermartin_I(\sit)=\supermartin(\sit),
\end{equation*}
where the first inequality holds because \(p_{\abs{\sit}}\in I\), the second equality follows from coherence property~\ref{axiom:coherence:homogeneity}, and the second inequality holds because \(\supermartin_I\) is a supermartingale for~\(\constantfrcstsystem[I]\).
If \(\abs{\sit}=m\), then \(\supermartin_I(\sit\cdot)\leq\alpha\) and therefore
\begin{equation*}
\ex_{p_{\abs{\sit}}}(\supermartin(\sit\cdot))\leq
\uex_I(\supermartin(\sit\cdot))=\uex_I\Big(\frac{1}{\alpha}\supermartin_I(\sit\cdot)\Big)\leq\uex_I(1)\leq1=\supermartin(\sit),
\end{equation*}
where the first inequality holds because \(p_{\abs{\sit}}\in I\), and the second and third inequalities follow from coherence properties~\ref{axiom:coherence:monotonicity} and~\ref{axiom:coherence:bounds}, respectively.
Finally, if \(\abs{\sit}<m\), then \(\ex_{p_{\abs{\sit}}}(\supermartin(\sit\cdot))=\ex_{p_{\abs{\sit}}}(1)=1=\supermartin(\sit)\).
So we can conclude that \(\smash{\ex_{\frcstsystem_{\sim\nicefrac{1}{2}}(\sit)}(\supermartin(s\cdot))=\ex_{p_{\abs{\sit}}}(\supermartin(s\cdot))\leq\supermartin(s)}\) for all~\(\sit\in\sits\).
Hence, \(\supermartin\) is a {\lscomp} test supermartingale for~\(\frcstsystem_{\sim\nicefrac{1}{2}}\).
However, by construction, \(\supermartin\) is unbounded above on~\(\pth\), simply because \(\supermartin_I\) is unbounded above on~\(\pth\) and \(\alpha\) is positive.
This contradicts the fact that \(\pth\) is \(\mlallowables\)-random for~\(\frcstsystem_{\sim\nicefrac{1}{2}}\).
\end{proof}

We complete the argument by showing that
\begin{equation*}
\text{\(\pinterval\notin\constantschnorrrandom\) for any~\(\pinterval\in\imprecisefrcsts\) such that \(\lp\geq\nicefrac{1}{2}\) or \(\up\leq\nicefrac{1}{2}\)}. 
\end{equation*}
Taking into account Proposition~\ref{prop:constantcalibrated:nestedallowables}, this will then also tell us that \(\pinterval\notin\constantrandom{\allowables}\), for all \(\poscallowables\subseteq\allowables\subseteq\mlallowables\). So it implies in particular that \(\{\nicefrac{1}{2}\}\notin\constantschnorrrandom\) and \(\set{\nicefrac{1}{2}}\notin\constantrandom{\allowables}\), meaning that the sequence isn't Schnorr random in the classical `fair coin' sense, nor computably random or (weakly) {\ML} random.

The proof is based on ideas involving Hellinger-like divergences in a beautiful paper by Volodya Vovk \cite{vovk2009:merging}: if the forecast sequences produced by two precise forecasting systems along a path~\(\pth\) lie `far enough' from each other, then it is possible to construct simple test supermartingales for these respective forecasting systems whose product becomes unbounded on~\(\pth\), implying that~\(\pth\) can't be random for both forecasting systems.
Here, we show that this idea can be extended to the case where one of the forecasting systems is imprecise.
We will have occasion to use this proof method again, in our proof of Theorem~\ref{thm:inherentlyimprecise}.

\begin{proof}[Proof that \(\pinterval\notin\constantschnorrrandom\) for any~\(\pinterval\in\imprecisefrcsts\) such that \(\lp\geq\nicefrac{1}{2}\) or \(\up\leq\nicefrac{1}{2}\)]
We only prove the result for~\(\lp\geq\nicefrac{1}{2}\); the proof for the other case is entirely analogous, the main difference with the argument below being that we then need to focus on the even rather than the odd indices.

Let \(I\coloneqq\pinterval\).
Then, by assumption, \(I\subseteq[\nicefrac{1}{2},1]\).
Consider the two gamble processes~\(\multprocess_I\) and \(\multprocess_{\sim\nicefrac{1}{2}}\), defined for all~\(\sit\in\sits\) by
\begin{equation*}
\multprocess_I(\sit)
\coloneqq
\begin{cases}
f_{\nicefrac{1}{2},p_{\abs{\sit}}}
&\text{if \(\abs{s}\) is odd}\\
1
&\text{if \(\abs{s}\) is even}
\end{cases}
\quad\text{and}\quad
\multprocess_{\sim\nicefrac{1}{2}}(\sit)
\coloneqq
\begin{cases}
f_{p_{\abs{\sit}},\nicefrac{1}{2}}
&\text{if \(\abs{s}\) is odd}\\
1
&\text{if \(\abs{s}\) is even},
\end{cases}
\end{equation*}
where, for any~\(\alpha,\beta\in(0,1)\), we define the gamble~\(f_{\alpha,\beta}\) on~\(\outcomes\) by
\begin{equation}\label{eq:workhorse:gambles}
f_{\alpha,\beta}(1)
\coloneqq
\frac{\sqrt{\nicefrac{\beta}{\alpha}}}{\sqrt{\alpha\beta}+\sqrt{(1-\alpha)(1-\beta)}}
\text{ and }
f_{\alpha,\beta}(0)
\coloneqq
\frac{\sqrt{\nicefrac{1-\beta}{1-\alpha}}}{\sqrt{\alpha\beta}+\sqrt{(1-\alpha)(1-\beta)}}.
\end{equation}
These gamble processes are {\comp} because the sequence~\((p_n)_{n\in\naturalswithzero}\) is {\comp} and because checking whether \(\abs{\sit}\) is odd is decidable.
Furthermore, due to Lemma~\ref{lem:hulpresultaatvoorinherentlyIP}\ref{it:hulpresultaatvoorinherentlyIP:expectation}, we also know that they're positive.
So we find that \(\multprocess_I\) and \(\multprocess_{\sim\nicefrac{1}{2}}\) are {\comp} multiplier processes.
We now proceed to show that they're in fact {\comp} supermartingale multipliers for~\(\constantfrcstsystem[I]\) and~\(\frcstsystem_{\sim\nicefrac{1}{2}}\), respectively.
To see that~\(\multprocess_I\) is a supermartingale multiplier for~\(\constantfrcstsystem[I]\), observe that
\begin{equation*}
\uex_I(\multprocess_I(\sit))
=
\begin{cases}
\uex_I(f_{\nicefrac{1}{2},p_{\abs{\sit}}})
\leq1
&\text{if \(\abs{s}\) is odd}\\
\uex_I(1)\leq1
&\text{if \(\abs{s}\) is even}
\end{cases}
\quad\text{for all~\(\sit\in\sits\),}
\end{equation*}
where the odd case follows from Lemma~\ref{lem:hulpresultaatvoorinherentlyIP}\ref{it:hulpresultaatvoorinherentlyIP:intervalabove} because then \(p_{\abs{\sit}}<\nicefrac{1}{2}\leq\lp\), and the even case follows from coherence property~\ref{axiom:coherence:bounds}.
To see that \(\multprocess_{\sim\nicefrac{1}{2}}\) is a supermartingale multiplier for~\(\frcstsystem_{\sim\nicefrac{1}{2}}\), observe that
\begin{equation*}
\ex_{p_{\abs{\sit}}}(\multprocess_{\sim\nicefrac{1}{2}}(\sit))
=
\begin{cases}
\ex_{p_{\abs{\sit}}}(f_{p_{\abs{\sit}},\nicefrac{1}{2}})
=1
&\text{if \(\abs{s}\) is odd}\\
\ex_{p_{\abs{\sit}}}(1)=1
&\text{if \(\abs{s}\) is even}
\end{cases}
\quad\text{for all~\(\sit\in\sits\),}
\end{equation*}
using Lemma~\ref{lem:hulpresultaatvoorinherentlyIP}\ref{it:hulpresultaatvoorinherentlyIP:expectation} for the odd case.
Taking into account Proposition~\ref{prop:computable:from:multiplier}, we conclude from the above that~\(\mint[\multprocess]_I\) and \(\mint[\multprocess]_{\sim\nicefrac{1}{2}}\) are {\comp} test supermartingales for~\(\constantfrcstsystem[I]\) and \(\frcstsystem_{\sim\nicefrac{1}{2}}\), respectively.

Let us now take a look at the product of~\(\mint[\multprocess]_I\) and \(\mint[\multprocess]_{\sim\nicefrac{1}{2}}\).
We start by observing that, for all~\(n\in\naturalswithzero\),
\begin{equation*}
\frac{1}{1-\frac{1}{4}\big(\frac{1}{2}-p_n\big)^2}
=\frac{1}{1-\frac{1}{4}\big(-(-1)^{n}\delta_{n}\big)^2}
=\frac{1}{1-\frac{1}{4}\delta_{n}^2}
=\frac{1}{1-\frac{1}{4}\frac{8}{n+33}}
=\frac{1}{1-\frac{2}{n+33}}
=\frac{n+33}{n+31}.\\[5pt]
\end{equation*}
Because of Lemma~\ref{lem:hulpresultaatvoorinherentlyIP}\ref{it:hulpresultaatvoorinherentlyIP:product}, this implies that, for all~\(n\in\naturalswithzero\),
\begin{equation*}
\multprocess_I(\pthto{n})\multprocess_{\sim\nicefrac{1}{2}}(\pthto{n})
=
\begin{cases}
f_{\nicefrac{1}{2},p_n}f_{p_n,\nicefrac{1}{2}}\geq\group[\big]{1-\frac{1}{4}(\nicefrac{1}{2}-p_n)^2}^{-1}=\frac{n+33}{n+31}
&\text{if \(n\) is odd}\\
1
&\text{if \(n\) is even.}
\end{cases}
\end{equation*}
Hence, for all~\(n\in\naturals\),
\begin{align*}
\mint[\multprocess]_I(\pthto{2n})\mint[\multprocess]_{\sim\nicefrac{1}{2}}(\pthto{2n})
&=\smashoperator[r]{\prod_{k=0}^{2n-1}}\multprocess_I(\pthto{k})(\pthat{k+1})\multprocess_{\sim\nicefrac{1}{2}}(\pthto{k})(\pthat{k+1})
\geq\smashoperator{\prod_{\substack{k=0\\k\text{ odd}}}^{2n-1}}\frac{k+33}{k+31}
=\frac{2n+32}{32}.
\end{align*}
Because the map~\(\realordering\colon\naturalswithzero\to\nonnegreals\colon n\mapsto\frac{n+32}{32}\) is a real growth function,  Proposition~\ref{prop:computably:unbounded:various}\ref{it:computably:unbounded:real} guarantees that the product~\(\mint[\multprocess_I]\mint[\multprocess_{\sim\nicefrac{1}{2}}]\) is computably unbounded on~\(\pth\), so Proposition~\ref{prop:computably:unbounded:product} tells us that at least one of the factor supermartingales~\(\mint[\multprocess_I]\) and~\(\mint[\multprocess_{\sim\nicefrac{1}{2}}]\) must be computably unbounded on~\(\pth\) too.
But since \(\mint[\multprocess]_{\sim\nicefrac{1}{2}}\) is a {\comp}---and therefore also {\lscomp} due to Proposition~\ref{prop:computable:upper:lower}---test supermartingale multiplier for the forecasting system~\(\frcstsystem_{\sim\nicefrac{1}{2}}\), it follows from the assumed \(\mlallowables\)-randomness of \(\pth\) for~\(\frcstsystem_{\sim\nicefrac{1}{2}}\) that \(\mint[\multprocess_{\sim\nicefrac{1}{2}}]\) can't be computably unbounded on~\(\pth\), so \(\mint[\multprocess]_I\) must be.
Since \(\mint[\multprocess]_I\) is a {\comp} supermartingale for~\(\constantfrcstsystem[I]\), we conclude that, indeed, \(\pinterval=I\notin\constantschnorrrandom\).
\end{proof}

\begin{lemma}\label{lem:hulpresultaatvoorinherentlyIP}
For any~\(\alpha,\beta\in(0,1)\), we consider the gamble~\(f_{\alpha,\beta}\) on~\(\outcomes\) defined in Equation~\eqref{eq:workhorse:gambles}.
Then for any~\(\alpha,\beta\in(0,1)\) and \(I=\pinterval\in\imprecisefrcsts\), the following statements hold:
\begin{enumerate}[label=\upshape(\roman*),leftmargin=*]
\item\label{it:hulpresultaatvoorinherentlyIP:expectation} \(\ex_{\alpha}(f_{\alpha,\beta})=1\), \(f_{\alpha,\beta}(0)>0\) and~\(f_{\alpha,\beta}(1)>0\);
\item\label{it:hulpresultaatvoorinherentlyIP:dominance}  \(f_{\alpha,\beta}(1)>f_{\alpha,\beta}(0)\) if and only if \(\alpha<\beta\);
\item\label{it:hulpresultaatvoorinherentlyIP:intervalbelow} if \(\up\leq\alpha<\beta\), then \(\uex_I(f_{\alpha,\beta})\leq1\);
\item\label{it:hulpresultaatvoorinherentlyIP:intervalabove} if \(\alpha<\beta\leq\lp\), then \(\uex_I(f_{\beta,\alpha})\leq1\);
\item\label{it:hulpresultaatvoorinherentlyIP:product} \(f_{\alpha,\beta}(0)f_{\beta,\alpha}(0)=f_{\alpha,\beta}(1)f_{\beta,\alpha}(1)\geq\smash{\group[\big]{1-\frac{1}{4}(\alpha-\beta)^2}^{-1}}\).
\end{enumerate}
\end{lemma}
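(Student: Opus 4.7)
The plan is to introduce the shorthand $D_{\alpha,\beta}\coloneqq\sqrt{\alpha\beta}+\sqrt{(1-\alpha)(1-\beta)}$, so that $f_{\alpha,\beta}(1)=\sqrt{\beta/\alpha}/D_{\alpha,\beta}$ and $f_{\alpha,\beta}(0)=\sqrt{(1-\beta)/(1-\alpha)}/D_{\alpha,\beta}$, and to note the symmetry $D_{\alpha,\beta}=D_{\beta,\alpha}>0$. For \ref{it:hulpresultaatvoorinherentlyIP:expectation}, a direct expansion gives
\[
\ex_\alpha(f_{\alpha,\beta})
=\tfrac{1}{D_{\alpha,\beta}}\bigl[\alpha\sqrt{\beta/\alpha}+(1-\alpha)\sqrt{(1-\beta)/(1-\alpha)}\bigr]
=\tfrac{1}{D_{\alpha,\beta}}\bigl[\sqrt{\alpha\beta}+\sqrt{(1-\alpha)(1-\beta)}\bigr]=1,
\]
and positivity of the two components is immediate from $\alpha,\beta\in(0,1)$. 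For \ref{it:hulpresultaatvoorinherentlyIP:dominance}, since both sides are positive the inequality $f_{\alpha,\beta}(1)>f_{\alpha,\beta}(0)$ squares to $\beta(1-\alpha)>\alpha(1-\beta)$, which simplifies to $\beta>\alpha$.

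For \ref{it:hulpresultaatvoorinherentlyIP:intervalbelow} and \ref{it:hulpresultaatvoorinherentlyIP:intervalabove} the idea is to combine \ref{it:hulpresultaatvoorinherentlyIP:expectation}, \ref{it:hulpresultaatvoorinherentlyIP:dominance} and the two-branch formula~\eqref{eq:local:upper} for $\uex_I$ with the fact that, by Equation~\eqref{eq:local:linear}, $p\mapsto\ex_p(f)$ is monotone (increasing iff $f(1)\geq f(0)$). In \ref{it:hulpresultaatvoorinherentlyIP:intervalbelow}, the assumption $\alpha<\beta$ and \ref{it:hulpresultaatvoorinherentlyIP:dominance} yield $f_{\alpha,\beta}(1)>f_{\alpha,\beta}(0)$, so $\uex_I(f_{\alpha,\beta})=\ex_{\up}(f_{\alpha,\beta})$ and this is non-decreasing in $\up$; using $\up\leq\alpha$ and \ref{it:hulpresultaatvoorinherentlyIP:expectation} we obtain $\ex_{\up}(f_{\alpha,\beta})\leq\ex_\alpha(f_{\alpha,\beta})=1$. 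Statement \ref{it:hulpresultaatvoorinherentlyIP:intervalabove} is the mirror image: applying \ref{it:hulpresultaatvoorinherentlyIP:dominance} with the arguments swapped gives $f_{\beta,\alpha}(1)<f_{\beta,\alpha}(0)$, hence $\uex_I(f_{\beta,\alpha})=\ex_{\lp}(f_{\beta,\alpha})$, which is non-increasing in $\lp$; combined with $\lp\geq\beta$ and \ref{it:hulpresultaatvoorinherentlyIP:expectation} applied to the pair $(\beta,\alpha)$, this gives $\ex_{\lp}(f_{\beta,\alpha})\leq\ex_\beta(f_{\beta,\alpha})=1$.

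For \ref{it:hulpresultaatvoorinherentlyIP:product}, I first observe the clean identity $f_{\alpha,\beta}(x)f_{\beta,\alpha}(x)=1/D_{\alpha,\beta}^2$ for each $x\in\outcomes$, which follows because the two surd factors are reciprocals and $D_{\alpha,\beta}=D_{\beta,\alpha}$. It then remains to show that $D_{\alpha,\beta}^2\leq1-\tfrac{1}{4}(\alpha-\beta)^2$. The plan is to expand and recognise the complementary identity
\[
D_{\alpha,\beta}^2+\bigl(\sqrt{\alpha(1-\beta)}-\sqrt{\beta(1-\alpha)}\bigr)^2=1,
\]
which drops out after grouping terms as $\alpha[\beta+(1-\beta)]+(1-\alpha)[\beta+(1-\beta)]=1$. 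Multiplying and dividing the surd difference by its conjugate gives
\[
\bigl\lvert\sqrt{\alpha(1-\beta)}-\sqrt{\beta(1-\alpha)}\bigr\rvert
=\frac{\lvert\alpha-\beta\rvert}{\sqrt{\alpha(1-\beta)}+\sqrt{\beta(1-\alpha)}}\geq\frac{\lvert\alpha-\beta\rvert}{2},
\]
since each surd is bounded by $1$. Combining these gives the desired bound and closes~\ref{it:hulpresultaatvoorinherentlyIP:product}.

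I don't anticipate any real obstacle: everything reduces to elementary manipulations with square roots and the expectation formula~\eqref{eq:local:linear}. The only place requiring a slightly clever rewriting is the algebraic identity underpinning \ref{it:hulpresultaatvoorinherentlyIP:product}, namely recognising $D_{\alpha,\beta}^2$ as $1$ minus a squared surd difference, which is what makes the conjugate-and-bound trick work.
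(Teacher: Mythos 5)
Your proposal is correct and follows essentially the same route as the paper for all five items; for item~\ref{it:hulpresultaatvoorinherentlyIP:product} the paper isolates the key inequality $D_{\alpha,\beta}^2\leq1-\tfrac14(\alpha-\beta)^2$ into Lemma~\ref{lem:alphabetainequalities} and proves it via the substitution $a=\alpha+\beta-2\alpha\beta$, $b=2\sqrt{\alpha\beta(1-\alpha)(1-\beta)}$, noting $D_{\alpha,\beta}^2=1-(a-b)$, $a^2-b^2=(\alpha-\beta)^2$ and $a+b\leq4$, whereas you go straight to the equivalent surd-difference form $D_{\alpha,\beta}^2=1-\bigl(\sqrt{\alpha(1-\beta)}-\sqrt{\beta(1-\alpha)}\bigr)^2$ and bound that squared difference by conjugate multiplication. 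These are literally the same quantities (your squared surd difference \emph{is} the paper's $a-b$, and your denominator bound is $a+b\leq4$), so your version is only a cleaner, unboxed write-up of the identical algebra rather than a genuinely different argument.
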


\begin{proof}
Statement~\ref{it:hulpresultaatvoorinherentlyIP:expectation} is an immediate consequence of the definition of~\(f_{\alpha,\beta}\) and \(\ex_\alpha\) and the fact that \(\alpha,\beta\in(0,1)\).
For statement~\ref{it:hulpresultaatvoorinherentlyIP:dominance}, observe that, indeed, since \(\alpha,\beta\in(0,1)\),
\begin{equation*}
f_{\alpha,\beta}(1)>f_{\alpha,\beta}(0)
\ifandonlyif
\sqrt{\nicefrac{\beta}{\alpha}}>\sqrt{\nicefrac{1-\beta}{1-\alpha}}
\ifandonlyif
\beta(1-\alpha)>\alpha(1-\beta)
\ifandonlyif
\beta>\alpha.
\end{equation*}

For statements~\ref{it:hulpresultaatvoorinherentlyIP:intervalbelow} and~\ref{it:hulpresultaatvoorinherentlyIP:intervalabove}, first observe that it follows from \(\alpha<\beta\) and statement~\ref{it:hulpresultaatvoorinherentlyIP:dominance} that \(f_{\alpha,\beta}(1)>f_{\alpha,\beta}(0)\) and \(f_{\beta,\alpha}(1)\leq f_{\beta,\alpha}(0)\).
Statement~\ref{it:hulpresultaatvoorinherentlyIP:intervalbelow} now follows because
\begin{equation*}
\uex_I(f_{\alpha,\beta})=\ex_{\up}(f_{\alpha,\beta})\leq\ex_\alpha(f_{\alpha,\beta})=1,
\end{equation*}
where the first equality and the inequality follow from Equations~\eqref{eq:local:upper} and~\eqref{eq:local:linear}, respectively, and the fact that \(f_{\alpha,\beta}(1)>f_{\alpha,\beta}(0)\), and where the last equality follows from statement~\ref{it:hulpresultaatvoorinherentlyIP:expectation}.
Statement~\ref{it:hulpresultaatvoorinherentlyIP:intervalabove} follows because
\begin{equation*}
\uex_I(f_{\beta,\alpha})=\ex_{\lp}(f_{\beta,\alpha})\leq\ex_\beta(f_{\beta,\alpha})=1,
\end{equation*}
where the first equality and the inequality follow from Equations~\eqref{eq:local:upper} and~\eqref{eq:local:linear}, respectively, and the fact that \(f_{\beta,\alpha}(1)\leq f_{\beta,\alpha}(0)\), and where the last equality follows from statement~\ref{it:hulpresultaatvoorinherentlyIP:expectation} with \(\alpha\) and \(\beta\) interchanged.

Statement~\ref{it:hulpresultaatvoorinherentlyIP:product} follows from Lemma~\ref{lem:alphabetainequalities} because
\begin{equation*}
f_{\alpha,\beta}(0)f_{\beta,\alpha}(0)
=f_{\alpha,\beta}(1)f_{\beta,\alpha}(1)
=\frac{1}{\group[\Big]{\sqrt{\alpha\beta}+\sqrt{(1-\alpha)(1-\beta)}}^2}.
\qedhere
\end{equation*}
\end{proof}

\begin{lemma}\label{lem:alphabetainequalities}
For any~\(\alpha,\beta\in(0,1)\), we have that
\begin{equation*}
0<\group[\Big]{\sqrt{\alpha\beta}+\sqrt{(1-\alpha)(1-\beta)}}^2
\leq1-\frac{1}{4}(\alpha-\beta)^2.
\end{equation*}
\end{lemma}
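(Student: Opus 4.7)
\medskip

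\noindent\textbf{Proof proposal.} The positivity of $(\sqrt{\alpha\beta}+\sqrt{(1-\alpha)(1-\beta)})^2$ is immediate, since both $\alpha,\beta\in(0,1)$, so I will focus on the upper bound. The plan is to introduce the auxiliary quantities
\begin{equation*}
u\coloneqq\sqrt{\alpha(1-\beta)}
\text{ and }
v\coloneqq\sqrt{\beta(1-\alpha)},
\end{equation*}
and then reduce the inequality to something elementary in $u$ and $v$.

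First, I would record three identities that follow by direct algebra:
\begin{align*}
u^2+v^2 &= \alpha+\beta-2\alpha\beta,\\
uv &= \sqrt{\alpha\beta(1-\alpha)(1-\beta)},\\
u^2-v^2 &= \alpha-\beta.
\end{align*}
Expanding the square on the left-hand side and using the first two identities gives
\begin{equation*}
\group[\big]{\sqrt{\alpha\beta}+\sqrt{(1-\alpha)(1-\beta)}}^2
=\alpha\beta+(1-\alpha)(1-\beta)+2uv
=1-(u^2+v^2)+2uv
=1-(u-v)^2.
\end{equation*}
On the other hand, the third identity yields $(\alpha-\beta)^2=(u^2-v^2)^2=(u-v)^2(u+v)^2$, so
\begin{equation*}
1-\tfrac14(\alpha-\beta)^2 = 1-\tfrac14(u-v)^2(u+v)^2.
\end{equation*}
The inequality to be proved therefore reduces to
\begin{equation*}
(u-v)^2\sqgroup[\big]{1-\tfrac14(u+v)^2}\geq0.
\end{equation*}

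The first factor is a square, hence non-negative. For the second factor, since $\alpha,\beta\in(0,1)$, we have $\alpha(1-\beta)\leq 1$ and $\beta(1-\alpha)\leq 1$, so $u\leq1$ and $v\leq 1$, whence $u+v\leq 2$ and $1-\tfrac14(u+v)^2\geq 0$. This finishes the argument.

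I do not anticipate a serious obstacle: the whole proof is a one-line algebraic manipulation once the substitution $(u,v)$ is in place. The only thing to watch is that the transition from $(\alpha-\beta)^2$ to $(u-v)^2(u+v)^2$ is done via $u^2-v^2=\alpha-\beta$, and that the crude bound $u,v\leq 1$ is more than enough (one could even note $u+v\leq\sqrt2$ by Cauchy--Schwarz, but this is not needed).
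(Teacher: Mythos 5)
Your proof is correct. It is essentially the same argument as the paper's, written through the substitution $u=\sqrt{\alpha(1-\beta)}$, $v=\sqrt{\beta(1-\alpha)}$: the paper works with $a\coloneqq\alpha+\beta-2\alpha\beta$ and $b\coloneqq2\sqrt{\alpha\beta(1-\alpha)(1-\beta)}$, which are exactly your $u^2+v^2$ and $2uv$, so that the paper's $a-b=(u-v)^2$, $a+b=(u+v)^2$ and $a^2-b^2=(\alpha-\beta)^2$ match your identities line for line. Your parametrization is a genuine tidying-up, though: the paper needs the detour $a-b=(a^2-b^2)/(a+b)$ together with the bound $a+b\leq4$, whereas you factor $(\alpha-\beta)^2=(u-v)^2(u+v)^2$ directly and the whole inequality collapses to $(u-v)^2\bigl[1-\tfrac14(u+v)^2\bigr]\geq0$, with $u,v\leq1$ immediate. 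So you obtain the same chain of identities by a cleaner route, avoiding the division step and making the source of each bound transparent.
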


\begin{proof}
The first inequality follows trivially from \(\alpha,\beta\in(0,1)\).
To prove the second, let
\begin{equation*}
a\coloneqq\alpha+\beta-2\alpha\beta=\alpha(1-\beta)+\beta(1-\alpha)>0
\text{ and }
b\coloneqq2\sqrt{\alpha\beta(1-\alpha)(1-\beta)}>0.
\end{equation*}
First observe that
\begin{align}\label{eq:lem:alphabetainequalities:2}
&\group[\Big]{\sqrt{\alpha\beta}+\sqrt{(1-\alpha)(1-\beta)}}^2\notag\\
&\hspace{2.3cm}=\alpha\beta+(1-\alpha)(1-\beta)+2\sqrt{\alpha\beta}\sqrt{(1-\alpha)(1-\beta)}
=1+b-a
\end{align}
and
\begin{align}\label{eq:lem:alphabetainequalities:3}
a^2-b^2
&=\big(\alpha+\beta-2\alpha\beta\big)^2
-\group[\Big]{2\sqrt{\alpha\beta(1-\alpha)(1-\beta)}}^2\notag\\
&=\group[\big]{\alpha^2+\beta^2+4\alpha^2\beta^2+2\alpha\beta-4\alpha^2\beta-4\alpha\beta^2}-4\alpha\beta(1-\alpha)(1-\beta)\notag\\
&=\alpha^2+\beta^2-2\alpha\beta=(\alpha-\beta)^2.
\end{align}
Next, we prove that \(a-b\geq\frac{1}{4}(\alpha-\beta)^2\).
On the one hand, since \(a>0\) and \(b>0\), we know that \(a+b>0\). On the other hand, we also know that
\begin{equation}\label{eq:lem:alphabetainequalities:4}
a+b=\alpha(1-\beta)+\beta(1-\alpha)+2\sqrt{\alpha\beta(1-\alpha)(1-\beta)}\leq 1+1+2=4.
\end{equation}
We therefore find that
\begin{equation*}
a-b
=\frac{(a-b)(a+b)}{a+b}
=\frac{a^2-b^2}{a+b}
=\frac{(\alpha-\beta)^2}{a+b}
\geq\frac{1}{4}(\alpha-\beta)^2,
\end{equation*}
using Equation~\eqref{eq:lem:alphabetainequalities:3} for the third equality and Equation~\eqref{eq:lem:alphabetainequalities:4} for the inequality.
Combined with Equation~\eqref{eq:lem:alphabetainequalities:2}, it follows that, indeed,
\begin{equation*}
\group[\Big]{\sqrt{\alpha\beta}+\sqrt{(1-\alpha)(1-\beta)}}^2
=1-a+b\leq1-\frac{1}{4}(\alpha-\beta)^2.
\qedhere
\end{equation*}
\end{proof}

\section{Imprecision can't be explained away}\label{sec:inherently}
The examples in the previous section illustrate that randomness associated with a non-stationary precise forecasting system can also be `described' as randomness for a simpler, stationary but then necessarily imprecise, forecasting system.
This observation might lead to the suspicion that all stationary imprecise forms of randomness can be `explained away' as such simpler representations of non-stationary but precise forms of randomness.
This would imply that the imprecision---or loss of precision---in the stationary forecasts isn't essential, and can always be dismissed as a mere artefact, a simple effect of using a stationary representation that isn't powerful enough to allow for the ideal representation, which must be, one would suspect, always precise but non-stationary.

We mean to show in this section that this suspicion is misguided, and even flat out wrong when we focus on {\comp} forecasting systems: we will see that there are paths that are random for a {\comp} stationary interval forecasting system that are never random for any {\comp} precise forecasting system, be it stationary or not.
This serves to further corroborate our claim that randomness is indeed inherently imprecise, as its imprecision can't be explained away as an effect of oversimplification.
The imprecision involved is furthermore non-negligible, and can be made arbitrarily large, because besides excluding the possibility of randomness of such paths for precise {\comp} forecasting systems, we also show they can't be random for any {\comp} forecasting system whose highest imprecision is smaller than that of the original, stationary one.

\begin{theorem}[Imprecision can't be explained away]\label{thm:inherentlyimprecise}
Consider any set of allowable test processes~\(\allowables\), and any interval forecast~\(I=\pinterval\in\imprecisefrcsts\).
Then there is path~\(\pth\in\pths\) that is \(\allowables\)-random---and therefore also Schnorr random---for the stationary interval forecast~\(I\), but that is never Schnorr random---and therefore never \(\allowables\)-random---for any {\comp} forecasting system~\(\frcstsystem\) whose highest imprecision is smaller than that of~\(I\), in the specific sense that \(\sup_{\sit\in\sits}\sqgroup[\big]{\ufrcstsystem(\sit)-\lfrcstsystem(\sit)}<\up-\lp\).
\end{theorem}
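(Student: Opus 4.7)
The plan is to mimic the Vovk--Hellinger divergence argument from the proof at the end of Section~\ref{sec:example:almost:half}. The high-level strategy is, for each computable forecasting system~\(\frcstsystem\) satisfying the strict width inequality, to construct a pair of computable positive test supermartingales \(\supermartin_I\in\callowabletests[{\constantfrcstsystem[I]}]\) and \(\supermartin_\frcstsystem\in\callowabletests[\frcstsystem]\) whose product \(\supermartin_I\cdot\supermartin_\frcstsystem\) is computably unbounded on every path. Granted this, a path simultaneously \(\allowables\)-random for \(\constantfrcstsystem[I]\) and Schnorr random for \(\frcstsystem\) would lead to a contradiction via Proposition~\ref{prop:computably:unbounded:product}: Schnorr randomness for \(\frcstsystem\) rules out \(\supermartin_\frcstsystem\) being computably unbounded, and \(\allowables\)-randomness for \(\constantfrcstsystem[I]\) combined with Proposition~\ref{prop:computably:unbounded:implies:unbounded} rules out \(\supermartin_I\) being so, since \(\supermartin_I\in\poscallowables\subseteq\allowables\). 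Together with Corollary~\ref{cor:consistency}, which supplies at least one \(\allowables\)-random path for \(\constantfrcstsystem[I]\), this would yield the desired~\(\pth\).

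For the construction of the supermartingale pair, I would exploit the quantitative margin \((\lfrcstsystem(s)-\lp)+(\up-\ufrcstsystem(s))=w-[\ufrcstsystem(s)-\lfrcstsystem(s)]\geq 2\epsilon\), where \(w:=\up-\lp\) and \(\epsilon>0\) is a rational witness to the strict width inequality. At every situation~\(s\), at least one of the two distances exceeds~\(\epsilon\), supplying a directional Vovk pair \((f_{\alpha,\beta},f_{\beta,\alpha})\) of Equation~\eqref{eq:workhorse:gambles} anchored at the corresponding endpoint of~\(I\) with the opposite argument placed just outside~\(I\), so that Lemma~\ref{lem:hulpresultaatvoorinherentlyIP}\ref{it:hulpresultaatvoorinherentlyIP:product} delivers a pointwise growth factor of order \(1+\tfrac{1}{4}\epsilon^2\). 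Since the side on which the margin is realised is not in general decidable from computable approximations to~\(\frcstsystem\), I would sum the two directional multiplier constructions on each side separately, leveraging the elementary bound \((a_1+a_2)(b_1+b_2)\geq\max(a_1 b_1,a_2 b_2)\) to transfer per-situation growth of the active direction to the product of the sums; the necessary computability properties then follow from Propositions~\ref{prop:computable:from:delta}--\ref{prop:computablemultiplier:from:process}.

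The principal technical obstacle is that the naive Vovk pair \((f_{\mu,\nu},f_{\nu,\mu})\) pairing a \(\constantfrcstsystem[I]\)-supermartingale increment with an \(\frcstsystem(s)\)-supermartingale increment is only admissible when \(\frcstsystem(s)\) is strictly disjoint from~\(I\), because the constraint \(\uex_I(f_{\mu,\nu})\leq 1\) forces \(\mu\) to lie at an endpoint of~\(I\) with \(\nu\) strictly outside~\(I\) on the corresponding side. When \(\frcstsystem(s)\) overlaps~\(I\), an additional device is required: I propose to fix a rational offset \(\delta\in(0,\epsilon)\) and place the auxiliary point~\(\nu\) at distance~\(\delta\) just outside the relevant endpoint of~\(I\) unconditionally, thereby preserving the \(\constantfrcstsystem[I]\)-supermartingale property of \(f_{\mu,\nu}\) by construction; the \(\frcstsystem\)-side gamble must then be replaced by a carefully designed variant whose upper expectation under \(\frcstsystem(s)\) is still at most one and whose pointwise product with \(f_{\mu,\nu}\) retains a growth factor of order \(\epsilon^2\) via a suitable refinement of Lemma~\ref{lem:hulpresultaatvoorinherentlyIP}\ref{it:hulpresultaatvoorinherentlyIP:product}. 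Verifying that this can be arranged uniformly in~\(s\), while simultaneously respecting non-negativity and the respective supermartingale constraints, is the most delicate step of the proof, and is enabled throughout by the \(\epsilon\)-margin supplied by the strict width hypothesis.
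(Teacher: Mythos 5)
There is a genuine gap, and it lies not in the "delicate step" you flag but in the high-level strategy itself. Your plan is to produce, for each eligible computable~\(\frcstsystem\), a pair \(\supermartin_I\in\callowabletests[{\constantfrcstsystem[I]}]\) and \(\supermartin_\frcstsystem\in\callowabletests[\frcstsystem]\) whose product is computably unbounded \emph{on every path}, and then to take an \emph{arbitrary} \(\allowables\)-random path for~\(\constantfrcstsystem[I]\) supplied by Corollary~\ref{cor:consistency}. Neither half of that plan can work. First, the product cannot be unbounded on every path. Take \(\frcstsystem=\constantfrcstsystem[p]\) for any computable rational \(p\in(\lp,\up)\); this has width zero and is eligible. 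Since \(\constantfrcstsystem[p]\subseteq\constantfrcstsystem[I]\), both \(\supermartin_I\) and \(\supermartin_\frcstsystem\) lie in \(\callowabletests[{\constantfrcstsystem[p]}]\), and by Corollary~\ref{cor:consistency} there is a path~\(\altpth\) that is \(\callowables\)-random for~\(\constantfrcstsystem[p]\); on~\(\altpth\) both factors, hence their product, remain bounded. Second, and for the same reason, an arbitrary random path for~\(\constantfrcstsystem[I]\) will not do: any path that is Schnorr random for such a computable precise \(p\in I\) is, by Proposition~\ref{prop:nestedfrcstsystems}, also \(\allowables\)-random for~\(\constantfrcstsystem[I]\), yet it violates the theorem's conclusion. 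The path must therefore be \emph{selected} to diagonalise against the countably many eligible~\(\frcstsystem\), which is precisely what Corollary~\ref{cor:consistency} alone cannot give you.

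You do correctly sense the second, more local obstacle: when \(\frcstsystem(\sit)\) overlaps~\(I\), a Vovk pair anchored at an endpoint of~\(I\) cannot serve both sides. But the remedy is not to perturb the auxiliary point slightly outside~\(I\); the remedy is to move the anchor \emph{inside}~\(I\), which then means the \(I\)-side gamble is no longer a supermartingale multiplier for~\(\constantfrcstsystem[I]\) (the gamble \(f_{\up[m],\up[m]-\epsilon_m}\) with \(\up[m]<\up\) is decreasing, so \(\uex_I\) of it is \(\ex_{\lp}\), which exceeds \(\ex_{\up[m]}=1\)). The paper resolves this by not working with~\(\constantfrcstsystem[I]\) directly: it builds a single \emph{precise} forecasting system~\(\frcstsystem_{\lp,\up}\) whose values \(\lp[m],\up[m]\) always lie strictly inside~\(I\) and are chosen, by a decidable test on a rational approximation of~\(\ufrcstsystem_m\), to be far enough (\(\geq\epsilon_m\)) from the relevant endpoint of each~\(\frcstsystem_m(\sit)\). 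One then fixes a path~\(\pth\) random for~\(\frcstsystem_{\lp,\up}\); since \(\frcstsystem_{\lp,\up}\subseteq\constantfrcstsystem[I]\), Proposition~\ref{prop:nestedfrcstsystems} yields randomness for~\(\constantfrcstsystem[I]\) for free, and for each~\(m\) the Vovk pair \((f_{\up[m],\up[m]-\epsilon_m},f_{\up[m]-\epsilon_m,\up[m]})\) (or its lower-endpoint twin) gives one \emph{martingale} multiplier for~\(\frcstsystem_{\lp,\up}\) and one supermartingale multiplier for~\(\frcstsystem_m\), whose product grows geometrically along the infinitely many indices~\(n\) with \(\lambda(n)=m\). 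Boundedness of the \(\frcstsystem_{\lp,\up}\)-side factor on the chosen~\(\pth\) then transfers the growth to the \(\frcstsystem_m\)-side factor. So the missing ideas are: the auxiliary precise forecasting system~\(\frcstsystem_{\lp,\up}\) living inside~\(I\); the recursive scheduling map~\(\lambda\) that revisits each~\(m\) infinitely often; and the decidable approximate comparison \(\ufrcstsystem'_m(\sit)\leq\up[m]-2\epsilon_m\), which replaces your (correctly diagnosed but unresolved) undecidability concern.
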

\noindent Our argument is crucially inspired by Volodya Vovk, who hit upon the essential idea and provided a first sketch for it.
We use the same basic idea, but follow an argumentation and construction that is different in a number of ways, in order to also---contrary to his approach---deal with precise {\comp} forecasts that may be irrational or become zero, and more importantly, with imprecise {\comp} forecasts whose imprecision is smaller than that of the stationary one.
Our proof method also has a more constructive flavour than his, which was based on an almost sure convergence argument.
Interestingly, our more constructive line of reasoning is inspired by the ideas and techniques he proposed in another paper \cite{vovk2009:merging}, and whose extension to our imprecise context we've already used for the example in Section~\ref{sec:example:almost:half}.
Stripped to its bare essentials, the argument is actually quite simple. 
We construct a precise forecasting system~\(\frcstsystem_{\lp,\up}\) whose forecasts are included in~\(I\) and infinitely often lie `far enough' from each of the countably many computable forecasting systems~\(\frcstsystem_m\) whose highest imprecision is smaller than that of~\(I\).
This then guarantees that no path can be simultaneously random for~\(\frcstsystem_{\lp,\up}\)---and therefore for~\(\constantfrcstsystem[I]\)---and for any such~\(\frcstsystem_m\).

\begin{proof}[Proof of Theorem~\ref{thm:inherentlyimprecise}]
To start the argument, we consider any recursive map~\(\lambda\colon\naturalswithzero\to\naturalswithzero\) such that for each~\(m\in\naturalswithzero\) there are infinitely many~\(n\in\naturalswithzero\) that are mapped to~\(m\), meaning that \(\lambda(n)=m\).
For instance, \(\lambda(n)\) could be the number of trailing zeroes in the binary expansion of~\(n+1\), so \(\lambda(n)\coloneqq\max\cset{k\in\naturalswithzero}{(n+1)2^{-k}\in\naturals}\) for all~\(n\in\naturalswithzero\), and consequently \(\lambda^{-1}(\set{m})=\cset{2^m(2\ell+1)-1}{\ell\in\naturalswithzero}\) for all~\(m\in\naturalswithzero\).

We also let \(\frcstsystem_0\), \(\frcstsystem_1\), \dots, \(\frcstsystem_n\), \dots\ be any enumeration of the (countably many) {\comp} forecasting systems, and we use this enumeration to let
\begin{equation*}
\compfrcstsystems[I]
\coloneqq\cset[\bigg]{m\in\naturalswithzero}{\sup_{\sit\in\sits}\sqgroup[\big]{\ufrcstsystem_m(\sit)-\lfrcstsystem_m(\sit)}<\up-\lp}
\end{equation*}
identify the set of all {\comp} forecasting systems whose highest imprecision is smaller than that of the interval forecast~\(I\).

We're now going to fix any~\(m\in\compfrcstsystems[I]\), or in other words, any such {\comp} forecasting system~\(\frcstsystem_m\).
Let \(0<\epsilon_m<1\) be any rational number [which there always is] such that
\begin{equation}\label{eq:theo:inherentlyimprecise:1}
\sup_{\sit\in\sits}\sqgroup[\big]{\ufrcstsystem_m(\sit)-\lfrcstsystem_m(\sit)}+6\epsilon_m<\up-\lp.
\end{equation}
Also, let \(\lp[m]\) and \(\up[m]\) be any two rational numbers [which there always are] such that
\begin{equation*}
\lp<\lp[m]<\lp+\epsilon_m
\text{ and }
\up-\epsilon_m<\up[m]<\up,
\end{equation*}
and consider any~\(N_m\in\naturals\) such that \(2^{-N_m}<\epsilon_m\).
Since the real process~\(\ufrcstsystem_m\) is {\comp} [because the forecasting system~\(\frcstsystem_m\) is], we know from Proposition~\ref{prop:computable:simplified} and the definition of a recursive net of rational numbers that there are three recursive maps~\(a_m\), \(b_m\) and \(\varsigma_m\) from~\(\sits\times\naturalswithzero\) to~\(\naturalswithzero\) such that
\begin{equation*}
b_m(\sit,n)>0
\text{ and }
\abs[\bigg]{(-1)^{\varsigma_m(\sit,n)}\frac{a_m(\sit,n)}{b_m(\sit,n)}-\ufrcstsystem_m(\sit)}
\leq2^{-n}
\text{ for all~\(\sit\in\sits\) and \(n\in\naturalswithzero\)}.
\end{equation*}
Hence, if we let \(\ufrcstsystem'_m\) be the rational-valued process defined by
\begin{equation*}
\ufrcstsystem'_m(\sit)\coloneqq (-1)^{\varsigma_m(\sit,N_m)}\frac{a_m(\sit,N_m)}{b_m(\sit,N_m)}
\text{ for all~\(\sit\in\sits\)},
\end{equation*}
then clearly \(\abs{\ufrcstsystem'_m(\sit)-\ufrcstsystem_m(\sit)}\leq2^{-N_m}<\epsilon_m\) for all~\(\sit\in\sits\).

We also establish a number of inequalities that will be important further on in this proof.
On the one hand, we have that
\begin{equation}\label{eq:theo:inherentlyimprecise:2}
0\leq\lp<\lp[m]<\lp+\epsilon_m<\lp[m]+\epsilon_m<\lp+2\epsilon_m,
\end{equation}
where the first inequality holds because \(I\subseteq[0,1]\), the second and third inequalities follow from our choice of \(\lp[m]\), the fourth inequality follows from the second, and the fifth inequality follows from the third.
On the other hand, we have that
\begin{equation}\label{eq:theo:inherentlyimprecise:3}
\up-2\epsilon_m<\up[m]-\epsilon_m<\up-\epsilon_m<\up[m]<\up\leq1,
\end{equation}
where the last inequality holds because \(I\subseteq[0,1]\), the third and fourth inequalities follow from our choice of \(\up[m]\), the second inequality follows from the fourth, and the first inequality follows from the third.
Furthermore, since \(\sup_{\sit\in\sits}\sqgroup[\big]{\ufrcstsystem_m(\sit)-\lfrcstsystem_m(\sit)}\geq0\), it follows from Equation~\eqref{eq:theo:inherentlyimprecise:1} that \(6\epsilon_m<\up-\lp\), which implies that \(\lp+2\epsilon_m<\lp+4\epsilon_m<\up-2\epsilon_m\).
Combining these inequalities with the ones in Equations~\eqref{eq:theo:inherentlyimprecise:2} and~\eqref{eq:theo:inherentlyimprecise:3}, we finally get that
\begin{equation}\label{eq:theo:inherentlyimprecise:4}
0\leq\lp<\lp[m]<\lp[m]+\epsilon_m<\up[m]-\epsilon_m<\up[m]<\up\leq1.
\end{equation}

With this set-up phase completed, we're now ready to use the map~\(\lambda\), and the \(\lp[m]\), \(\up[m]\), \(\epsilon_m\) and \(\ufrcstsystem'_m\) we have just determined for any~\(m\in\compfrcstsystems[I]\), to define the following precise forecasting system~\(\frcstsystem_{\lp,\up}\):\footnote{Recall that we don't distinguish between a singleton and its single element.}
\begin{equation}\label{eq:inherently:precise:noncomputable}
\frcstsystem_{\lp,\up}(\sit)
\coloneqq
\begin{cases}
\up[\lambda(\dist{\sit})]&\text{if \(\lambda(\dist{\sit})\in\compfrcstsystems[I]\) and \(\ufrcstsystem'_{\lambda(\dist{\sit})}(\sit)\leq\up[\lambda(\dist{\sit})]-2\epsilon_{\lambda(\dist{\sit})}\)}\\
\lp[\lambda(\dist{\sit})]&\text{if \(\lambda(\dist{\sit})\in\compfrcstsystems[I]\) and \(\ufrcstsystem'_{\lambda(\dist{\sit})}(\sit)>\up[\lambda(\dist{\sit})]-2\epsilon_{\lambda(\dist{\sit})}\)}\\
\up&\text{if \(\lambda(\dist{\sit})\notin\compfrcstsystems[I]\)}
\end{cases}
\quad\text{for all~\(\sit\in\sits\)}.
\end{equation}
We also consider any path~\(\pth\in\pths\) that is \(\allowables\)-random---and therefore, due to Proposition~\ref{prop:random:implies:schnorrandom}, also Schnorr random---for this precise forecasting system~\(\frcstsystem_{\lp,\up}\).
We know from Corollary~\ref{cor:consistency} that there is at least one such path; in fact, we know from Theorem~\ref{thm:consistency} that the set of all such paths has lower probability one in the probability tree associated with the precise forecasting system~\(\frcstsystem_{\lp,\up}\).

For any~\(m\in\compfrcstsystems[I]\), we know from Equation~\eqref{eq:theo:inherentlyimprecise:4} that \(\lp<\lp[m]<\up[m]<\up\), so it follows from Equation~\eqref{eq:inherently:precise:noncomputable} that \(\frcstsystem_{\lp,\up}(\sit)\in\pinterval=I\) for all~\(\sit\in\sits\).
This implies that \(\frcstsystem_{\lp,\up}\subseteq\constantfrcstsystem[I]\), where \(\constantfrcstsystem[I]\) is the stationary forecasting system associated with the constant interval forecast~\(I\).
Since \(\pth\) was assumed to be \(\allowables\)-random for~\(\frcstsystem_{\lp,\up}\), it follows from Proposition~\ref{prop:nestedfrcstsystems} that~\(\pth\) is also \(\allowables\)-random for~\(\constantfrcstsystem[I]\).

We will be done if we can show that the path~\(\pth\) isn't Schnorr random for any {\comp} forecasting system~\(\frcstsystem_m\) whose highest imprecision is less than that of~\(I\).
That is, if we can show that the path~\(\pth\) isn't Schnorr random for~\(\frcstsystem_m\) for any~\(m\in\compfrcstsystems[I]\).
This is what we now set out to do.

To this end, we consider an arbitrary but fixed~\(m\in\compfrcstsystems[I]\) and are going to construct a {\comp} test supermartingale for~\(\frcstsystem_m\) that is computably unbounded on~\(\pth\).
Since we know from Equation~\eqref{eq:theo:inherentlyimprecise:4} that \(0<\lp[m]<\lp[m]+\epsilon_m<\up[m]-\epsilon_m<\up[m]<1\), we can define a multiplier process \(\multprocess_m\) by
\begin{equation*}
\multprocess_m(\sit)
\coloneqq
\begin{cases}
f_{\up[m]-\epsilon_m,\up[m]}
&\text{if \(\lambda(\dist{\sit})=m\) and \(\ufrcstsystem'_{m}(\sit)\leq\up[m]-2\epsilon_{m}\)}\\
f_{\lp[m]+\epsilon_m,\lp[m]}
&\text{if \(\lambda(\dist{\sit})=m\) and \(\ufrcstsystem'_{m}(\sit)>\up[m]-2\epsilon_{m}\)}\\
1&\text{if \(\lambda(\dist{\sit})\neq m\)}
\end{cases}
\quad\text{for all~\(\sit\in\sits\)},
\end{equation*}
where, for any~\(\alpha,\beta\in(0,1)\), the gamble~\(f_{\alpha,\beta}\) on~\(\outcomes\) is defined by Equation~\eqref{eq:workhorse:gambles} in Section~\ref{sec:non-stationarity}.
For given~\(\lp[m]\), \(\up[m]\) and~\(\epsilon_m\), this multiplier process \(\multprocess_m\) is {\comp} because \(\lp[m]\), \(\up[m]\) and~\(\epsilon_m\) are rational and because the recursive character of~\(\lambda\), \(a_m\), \(b_m\) and~\(\varsigma_m\) guarantees that the equalities and inequalities in this expression are decidable.
\(\multprocess_m\) is furthermore positive by Lemma~\ref{lem:hulpresultaatvoorinherentlyIP}\ref{it:hulpresultaatvoorinherentlyIP:expectation}.

To prove that \(\multprocess_m\) is a supermartingale multiplier for~\(\frcstsystem_m\), we show that \(\uex_{\frcstsystem_m(\sit)}(\multprocess_m(\sit))\leq1\) for all~\(\sit\in\sits\).
There are, of course, three possible cases.

If \(\lambda(\abs{\sit})\neq m\), it is immediate that \(\uex_{\frcstsystem_m(\sit)}(\multprocess_m(\sit))=\uex_{\frcstsystem_m(\sit)}(1)=1\) [use~\ref{axiom:coherence:bounds} for the last equality].

If \(\lambda(\abs{\sit})=m\) and \(\ufrcstsystem'_{m}(\sit)\leq\up[m]-2\epsilon_{m}\), then also \(\ufrcstsystem_m(\sit)<\ufrcstsystem'_m(\sit)+\epsilon_m\leq\up[m]-\epsilon_m\) because \(\abs{\ufrcstsystem'_m(\sit)-\ufrcstsystem_m(\sit)}<\epsilon_m\).
It therefore follows from Lemma~\ref{lem:hulpresultaatvoorinherentlyIP}\ref{it:hulpresultaatvoorinherentlyIP:intervalbelow} that \(\uex_{\frcstsystem_m(\sit)}(\multprocess_m(\sit))=\uex_{\frcstsystem_m(\sit)}(f_{\up[m]-\epsilon_m,\up[m]})\leq1\).

Finally, we consider the case that \(\lambda(\abs{\sit})=m\) and \(\ufrcstsystem'_{m}(\sit)>\up[m]-2\epsilon_{m}\).
Since Equation~\eqref{eq:theo:inherentlyimprecise:1}
implies that \(\ufrcstsystem_m(\sit)-\lfrcstsystem_m(\sit)<\up-\lp-6\epsilon_m\), we then find that
\begin{align*}
\lfrcstsystem_m(\sit)
>\ufrcstsystem_m(\sit)-\up+\lp+6\epsilon_m
&>\ufrcstsystem'_m(\sit)-\up+\lp+5\epsilon_m
>\up[m]-2\epsilon_{m}-\up+\lp+5\epsilon_m\\
&=\up[m]-\up+\lp+3\epsilon_m
>\lp+2\epsilon_m
>\lp[m]+\epsilon_m
>\lp[m],
\end{align*}
where the second inequality holds because \(\abs{\ufrcstsystem'_m(\sit)-\ufrcstsystem_m(\sit)}<\epsilon_m\), the fourth inequality because \(\up[m]\) was chosen to make sure that \(\up-\epsilon_m<\up[m]\), and the fifth inequality because \(\lp[m]\) was chosen to make sure that \(\lp[m]<\lp+\epsilon_m\).
It therefore follows from Lemma~\ref{lem:hulpresultaatvoorinherentlyIP}\ref{it:hulpresultaatvoorinherentlyIP:intervalabove} that, also in this case, \(\uex_{\frcstsystem_m(\sit)}(\multprocess_m(\sit))=\uex_{\frcstsystem_m(\sit)}(f_{\lp[m]+\epsilon_m,\lp[m]})\leq1\).

So \(\multprocess_m\) is indeed a supermartingale multiplier for~\(\frcstsystem_m\).
Since we had already established that \(\multprocess_m\) is {\comp} and positive, it follows that \(\mint[\multprocess_m]\) is a positive {\comp} test supermartingale for~\(\frcstsystem_m\), also taking into account Proposition~\ref{prop:computable:from:multiplier}.

We're clearly done if we can show that this \(\mint[\multprocess_m]\) is computably unbounded on~\(\pth\).
To do so, consider the multiplier process~\(\multprocess_{m,\lp,\up}\) defined by
\begin{equation*}
\multprocess_{m,\lp,\up}(\sit)
\coloneqq
\begin{cases}
f_{\up[m],\up[m]-\epsilon_m}
&\text{if \(\lambda(\dist{\sit})=m\) and \(\ufrcstsystem'_{m}(\sit)\leq\up[m]-2\epsilon_{m}\)}\\
f_{\lp[m],\lp[m]+\epsilon_m}
&\text{if \(\lambda(\dist{\sit})=m\) and \(\ufrcstsystem'_{m}(\sit)>\up[m]-2\epsilon_{m}\)}\\
1&\text{if \(\lambda(\dist{\sit})\neq m\)}
\end{cases}
\quad\text{for all~\(\sit\in\sits\)}.
\end{equation*}

We first prove that, for given~\(\lp[m]\), \(\up[m]\) and~\(\epsilon_m\), this~\(\multprocess_{m,\lp,\up}\) is a positive {\comp} supermartingale multiplier for the forecasting system~\(\frcstsystem_{\lp,\up}\).
The argumentation is fairly similar to the one given above for~\(\multprocess_m\) and~\(\frcstsystem_m\).
Computability follows from the rationality of~\(\lp[m]\), \(\up[m]\) and~\(\epsilon_m\), and the recursive character of the maps~\(\lambda\), \(a_m\), \(b_m\) and~\(\varsigma_m\).
Positivity follows from Lemma~\ref{lem:hulpresultaatvoorinherentlyIP}\ref{it:hulpresultaatvoorinherentlyIP:expectation}.
To prove that \(\multprocess_{m,\lp,\up}\) is a supermartingale multiplier for~\(\frcstsystem_{\lp,\up}\), we need to show that \(\ex_{\frcstsystem_{\lp,\up}(\sit)}(\multprocess_{m,\lp,\up}(\sit))\leq1\) for all~\(\sit\in\sits\).

The case that \(\lambda(\abs{\sit})\neq m\) is again trivial.

If \(\lambda(\abs{\sit})=m\) and \(\ufrcstsystem'_{m}(\sit)\leq\up[m]-2\epsilon_m\), then \(\frcstsystem_{\lp,\up}(\sit)=\up[m]\) and \(\multprocess_{m,\lp,\up}(\sit)=f_{\up[m],\up[m]-\epsilon_m}\), so it follows from Lemma~\ref{lem:hulpresultaatvoorinherentlyIP}\ref{it:hulpresultaatvoorinherentlyIP:expectation} that \(\ex_{\frcstsystem_{\lp,\up}(\sit)}(\multprocess_{m,\lp,\up}(\sit))=\ex_{\up[m]}(f_{\up[m],\up[m]-\epsilon_m})=1\).

Similarly, if \(\lambda(\abs{\sit})=m\) and \(\ufrcstsystem'_{m}(\sit)>\up[m]-2\epsilon_m\), then \(\frcstsystem_{\lp,\up}(\sit)=\lp[m]\) and \(\multprocess_{m,\lp,\up}(\sit)=f_{\lp[m],\lp[m]+\epsilon_m}\).
Hence, \(\ex_{\frcstsystem_{\lp,\up}(\sit)}(\multprocess_{m,\lp,\up}(\sit))=\ex_{\lp[m]}(f_{\lp[m],\lp[m]+\epsilon_m})=1\), again by Lemma~\ref{lem:hulpresultaatvoorinherentlyIP}\ref{it:hulpresultaatvoorinherentlyIP:expectation}.

So, \(\multprocess_{m,\lp,\up}\) is indeed a positive {\comp} supermartingale multiplier for~\(\frcstsystem_{\lp,\up}\).
Taking into account Proposition~\ref{prop:computable:from:multiplier}, we can conclude that \(\mint[\multprocess_{m,\lp,\up}]\) is a positive {\comp} test supermartingale for~\(\frcstsystem_{\lp,\up}\).
This {\comp} test supermartingale~\(\mint[\multprocess_{m,\lp,\up}]\) must furthermore be bounded above on~\(\pth\), because of the assumed \(\allowables\)-randomness of the path~\(\pth\) for~\(\frcstsystem_{\lp,\up}\).

The idea for the rest of the proof is now that we're going to show that the product process~\(\mint[\multprocess_{m,\lp,\up}]\mint[\multprocess_m]\) is computably unbounded on~\(\pth\), and therefore \(\mint[\multprocess_m]\) must be as well.

Let \(B\) be any rational upper bound on~\(\mint[\multprocess_{m,\lp,\up}]\) along~\(\pth\).
Consider the rational number \(\delta\coloneqq\smash{\group[\big]{1-\frac{1}{4}\epsilon_m^2}^{-1}}\).
Then \(\delta>1\) because \(0<\epsilon_m<1\), and Lemma~\ref{lem:hulpresultaatvoorinherentlyIP}\ref{it:hulpresultaatvoorinherentlyIP:product} guarantees that
\begin{equation*}
\left\{
\begin{aligned}
f_{\up[m]-\epsilon_m,\up[m]}(1)f_{\up[m],\up[m]-\epsilon_m}(1)
&=f_{\up[m]-\epsilon_m,\up[m]}(0)f_{\up[m],\up[m]-\epsilon_m}(0)
\geq\delta\\
f_{\lp[m]+\epsilon_m,\lp[m]}(1)f_{\lp[m],\lp[m]+\epsilon_m}(1)
&=f_{\lp[m]+\epsilon_m,\lp[m]}(0)f_{\lp[m],\lp[m]+\epsilon_m}(0)
\geq\delta.
\end{aligned}
\right.
\end{equation*}
Hence, for any~\(\sit\in\sits\), we find that \(\multprocess_{m}(\sit)\multprocess_{m,\lp,\up}(\sit)\geq\delta\) if \(\lambda(\abs{\sit})=m\), and, otherwise, \(\multprocess_{m}(\sit)\multprocess_{m,\lp,\up}(\sit)=1\).
Consider the map~\(\ordering_m\colon\naturalswithzero\to\naturalswithzero\), defined by
\begin{equation*}
\ordering_m(n)
\coloneqq
\abs[\big]{\cset{k\in\set{0,\dots,n-1}}{\lambda(k)=m}}
\text{ for all~\(n\in\naturalswithzero\)},
\end{equation*}
which is clearly non-decreasing, recursive because \(\lambda\) is, and unbounded because there are infinitely many~\(k\in\naturalswithzero\) for which~\(\lambda(k)=m\).
Then, for any~\(n\in\naturalswithzero\),
\begin{align*}
\mint[\multprocess_{m}](\pthto{n})\mint[\multprocess_{m,\lp,\up}](\pthto{n})
&=\prod_{k=0}^{n-1}\multprocess_{m}(\pthto{k})(\pth_{k+1})\prod_{k=0}^{n-1}\multprocess_{m,\lp,\up}(\pthto{k})(\pth_{k+1})\\
&=\prod_{k=0}^{n-1}\sqgroup[\big]{\multprocess_{m}(\pthto{k})(\pth_{k+1})\multprocess_{m,\lp,\up}(\pthto{k})(\pth_{k+1})}
\geq\smashoperator[r]{\prod_{\substack{k=0\\\lambda(k)=m}}^{n-1}}\delta
=\delta^{\ordering_m(n)},
\end{align*}
and therefore, since \(\mint[\multprocess_{m,\lp,\up}]\) is positive and bounded above by~\(B\) along~\(\pth\), also
\begin{equation*}
\mint[\multprocess_{m}](\pthto{n})
\geq\frac{\delta^{r(n)}}{\mint[\multprocess_{m,\lp,\up}](\pthto{n})}\geq B^{-1}\delta^{\ordering_m(n)}.
\end{equation*}
Now let the map~\(\realordering_m\colon\naturalswithzero\to\nonnegreals\) be defined by~\(\realordering_m(n)\coloneqq B^{-1}\delta^{\ordering_m(n)}\) for all~\(n\in\naturalswithzero\).
Then \(\realordering\) is {\comp} because \(\ordering_m\) is recursive and because \(\delta\) and \(B\) are rational, and \(\realordering\) is non-decreasing and unbounded because \(\ordering_m\) is non-decreasing and unbounded, and because \(\delta>1\).
So, \(\realordering_m\) is a real growth function for which \(\mint[\multprocess_{m}](\pthto{n})\geq\realordering_m(n)\) for all~\(n\in\naturalswithzero\), and therefore also \(\limsup_{n\to\infty}\sqgroup{\mint[\multprocess_{m}](\pthto{n})-\realordering_m(n)}\geq0\).
We find that the {\comp} test supermartingale~\(\mint[\multprocess_{m}]\) for~\(\frcstsystem_m\) is indeed computably unbounded on~\(\pth\), by Proposition~\ref{prop:computably:unbounded:various}\ref{it:computably:unbounded:real}.
\end{proof}

For an example showing that the computability condition in this result can't be dropped, and a discussion on the theoretical and practical relevance of this condition, we refer to recent work by Persiau and ourselves~\cite{persiau2021:nonstationary}.

\section{The meagreness of random sequences}\label{sec:meagreness}
In yet another beautiful paper we came across while researching this topic, Muchnik, Semenov and Uspensky \cite{muchnik1998:metaphysics:of:randomness} showed that the set of all paths that correspond to a precise stationary forecast is meagre.

The essence of their argument is the following.
They call a path~\(\pth\) \emph{lawful} if there is some algorithm that, given as input any situation~\(\sit\) on the path~\(\pth\), outputs a non-trivial finite set~\(R(\sit)\) of situations~\(\altsit\sfollows\sit\) that strictly follow that situation~\(\sit\), such that one of these `extensions'~\(\altsit\) is also on the path---meaning that \(\pth\in\exact{\altsit}\).
By `non-trivial', they mean that~\(R(\sit)\) is restrictive: it actually eliminates possible extensions.
They then go on to show that the set of all lawful paths is meagre, and finally, that random paths, because they satisfy the law of large numbers, are lawful.

In this section, we show that we can extend this argument to imprecise stationary forecasts.
This will show that, while, due to Theorem~\ref{thm:consistency}, almost all paths are random for a forecasting system---so the random paths are legion---in a \emph{measure-theoretic} sense, in the specific \emph{topological} sense of meagreness, they are few.

First of all, let us give a definition of lawfulness that makes the formulation above more precise; see also Figure~\ref{fig:lawfulness}.
A \emph{partial} function on a domain~\(D\) is a function that need not be defined on all elements of~\(D\), so need not be a map.

\begin{definition}[Lawfulness {\protect\cite[Definition~2.1]{muchnik1998:metaphysics:of:randomness}}]\label{def:lawfulness}
We call \emph{algorithm} any recursive (partial) function~\(R\) from~\(\sits\) to the collection of finite subsets of~\(\sits\).
A path~\(\pth\in\pths\) is called \emph{lawful for an algorithm~\(R\)} if for all~\(m\in\naturalswithzero\):
\begin{enumerate}[label=\upshape(\roman*),leftmargin=*,noitemsep,topsep=0pt]
\item\label{it:lawfulness:defined} \(R\) is defined in the situation~\(\pthto{m}\);
\item\label{it:lawfulness:extends} \(R(\pthto{m})\) is a non-empty finite subset of~\(\sits\) such that \(\pthto{m}\sprecedes\altsit\) for all~\(\altsit\in R(\pthto{m})\);
\item\label{it:lawfulness:nontrivial} \(R(\pthto{m})\) is \emph{non-trivial}: \(\bigcup_{\altsit\in R(\pthto{m})}\exact{t}\subset\exact{\pthto{m}}\);
\item\label{it:lawfulness:consistent} there is some~\(\altsit\in R(\pthto{m})\) such that \(\pth\in\exact{\altsit}\).
\end{enumerate}
A path~\(\pth\in\pths\) is called \emph{lawful} if it is lawful for some algorithm~\(R\).
A path that isn't lawful is called \emph{lawless}.
\end{definition}

\begin{figure}[h]
\begin{center}
\pgfdeclarelayer{background}
\pgfsetlayers{background,main}
\begin{tikzpicture}[scale=.5]
\tikzstyle{level 1}=[sibling distance=8em]
\tikzstyle{level 2}=[sibling distance=4em]
\tikzstyle{level 3}=[sibling distance=2em]
\coordinate (sit) at (0,0);
\coordinate (oneleft) at (-2cm,0cm);
\coordinate (twoleft) at (-4cm,0cm);
\draw[red] (twoleft) -- (oneleft) -- (sit);
\node[circle,inner sep=1.25pt,fill=red] at (oneleft) {};
\node[circle,inner sep=1.25pt,fill=white] at (twoleft) {\(\cdots\)};
\node[circle,inner sep=1.25pt,fill=red] at (sit) {}
[grow=right,level distance=2cm]
child[black] {node[circle,inner sep=1pt,fill=blue] (a) {}
  child {node[circle,inner sep=1pt,fill=blue] (aa) {}
    child {node[circle,inner sep=1pt,fill=blue] (aaa) {}}
    child {node[circle,inner sep=1pt,fill=blue] (aab) {}}
  }
  child {node[circle,inner sep=1pt,fill=blue] (ab) {}
    child {node[circle,inner sep=1pt,fill=blue] (aba) {}}
    child {node[circle,inner sep=1pt,fill=blue] (abb) {}}
  }
}
child[red] {node[circle,inner sep=1.25pt,fill=red] (b) {}
  child {node[circle,inner sep=1.25pt,fill=red] (ba) {}
    child[black] {node[circle,inner sep=1pt,fill=blue] (baa) {}}
    child {node[circle,inner sep=1.25pt,fill=red] (bab) {}}
  }
  child[black] {node[circle,inner sep=1pt,fill=blue] (bb) {}
    child {node[circle,inner sep=1pt,fill=blue] (bba) {}}
    child {node[circle,inner sep=1pt,fill=blue] (bbb) {}}
  }
};
\coordinate (oneright) at ($(bab)+(2cm,0cm)$);
\coordinate (tworight) at ($(bab)+(4cm,0cm)$);
\draw[red] (bab) -- node[above,midway,red] {\small\(\pth\)} (oneright) -- (tworight);
\node[circle,inner sep=1.25pt,fill=red] at (oneright) {};
\node[circle,inner sep=1.25pt,fill=white] at (tworight) {\(\cdots\)};
\node[above] at (sit) {\small\(\sit\)};
\begin{pgfonlayer}{background}
\draw[lightgray,line width=8pt,line join=round,line cap=round] (bab) -- (ab) -- (aa);
\node[gray,above left] at (ab) {\small\(R(\sit)\)};
\end{pgfonlayer}
\end{tikzpicture}
\end{center}
\caption{Visualisation of the main ideas behind the lawfulness of a path~\(\pth\)}
\label{fig:lawfulness}
\end{figure}

A set of paths \(A\subseteq\pths\) is \emph{nowhere dense in~\(\pths\)} \cite{muchnik1998:metaphysics:of:randomness} if for every \(\sit\in\sits\), there is some~\(\altsit\in\sits\) such that \(\sit\precedes\altsit\) and \(A\cap\exact{\altsit}=\emptyset\).
A set of paths \(B\subseteq\pths\) is then called \emph{meagre}, or \emph{first category}, it it is a countable union of nowhere dense sets.
We will rely on the following central result in Ref.~\cite{muchnik1998:metaphysics:of:randomness}.

\begin{theorem}[{\protect\cite[Corollary~2.3]{muchnik1998:metaphysics:of:randomness}}]\label{thm:meagre}
Any subset of \(\pths\) containing only lawful paths is meagre.
\end{theorem}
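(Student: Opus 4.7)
The plan is to prove the stronger statement that the set~\(L\) of all lawful paths is itself meagre; since every subset of a meagre set is meagre, this will suffice. Because lawfulness of a path requires the existence of \emph{some} witnessing algorithm, one has \(L=\bigcup_R L_R\), where \(L_R\) denotes the set of paths lawful for the algorithm~\(R\). Algorithms are, by Definition~\ref{def:lawfulness}, recursive partial functions from~\(\sits\) to the finite subsets of~\(\sits\), so there are only countably many of them, and it will therefore suffice to show that~\(L_R\) is nowhere dense in~\(\pths\) for every single algorithm~\(R\).

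To that end, I would fix an algorithm~\(R\) and any situation~\(\sit\in\sits\), and exhibit some \(\altsit\in\sits\) with \(\sit\precedes\altsit\) such that \(L_R\cap\exact{\altsit}=\emptyset\). The argument splits on whether conditions \ref{it:lawfulness:defined}--\ref{it:lawfulness:nontrivial} of Definition~\ref{def:lawfulness} all hold at~\(\sit\). If any of them fails---that is, if \(R\) is undefined at~\(\sit\), or \(R(\sit)\) is empty, or contains an element that is not a strict descendant of~\(\sit\), or satisfies \(\bigcup_{\altsit'\in R(\sit)}\exact{\altsit'}=\exact{\sit}\)---then no path going through~\(\sit\) can be lawful for~\(R\), since its failure mode occurs at the initial segment of length~\(\abs{\sit}\); in this case I simply set \(\altsit\coloneqq\sit\). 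Otherwise, \(R(\sit)\) is a non-empty finite collection of strict descendants of~\(\sit\) whose associated cylinders do not cover \(\exact{\sit}\); then \(\exact{\sit}\setminus\bigcup_{\altsit'\in R(\sit)}\exact{\altsit'}\) is a non-empty open subset of~\(\pths\)---being the relative complement inside a clopen cylinder of a finite union of clopen cylinders---and therefore contains some basic cylinder~\(\exact{\altsit}\) with \(\sit\precedes\altsit\). Every path going through this~\(\altsit\) has initial segment \(\pthto{\abs{\sit}}=\sit\) but lies in none of the~\(\exact{\altsit'}\) for \(\altsit'\in R(\sit)\), so condition~\ref{it:lawfulness:consistent} of Definition~\ref{def:lawfulness} fails at \(m=\abs{\sit}\), and the path is not lawful for~\(R\).

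Putting these two cases together shows that \(L_R\) is nowhere dense for every algorithm~\(R\), so the countable union~\(L\) is meagre and any subset of~\(L\) is meagre, as required. I do not anticipate a serious obstacle here: the only mildly delicate step is the topological one of extracting a basic cylinder~\(\exact{\altsit}\) from the non-trivial relative complement \(\exact{\sit}\setminus\bigcup_{\altsit'\in R(\sit)}\exact{\altsit'}\), and this rests solely on the elementary facts that the cylinders form a basis for the product topology on~\(\pths=\outcomes^{\naturals}\) and that each~\(\exact{\altsit'}\) is clopen. The overall argument is essentially the one already sketched in~\cite{muchnik1998:metaphysics:of:randomness}; the main work is just verifying that the standard topological argument translates cleanly into the tree-based framework used here.
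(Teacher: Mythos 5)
The paper does not actually prove Theorem~\ref{thm:meagre}; it simply cites it as Corollary~2.3 of Muchnik, Semenov and Uspensky \cite{muchnik1998:metaphysics:of:randomness} and uses it as a black box. Your proof is nevertheless correct and is essentially the standard Baire-category argument from that reference: you decompose the set of lawful paths into a countable union \(\bigcup_R L_R\) over algorithms~\(R\), and show each~\(L_R\) is nowhere dense by finding, for every situation~\(\sit\), a descendant cylinder~\(\exact{\altsit}\) that avoids~\(L_R\). The two cases are handled cleanly: when~\(R(\sit)\) fails one of conditions~\ref{it:lawfulness:defined}--\ref{it:lawfulness:nontrivial}, you may take \(\altsit=\sit\); otherwise \(\exact{\sit}\setminus\bigcup_{\altsit'\in R(\sit)}\exact{\altsit'}\) is a non-empty open set (a clopen cylinder minus a finite union of clopen cylinders), so it contains some basic cylinder~\(\exact{\altsit}\subseteq\exact{\sit}\), and any path through~\(\altsit\) violates condition~\ref{it:lawfulness:consistent} at depth~\(\abs{\sit}\). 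This fills in a proof the paper chose to delegate, and it matches what the reference does; the only thing worth emphasising is that the countability of the set of algorithms---i.e., recursive partial functions, of which there are countably many---is exactly what makes the union meagre rather than merely a union of nowhere dense sets.
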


To prove that a set of random paths is meagre, it therefore suffices to prove that these random paths are all lawful.
This turns out to be not too difficult, because the following proposition shows that relative frequencies along lawless paths behave very `wildly'.

\begin{proposition}\label{prop:lawful:liminfs:and:limsups}
Let \(\pth\in\pths\) be a lawless path.
Then
\begin{equation*}
\liminf_{n\to\infty}\frac{1}{n}\smashoperator{\sum_{k=1}^n}\pthat{k}=0
\text{ and }
\limsup_{n\to\infty}\frac{1}{n}\smashoperator{\sum_{k=1}^n}\pthat{k}=1.
\end{equation*}
\end{proposition}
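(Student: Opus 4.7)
My plan is to prove the contrapositive: if $\liminf_n\tfrac{1}{n}\sum_{k=1}^n\pthat{k}>0$ or $\limsup_n\tfrac{1}{n}\sum_{k=1}^n\pthat{k}<1$, then $\pth$ is lawful, contradicting its assumed lawlessness. The two cases are symmetric under the bit-flip $\pthat{k}\mapsto1-\pthat{k}$ (swap the roles of $0$ and $1$ in the construction below), so I will focus on the second. Write $S_n\coloneqq\sum_{k=1}^n\pthat{k}$.

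The first step is to fix a rational $r$ strictly between $\limsup_nS_n/n$ and $1$, together with an $N_0\in\naturals$ such that $S_n<rn$ for all $n\geq N_0$; because $S_n$ is a non-negative integer, this strict inequality upgrades to $S_n\leq\lfloor rn\rfloor$. The main task is then to exhibit an algorithm $R$ witnessing Definition~\ref{def:lawfulness} for~$\pth$. I would define $R$ in two regimes. For $\sit\in\sits$ with $\dist{\sit}\leq N_0$, I would use a finite hardcoded table built from the first $N_0+1$ bits of~$\pth$: set $R(\sit)\coloneqq\set{\pthto{\dist{\sit}+1}}$ when $\sit=\pthto{\dist{\sit}}$, and $R(\sit)\coloneqq\set{\sit 0}$ otherwise. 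For $\sit$ with $\dist{\sit}>N_0$, I would compute $R(\sit)$ by searching for the smallest integer $m>\dist{\sit}$ such that
\[
R(\sit)\coloneqq\cset[\big]{\altsit\in\sits}{\sit\sprecedes\altsit,\ \dist{\altsit}=m,\ \textstyle\sum_{k=1}^m\altsit_k\leq\lfloor rm\rfloor}
\]
is both non-empty and a proper subset of the set of length-$m$ extensions of~$\sit$. Writing $n\coloneqq\dist{\sit}$ and $s\coloneqq\sum_{k=1}^n\sit_k$, non-emptiness reduces to $\lfloor rm\rfloor\geq s$ and non-triviality to $m-n+s>\lfloor rm\rfloor$; since $r<1$, both hold for every $m$ beyond a computable threshold, so the search terminates and $R$ is a total recursive function from~$\sits$ to finite subsets of~$\sits$.

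Verifying lawfulness of $\pth$ for this $R$ will then be immediate. For $n\leq N_0$ the table yields $R(\pthto{n})=\set{\pthto{n+1}}$, which is non-empty, non-trivial, and contains~$\pthto{n+1}$ by construction. For $n>N_0$, the length $m$ produced by the search satisfies $m>n>N_0$, so $S_m\leq\lfloor rm\rfloor$ by the choice of $N_0$, whence $\pthto{m}\in R(\pthto{n})$. All four clauses of Definition~\ref{def:lawfulness} are therefore met, forcing $\pth$ to be lawful --- a contradiction. The only genuine subtlety is that the "bulk" construction may fail on short prefixes whose $1$-density still exceeds~$r$; the finite hardcoded table for $\dist{\sit}\leq N_0$ exists precisely to handle those cases, and it preserves recursiveness because $N_0$ is a single constant depending on~$\pth$.
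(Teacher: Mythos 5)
Your proof is correct and takes essentially the same approach as the paper: argue the contrapositive and construct a recursive algorithm \(R\) that, for each sufficiently deep situation, outputs its extensions at some larger depth whose count of ones falls below a linear threshold determined by a rational \(r\) separating the limsup relative frequency from \(1\), with shallow situations handled via a finite hardcoded table drawn from \(\pth\)'s initial segment. The only difference is cosmetic---you search for the smallest suitable extension depth \(m\), whereas the paper fixes it as an integer multiple \(r_o\cdot\dist{\sit}\) with \(r_o>1\) chosen so that non-emptiness and non-triviality of the output set hold automatically.
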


\begin{proof}
We give the proof for the \(\limsup\).
The proof for the \(\liminf\) is completely analogous.

Assume {\itshape ex absurdo} that \(\limsup_{n\to\infty}\frac{1}{n}\sum_{k=1}^n\pthat{k}<1\).
Then there are~\(n_o,r_o\in\naturals\) with~\(r_o>1\) and~\(n_o>1\) such that
\begin{equation}\label{eq:limsup:not:one}
\frac{1}{m}\smashoperator{\sum_{k=1}^m}\pthat{k}<1-\frac{1}{r_o}
\text{ for all~\(m\geq n_o\)}.
\end{equation}

For any given~\(n,r\in\naturals\) with~\(r>1\) and~\(n>1\), we construct an algorithm~\(R_{n,r}\) as follows.
For any~\(\sit\in\sits\) with~\(m\coloneqq\dist{\sit}\geq n\), we let \(R_{n,r}(\sit)\) be the set of all situations~\(\altsit\in\sits\) such that
\begin{equation}\label{eq:algorithm:conditions}
\dist{\altsit}=rm
\text{ and }
\sit\precedes\altsit
\text{ and }
\smashoperator{\sum_{k=1}^{rm}}\altsitat{k}<mr-m.
\end{equation}
We also let \(R_{n,r}(\sit)\coloneqq R_{n,r}(\pthto{n})\) for all~\(\sit\in\sits\) with~\(\dist{\sit}<n\).
It is clear that \(R_{n,r}\) is a recursive map, because the conditions in Equation~\eqref{eq:algorithm:conditions} are decidable.

If we can now show that~\(\pth\) is lawful for~\(R_{n_o,r_o}\), and therefore lawful, we will have a contradiction.
To do so, fix any~\(m\in\naturalswithzero\).
It follows from the construction of the~\(R_{n,r}\) that
\begin{multline}\label{eq:algorithm:on:path}
R_{n_o,r_o}(\pthto{m})\\
=
\cset[\bigg]{\altsit\in\sits}
{\dist{\altsit}=r_o\max\set{n_o,m},
\pthto{\max\set{n_o,m}}\precedes\altsit
\text{ and }
\frac{1}{r_o\max\set{n_o,m}}\smashoperator{\sum_{k=1}^{r_o\max\set{n_o,m}}}\altsitat{k}<1-\frac{1}{r_o}},
\end{multline}
so \(R_{n_o,r_o}\) is indeed defined on~\(\pthto{m}\), which implies that~\(R_{n_o,r_o}\) satisfies requirement~\ref{it:lawfulness:defined} in Definition~\ref{def:lawfulness}.

Combining Equations~\eqref{eq:limsup:not:one} and~\eqref{eq:algorithm:on:path} tells us that \(\pthto{r_o\max\set{n_o,m}}\in R_{n_o,r_o}(\pthto{m})\), so requirement~\ref{it:lawfulness:consistent} in Definition~\ref{def:lawfulness} is also satisfied.

We also gather from Equation~\eqref{eq:algorithm:on:path} that the depth \(\dist{t}\) of all situations~\(t\) in~\(R_{n_o,r_o}(\pthto{m})\) is~\(r_o\max\set{n_o,m}\), so \(R_{n_o,r_o}(\pthto{m})\) is finite.
Since \(r_o>1\), we see that \(r_o\max\set{n_o,m}>m\), so all situations in~\(R_{n_o,r_o}(\pthto{m})\) are strictly preceded by~\(\pthto{m}\).
Moreover, we have just proved above that~\(R_{n_o,r_o}(\pthto{m})\) is non-empty.
This tells us that~\(R_{n_o,r_o}\) satisfies requirement~\ref{it:lawfulness:extends} in Definition~\ref{def:lawfulness}.

Finally, consider the situation~\(\ualtsit\) defined by~\(\dist{\ualtsit}\coloneqq r_o\max\set{n_o,m}\) and
\begin{equation*}
\ualtsitto{\max\set{n_o,m}}\coloneqq\pthto{\max\set{n_o,m}}
\text{ and }
\ualtsitat{k}\coloneqq1
\text{ for~\(\max\set{n_o,m}+1\leq k\leq r_o\max\set{n_o,m}\)},
\end{equation*}
then \(\ualtsit\notin R_{n_o,r_o}(\pthto{m})\) by Equation~\eqref{eq:algorithm:on:path}, because
\begin{equation*}
\frac{1}{r_o\max\set{n_o,m}}\smashoperator{\sum_{k=1}^{r_o\max\set{n_o,m}}}\ualtsitat{k}
\geq\frac{r_o\max\set{n_o,m}-\max\set{n_o,m}}{r_o\max\set{n_o,m}}
=1-\frac{1}{r_o}
\end{equation*}
where the inequality follows from~\(\ualtsitat{k}\geq0\) for~\(1\leq k\leq\max\set{n_o,m}\).
This tells us that~\(R_{n_o,r_o}\) satisfies requirement~\ref{it:lawfulness:nontrivial} in Definition~\ref{def:lawfulness}.

We conclude that \(\pth\) is indeed lawful for~\(R_{n_o,r_o}\).
\end{proof}

So, in order to prove our result, it now suffices to consider that relative frequencies along random paths can't behave so wildly, because they're constrained by our `weak {\comp} stochasticity' result in Corollary~\ref{cor:well:calibrated:constant:schnorr}.
\emph{Random paths are typically lawful.}

\begin{theorem}\label{thm:the:set:of:random:sequences:is:meagre}
Let \(I=\pinterval\in\imprecisefrcsts\) be any closed subinterval of \(\frcsts\) strictly included in~\(\frcsts\), so \(\lp>0\) or \(\up<1\).
Then the set of all paths that are \(\allowables\)-random for the stationary forecasting system~\(\constantfrcstsystem[I]\) is meagre.
Similarly, the set of all Schnorr random paths for~\(\constantfrcstsystem[I]\) is meagre.
\end{theorem}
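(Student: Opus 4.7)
The plan is to combine three ingredients already established in the excerpt: the meagreness of any set of lawful paths (Theorem~\ref{thm:meagre}), the frequency-behaviour of lawless paths (Proposition~\ref{prop:lawful:liminfs:and:limsups}), and the weak Church-randomness bound on relative frequencies for constant interval forecasts (Corollary~\ref{cor:well:calibrated:constant:schnorr}). The argument goes by contrapositive: I will show that every path that is Schnorr random---and a fortiori every \(\allowables\)-random path, by Equation~\eqref{eq:random:implies:schnorrandom}---for \(\constantfrcstsystem[I]\) is lawful, so that the set of such random paths is contained in the set of lawful paths and is therefore meagre by Theorem~\ref{thm:meagre}.

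First, I would fix a path \(\pth\) that is Schnorr random for \(\constantfrcstsystem[I]\), so that \(I\in\constantschnorrrandom[\pth]\). Apply Corollary~\ref{cor:well:calibrated:constant:schnorr} with the recursive selection function \(\selectionfunction\equiv1\) (which trivially satisfies \(\sum_{k=1}^{n}\selectionfunction(k)=n\to\infty\)) to obtain
\begin{equation*}
\lp
\leq\liminf_{n\to\infty}\frac{1}{n}\sum_{k=1}^{n}\pthat{k}
\leq\limsup_{n\to\infty}\frac{1}{n}\sum_{k=1}^{n}\pthat{k}
\leq\up.
\end{equation*}

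Next, I would invoke the hypothesis that \(I\) is strictly contained in \(\frcsts\): either \(\lp>0\) or \(\up<1\). In the first case the above display yields \(\liminf_{n\to\infty}\frac{1}{n}\sum_{k=1}^{n}\pthat{k}\geq\lp>0\), which contradicts the first conclusion of Proposition~\ref{prop:lawful:liminfs:and:limsups}; in the second case we get \(\limsup_{n\to\infty}\frac{1}{n}\sum_{k=1}^{n}\pthat{k}\leq\up<1\), contradicting the second. Either way, \(\pth\) cannot be lawless, so it must be lawful.

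Finally, since every Schnorr random path for \(\constantfrcstsystem[I]\) is lawful, the set of all such paths is a subset of the set of lawful paths and is therefore meagre by Theorem~\ref{thm:meagre}. The result for \(\allowables\)-randomness follows at once from Proposition~\ref{prop:random:implies:schnorrandom}, since \(\random[\pth]{\allowables}\subseteq\schnorrrandom[\pth]\) makes the set of \(\allowables\)-random paths a subset of the (meagre) set of Schnorr random paths, and any subset of a meagre set is meagre. There is no real obstacle here: all the heavy lifting has been done in Corollary~\ref{cor:well:calibrated:constant:schnorr}, Proposition~\ref{prop:lawful:liminfs:and:limsups}, and Theorem~\ref{thm:meagre}; the proof is essentially a one-line contrapositive once these are in hand.
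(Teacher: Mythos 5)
Your proof is correct and follows essentially the same argument as the paper's own: invoke Corollary~\ref{cor:well:calibrated:constant:schnorr} with the constant selection function~\(\selectionfunction\equiv1\) to constrain the limiting relative frequencies along any random path, observe that this contradicts Proposition~\ref{prop:lawful:liminfs:and:limsups} for lawless paths, and conclude via Theorem~\ref{thm:meagre}. The only cosmetic difference is that you prove the Schnorr case first and deduce the \(\allowables\)-case from the inclusion \(\random[\pth]{\allowables}\subseteq\schnorrrandom[\pth]\), while the paper argues for \(\allowables\)-randomness directly and notes the Schnorr argument is identical.
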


\begin{proof}
Consider any~\(\pth\) that is \(\allowables\)-random for the stationary forecasting system~\(\constantfrcstsystem[I]\).
Then it clearly suffices to show that \(\pth\) is lawful, by Theorem~\ref{thm:meagre}.
Assume {\itshape ex absurdo} that it is lawless.
By combining Proposition~\ref{prop:lawful:liminfs:and:limsups} with Corollary~\ref{cor:well:calibrated:constant:schnorr} [with recursive selection function \(\selectionfunction\coloneqq1\)], we find that \(\lp\leq0\) and \(\up\geq1\), a contradiction.
The proof for Schnorr randomness is identical.
\end{proof}

Our argumentation shows that the important distinction for random paths does not lie between precise and imprecise stationary forecasts, but rather between vacuous and non-vacuous forecasts: for any non-vacuous stationary forecast, the set of random paths is meagre, whereas for the vacuous stationary forecast, all paths are random, and therefore the corresponding set of random paths is \emph{co-meagre}---the complement of a meagre set.
We also see that the paths that are random for non-vacuous interval forecasts are `equally rare' as those that are random for precise forecasts, which, we believe, only adds to their mathematical interest.

\section{Conclusion}
The probability of an event is often seen as a precise, or at least ideally precise, number.
Apart from a few notable exceptions in earlier accounts \cite{shafer1978,dempster1967,shafer1976,koopman1940}, a more determined investigation into reasons for letting go of this idealisation, and into mathematical ways to achieve this, only started in the later decades of the 20th century \cite{levi1980a,walley1991,seidenfeld1995,shafer2001,seidenfeld1999,kyburg1988}; see also Refs.~\cite{augustin2013:itip,troffaes2013:lp} for overviews.
Most of this work centred on the decision-theoretic and epistemic aspects of probability \cite{levi1980a,walley1991}, while some contributions were more agnostic in this respect \cite{shafer2001,shafer2019:book}, but a few attempts were also made \cite{walley1982a,fierens2009:frequentist,fierens2009:chaotic,gorban2016:statisticalstability} to justify letting go of precision also for probabilities with a more physical, or frequentist, interpretation.

We believe this paper is the first systematic attempt at reconciling imprecision with the study of (frequency-based or algorithmic) randomness along the lines of von Mises \cite{mises1981}, Church \cite{church1940:random:sequence}, Kolmogorov \cite{kolmogorov1965}, Ville \cite{ville1939}, {\ML} \cite{martinlof1966:random:sequences}, Levin~\cite{levin1973:random:sequence} and Schnorr \cite{schnorr1971,schnorr1973}, to name only a few of the early protagonists.
We show that this is both possible and interesting.
We see that, besides the sequences that are random for precise forecasts, new realms of sequences arise that are random for interval forecasts.
They have intriguing properties, and are as topologically rare as their precise counterparts, in the sense that they also constitute meagre sets.
Even with the limited number of examples we have examined here, it should be apparent that incorporating imprecision---or interval forecasts---into the study of randomness allows for more mathematical structure to arise.
This is relevant in and of itself, but we would argue that our treatment also allows us to better understand and place, as special cases, the existing results in the precise limit.
This, by the way, also holds true for the more epistemic accounts of imprecision in probability; see for instance Ref.~\cite{troffaes2013:lp} for a more detailed account.

This leads us to our rather provocative title for this paper.
A number of people, while not averse to the idea of allowing for imprecision in the study of randomness, advised us to tone down our claim that `Randomness is inherently imprecise', and suggested replacing it by something weaker, such as `Some aspects of randomness cannot be adequately dealt with using precise forecasts only'.  
Evidently, we have decided against that, and it behoves us here to explain our reasons for doing so, other than the plainly polemic or merely rhetorical ones.  
Simply stated, we are actually convinced that the statement in our title holds true, and that there is more to randomness than the by now classical account for precise forecasts would have us suspect.

First off, we have seen in Section~\ref{sec:non-stationarity} that on the one hand, `imprecise randomness' can arise as a useful stationary model simplification when dealing with non-stationarity, which points to practical reasons for allowing imprecision in the study of randomness.
But, on the other hand, we've also been led, in Section~\ref{sec:inherently}, to the conclusion that imprecise randomness has a more fundamental role, because there are sequences that are random for a given {\comp} interval forecast, but not for any {\comp} (more) precise forecast.

Of course, we agree that the \emph{initial ideas} for characterising what randomness is, were based on probabilistic limit laws such as the convergence of relative frequencies, which as Corollaries~\ref{cor:well:calibrated:constant} and~\ref{cor:well:calibrated:constant:schnorr} suggest, are harder to guarantee in exactly the same form in our imprecise context.
But the seminal martingale-theoretic accounts of Ville~\cite{ville1939}, Schnorr~\cite{schnorr1971,schnorr1973} and Levin~\cite{levin1973:random:sequence}, as well as the prequential approach by Vovk and Shen~\cite{vovk2010:randomness}, have opened up the path towards useful alternative characterisations, which are much more amenable to letting go of the ideal of precision, as we have shown here. 

We see very few reasons for holding on to that perceived ideal, neither from a practical (forecasting and calibration) point of view, nor from a more formal mathematical stance.
We have already hinted above at the formal mathematical reasons for allowing for imprecision in the study of randomness: restricting ourselves to precise forecasts hides \emph{interesting and useful} mathematical structure that, as we see in this paper but also have come to witness in our most recent---still largely unpublished---research efforts, reveals relevant facts even about the precise aspects of randomness.
Again, such structural arguments for allowing for imprecision can, incidentally, also be brought to bear in more epistemic and decision-theoretic accounts of uncertainty; see for instance Ref.~\cite{troffaes2013:lp} for examples and related discussion.
And as randomness in more recent accounts has been linked with forecasting and calibration, many of the arguments for allowing for imprecision and indecision (see for instance~\cite[Chapter~5]{walley1991} for extensive discussion) in epistemic uncertainty modelling become relevant to the study of randomness also.

One aspect of what we are saying then, can be summarised as follows: (algorithmic) randomness seems to have acquired a broader meaning and to have become more strongly connected to other issues than only probabilistic convergence laws, and this change of focus makes allowing for imprecision in its foundations much more intuitive and less far-fetched than it may once have seemed.   

What else do we mean when we say that `randomness is inherently imprecise'?
Randomness, as we perceive it, is about outcome sequences (paths) and forecasting systems `going together well'.
And this going together well has certain implications, which for a given forecasting system, impose restrictions on the behaviour of the successive outcomes in a random path, such as the limit laws in Theorems~\ref{thm:well:calibrated:general} and~\ref{thm:well:calibrated:general:schnorr}, and in Corollaries~\ref{cor:well:calibrated:constant} and~\ref{cor:well:calibrated:constant:schnorr} for stationary forecasts.
It is of crucial importance to our argument that these restrictions do not all of a sudden disappear when going from point to interval forecasts.
They weaken, perhaps, but don't disappear, as is also made very clear by our discussion in Section~\ref{sec:meagreness}: the hard cut-off there lies not between precise and imprecise forecasts, but between non-vacuous and vacuous ones.
For \emph{any non-vacuous} stationary forecast, be it precise or imprecise, the restrictions on the corresponding sets of random paths are substantial, and result in these sets being meagre, as Theorem~\ref{thm:the:set:of:random:sequences:is:meagre} testifies. 
It is only for vacuous forecasts that the restrictions disappear and that the corresponding randomness notion becomes vacuous as well: all paths are random for such forecasts.
Thus, the difference between non-vacuous imprecise and precise randomness may be a matter of degree, perhaps and in certain respects, but not of quality.
Why then single out the precise limit case as the only one worthy of the moniker `randomness'?

This work may seem promising, but we're well aware that it is only a humble beginning.
We see many extensions in many directions, so let us briefly discuss a few.

First of all, our preliminary exploration suggests that it will be possible to formulate equivalent randomness definitions in terms of \emph{randomness tests}, rather than supermartingales.
We believe it would be relevant to work this out in much more detail.

Secondly, the approach we follow here is not prequential: we assume that our Forecaster specifies an entire forecasting system~\(\frcstsystem\), or in other words an interval forecast in all possible situations~\((\xvaltolong[n])\), rather than only interval forecasts in those situations~\((\zvaltolong[n])\) of the sequence~\(\pth=(\zvaltolong[n],\dots)\) whose potential randomness we're considering.
The \emph{prequential approach}, which we eventually will want to come to, looks at the randomness of a sequence of interval forecasts and outcomes~\((I_1,z_1,I_2,z_2,\dots,I_n,z_n,\dots)\), where each \(I_k\) is an interval forecast for the as yet unknown \(\randomoutcome[k]\), which is afterwards revealed to be \(z_k\), without the need for a specification of forecasts in other situations that are never reached; see the paper by Vovk and Shen \cite{vovk2010:randomness} for an account of how this works for precise forecasts and {\ML} randomness.

Thirdly, we perceive the need to connect our work more firmly with earlier approaches to associating imprecision with randomness through unstable relative frequencies and non-stationarity, most notably by Terrence Fine's group \cite{walley1982a,fierens2009:frequentist,fierens2009:chaotic}.

And finally, and perhaps most importantly, we believe this research could be a very early starting point for a more systematic approach to statistics that takes imprecise or set-valued parameters more seriously, when learning from finite amounts of data.
Ahead of this, more work must be done to extend our mathematical formulation to non-binary outcomes, to name just one important generalisation; see Ref.~\cite{persiau2020:randomness:more:than:probabilities} for a step in this direction.

\section*{Acknowledgements}
This paper has taken a very long time to write.
Our research on this topic started with discussions between Gert and Philip Dawid about what prequential interval forecasting would look like, during a joint stay at Durham University in late 2014.
Gert, and Jasper who joined in late 2015, wrote an early prequential version of the present paper during a joint research visit to the University of Strathclyde and Durham University in May 2016, trying to extend the results in Refs.~\cite{vovk1987:randomness,vovk2010:randomness,vovk2009:merging} to make them allow for interval forecasts.
In an email exchange, Volodya Vovk pointed out a number of difficulties with our approach, which we were able to resolve by letting go of its prequential emphasis, at least for the time being.
This was done during research visits of Gert to Jasper at IDSIA in Lugano in late 2016 and early 2017.
This paper then lay dormant for a while, while Gert took up a position as director of studies between 2016 and 2019, and both of us were more strongly focused on issues dealing with coherent choice functions.
We finished the conceptual work on this paper during a joint research stay in Siracusa in January 2020, and wrote it all down later at home during successive Covid-19 lockdowns, in the spring of 2020, and the early months of 2021.

As with most of our joint work, there is no telling, after a while, which of us had what idea, or did what, exactly.
We have both contributed equally to this paper.
But since a paper must have a first author, we decided it should be the one who took the first significant steps: Gert, in this case.

We are grateful to Philip Dawid and Volodya Vovk for their inspiring and helpful comments and guidance, and to Gert Vermeulen for introducing us to the wonders of Archimedes' ancient home town.
Teddy Seidenfeld, Glenn Shafer and Alexander Shen, as well as a number of anonymous reviewers, have helped with useful suggestions and constructive criticism.
Gert's research and travel were partly funded through project number G012512N of the Research Foundation -- Flanders (FWO). Jasper was a Post-Doctoral Fellow of the FWO when much of this research was being done, and he wishes to acknowledge its financial support.
In more recent years, Gert and Jasper's work was also supported by H2020-MSCA-ITN-2016 UTOPIAE, grant agreement 722734.

% \bibliographystyle{plainnat}
% \bibliography{general}

\appendix
\section{Proofs of results about lower and upper expectations, computability and growth functions}

\begin{proof}[Proof of the claims about infimum/minimum selling prices]
In Section~\ref{sec:single:forecast}, we advanced the claim that working with infimum acceptable selling prices is, in the context of this paper, equivalent to working with minimum acceptable selling prices, and similarly for supremum and maximum acceptable buying prices.
Let us spend some effort here to understand why that is.

If \(\uex_I(f)\) is interpreted as a \emph{minimum} acceptable selling price for~\(f(\randomoutcome)\), this means that Forecaster is willing to sell the uncertain reward~\(f(\randomoutcome)\) for any price~\(\beta\) down to \emph{and including}~\(\uex_I(f)\).
If, on the other hand, \(\uex_I(f)\) is only interpreted as an \emph{infimum} acceptable selling price for~\(f(\randomoutcome)\), then this means that Forecaster is willing to sell the uncertain reward~\(f(\randomoutcome)\) for any price~\(\beta\) strictly higher than~\(\uex_I(f)\), but \emph{nothing is stated} about his actual willingness to sell for the price~\(\uex_I(f)\) itself.

To clarify this idea, let us denote by \(\sells\) the set of all gambles \(f\colon\outcomes\to\reals\) that Forecaster accepts to give away, and that are therefore available to Sceptic.
If pay-offs are expressed in units of linear utility, this set~\(\sells\) will arguably be a convex cone, and it will include the non-positive gambles \(f\leq0\), simply because it is rational for Forecaster to give away a partial loss \cite{walley1991,troffaes2013:lp,quaeghebeur2012:itip}.

An upper expectation~\(\uex_I(f)\) is typically interpreted as Forecaster's infimum acceptable selling price for the gamble~\(f(\randomoutcome)\), meaning that \cite{walley1991,troffaes2013:lp}
\begin{align}
\uex_I(f)
&=\inf\cset{\beta\in\reals}{\text{Forecaster accepts to sell \(f(\randomoutcome)\) for price~\(\beta\)}}\notag\\
&=\inf\cset{\beta\in\reals}{f-\beta\in\sells}\label{eq:selling:price}.
\end{align}
We then see that the functional~\(\uex_I\) can be used to characterise the convex cone~\(\sells\), but \emph{only up to border behaviour}, as Figure~\ref{fig:infimumvsminimum} and the following argumentation clarify.
Indeed, we can readily infer the following implications from Equation~\eqref{eq:selling:price} and the fact that \(\sells\) is a convex cone that includes all non-positive gambles:
\begin{equation*}
\group{f\in\sells\then\uex_I(f)\leq0}
\text{ and }
\group[\big]{\group{f\leq0\text{ or }\uex_I(f)<0}\then f\in\sells}.
\end{equation*}
So we see that the \emph{marginal gambles}~\(g\)---those gambles for which~\(\uex_I(g)=0\)---are the only ones for which the difference in interpretation between infimum and minimum acceptable buying prices matters in the context of this paper.
When \(\uex_I(g)\) is interpreted as a \emph{minimum} acceptable selling price, this implies that Forecaster will actually make the marginal gamble~\(g\) available to Sceptic.
But when \(\uex_I(g)\) is interpreted only as an \emph{infimum} acceptable selling price, then the fact that \(\uex_I(g)=0\) tells us nothing about whether Forecaster will make~\(g\) available to Sceptic: he may, or he may not.

\begin{figure}[h]
\centering
\begin{tikzpicture}[scale=1.25]\footnotesize
% various coordinates
\coordinate (xaxis) at (1.5,0);
\coordinate (yaxis) at (0,1.5);
\coordinate (origin) at (0,0);
\coordinate (bottomleft) at (-1.5,-1.5);
\coordinate (bottomright) at (1,-1.5);
\coordinate (bottom) at (0,-1.5);
\coordinate (left) at (-1.5,0);
\coordinate (aboveleft) at (-1.5,0.5);
\fill[nearly transparent,blue] (origin) -- (belowright) -- (bottom) -- cycle;
\coordinate (belowright) at (1,-1.5);
% the good region
\draw[help lines,dashed] (origin) -- (1.5,-0.5);
\draw[help lines,dashed] (-1,1.5) -- (origin);
\draw[red,thick] (aboveleft) -- node[midway,above,rotate=-18.43] {\footnotesize \(\uex_{I}(f)=0\)} (origin);
\draw[red,thick] (origin) -- node[midway,above,rotate=-56.31] {\footnotesize \(\uex_{I}(f)=0\)} (belowright);
\fill[nearly transparent,blue] (origin) -- (left) -- (aboveleft) -- cycle;
\fill[nearly transparent,blue] (origin) -- (bottomright) -- (bottom) -- cycle;
\fill[semitransparent,blue] (origin) -- (left) -- (bottomleft) -- (bottom) -- cycle;
% axes and grid
\draw[step=.5cm,gray,very thin] (-1.4,-1.4) grid (1.4,1.4);
\draw[->] (-1.5,0) -- (xaxis) node[below] {\footnotesize\(f(1)\)};
\draw[->] (0,-1.5) -- (yaxis) node[above] {\footnotesize\(f(0)\)};
% the zero gamble
\node[circle,fill,semitransparent,blue,inner sep=1pt] at (origin) {};
\end{tikzpicture}
\caption{Depiction of the convex cone of gambles that Forecaster accepts to give away, and that are therefore available to Sceptic.
The region that is shaded (lighter or darker) blue, with the exclusion of the border (in red), depicts the gambles~\(f\) that Forecaster will definitely accept to give away, because they have a negative infimum acceptable selling price~\(\uex_I(f)<0\) or because they're non-positive (the darker blue ones). The marginal gambles~\(f\) (in red) are the ones for which the infimum acceptable selling price~\(\uex_I(f)\) is zero.}
\label{fig:infimumvsminimum}
\end{figure}

The interpretation of~\(\uex_I(f)\) as a \emph{minimum} acceptable selling price is essentially reflected in Equation~\eqref{eq:supermartingale}, where we define a supermartingale~\(\supermartin\) by requiring that in each situation~\(\sit\) its process difference \(\adddelta\supermartin(\sit)\) \emph{must be available to Sceptic}, because Forecaster is willing to sell it (to Sceptic) for some price lower than or equal to~\(0\):
\begin{equation*}
\uex_{\frcstsystem(\sit)}(\adddelta\supermartin(\sit))\leq0.
\end{equation*}
Were we to interpret~\(\uex_I(f)\) only as an \emph{infimum} acceptable selling price, this would essentially mean that in each situation~\(\sit\) the uncertain reward~\(\adddelta\supermartin(\sit)-\delta\) must be available to Sceptic for all~\(\delta>0\), but \emph{not necessarily} for~\(\delta=0\); but now, of course, as indicated above, we must also take into account that a non-positive~\(\adddelta\supermartin(\sit)\leq0\) will also be available to Sceptic.
We could have this reflected conservatively in the \emph{strict supermartingale condition}:
\begin{equation}\label{eq:strict:supermartingale}
\adddelta\supermartin(\sit)\leq0\text{ or }\uex_{\frcstsystem(\sit)}(\adddelta\supermartin(\sit))<0
\text{ for all \(\sit\in\sits\)}.
\end{equation}
We will call any process satisfying this requirement~\eqref{eq:strict:supermartingale} a \emph{strict supermartingale} for~\(\frcstsystem\).
Proving our statement about the equivalence of infimum and minimum acceptable selling prices in this randomness context then amounts to showing that our randomness notions remain unaffected by replacing supermartingales with strict supermartingales in their definition.

Because the strict supermartingale condition is stronger than the supermartingale condition, it is clearly enough to show that any path~\(\pth\) that is not random in the supermartingale sense, also can't be random in the strict supermartingale sense.
To prove this, consider any test supermartingale~\(\test\), and define the real process~\(\test'\) by
\begin{equation*}
\test'(\sit)
\coloneqq\dfrac{\test(\sit)+\frac{1}{\dist{\sit}+1}}{2}
\text{ for all \(\sit\in\sits\)}.
\end{equation*}
Since
\begin{equation*}
\adddelta\test'(\sit)
=\frac12\adddelta\test(\sit)+\frac12\group[\bigg]{\frac{1}{\dist{\sit}+2}-\frac{1}{\dist{\sit}+1}}
=\frac12\adddelta\test(\sit)-\frac12\frac{1}{(\dist{\sit}+1)(\dist{\sit}+2)},
\end{equation*}
it follows readily [from~\ref{axiom:coherence:homogeneity} and~\ref{axiom:coherence:constantadditivity}] that \(\test'\) is a strict test supermartingale.
It is moreover (lower semi)computable when \(\test\) is, and it is (computably) unbounded on the same paths as~\(\test\).
This implies that our notions of {\ML} randomness, {\comp} randomness and Schnorr randomness indeed remain unaffected by replacing supermartingales with strict supermartingales in their definition.

For weak {\ML} randomness, we need a slightly different argument.
Consider any {\lscomp} supermartingale multiplier \(\multprocess\), and the related multiplier process \(\multprocess'\) defined by
\begin{equation*}
\multprocess'(\sit)\coloneqq\multprocess(\sit)\dfrac{(\dist{\sit}+1)(\dist{\sit}+3)}{(\dist{\sit}+2)^2}
\text{ for all \(\sit\in\sits\)}.
\end{equation*}
Then \(\multprocess'\) is clearly also {\lscomp}, and
\begin{equation}\label{eq:strict:multiplier:supermartingale:condition}
\uex_{\frcstsystem(\sit)}(\multprocess'(\sit))
=\underset{<1}{\underbrace{\dfrac{(\dist{\sit}+1)(\dist{\sit}+3)}{(\dist{\sit}+2)^2}}}
\uex_{\frcstsystem(\sit)}(\multprocess(\sit))
<1
\text{ for all \(\sit\in\sits\)},
\end{equation}
where the equality follows from~\ref{axiom:coherence:homogeneity}, and the strict inequality from the assumption that~\(\multprocess\) is a supermartingale multiplier, and~\ref{axiom:coherence:bounds}.
Hence, \(\multprocess'\) is a supermartingale multiplier as well.
Now, consider the real process \(\test'\) defined by
\begin{equation}\label{eq:strict:multiplier:supermartingale}
\test'(\sit)\coloneqq\mint[\multprocess](\sit)\frac12\dfrac{\dist{\sit}+2}{\dist{\sit}+1}
\text{ for all \(\sit\in\sits\)}.
\end{equation}
Then \(\test'\) is non-negative since \(\mint[\multprocess]\) is, and \(\test'(\init)=1\), so \(\test'\) is a test process.
Also,
\begin{multline*}
\test'(\sit\cdot)
=\mint[\multprocess](\sit\cdot)\frac12\dfrac{\dist{\sit}+3}{\dist{\sit}+2}
=\mint[\multprocess](\sit)\multprocess(\sit)\frac12\dfrac{\dist{\sit}+3}{\dist{\sit}+2}\\
=\test'(\sit)\multprocess(\sit)\dfrac{(\dist{\sit}+1)(\dist{\sit}+3)}{(\dist{\sit}+2)^2}
=\test'(\sit)\multprocess'(\sit)
\text{ for all \(\sit\in\sits\)},
\end{multline*}
which shows that \(\test'\) is the test supermartingale generated by the supermartingale multiplier~\(\multprocess'\).
Equation~\eqref{eq:supermartingale:multiplier:differences} now tells us that
\begin{equation*}
\uex_{\frcstsystem(\sit)}(\adddelta\test'(\sit))
=\test'(\sit)\sqgroup[\big]{\uex_{\frcstsystem(\sit)}(\multprocess'(\sit))-1}
\text{ for all~\(\sit\in\sits\)}.
\end{equation*}
The same Equation~\eqref{eq:supermartingale:multiplier:differences} also guarantees that if \(\test'(\sit)=0\), then also~\(\adddelta\test'(\sit)=0\), and therefore it follows readily from Equation~\eqref{eq:strict:multiplier:supermartingale:condition} that \(\test'\) satisfies the strict supermartingale condition~\eqref{eq:strict:supermartingale}.
Finally, Equation~\eqref{eq:strict:multiplier:supermartingale} guarantees that \(\test'\) and \(\mint[\multprocess]\) become unbounded on the same paths.
\end{proof}

\begin{proof}[Proof of Proposition~\ref{prop:properties:of:global:expectations}]
We begin by proving that \(\inf f\leq\uglobal(f)\leq\sup f\).
Conjugacy will then imply that also \(\inf f\leq\lglobal(f)\leq\sup f\), and therefore that both \(\lglobal(f)\) and \(\uglobal(f)\) are real numbers.
This fact will then be used further on.
The remainder of statement~\ref{axiom:lower:upper:bounds} will be proved further below.
Since all constant real processes are supermartingales [by~\ref{axiom:coherence:bounds}], we infer from Equation~\eqref{eq:tree:upper:expectation} that, almost trivially,
\begin{equation*}
\uglobal(f)
\leq\inf\cset{\alpha\in\reals}{\alpha\geq f(\pth)\text{ for all \(\pth\in\pths\)}}
=\sup f.
\end{equation*}
For the other inequality, consider any supermartingale \(\supermartin\in\supermartins\) such that \(\liminf\supermartin\geq f\).
We derive from Equation~\eqref{eq:supermartingale} and~\ref{axiom:coherence:bounds} that \(\supermartin(\sit)\geq\min\set{\supermartin(\sit0),\supermartin(\sit1)}\) for all~\(\sit\in\sits\).
This implies that there is some path~\(\altpth\in\pths\) such that \(\supermartin(\init)\geq\supermartin(\altpthto{n})\) for all \(n\in\naturalswithzero\),\footnote{This argument requires the axiom of dependent choice.} and therefore also that \(\supermartin(\init)\geq\liminf\supermartin(\altpth)\geq f(\altpth)\geq\inf f\).
Equation~\eqref{eq:tree:upper:expectation} then guarantees that, indeed,
\begin{equation*}
\uglobal(f)
=\inf\cset[\big]{\supermartin(\init)}{\supermartin\in\supermartins\text{ and }\liminf\supermartin\geq f}
\geq\inf f.
\end{equation*}
In particular, we find for~\(f=0\) that
\begin{equation}\label{eq:zero:expectation}
\lglobal(0)=\uglobal(0)=0.
\end{equation}

\ref{axiom:lower:upper:subadditivity}.
We prove the third and fourth inequalities; the remaining inequalities will then follow from conjugacy.
For the fourth inequality, we consider any real~\(\alpha\) and~\(\beta\) such that \(\alpha>\uglobal(f)\) and \(\beta>\uglobal(g)\).
Then it follows from Equation~\eqref{eq:tree:upper:expectation} that there are supermartingales~\(\supermartin_1,\supermartin_2\in\supermartins\) such that \(\liminf\supermartin_1\geq f\), \(\liminf\supermartin_2\geq g\), \(\alpha>\supermartin_1(\init)\) and \(\beta>\supermartin_2(\init)\).
But then \(\supermartin\coloneqq\supermartin_1+\supermartin_2\) is a supermartingale for~\(\frcstsystem\) with
\begin{equation*}
\liminf\supermartin
=\liminf(\supermartin_1+\supermartin_2)
\geq\liminf\supermartin_1+\liminf\supermartin_2
\geq f+g,
\end{equation*}
and we therefore infer from Equation~\eqref{eq:tree:upper:expectation} that
\begin{equation*}
\uglobal(f+g)
\leq\supermartin(\init)
=\supermartin_1(\init)+\supermartin_2(\init)
<\alpha+\beta.
\end{equation*}
Since this inequality holds for all real \(\alpha>\uglobal(f)\) and \(\beta>\uglobal(g)\), and since we have proved above that upper expectations of gambles are real-valued, we find that, indeed, \(\uglobal(f+g)\leq\uglobal(f)+\uglobal(g)\).

For the third inequality, observe that \(g=(f+g)-f\), so we infer from the inequality we have just proved that
\begin{equation*}
\uglobal(g)=\uglobal((f+g)-f)\leq\uglobal(f+g)+\uglobal(-f)=\uglobal(f+g)-\lglobal(f),
\end{equation*}
whence, indeed, \(\uglobal(f+g)\geq\lglobal(f)+\uglobal(g)\), since we have already proved above that lower and upper expectations are real-valued.

\ref{axiom:lower:upper:homogeneity}.
We prove the second equality; the first equality then follows from conjugacy.
It follows from Equation~\eqref{eq:zero:expectation} that we may assume without loss of generality that \(\lambda>0\).
The desired equality now follows at once from Equation~\eqref{eq:tree:upper:expectation} and the equivalences \(\supermartin\in\supermartins\ifandonlyif\lambda^{-1}\supermartin\in\supermartins\) and \(\liminf\supermartin\geq\lambda f\ifandonlyif\liminf\lambda^{-1}\supermartin\geq f\).

\ref{axiom:lower:upper:bounds}.
It is only left to prove that \(\lglobal(f)\leq\uglobal(f)\).
Since \(f-f=0\), we infer from~\ref{axiom:lower:upper:subadditivity} and Equation~\eqref{eq:zero:expectation} that \(0=\uglobal(f-f)\leq\uglobal(f)+\uglobal(-f)=\uglobal(f)-\lglobal(f)\).
The desired inequality now follows from the fact that lower and upper expectations are real-valued, as proved above.

\ref{axiom:lower:upper:constantadditivity}.
We prove the first equality; the second will then follow from conjugacy.
Infer from~\ref{axiom:lower:upper:bounds} that \(\lglobal(\mu)=\uglobal(\mu)=\mu\), and then~\ref{axiom:lower:upper:subadditivity} indeed leads to
\begin{equation*}
\lglobal(f)+\mu=\lglobal(f)+\lglobal(\mu)\leq\lglobal(f+\mu)\leq\lglobal(f)+\uglobal(\mu)=\lglobal(f)+\mu.
\end{equation*}

\ref{axiom:lower:upper:monotonicity}.
We prove the first implication; the second will then follow from conjugacy.
Assume that \(f\leq g\), then \(\inf(g-f)\geq0\), so we infer from~\ref{axiom:lower:upper:bounds} and~\ref{axiom:lower:upper:subadditivity} that, indeed,
\begin{equation*}
0\leq\inf(g-f)\leq\lglobal(g-f)\leq\lglobal(g)+\uglobal(-f)=\lglobal(g)-\lglobal(f).
\end{equation*}
The desired inequality now follows from the fact that lower and upper expectations are real-valued, as proved above.
\end{proof}

\begin{proof}[Proof of Proposition~\ref{prop:computable:simplified}]
The `if' part is immediate when we let \(e(\sit,N)\coloneqq N\) for all~\(\sit\in\sits\) and \(N\in\naturalswithzero\), so we proceed to the `only if' part.
That \(\process\) is {\comp} means that there is some recursive net of rational numbers~\(\rprime[\sit,n]\) and some recursive map~\(e\colon\sits\times\naturalswithzero\to\naturalswithzero\) such that \(n\geq e(\sit,N)\) implies that \(\abs{\rprime[\sit,n]-F(\sit)}\leq2^{-N}\) for all~\(\sit\in\sits\) and \(N\in\naturalswithzero\).
The net of rational numbers defined by~\(r_{\sit,n}\coloneqq\rprime[\sit,e(\sit,n)]\) for all~\(\sit\in\sits\) and \(n\in\naturalswithzero\) is recursive because the function~\(e\) is recursive, and it indeed satisfies \(\abs{r_{\sit,n}-\process(\sit)}=\abs{\rprime[\sit,e(\sit,n)]-\process(\sit)}\leq2^{-n}\) for all~\(\sit\in\sits\) and \(n\in\naturalswithzero\).
\end{proof}

\begin{proof}[Proof of Proposition~\ref{prop:computable:upper:lower}]
We begin with the `if' part.
Assume that \(\process\) is both lower and upper {\scomp}.
This implies that there are two recursive nets of rational numbers~\(\lowr[\sit,n]\) and \(\uppr[\sit,n]\) such that \(\lowr[\altsit,n]\nearrow\process(\altsit)\) and \(\uppr[\altsit,n]\searrow\process(\altsit)\) for any fixed~\(\altsit\in\sits\).
Consider the recursive nets of rational numbers defined by~\(\delta_{\,\sit,n}\coloneqq\uppr[\sit,n]-\lowr[\sit,n]\geq0\) and \(r_{\sit,n}\coloneqq\group{\lowr[\sit,n]+\uppr[\sit,n]}/2\).
For any fixed~\(\altsit\in\sits\), the sequence~\(\delta_{\,\altsit,n}\searrow0\), which implies that for any~\(N\in\naturalswithzero\) there is some natural number~\(e(\altsit,N)\) such that \(\delta_{\,\altsit,n}\leq2^{-N}\) for all~\(n\geq e(\altsit,N)\).
Clearly, the map~\(e\colon\sits\times\naturalswithzero\to\naturalswithzero\) can be defined recursively, and we see that \(n\geq e(\sit,N)\) also implies that \(\abs{\process(\sit)-r_{\sit,n}}\leq\abs{\uppr[\sit,n]-\lowr[\sit,n]}=\delta_{\,\sit,n}\leq2^{-N}\), for all~\(\sit\in\sits\) and \(n,N\in\naturalswithzero\).
Hence, the real process~\(\process\) is also {\comp}.

We continue with the `only if' part.
Assume that \(\process\) is {\comp}, so there is a recursive net of rational numbers~\(r_{\sit,n}\) and a recursive map~\(e\colon\sits\times\naturalswithzero\to\naturalswithzero\) such that
\(n\geq e(\sit,N)\) implies that \(\abs{r_{\sit,n}-\process(\sit)}\leq2^{-N}\) for all~\(\sit\in\sits\) and \(n,N\in\naturalswithzero\).
We prove that \(\process\) is {\lscomp}; the proof that \(\process\) is {\uscomp} is completely similar.
Consider the recursive net of rational numbers defined by~\(\rprime[\sit,n]\coloneqq r_{\sit,e(\sit,n+2)}-3\cdot2^{-\group{n+2}}\) for all \(\sit\in\sits\) and \(n\in\naturalswithzero\).
Then we know that \(\abs{\rprime[\sit,n]+3\cdot2^{-\group{n+2}}-\process(\sit)}\leq2^{-\group{n+2}}\) and therefore also
\begin{equation*}
-2^{-n}
=-2^{-\group{n+2}}-3\cdot2^{-\group{n+2}}
\leq\rprime[\sit,n]-\process(\sit)
\leq2^{-\group{n+2}}-3\cdot2^{-\group{n+2}}
=-2^{-\group{n+1}}
\leq2^{-n},
\end{equation*}
for all~\(\sit\in\sits\) and \(n\in\naturalswithzero\), which then tells us that \(\rprime[\sit,n]\leq\process(\sit)-2^{-\group{n+1}}\leq\rprime[\sit,n+1]\) and that \(\abs{\rprime[\sit,n]-\process(\sit)}\leq2^{-n}\), again for all~\(\sit\in\sits\) and \(n\in\naturalswithzero\).\footnote{Note that this implies that we can always assume without loss of generality from the outset for our original net~\(r_{\sit,n}\) that it is non-decreasing as a function of~\(n\), that \(r_{\sit,n}<\process(\sit)\) and that \(e(\sit,n)=n\) for all~\(\sit\in\sits\) and~\(n\in\naturalswithzero\).}
Hence, we find for the recursive net of rational numbers~\(\rprime[\sit,n]\) that \(\rprime[\sit,n]\nearrow\process(\sit)\) for all~\(\sit\in\sits\), which implies that \(\process\) is indeed {\lscomp}.
\end{proof}

\begin{proof}[Proof of Proposition~\ref{prop:IcompIffGammaComp}]
Obviously, the constant real processes~\(\lconstantfrcstsystem[I](\sit)\coloneqq\lp\) and \(\uconstantfrcstsystem[I](\sit)\coloneqq\up\) are {\comp} if and only if their constant values~\(\lp\) and~\(\up\) are.
\end{proof}

\begin{proof}[Proof of Proposition~\ref{prop:computable:from:delta}]
We only prove the third statement.
The proof for the first and second statements are similar to the proof of the `if' part of the third one, but simpler.
There are a number of ways to prove the third statement, but we will use Proposition~\ref{prop:computable:simplified}.

For the `if' part, we assume that \(\process(\init)\) and \(\adddelta\process\) are {\comp}.
Proposition~\ref{prop:computable:simplified} then implies that there are a recursive sequence of rational numbers~\(r_{\init,n}\) and two recursive nets of rational numbers~\(r_{\sit,n}^x\) such that \(\abs{\process(\init)-r_{\init,n}}\leq2^{-n}\) and \(\abs{\adddelta\process(\sit)(x)-r_{\sit,n}^x}\leq2^{-n}\) for all~\(\sit\in\sits\), \(n\in\naturalswithzero\) and \(x\in\outcomes\).
We now define the recursive net of rational numbers~\(r_{\sit,n}\) as follows: for any~\(\sit\in\sits\) and any~\(n\in\naturalswithzero\), let
\begin{equation*}
r_{\sit,n}\coloneqq r_{\init,n}+\smashoperator{\sum_{k=1}^{\dist{\sit}}}r_{\sitto{k-1},n}^{\sitat{k}}.
\end{equation*}
Then, since also
\begin{equation*}
\process(\sit)=\process(\init)+\smashoperator{\sum_{k=1}^{\dist{\sit}}}\adddelta\process(\sitto{k-1})(\sitat{k}),
\end{equation*}
we see that
\begin{equation*}
\abs{\process(\sit)-r_{\sit,n}}
\leq\abs{\process(\init)-r_{\init,n}}
+\smashoperator{\sum_{k=1}^{\dist{\sit}}}\abs[\big]{\adddelta\process(\sitto{k-1})(\sitat{k})-r_{\sitto{k-1},n}^{\sitat{k}}}
\leq(\dist{\sit}+1)2^{-n},
\end{equation*}
so if we define the (clearly) recursive map~\(e\) by
\begin{equation*}
e(\sit,N)\coloneqq N+\dist{\sit}\geq N+\log_2(\dist{\sit}+1)
\text{ for all \(\sit\in\sits\) and \(N\in\naturalswithzero\)},
\end{equation*}
then \(n\geq e(\sit,N)\) implies that \(\abs{\process(\sit)-r_{\sit,n}}\leq2^{-N}\) for all~\(\sit\in\sits\) and \(n\in\naturalswithzero\).
Hence, \(\process\) is {\comp}.

For the `only if' part, assume that \(\process\) is {\comp}.
Then definitely in particular also its value \(\process(\init)\) in the initial situation~\(\init\) is {\comp}, so it only remains to prove that the process difference \(\adddelta\process\) is {\comp}.
Consider, to this effect, any~\(x\in\outcomes\).
It follows from the {\compy} of~\(\process\) and Proposition~\ref{prop:computable:simplified} that there is some recursive net of rational numbers~\(\rprime[\sit,n]\) such that \(\abs{\process(\sit)-\rprime[\sit,n]}\leq2^{-n}\) and \(\abs{\process(\sit x)-\rprime[\sit x,n]}\leq2^{-n}\) and therefore also
\begin{equation*}
\rprime[\sit x,n]-\rprime[\sit,n]-2^{-\group{n-1}}
\leq\process(\sit x)-\process(\sit)
\leq\rprime[\sit x,n]-\rprime[\sit,n]+2^{-\group{n-1}}
\text{ for all~\(\sit\in\sits\) and \(n\in\naturalswithzero\).}
\end{equation*}
If we now let \(r_{\sit,n}^x\coloneqq\rprime[\sit x,n+1]-\rprime[\sit,n+1]\), then this defines a recursive net of rational numbers~\(r_{\sit,n}^x\) that satisfies \(\abs{\adddelta\process(\sit)(x)-r_{\sit,n}^x}\leq2^{-n}\) for all~\(\sit\in\sits\) and all~\(n\in\naturalswithzero\).
Hence, the real process~\(\adddelta\process(\cdot)(x)\) is {\comp} by Proposition~\ref{prop:computable:simplified}, and so is, therefore, the process difference~\(\adddelta\process\).
\end{proof}

\begin{proof}[Proof of Proposition~\ref{prop:computable:from:multiplier}]
We only give the proof for the first statement.
The proof for the second statement is similar but simpler, and the third statement then follows readily from the first and the second, and Propositions~\ref{prop:computable:upper:lower} and~\ref{prop:computable:from:delta}.

Assume that the multiplier process \(\multprocess\) is {\lscomp}.
This implies that there are two recursive nets of rational numbers~\(r_{\sit,n}^{x}\) such that \(r_{\sit,n}^x\nearrow\multprocess(\sit)(x)\), for~\(x\in\outcomes\).
Since \(\multprocess(\sit)(x)\geq0\), we may assume without loss of generality that \(r_{\sit,n}^x\geq0\) too [otherwise replace this recursive net of rational numbers with the recursive net of rational numbers \(\max\set{0,r_{\sit,n}^{x}}\)].
We now construct a recursive net of rational numbers~\(r_{\sit,n}\) as follows: for any~\(\sit\in\sits\) and for any~\(n\in\naturalswithzero\), we let \(\smash{r_{\sit,n}\coloneqq\prod_{k=0}^{\dist{\sit}-1}r_{\sitto{k},n}^{\sitat{k+1}}}\).
Then, since also \(\smash{\mint(\sit)=\prod_{k=0}^{m-1}\multprocess(\sitto{k})(\sitat{k+1})}\) and \(\smash{r_{\sitto{k},n}^{\sitat{k+1}}\nearrow\multprocess(\sitto{k})(\sitat{k+1})}\) for all~\(k\in\set{0,1,\dots,\dist{\sit}-1}\), we find that \(r_{\sit,n}\nearrow\mint(\sit)\) for all~\(\sit\in\sits\), so \(\mint\) is indeed {\lscomp}.
\end{proof}

\begin{proof}[Proof of Proposition~\ref{prop:computablemultiplier:from:process}]
Since \(\mint\) is positive, it follows trivially that \(\multprocess\) is positive as well.
Consider now any~\(\xval\in\outcomes\).
Since \(\mint\) is {\comp}, it follows from Proposition~\ref{prop:computable:from:delta} that \(\adddelta\mint\) is {\comp}, and therefore, we know that \(\adddelta\mint(\sit)(x)\) is {\comp} as well.
Hence, since \(\mint\) is {\comp} and positive, and
\begin{equation*}
\multprocess(\sit)(x)
=\frac{\mint(\sit)+\adddelta\mint(\sit)(x)}{\mint(\sit)}
=1+\frac{\adddelta\mint(\sit)(x)}{\mint(\sit)}
\text{ for all \(\sit\in\sits\)},
\end{equation*}
we find that the real process \(\multprocess(\sit)(x)\), \(\sit\in\sits\) is {\comp}, and so is therefore \(\multprocess\).

For the second statement, consider any positive {\comp} real process \(\process\), and let
\begin{equation*}
\multprocess(\sit)(x)
\coloneqq\frac{\process(\sit x)}{\process(\sit)}>0
\text{ for all \(\sit\in\sits\) and \(x\in\outcomes\)},
\end{equation*}
then \(\multprocess\) is clearly a positive multiplier process with \(\mint=\process/\process(\init)\).
This implies that \(\mint\) is {\comp}.
The first part of the proposition now implies that \(\multprocess\) is {\comp}.
\end{proof}

\begin{proof}[Proof of Proposition~\ref{prop:computably:unbounded:various}]
We prove that~\ref{it:computably:unbounded}\(\then\)\ref{it:computably:unbounded:real}\(\then\)\ref{it:computably:unbounded:fraction}\(\then\)\ref{it:computably:unbounded}.

\ref{it:computably:unbounded}\(\then\)\ref{it:computably:unbounded:real}.
Trivial, because for any growth function \(\ordering\), the map~\(\realordering\coloneqq\ordering\) is a real growth function.

\ref{it:computably:unbounded:real}\(\then\)\ref{it:computably:unbounded:fraction}.
Fix any~\(r\in\naturals\).
Because the real growth function~\(\realordering\) is non-decreasing and unbounded, there is some~\(m_r\in\naturalswithzero\) such that \(\realordering(m)>\nicefrac2r\) for all natural~\(m\geq m_r\).
For any such \(m\geq m_r\), it follows from the assumption that there is some natural~\(n_{r,m}\geq m\) for which \(\mu(n_{r,m})>\realordering(n_{r,m})-\nicefrac1r\), and therefore, since also \(\realordering(n_{r,m})\geq\realordering(m)>\nicefrac2r\),
\begin{equation*}
\frac{\mu(n_{r,m})}{\realordering(n_{r,m})}>1-\frac1{r\realordering(n_{r,m})}>\frac12.
\end{equation*}
This implies that, indeed, \(\limsup_{n\to\infty}\nicefrac{\mu(n)}{\realordering(n)}>0\).

\ref{it:computably:unbounded:fraction}\(\then\)\ref{it:computably:unbounded}.
We begin by showing that there is some real growth function \(\realordering'\) such that \(\limsup_{n\to\infty}\sqgroup{\mu(n)-\realordering'(n)}>0\).
It follows from the assumption that there is some~\(r\in\naturals\) such that \(\limsup_{n\to\infty}\nicefrac{\mu(n)}{\realordering(n)}>\nicefrac1r\), and also that there is some~\(n_o\in\naturalswithzero\) such that~\(\realordering(n)>0\) for all \(n\geq n_o\).
If we now let \(\realordering'\coloneqq\nicefrac{\realordering}{2r}\) for all~\(n\in\naturalswithzero\), then it is clear that \(\realordering'\) is a real growth function, and that \(\limsup_{n\to\infty}\nicefrac{\mu(n)}{\realordering'(n)}>2\).
This implies that for all \(n\in\naturalswithzero\) with~\(n\geq n_o\), there is some~\(m_n\geq n\) in~\(\naturalswithzero\) such that \(\nicefrac{\mu(m_n)}{\realordering'(m_n)}>2\), and therefore also
\begin{equation*}
\mu(m_n)-\realordering'(m_n)
>\realordering'(m_n)
\geq\realordering'(n)
\geq\realordering'(n_o).
\end{equation*}
This implies that, indeed, \(\limsup_{n\to\infty}[\mu(n)-\realordering'(n)]\geq\realordering'(n_o)>0\).
Because \(\realordering'\) is computable, we know from Proposition~\ref{prop:computable:simplified} [after identifying the countably infinite sets~\(\sits\) and~\(\naturalswithzero\)] that there is some recursive net of rational numbers \(\rprime[k,n]\) such that
\begin{equation*}
\abs{\rprime[k,n]-\realordering'(k)}\leq2^{-n}
\text{ for all~\(k,n\in\naturalswithzero\)}.
\end{equation*}
If we now define the sequence of rational numbers \(r_k\coloneqq\rprime[k,k+1]-3\cdot2^{-k}\), then this sequence is clearly a recursive sequence of rational numbers for which \(\abs{r_k+3\cdot2^{-k}-\realordering'(k)}\leq 2^{-(k+1)}<2^{-k}\) and therefore also
\begin{equation}\label{eq:computably:unbounded:real}
\realordering'(k)-4\cdot2^{-k}
<r_k
<\realordering'(k)-2\cdot2^{-k}
\text{ for all~\(k\in\naturalswithzero\)}.
\end{equation}
Hence also
\begin{equation*}
r_{k+1}
>\realordering'(k+1)-4\cdot2^{-(k+1)}
\geq\realordering'(k)-2\cdot2^{-k}
>r_k
\text{ for all~\(k\in\naturalswithzero\)},
\end{equation*}
where the strict inequalities follow from Equation~\eqref{eq:computably:unbounded:real}, and the weak inequality from the non-decreasing character of the real growth function~\(\realordering'\).
This tells us that the sequence~\(r_k\) is increasing.
Equation~\eqref{eq:computably:unbounded:real} tells us that it is also unbounded, because \(\realordering'\) is.
If we therefore define the map \(\ordering'\colon\naturalswithzero\to\integers\) by letting \(\ordering'(k)\coloneqq\floor{r_k}\) for all~\(k\in\naturalswithzero\), then this map is recursive because \(r_k\) is a recursive sequence of rational numbers, non-decreasing because the sequence~\(r_k\) is increasing, and unbounded because the sequence~\(r_k\) is.
Since
\begin{equation*}
-\ordering'(k)=-\floor{r_k}=\ceil{-r_k}\geq-r_k>-\realordering'(k)+2\cdot2^{-k},
\end{equation*}
where we have used Equation~\eqref{eq:computably:unbounded:real}, we find that
\begin{equation*}
\mu(k)-\ordering'(k)
\geq\mu(k)-\realordering'(k)+2\cdot2^{-k}
\text{ for all~\(k\in\naturalswithzero\)},
\end{equation*}
and therefore \(\limsup_{n\to\infty}\sqgroup{\mu(n)-\ordering'(n)}\geq\limsup_{n\to\infty}\sqgroup{\mu(n)-\realordering'(n)}>0\).
The same inequality of course also holds if we replace~\(\ordering'\) by the growth function~\(\ordering\coloneqq\max\set{0,\ordering'}\).
\end{proof}

\begin{proof}[Proof of Proposition~\ref{prop:computably:unbounded:implies:unbounded}]
Consider any real \(R>0\).
Since \(\mu\) is computably unbounded, there is some growth function~\(\ordering\) such that Equation~\eqref{eq:computably:unbounded} holds.
Since \(\ordering\) is unbounded, there is some~\(m_R\in\naturalswithzero\) such that \(\ordering(m_R)>R\), and then Equation~\eqref{eq:computably:unbounded} implies that there is some natural~\(n_R\geq m_R\) such that \(\mu(n_R)>\ordering(n_R)\geq\ordering(m_R)>R\) [the weak inequality follows from the non-decreasing character of the growth function~\(\ordering\)].
Hence, \(\mu\) is unbounded above.
\end{proof}

\begin{proof}[Proof of Proposition~\ref{prop:computably:unbounded:product}]
Assume {\itshape ex absurdo} that both \(\mu_1\) and \(\mu_2\) are not computably unbounded.
That the product~\(\mu_1\mu_2\) is computably unbounded implies, by Proposition~\ref{prop:computably:unbounded:various}\ref{it:computably:unbounded:fraction}, that there is some real growth function~\(\realordering\) and some natural number~\(r>0\) such that
\begin{equation}\label{eq:computably:unbounded:product:start}
(\forall m\in\naturalswithzero)(\exists n_m\geq m)
\frac{\mu_1(n_m)\mu_2(n_m)}{\realordering(n_m)}>\frac{1}{r^2}.
\end{equation}
If we consider the real growth functions \(\realordering_1\) and \(\realordering_2\) defined by~\(\realordering_1(n)=\realordering_2(n)\coloneqq\sqrt{\realordering(n)}\) for all~\(n\in\naturalswithzero\), then clearly \(\realordering(n)=\realordering_1(n)\realordering_2(n)\) for all~\(n\in\naturalswithzero\), and therefore Equation~\eqref{eq:computably:unbounded:product:start} guarantees that
\begin{equation}\label{eq:computably:unbounded:product:intermediate}
(\forall m\in\naturalswithzero)(\exists n_m\geq m)
\frac{\mu_1(n_m)}{\realordering_1(n_m)}\frac{\mu_2(n_m)}{\realordering_2(n_m)}>\frac{1}{r^2}.
\end{equation}
But the assumption that \(\mu_1\) and \(\mu_2\) are not computably unbounded, in combination with Proposition~\ref{prop:computably:unbounded:various}\ref{it:computably:unbounded:fraction}, guarantees in particular that there are \(m_1,m_2\in\naturalswithzero\) such that
\begin{equation*}
(\forall n\geq m_1)\frac{\mu_1(n)}{\realordering_1(n)}\leq\frac{1}{r}
\text{ and }
(\forall n\geq m_2)\frac{\mu_2(n)}{\realordering_2(n)}\leq\frac{1}{r},
\end{equation*}
and therefore
\begin{equation*}
(\forall n\geq\max\set{m_1,m_2})\frac{\mu_1(n)}{\realordering_1(n)}\frac{\mu_2(n)}{\realordering_2(n)}\leq\frac{1}{r^2},
\end{equation*}
contradicting Equation~\eqref{eq:computably:unbounded:product:intermediate}.
\end{proof}

\end{document}